\title{Paralinearization and extended lifespan \\
for solutions of the $ \alpha $-SQG sharp front equation }
\author{
    Massimiliano Berti\thanks{Scuola Internazionale Superiore di Studi Avanzati (SISSA), Trieste, Italy. \href{mailto:berti@sissa.it}{\texttt{berti@sissa.it}}}
    \and
    Scipio Cuccagna\thanks{Università degli Studi di Trieste, Dipartimento di Matematica, Informatica e Geoscienze, Italy. \href{mailto:scuccagna@units.it}{\texttt{scuccagna@units.it}}}
    \and
    Francisco Gancedo\thanks{University of Seville, Spain. \href{mailto:fgancedo@us.es}{\texttt{fgancedo@us.es}}}
    \and
    Stefano Scrobogna\thanks{Università degli Studi di Trieste, Dipartimento di Matematica, Informatica e Geoscienze, Italy. \href{mailto:stefano.scrobogna@units.it}{\texttt{stefano.scrobogna@units.it}}}
}
\DeclareMathAlphabet{\mathcal}{OMS}{cmsy}{m}{n}
\DeclareFontFamily{U}{mathc}{}
\DeclareFontShape{U}{mathc}{m}{it}
{<->x*[1.03] mathc10}{}
\DeclareMathAlphabet{\mathscr}{U}{mathc}{m}{it}
\DeclareMathAlphabet{\mathpzc}{OT1}{pzc}{m}{it}
\tikzset{cross/.style={cross out, draw=black, minimum size=2*(#1-\pgflinewidth), inner sep=0pt, outer sep=0pt},
cross/.default={1pt}}
  \DeclareSymbolFont{stix@largesymbols}{LS2}{stixex}{m}{n}
  \DeclareMathDelimiter{\lBrace}{\mathopen} {stix@largesymbols}{"E8}%
                                            {stix@largesymbols}{"0E}
  \DeclareMathDelimiter{\rBrace}{\mathclose}{stix@largesymbols}{"E9}%
                                            {stix@largesymbols}{"0F}
\DeclareSymbolFontAlphabet{\amsmathbb}{AMSb}%
\definecolor{dkgreen}{rgb}{0,0.6,0}
\definecolor{gray}{rgb}{0.5,0.5,0.5}
\definecolor{mauve}{rgb}{0.58,0,0.82}
\tiny\color{gray},
\def\maketag@@@#1{\hbox{\m@th\normalfont\normalsize#1}}
\newcommand{\dd}{\textnormal{d}}
\newcommand{\sgn}{\textnormal{sgn}}
\newcommand{\Vol}{\textnormal{Vol}}
\newcommand{\pare}[1]{\left( #1 \right)}
\newcommand{\angles}[1]{\left\langle #1 \right\rangle}
\newcommand{\norm}[1]{\left\| #1 \right\|}
\newcommand{\av}[1]{\left| #1 \right|}
\newcommand{\bra}[1]{\left[ #1 \right]}
\newcommand{\pbra}[2]{\set{ #1 \big. , \  #2 }}
\newcommand{\comm}[2]{\left\llbracket #1 \,  , \,   #2 \right\rrbracket}
\newcommand{\Ad}{\textnormal{Ad}}
\newcommand{\xfrac}[2]{\left. #1 \middle/ #2 \right. }
\newcommand{\set}[1]{\left\{ #1 \right\}}
\newcommand{\system}[1]{\left\{ #1 \right.}
\newcommand{\ceil}[1]{\left\lceil #1 \right\rceil}
\newcommand{\OpBW}[1]{\textnormal{Op}^{BW}\bra{#1}}
\newcommand{\OpW}[1]{\textnormal{Op}^{W}\bra{#1}}
\newcommand{\ddt}{\frac{\textnormal{d}}{\textnormal{d}t}}
\newcommand{\Id}{\textnormal{Id}}
\newcommand{\defeq}{\vcentcolon=}
\newcommand{\Cast}[2]{C^{#1}_\ast \pare{I; H^{#2}_0\pare{\bT;\bR}}}
\newcommand{\Ball}[2]{B^{#1}_{#2, \bR}\pare{I;\epsilon_0}}
\def\fint{\mathop{\,\rlap{--}\!\!\!\int}\nolimits}
\renewcommand{\Im}{\textnormal{Im}}
\newcommand{\RN}[1]{%
  \textup{\uppercase\expandafter{\romannumeral#1}}%
}
\newcommand{\KF}{K\cF}
\newcommand{\KR}{K\cR}
\newcommand{\KM}{K\mathcal{M}}
\newcommand{\cC}{\mathcal{C}}
\newcommand{\cI}{\mathcal{I}}
\newcommand{\cL}{\mathcal{L}}
\newcommand{\cT}{\mathcal{T}}
\newcommand{\cF}{\mathcal{F}}
\newcommand{\cJ}{\mathcal{J}}
\newcommand{\cR}{\mathcal{R}}
\newcommand{\bR}{\mathbb{R}}
\newcommand{\bN}{\mathbb{N}}
\newcommand{\bM}{\mathbb{M}}
\newcommand{\cO}{\mathcal{O}}
\newcommand{\cK}{\mathcal{K}}
\newcommand{\sG}{\mathsf{G}}
\newcommand{\fV}{\mathfrak{V}}
\newcommand{\cP}{\mathcal{P}}
\newcommand{\cM}{\mathcal{M}}
\newcommand{\bZ}{\mathbb{Z}}
\newcommand{\bC}{\mathbb{C}}
\newcommand{\bT}{\mathbb{T}}
\newcommand{\bV}{\mathbb{V}}
\newcommand{\cg}{\mathpzc{g}}
\newcommand{\cy}{\mathpzc{y}}
\newcommand{\sT}{\mathsf{T}}
\newcommand{\sH}{\mathsf{H}}
\newcommand{\sK}{\mathsf{K}}
\newcommand{\sR}{\mathsf{R}}
\newcommand{\sM}{\mathsf{M}}
\newcommand{\sx}{\mathsf{x}}
\newcommand{\sX}{\mathsf{X}}
\newcommand{\st}{\mathsf{t}}
\newcommand{\sa}{\mathsf{a}}
\newcommand{\ii}{{\rm i}}
\newcommand{\be}{\begin{equation}}
\newcommand{\ee}{\end{equation}}
\newcommand{\vr}{\varrho}
\newcommand{\N}{\mathbb N}
\newcommand{\R}{\mathbb R}
\newcommand{\T}{\mathbb T}
\newcommand{\C}{\mathbb C}
\newcommand{\Z}{\mathbb Z}
\newcommand{\di}{{\rm d}}
\newcommand{\im}{{\rm i}}
\newcommand{\cU}{\mathcal{U}}
\newcommand{\ov}{\overline}
\def\wt{\widetilde}
\newcommand{\la}{\langle}
\newcommand{\ra}{\rangle}
\newcommand{\x}{\xi}
\newcommand{\s}{\sigma}
\newcommand{\pa}{\partial}
\newcommand{\sign}{{\rm sign}}
\newcommand{\mM}{\mathcal{M}}
\newcommand{\mR}{\mathcal{R}}
\newcommand{\Gt}[2]{{\tilde{\Gamma}^{#1}_{#2}}}
\newcommand{\Tu}{{{\mathbb T}^1}}
\newcommand{\Gr}[2]{{{\Gamma}^{#1}_{#2}[\epsilon_0]}}
\newcommand{\Br}[2]{{B^{#1}_{{s_0}_{#2}}(I;\epsilon_0)}}
\newcommand{\Lcal}{{\mathcal L}}
\newcommand{\Rcal}{{\mathcal R}}
\newcommand{\sg}[3]{{{\Sigma\Gamma}^{#1}_{#2}[\epsilon_0,#3]}}
\newcommand{\sr}[3]{{{\Sigma\Rcal}^{#1}_{#2}[\epsilon_0 ,#3]}}
\newcommand{\sFR}[2]{{{\Sigma\Fcal^{\R}_{#1}[\epsilon_0 ,#2]}}}
\newcommand{\Fcal}{{\mathcal F}}
\newcommand{\sF}[2]{{{\Sigma\Fcal}_{#1}[\epsilon_0 ,#2]}}
\theoremstyle{theorem}
\newtheorem{theorem}{Theorem}[section]
\newtheorem*{theorem*}{Theorem}
\newtheorem{prop}[theorem]{Proposition}
\newtheorem{proposition}[theorem]{Proposition}
\newtheorem{lemma}[theorem]{Lemma}
\theoremstyle{definition}
\newtheorem{definition}[theorem]{Definition}
\newtheorem{rem}[theorem]{Remark}
\newtheorem{remark}[theorem]{Remark}
\newtheorem{notation}[theorem]{Notation}
\newtheorem{step}{Step}
\numberwithin{equation}{section}
\begin{document}

\maketitle

\noindent
\begin{abstract}
In this paper we 
paralinearize  the contour dynamics equation for sharp-fronts 
of $\alpha$-SQG, 
for any 
$ \alpha \in (0,1) \cup (1,2) $, 
close to a circular vortex. This turns out to 
be a quasi-linear Hamiltonian PDE.
After deriving 
the asymptotic expansion of the linear frequencies of oscillations at the 
vortex disk and verifying 
the absence of three wave interactions,   
we prove that, in the most singular cases $ \alpha \in (1,2) $, any initial vortex patch which is  
$ \varepsilon $-close
to the disk exists for a time interval of size 
at least $  \sim \varepsilon^{-2} $. 
This quadratic lifespan result relies 
on a paradifferential Birkhoff 
normal form reduction 
and exploits cancellations arising from the Hamiltonian nature of the equation. 
This is the first normal form long time existence result of sharp fronts.
\end{abstract}
{\small
{\it Keywords:} $ \alpha $-SQG equations, vortex patches, paradifferential calculus, 
Birkhoff normal form.}

	{\small\tableofcontents}

	\allowdisplaybreaks

\section{Introduction and main results}

In this paper we consider the generalized surface
quasi-geostrophic $\alpha$-SQG equations
\begin{align}\label{transport}
\partial_t \theta(t,\zeta)+u(t,\zeta)\cdot \nabla \theta(t,\zeta)=0 \, ,
\quad \pare{t,\zeta}\in \bR \times  \bR^2 \, , 
\end{align}
with  velocity field
\begin{equation}\label{gBS}
u \defeq\nabla^\perp \av{D}^{-2+\alpha}\theta \, , \qquad  \av{D} \defeq(-\Delta)^\frac12 \, ,
\qquad  \alpha\in (0,2) \, .
\end{equation}
These class of
active scalar  equations have been introduced in   \cite{PHS94,CFMR05} and,
for  $ \alpha \to 0 $,
formally reduce  to the 2D-Euler equation in vorticity formulation
(in this case $\theta$ is the vorticity of the fluid).
The case $ \alpha = 1 $
is the surface quasi-geostrophic (SQG) equation
in \cite{CMT1994_2} which models the evolution of 
the temperature $ 	\theta $ for atmospheric and oceanic flows. 

For the 2D Euler equation 
global-in-time  well-posedness 
results are well known  for either regular initial data, 
see e.g. \cite{MB2002,Chemin98}, 
as well as for 
$L^1\cap L^\infty$ initial vorticities, thanks to  the celebrated 
Yudovich Theorem \cite{Yudovich1963}. 
This result is  based on the fact that the vorticity is  
transported by the particles of fluid
along the velocity field, 
which turns out to be log-Lipschitz, and thus it defines a global flow on the plane.  
On the other hand, for $\alpha > 0$, an analogous result does not hold 
because the velocity field $  u $  in \eqref{gBS} is more singular 
and does not define a flow. Nevertheless
local in time smooth solutions exist 
thanks to a nonlinear commutator structure of the vector field
for $ \alpha=1 $,
 in 
\cite{CMT1994_1},  and for $ \alpha\in\pare{1, 2} $, in \cite{CCCG2012}. For $ \alpha =1 $, the works \cite{CF02,Cordoba98} rule out the possible formation of certain 
kind of singularities but
the question of whether a finite-time singularity may develop from a smooth initial datum remains open.  In this context  we mention  the construction in  
\cite{HK21} 
of solutions 
that must either exhibit infinite in time growth of derivatives or blow up in finite time.

Existence of global weak $  L^p $ solutions  has been obtained by energy methods    
for $ \alpha=1 $,  if $ p > 4 / 3 $, in  \cite{Resnick95,Marchand08}, 
and for $ \alpha\in\pare{1, 2} $ if $ p=2 $, in \cite{CCCG2012}.
For $ \alpha\in (0,1] $ global weak solutions exist 
also in $ L^1\cap L^2 $ as proved in \cite{LX2019}. 
We also mention that non-unique weak solutions of SQG
have been constructed by  convex integration techniques in \cite{BSV19,IM2021}.

A particular type of weak solutions are the 
{\it vortex 
patches} -also called {\it sharp fronts}- 
which are given by the characteristic function of an evolving domain
\begin{equation}\label{front}
\theta\pare{t, \zeta}\defeq \begin{cases}
	1\quad\text{if } \zeta\in D(t) \, ,\\
	0\quad\text{if } \zeta\notin D(t) \, .
\end{cases} \qquad D(t)\subset \bR^2 \, .
\end{equation}
The  vortex patch  
problem \eqref{front}
can be described by the evolution of the interface $ \partial D(t)$ 
 only.  
The simplest example of a finite energy 
vortex patch is the circular ``Rankine" vortex
which is the circular steady solution with $ D(t) = D(0) = \{ |\zeta | \leq  1 \}  $ at any time $ t $.
On the other hand, 
since for $ \alpha\in\pare{0, 2} $  there is no analogue of Yudovich theorem, 
also to establish the local existence theory for sharp fronts nearby is a difficult task.
In the last few 
years special 
global in time sharp-front 
solutions  of $ \alpha$-SQG 
close to the Rankine vortex  have been constructed: 
the uniformly rotating $ V$-states  in 
\cite{HH2015,CCG2016_1,CCG2016_2,GomezSerrano2019}, 
as well as time  quasi-periodic solutions in \cite{HHM2021} for 
$ \alpha \in \pare{0, \tfrac12}$, and in \cite{GIP} for $ \alpha \in (1,2) $.
We quote further
 literature after the statement of  Theorem \ref{thm:main}. 

In this work we  prove the first  long-time existence result of sharp fronts
of $ \alpha $-SQG, in the more singular cases $ \alpha \in (1,2) $,
for any initial interface $ \partial D(0) $ sufficiently smooth and close to a circular Rankine vortex, see
Theorem \ref{thm:main}.  This is achieved thanks to the paralinearization 
result of the $ \alpha$-SQG sharp front equation in Theorem \ref{prop:paralinearization_1}
for any $ \alpha \in (0,1) \cup (1,2) $, that 
we consider of independent interest in itself. 

Let us present precisely our main results. 
The evolution of the boundary of the vortex patch 
is governed by the {\it Contour Dynamics Equation} 
for a parametrization  
  $ X:\bT\to \bR^2 $, $ x \mapsto X(t, x) $,  with $ \bT \defeq \bR \slash 2\pi \bZ $,
  of the boundary $ \partial D(t) $ of the vortex patch.
The Contour Dynamics Equation  for the $ \alpha $-SQG patch
--also called sharp-fronts equation-   is
\begin{equation}\label{eq:SQGpatch} 
\partial_t X\pare{t, x}  = \frac{c_\alpha}{2\pi} \  \int \frac{X'\pare{t, x } - X'\pare{t, y}}{\av{X \pare{t, x} - X \pare{t, y}}^\alpha} \, \dd y \, , \quad \alpha\in\pare{0, 2}  \, ,
\end{equation}
where $ ' $ denotes the derivative with respect to $ x $, 
\begin{equation}\label{eq:calpha}
c_\alpha \defeq \frac{\Gamma\pare{\frac{\alpha}{2}}}{2^{1-\alpha} \Gamma\pare{1-\frac{\alpha}{2}}}  
\end{equation}
and   $\Gamma ( \cdot )$ is the Euler-Gamma function. 
 The local solvability of \Cref{eq:SQGpatch} in Sobolev class 
has been proved 
in \cite{Gancedo2008}  for $ \alpha\in\left(0, 1\right] $, if the 
initial datum belongs to $ H^s $, $ s\geq 3 $
 and in \cite{GNP2021,GP2021} for 
less regular initial data  (see \cite{Rodrigo2005}
for  $ C^\infty $ data). The uniqueness has been established in \cite{CCG18}.
For $ \alpha\in\pare{1, 2} $ the local existence and uniqueness theory 
has been proved in \cite{CCCG2012,GP2021}  for initial data in $ H^s $, $  s\geq 4 $, see also \cite{KYZ17,AA2022}. 
In the very recent work \cite{KL2023} it is proved that the $ \alpha $-patch problem 
is ill posed in $ W^{k, p} $ if $ p\neq 2 $.

Very little is known concerning long time existence results. 
Actually highly unstable dynamical behaviour could emerge. 
In this context we mention the remarkable work \cite{KRLY16} 
where two smooth patches of 
opposite sign 
develop 
 a finite time particle collision. We also quote 
the numerical study \cite{Scott11} which provides 
some evidence of the development of filaments, pointing to a possible formation of  singularities via a self-similar filament cascade.

In this paper we consider sharp fronts
of $ \alpha $-SQG that are a radial perturbation of the unitary circle, i.e.
\begin{equation}\label{circle}
X\pare{x} = \pare{1+h\pare{ x}}\vec \gamma\pare{x} \, , \qquad \quad \vec \gamma (x) 
\defeq (\cos (x), \sin (x)) \, .
\end{equation}
Since
only the normal component
of the velocity field deforms the patch,   one derives
from \eqref{eq:SQGpatch} a scalar evolution equation for  $ h (x) $.
Multiplying \eqref{eq:SQGpatch} by the
 normal vector
 $ n (x) = h' (x) \vec  \gamma' (x) - \pare{1+h(x)} \vec \gamma (x) $  to the boundary
 of the patch at $ X(x) $, 
  we deduce that  $ h(t,x)$ solves the equation
\begin{equation}
\label{eq:elev_funct_eq}
\begin{aligned}
 - \pare{1+h\pare{x}} \partial_t h \pare{x} 
=  & \  \frac{c_\alpha}{2\pi} \  \int \frac{ \cos\pare{x-y}\bra{\pare{1+h\pare{x}}h'\pare{y} - \pare{1+h\pare{y}}h'\pare{x}}   }{\bra{ \pare{1+h\pare{x}}^2 + \pare{1+h\pare{y}}^2-2\pare{1+h\pare{x}}\pare{1+h\pare{y}}\cos\pare{x-y}}^{\frac{ \alpha }{2}}} \dd y \\
& \ +  \frac{c_\alpha}{2\pi} \   \int \frac{ \sin\pare{x-y}\bra{\pare{1+h\pare{x}}\pare{ 1+ h\pare{y} } + h'\pare{x} h'\pare{y}} }{\bra{ \pare{1+h\pare{x}}^2 + \pare{1+h\pare{y}}^2-2\pare{1+h\pare{x}}\pare{1+h\pare{y}}\cos\pare{x-y}}^{\frac{ \alpha }{2}}} \dd y \, .
  \end{aligned}
\end{equation}
In view of  \cite{CCCG2012,GP2021}  
if $ h_0\in H^s $,
for any
$ s \geq 4 $,
 there exists  a unique solution $ h\in\cC\pare{\bra{0, T}; H^s} $ of \eqref{eq:elev_funct_eq}
defined up to a time
$
T > \frac{1}{C_s \ \norm{h_0}_{H^s}} $.
The following result  extends the local-existence result for longer times.

\begin{theorem}[Quadratic life-span]\label{thm:main}
Let $ \alpha \in \pare{1, 2} $.
There exists  $ s_0 > 0 $ such that for any $ s \geq s_0 $,
 there are $ \varepsilon_0  > 0 $, $ c_{s,\alpha} > 0 $,
 $ C_{s,\alpha} > 0 $  such that, for any
 $ h_0 $ in $ H^s \pare{ \bT ; \bR }$ satisfying  $ \norm{h_0}_{H^s} \leq
  \varepsilon < \varepsilon_0  $,
 the 
 equation \eqref{eq:elev_funct_eq} with initial condition $ h(0) = h_0 $
 has a unique classical solution 
  \begin{equation}\label{timeexi}
h\in\cC\pare{\bra{- T_{s, \alpha} , T_{s, \alpha}}; H^s\pare{\bT;\bR}} \qquad \text{with} \qquad
 T_{s, \alpha}  > c_{s,\alpha} \varepsilon^{-2}  \, , 
\end{equation}
satisfying 
$ \norm{h\pare{t}}_{H^s} \leq C_{s, \alpha} \, \varepsilon  $, for any 
$  t\in\bra{-T_{s, \alpha}, T_{s, \alpha}}  $.
\end{theorem}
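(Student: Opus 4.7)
The plan is to combine the paralinearization of \Cref{prop:paralinearization_1} with a paradifferential Birkhoff normal form procedure exploiting the Hamiltonian structure of the sharp-front equation, and to close the argument with a standard bootstrap based on the local well-posedness in $H^s$ of \cite{CCCG2012,GP2021}.

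Starting from \Cref{prop:paralinearization_1}, equation \eqref{eq:elev_funct_eq} takes the quasi-linear paradifferential form
\[
\partial_t h = \OpBW{\ii A(h;x,\xi)} h + R(h),
\]
where $A(h;x,\xi)$ is a real symbol of order $\alpha-1$ depending on $h$ and $R(h)$ is a smoothing remainder. The linearization at $h=0$ is a Fourier multiplier whose eigenvalues $\omega_n$ are the linear frequencies of oscillation around the Rankine disk, with leading asymptotic $\omega_n \sim c_\alpha |n|^\alpha$ as $|n|\to\infty$. A direct symmetrized energy estimate already gives
\[
\tfrac{\dd}{\dd t}\norm{h}_{H^s}^2 \lesssim_s \norm{h}_{H^s}^3,
\]
which only yields the local lifespan $T \gtrsim \varepsilon^{-1}$; to upgrade to $\varepsilon^{-2}$ the cubic contribution must be eliminated.

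To this end I would perform a paradifferential Birkhoff normal form at homogeneity three. Decomposing the Hamiltonian as $H(h)=H_2(h)+H_3(h)+H_{\geq 4}(h)$ and solving the cohomological equation $\{H_2,\chi_3\}+H_3=Z_3$, where $Z_3$ is supported on three-wave resonant triples, requires the absence of such resonances, i.e. $\omega_{n_1}+\omega_{n_2}+\omega_{n_3}\neq 0$ under the momentum constraint $n_1+n_2+n_3=0$ for all nontrivial triples. This non-resonance property is one of the structural ingredients announced in the introduction; it is verified by combining an explicit low-frequency check with the asymptotic expansion $\omega_n=c_\alpha|n|^\alpha+O(|n|^{\alpha-1})$ and the strict convexity of $n\mapsto|n|^\alpha$ for $\alpha\in(1,2)$. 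The time-one flow $\Phi$ of the Hamiltonian vector field generated by $\chi_3$ is close to the identity and conjugates the equation to
\[
\partial_t \tilde h = \OpBW{\ii \widetilde A(\tilde h;x,\xi)}\tilde h + \tilde R_{\geq 3}(\tilde h), \qquad \tilde h = \Phi(h),
\]
in which the cubic part of the nonlinearity no longer contributes to the $H^s$ energy identity at leading order. A symmetrized energy estimate on $\tilde h$ then yields
\[
\tfrac{\dd}{\dd t}\norm{\tilde h}_{H^s}^2 \lesssim_s \norm{\tilde h}_{H^s}^4,
\]
and since $\Phi$ is close to the identity on a small ball, so that $\norm{\tilde h}_{H^s}\sim\norm{h}_{H^s}$, a standard continuity/bootstrap argument combined with Gronwall's inequality produces the desired lifespan $T_{s,\alpha}>c_{s,\alpha}\varepsilon^{-2}$ together with the uniform bound $\norm{h(t)}_{H^s}\leq C_{s,\alpha}\varepsilon$ of \eqref{timeexi}.

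I expect the main obstacle to be twofold. First, implementing the Birkhoff reduction in the quasi-linear paradifferential framework: the generator $\chi_3$ is itself a paradifferential object whose symbol depends on $h$ and loses derivatives, so one must verify that the conjugation preserves the paradifferential structure of \Cref{prop:paralinearization_1} and that $\tilde R_{\geq 3}$ is genuinely quartic in $\norm{\tilde h}_{H^s}$. In particular one has to identify and cancel precisely the cubic skew-adjoint contribution to the energy identity, which is where the Hamiltonian nature of the equation is essential. Second, the three-wave non-resonance must be checked uniformly for all momentum-preserving triples and all $\alpha\in(1,2)$, combining a finite low-frequency case analysis with the high-frequency asymptotic of $\omega_n$; this is also the step in which the restriction $\alpha\in(1,2)$ most clearly enters, since the dispersive order $\alpha-1>0$ is what allows the normal form scheme to absorb the quasi-linear loss of derivatives in the remainders.
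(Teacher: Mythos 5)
Your proposal correctly identifies the ingredients -- the paralinearization of \Cref{prop:paralinearization_1}, the Hamiltonian structure, the absence of three-wave resonances, and a bootstrap -- but takes a different route at the crucial normal-form step and leaves a genuine gap there. Two preliminary remarks. First, the Hamiltonian formulation and the paralinearization are stated for the unknown $f = h + \tfrac{1}{2} h^2$ of \eqref{eq:def_f}, not for $h$: the change of variable is what creates the Hamiltonian structure you invoke, and \eqref{eq:f-h} lets you return to $h$ at the end, but you should not paralinearize the $h$-equation directly. Second, the non-resonance is proved in the paper not by a low-frequency case check plus a high-frequency asymptotic, as you sketch, but by the discrete convexity $\triangle^2\omega_\alpha(j)>0$ of \Cref{lem:conve} and a telescoping sum, which gives the uniform bound $|\omega_\alpha(k)-\omega_\alpha(j)-\omega_\alpha(n)|\geq\omega_\alpha(2)$ directly; in particular there are no resonant triples at all, so your $Z_3$ is identically zero.

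The gap is in the normal-form construction. You propose the classical Hamiltonian Birkhoff scheme: solve $\{H_2,\chi_3\}+H_3=0$ and conjugate by the time-one flow of $X_{\chi_3}$, which you assert is ``close to the identity''. In this quasi-linear setting that is exactly what needs proof: the cubic Hamiltonian loses $\alpha$ derivatives, the divisor is bounded below only by the constant $\omega_\alpha(2)$, and in regimes with one small frequency it does not obviously gain enough to make $\chi_3$ the generator of a bounded flow, let alone a flow that keeps the equation in a paradifferential form amenable to energy estimates. You flag this as the ``main obstacle'', but ``one must verify'' does not close it. The paper does something structurally different in Section \ref{sec:constant_coeff}: first the paralinearized equation is conjugated by flows of \emph{paradifferential} generators (\Cref{prop:cc_ao}) -- Egorov-type conjugations with no small divisors -- until the symbol is $x$-independent, real and even in $\xi$ up to order $-1$, modulo a smoothing remainder. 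This constant-coefficient, skew-adjoint part commutes with $\angles{D}^s$ and drops out of the $H^s$ energy identity. Only then is a Birkhoff step applied (\Cref{lem:BNF1step}), and it acts exclusively on the degree-one \emph{smoothing} remainder $R_1(f)$: the generator $Q(f)$ of \eqref{omoBNF5} is itself a smoothing operator, so its flow $\Phi^\tau_Q(f)$ is a bounded, invertible perturbation of the identity on $H^s$, and the conjugated equation \eqref{BNF12} retains a usable paradifferential form. This separation -- unbounded paradifferential part removed by flows, bounded smoothing part removed by a homological equation -- is the missing idea in your argument and is precisely what makes the quasi-linear normal form rigorous.
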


\Cref{thm:main} is proved by normal form arguments for quasi-linear 
Hamiltonian PDEs. The first important step is the {\it paralinearization} of 
 \eqref{eq:elev_funct_eq} once it has been written in Hamiltonian form, 
 see \Cref{prop:paralinearization_1}. The 
 paralinearization formula \eqref{eq:paralinearized_1}
of the $ \alpha $-SQG equations 
holds for any $ \alpha \in (0,1) \cup (1,2) $. It is a major result of
this paper, that we expect  to be used also in other contexts. 

In order to prove  \Cref{thm:main} 
we reduce the paralinearized equation \eqref{eq:paralinearized_1}, for any $ \alpha \in (1,2) $, 
to Birkhoff normal form up to cubic smoothing
 terms. 
This requires to 
prove the absence of {\it three wave interactions}, which is  verified 
in Lemma \ref{lem:nonres_cond}
by showing the convexity of the  
linear normal frequencies of 
 the $ \alpha $-SQG equation at the circular vortex patch. 
 
Theorem \ref{thm:main} is the first Birkhoff normal form  results for sharp fronts 
equations. 

In recent years several 
advances have been obtained concerning long time existence of solutions for 
quasi-linear equations in fluids dynamics on $ \bT $, namely 
with periodic boundary conditions, as  the water waves equations.
Quadratic life span of small amplitude solutions have been obtained in 
\cite{BFF2021,IT2017,HIT2015, HIT2016,Wu2009,IP2015,IP2019,AIT2019},
extended to longer times in 
\cite{BD2018,BFP2018,BFF2021,BMM2022,DIP2022,WU2020,Zheng2022,BMM2},
by either introducing quasi-linear modified energies or using 
Birkhoff normal form techniques. 
We also quote the long time existence result 
\cite{CCZ2021} for solutions of  SQG close to the infinite energy 
radial solution $ \av{\zeta} $.

Before explaining 
in detail 
the main ideas of proof we present further results in literature about 
$ \alpha $-SQG. 
\\[1mm]
{\it Further literature.}
Special infinite energy 
global-in-time sharp front solutions   have been constructed 
in  \cite{CGI2019, HSZ20, HSZ2021} 
if the initial patch is a small perturbation of the half-space, 
by exploiting dispersive techniques.   In \cite{CCG2019,CCG2020} special global smooth solutions in the cases $ \alpha= 0,1 $ are obtained using bifurcation theory.
Concerning the possible formation of 
singularities, we mention that \cite{KRLY16,KYZ17} 
constructed
special initial sharp fronts in the half-space 
which develop 
a splash singularity in finite time  if  $\alpha \in \pare{ 0,\frac{1}{12} } $,  
 later extended 
in  \cite{GP2021} for $\alpha \in \pare{ 0,\frac{1}{3} }$. 
We also mention that 
 \cite{GS14,KL21} have proved that, if $  \alpha\in (0,1]  $, 
 the sharp fronts equation in the whole space 
 does not generate finite-time singularities of splash type.
\\[1mm]
{\it V-states.}
   The existence of uniformly rotating 
    $V$-states close to the disk was first numerically investigated in \cite{DZ1978} and analytically proved in \cite{Burbea1982} for the Euler equations,
    recently extended to global branches in 
    \cite{HMW2020}.  
For $ \alpha $-SQG  equations, as already mentioned,   
local bifurcation results  
of sharp fronts from the disk have been obtained in 
\cite{HH2015,CCG2016_1,CCG2016_2,CCG2019,GomezSerrano2019,CCG2020}. 
Smooth $ V $-states 
bifurcating from different steady configurations have been constructed 
for  $ \alpha \in [0,2) $ in
   \cite{DHH2018, Renault2017, HM2017, HM2016_1, HM2016_2, DHMV2016, DHH2016, HMV2015, HMV2013}. We refer to the introductions in 
   \cite{HHM2021,GIP} for more references. 
\\[1mm]
{\it Quasi-periodic solutions.}
As already mentioned 
global in time quasi-periodic solutions of the $ \alpha $-SQG 
vortex patch equation 
close to the circle \eqref{circle}
have been recently constructed   in \cite{HHM2021} for 
$ \alpha \in \pare{0, \tfrac12} $ 
and 
 in \cite{GIP} for $ \alpha \in (1,2)$.    
The result  \cite{HHM2021} holds for 
``most" values of $ \alpha \in \pare{0, \tfrac12} $ (used a parameter to impose non-resonance conditions) 
whereas   \cite{GIP} holds for any $ \alpha \in (1,2)$, 
using the initial conditions as parameters, 
 via a Birkhoff normal form analysis.   
The $ 2D$-Euler equation is more degenerate  
and in this case quasi-periodic solutions have been constructed 
in \cite{BHM2022} close to the family of 
Kirkhoff ellipses, not only close to the disk
(we refer to \cite{BHM2022} for a wider introduction to the field and literature
about quasi-periodic solutions). 
 These results build on on KAM techniques  \cite{BBHM2018, BBM2016_2, BFM2021,BM2020,FG2020} developed for the water waves equations.

\subsection*{Ideas of the proof}

{\bf The average-preserving unknown and the Hamiltonian formulation.}
The equation
\eqref{eq:elev_funct_eq}  for the
unknown $ h (x) $ is {\it not} convenient
because its evolution does not preserve  the average and
 it is {\it not} Hamiltonian.
This problem is overcome 
by reformulating \eqref{eq:elev_funct_eq} 
in terms of the  variable
\begin{equation}\label{eq:def_f}
f\pare{x} \defeq  h\pare{x} + \tfrac12 h^2\pare{x} \, . 
\end{equation}
 Indeed, symmetrizing in the $ x,y $ variables, we get 
$ \int_{\bT} \, 	\text{r.h.s.  \eqref{eq:elev_funct_eq}} \, \dd y
=0 $ 
and then 
$ \ddt \int_{\bT} \pare{ h\pare{t, x} + \tfrac{1}{2} h^2\pare{t, x}} \dd x =0 $. 
Thus the average of  
 $f\pare{x} $ in \eqref{eq:def_f}
is preserved along the patch evolution.
Note that, inverting \eqref{eq:def_f} for small $ \| f \|_{L^\infty} $ and $ \| h \|_{L^\infty} $,
we have
$ h\pare{x}  = \sqrt{1+2 f\pare{x}} - 1 $
and the Sobolev norms of $ f \pare{x} $ and $ h \pare{x}  $ are equivalent
\begin{equation}\label{eq:f-h}
\norm{f}_{s}\sim \norm{h}_{s} \, , \qquad 
 \forall s > \tfrac12  \, .
\end{equation}

\begin{rem}
There is a  deep connection between the conservation of the average of $ f(x)  $ and the incompressibility of the flow generated by the $ \alpha $-SQG patch.
Actually  the Lebesgue measure $ \Vol \pare{t} $ of the finite region of $ \bR^2 $ enclosed by the  patch is, passing to polar coordinates,
\begin{equation*}
\Vol\pare{t} = \int _{-\pi}^{\pi} \int _0 ^{1+h\pare{t, x}} \rho \ \dd \rho \ \dd x = \pi + \int _{-\pi} ^\pi \pare{ h\pare{t, x} + \frac{h^2\pare{t, x}}{2} } \dd x = \pi + \int _{-\pi}^{\pi} f\pare{t, x} \dd x
\end{equation*}
and therefore the conservation of the average $ \int_\T f \pare{x} \ \dd x $
amounts to the conservation of
$ \Vol\pare{t} $.
\end{rem}

The variable \eqref{eq:def_f}
 has been used in \cite{HHM2021} where it is also
proved that the evolution equation for $ f $ has a Hamiltonian structure, 
see also \cite{GIP}.
The following result is  \cite[Proposition 2.1]{HHM2021}:

\begin{prop}[Hamiltonian formulation of \eqref{eq:elev_funct_eq}]
Let $ \alpha\in\pare{0, 2} $.
If $ h $ is a solution of  \cref{eq:elev_funct_eq} then the variable $ f $ defined  in \eqref{eq:def_f}  solves the Hamiltonian equation
\begin{equation}\label{eq:SQG_Hamiltonian}
\pa_t f = \partial_x \,  \nabla E_{\alpha} \pare{f}
\end{equation}
where  $ E_\alpha \pare{ f } $ is the {\it pseudo-energy} of the patch
whose $ L^2 $-gradient $ \nabla E_\alpha \pare{f} $ is
\begin{equation}
\label{eq:gradient-pseudoenergy}
\nabla E_\alpha \pare{f} \\ = \frac{c_\alpha}{2\pare{1-\frac{\alpha}{2}}}  \fint \frac{1+2f\pare{y} +\sqrt{1+2f\pare{x}} \ \partial_y \bra{\sqrt{1+2f\pare{y}}\sin\pare{x-y}}  }{\bra{ 1+2f\pare{x} + 1+2f\pare{y} -2\sqrt{1+2f\pare{x}}\sqrt{1+2f\pare{y}} \cos\pare{x-y}}^{\frac{\alpha}{2}} } \ \dd y
\, .
\end{equation}
\end{prop}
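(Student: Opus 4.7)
Since $f = h + h^2/2$ yields $(1+h)^2 = 1+2f$, we have $\partial_t f = (1+h)\partial_t h$, so equation \eqref{eq:elev_funct_eq} becomes $\partial_t f = -\cN(h)$, where $\cN(h)$ denotes the sum of the two integrals on the right-hand side of \eqref{eq:elev_funct_eq}. The goal is to verify the identity $-\cN(h) = \partial_x G(x)$ with $G(x) := \nabla E_\alpha(f)(x)$ the expression in \eqref{eq:gradient-pseudoenergy}. I will systematically pass to the auxiliary variable $g(x) := \sqrt{1+2f(x)} = 1+h(x)$, for which $g'(x) = h'(x)$ and $D(x,y) := g^2(x) + g^2(y) - 2 g(x) g(y)\cos(x-y)$ is the squared chord distance $|X(x)-X(y)|^2$ between two points on the patch boundary \eqref{circle}.

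I would first motivate that $G$ is genuinely the $L^2$-gradient of a pseudo-energy by introducing the candidate
\[
E_\alpha(f) := -\frac{c_\alpha}{4\pi(2-\alpha)(1-\tfrac{\alpha}{2})}\iint_{\bT \times \bT} D(x,y)^{1-\frac{\alpha}{2}}\,dx\,dy,
\]
which up to normalization is the standard $\alpha$-SQG pseudo-energy, reducible to this double boundary integral via the identity $\Delta_z|z-w|^{2-\alpha} = (2-\alpha)^2 |z-w|^{-\alpha}$ and two applications of the divergence theorem on the patch interior. Computing the first variation $\delta E_\alpha(f)[\phi]$ using $\delta g = \phi/g$ and symmetrizing in $(x,y)$, then performing an integration by parts in $y$ based on the algebraic identity $g(x)\partial_y[g(y)\sin(x-y)] = g(x)g'(y)\sin(x-y) - g(x)g(y)\cos(x-y)$, reproduces exactly the formula \eqref{eq:gradient-pseudoenergy} for $\nabla E_\alpha(f)$.

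The heart of the argument is then the verification of $\partial_x G(x) = -\cN(h)$. The key device is that $(\partial_x+\partial_y)\cos(x-y) = (\partial_x+\partial_y)\sin(x-y) = 0$: writing $\partial_x = -\partial_y + (\partial_x+\partial_y)$ inside the integral defining $G$ and integrating by parts in $y$, the derivatives can be transferred onto $g(y)$ and $g'(y)$ (the residual $(\partial_x+\partial_y)$ part acts only on the $g$-factors and is handled by the chain rule in $D$). A careful collection of the resulting terms reproduces, in the $\cos(x-y)$-slot, the combination $(1+h(x))h'(y) - (1+h(y))h'(x)$, and in the $\sin(x-y)$-slot, the combination $(1+h(x))(1+h(y)) + h'(x)h'(y)$, matching the numerators of $\cN(h)$ with the correct sign and the overall factor $c_\alpha/(2\pi)$.

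The main obstacle in this last step is the bookkeeping of singular terms: differentiation of $D^{-\alpha/2}$ generates contributions of the form $D^{-\alpha/2-1}\partial_x D$ that are strictly more singular near the diagonal $x=y$ than the target expression, and the integration by parts in $y$ produces additional terms of comparable singularity. These two families must cancel exactly. The structural reason for this cancellation is the invariance of the integrand under the simultaneous translation $(x,y)\mapsto (x+c,y+c)$, combined with the rotational invariance of the kernel $|X|^{2-\alpha}$; this is the very same symmetry that is responsible for the conservation of $\int f\,dx = \Vol(t) - \pi$ along the patch flow.
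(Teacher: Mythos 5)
The paper does not supply its own proof of this proposition: the statement is taken verbatim from \cite[Proposition 2.1]{HHM2021}, so there is no internal argument to compare your proposal against. Judged as a stand-alone proof, your proposal has a genuine gap already in the first step. The candidate pseudo-energy
$ -\frac{c_\alpha}{4\pi(2-\alpha)(1-\alpha/2)}\iint_{\bT\times\bT}D(x,y)^{1-\alpha/2}\,\dd x\,\dd y $
is not what the two divergence theorems produce. Writing $\av{z-w}^{-\alpha}=(2-\alpha)^{-2}\Delta_z\av{z-w}^{2-\alpha}=-(2-\alpha)^{-2}\nabla_z\cdot\nabla_w\av{z-w}^{2-\alpha}$ and integrating by parts once in $z$ and once in $w$ converts $\iint_{D(t)\times D(t)}\av{z-w}^{-\alpha}\,\dd z\,\dd w$ into
$ -\tfrac{1}{(2-\alpha)^2}\iint_{\partial D\times\partial D}\pare{n_z\cdot n_w}\av{z-w}^{2-\alpha}\,\dd\sigma_z\,\dd\sigma_w
=-\tfrac{1}{(2-\alpha)^2}\iint_{\bT\times\bT}X'(x)\cdot X'(y)\,D(x,y)^{1-\alpha/2}\,\dd x\,\dd y $,
and the weight $X'(x)\cdot X'(y)=\bigl[(1+h_x)(1+h_y)+h'_xh'_y\bigr]\cos(x-y)+\bigl[h'_x(1+h_y)-(1+h_x)h'_y\bigr]\sin(x-y)$ cannot be dropped. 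Without it, taking $\delta g=\phi/g$ in $D$ and symmetrizing gives the first variation
$ \nabla E_\alpha(f)(x)\ \propto\ \frac{1}{\sqrt{1+2f(x)}}\fint\frac{\sqrt{1+2f(x)}-\sqrt{1+2f(y)}\cos(x-y)}{D(x,y)^{\alpha/2}}\,\dd y, $
which agrees with \eqref{eq:gradient-pseudoenergy} only at $f=0$; in particular its linearization in $f$ at $f=0$ produces a multiplication coefficient $\propto\fint\bigl(\cos(x-y)-\tfrac{\alpha}{2}(1-\cos(x-y))\bigr)D_0^{-\alpha/2}\,\dd y$, whereas the linearization of \eqref{eq:gradient-pseudoenergy} carries the opposite sign on the $\cos(x-y)$ term, so they cannot coincide (and hence cannot both reproduce $-L_\alpha(\av{D})$ of \Cref{lem:linearization}). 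The $\partial_y\bra{\sqrt{1+2f(y)}\sin(x-y)}$ structure in \eqref{eq:gradient-pseudoenergy} arises precisely from integrating by parts the $\delta g'(y)$ contribution generated by the weight; it is not produced by your unweighted functional.

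Your third step is the right strategy and would close the argument if executed, but as written it asserts the key identity rather than verifying it. Writing $g=\sqrt{1+2f}$ and $N(x,y)=g(y)^2+g(x)g'(y)\sin(x-y)-g(x)g(y)\cos(x-y)$, one checks that $\partial_x N$ equals exactly the numerator of $\cN(h)$, so that $\partial_x\nabla E_\alpha(f)=-\cN(h)$ is equivalent to
$ \frac{3-\alpha}{2-\alpha}\int_{\bT}\frac{\partial_x N}{D^{\alpha/2}}\,\dd y=\frac{\alpha}{2(2-\alpha)}\int_{\bT}\frac{N\,\partial_x D}{D^{\alpha/2+1}}\,\dd y . $
This is exactly the ``cancellation of singular families'' you allude to, and the translation invariance $(x,y)\mapsto(x+\varsigma,y+\varsigma)$ only tells you that $\partial_x$ may be traded for $-\partial_y$ under the integral up to terms hitting the $g$-factors; it does not by itself deliver the identity above. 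That identity must be checked by an explicit integration by parts in $y$, using $\sin^2(x-y)=(1-\cos(x-y))(1+\cos(x-y))$ to resolve the $D^{-\alpha/2-1}$ singularity and carefully tracking the $\alpha$-dependent constants. Carrying out this bookkeeping (starting from the correctly weighted boundary energy) is the entire content of the cited \cite[Proposition 2.1]{HHM2021}.
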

Note that the evolution equation \eqref{eq:SQG_Hamiltonian} is
translation-invariant because 
 $  E_{\alpha} \circ \st_\varsigma =  E_{\alpha} $ for any 
 $ \varsigma \in \mathbb{R} $, where $\st_\varsigma f (x) := f (x+ \varsigma )$. 
Moreover, in view of  the presence of the Poisson tensor $ \partial_x $ in
\eqref{eq:SQG_Hamiltonian} it is evident that the space average
of $  f $ is a prime integral of \eqref{eq:SQG_Hamiltonian}. In the sequel we 
assume the space average of $ f $ to be zero.
\\[1mm]
{\bf Paralinearization of \eqref{eq:SQG_Hamiltonian} for $ \alpha \in (0,1) 
\cup (1,2) $.}
 \Cref{sec:paralinearization} is dedicated to write the Hamiltonian equation \eqref{eq:SQG_Hamiltonian} in paradifferential form and to provide the detailed
structure of the principal and subprincipal symbols in the expansion of the paradifferential operator, obtaining
\begin{equation}
\label{eq:exp_approx}
\pa_t f + \partial_x \circ \OpBW{
\pare{ 1+\nu\pare{f; x} }  L_{\alpha}\pare{\av{\xi}} + V\pare{f; x}  + P \pare{f; x , \xi}
} f = \textnormal{smoothing terms} 
\end{equation}
where (see \Cref{prop:paralinearization_1} for a detailed statement)
\begin{itemize} 
\item $ \pare{1+\nu\pare{f; x} }  L_{\alpha}\pare{\av{\xi}} + V\pare{f; x} $
is a real symbol of order $ \max \{\alpha-1,0\}$ and $ \nu\pare{f; x} $, 
$ V\pare{f; x} $ are real functions vanishing for $  f = 0 $;
\item  the symbol $ P \pare{f; x , \xi} $ has order $ - 1 $ and vanishes for $  f = 0 $.
\end{itemize}
We note that in  \eqref{eq:exp_approx} the operator $ \OpBW { \ } $ 
is the paradifferential quantization  according to Weyl 
(see Definition \ref{quantizationtotale}) 
and  thus 
$  \OpBW{  {1+\nu\pare{f; x} }  L_{\alpha}\pare{\av{\xi}} + V\pare{f; x}  } $ 
is self-adjoint. As a consequence  the linear Hamiltonian operator
$ \pa_x \circ  \OpBW{  {1+\nu\pare{f; x} }  L_{\alpha}\pare{\av{\xi}} + V\pare{f; x}  } $ 
is   skew-self-adjoint at positive orders. This is the 
 reason why the unbounded  quasi-linear
vector field
$ \pa_x \circ  \OpBW{  {1+\nu\pare{f; x} }  L_{\alpha}\pare{\av{\xi}} + V\pare{f; x}  } f $  
admits energy estimates  in Sobolev spaces $ H^s $ via commutator estimates
(actually existence and unicity of the 
solutions of 
\eqref{eq:exp_approx} would follow 
 as in \cite{BMM2021}). 
We remark the absence in \eqref{eq:exp_approx} of 
operators like 
 $ \partial_x\circ\OpBW{ \text{symbol of order } (\alpha - 2)} $. 
The  cancellations of such terms  are verified in \Cref{sec:Hamiltonian_identity}
by a direct calculus 
and it is ultimately a consequence of the Hamiltonian structure of the equation
\eqref{eq:SQG_Hamiltonian}. 

We also note that 
the equation \eqref{eq:exp_approx} can be written,  in homogeneity degrees, as  
\begin{equation}
\label{lineazero}
\pa_t f + \omega_\alpha (D) f = \cO \pare{ f^2 } \, \qquad \text{where}
\qquad \omega_\alpha (D) := \partial_x \circ L_{\alpha}\pare{\av{D}}  
\end{equation}
is the unperturbed dispersion relation.  

Let us 
 explain how we deduce the paralinearization formula 
\eqref{eq:exp_approx} in Section 
\ref{sec:paralinearization}. 
The nonlinear 
term  $ \nabla E_\alpha\pare{f} $ in \eqref{eq:SQG_Hamiltonian} can be written as a 
convolution operator 
\begin{equation*}
\nabla E_\alpha\pare{f}\pare{x}
=
\int _{-\pi}^{\pi} K\pare{f;x,z} \frac{f\pare{x} - f\pare{x-z}}{\av{ z}^\alpha} \, \dd z   
\end{equation*}
with a nonlinear real valued convolution kernel $ K\pare{f;x,z}$. 
By Taylor expanding the kernel $ z\mapsto K\pare{f;x,z} $ at $ z = 0 $ 
(provided $ f $ is sufficiently regular) and expanding in paraproducts the arguments of the above integral,  we obtain an expansion of the form
\begin{subequations}
\begin{align}\label{eq:Taylor_pseudoenergy}
\nabla E_\alpha\pare{f}\pare{x}
 = & \ 
\sum_{\mathsf{j}=0}^{\mathsf{J}}
\OpBW{K_\mathsf{j}\pare{f, \ldots , f^{\pare{\mathsf{j+1}}}; x}}
\int _{-\pi}^{\pi} \pare{f\pare{x} - f\pare{x-z}} \ \frac{z^{\mathsf{j}}}{\av{ z}^\alpha} \ \dd z \\
& +
\int_{-\pi}^{\pi} \OpBW {R\pare{f, \ldots , f^{\pare{\mathsf{j+1}}}; x, z}}\pare{f\pare{x} - f\pare{x-z}} \dd z 
+\textnormal{ smoothing terms } ,   \label{eq:Taylor_pseudoenergy2}
\end{align}
\end{subequations}
where $ R\pare{f, \ldots , f^{\pare{\mathsf{j+1}}}; x, z} = 
\mathpzc{o} \pare{ \av{z}^{\mathsf{J}-\alpha}}   $ as $ z\to 0 $
being the Taylor remainder at order $ \mathsf{J} $
(here  $ f^{\pare{\mathsf{j}}} (x) $ denotes the $ \mathsf{j} $-derivative of 
$ f(x) $).
The terms  in the finite sum \eqref{eq:Taylor_pseudoenergy}
are particularly simple paradifferential operators. Indeed, provided $ \alpha < 2 $, 
$$ 
\int _{-\pi}^{\pi} \pare{f\pare{x} - f\pare{x-z}} \ \frac{z^{\mathsf{j}}}{\av{ z}^\alpha} \ \dd z = \bV_{\alpha-\mathsf{j}} f  + m_{\alpha-\pare{\mathsf{j}+1}}\pare{D}f  
$$
 where $ \bV_{\alpha-\mathsf{j}} $ is a real constant 
 and $  m_{\alpha-\pare{\mathsf{j}+1}}\pare{\xi} $ is a 
 Fourier multiplier of order $ \alpha-\pare{\mathsf{j}+1} $, as follows 
 by standard asymptotics of singular integral operators, see  \cite{Stein1993}. 
Thus, by symbolic calculus, 
\begin{equation*}
\begin{small}
\OpBW {K_\mathsf{j} } 
\int _{-\pi}^{\pi} \pare{f\pare{x} - f\pare{x-z}} \ \frac{z^{\mathsf{j}}}{\av{ z}^\alpha} \ \dd z
=
\OpBW {V_{\alpha-\mathsf{j}}\pare{f, \ldots , f^{\pare{\mathsf{j+1}}}; x} 
 + K_\mathsf{j}\pare{f, \ldots , f^{\pare{\mathsf{j+1}}}; x}  
 m_{\alpha-\pare{\mathsf{j}+1}}\pare{\xi}} f
 +\textnormal{l.o.t.} , 
 \end{small}
\end{equation*}
where  $ V_{\alpha-\mathsf{j}}  $ are real functions. 
The unbounded terms 
$ \partial_x\circ \OpBW{K_\mathsf{j}\pare{f, \ldots , f^{\pare{\mathsf{j+1}}}; x}  m_{\alpha-\pare{\mathsf{j}+1}}\pare{\xi}} f $, $ \mathsf{j}=0,1 $,  
would induces a loss of derivatives  in the  $ H^s $ energy estimates 
if  the imaginary part 
$ \Im  \, m_{\alpha-\pare{\mathsf{j}+1}}\pare{\xi}\neq 0 $.
Therefore a detailed analysis of these symbols  is essential. 
The highest order Fourier multiplier $ m_{\alpha-1}(\xi) $ 
turns out to be real. 
Concerning the subprincipal symbol 
$ K_{1}\pare{ f, f'; x }   m_{\alpha-2}(\xi) $, it turns out that 
 $  m_{\alpha-2}(\xi) $ has a non-zero imaginary part but 
a subtle 
nonlinear cancellation 
reveals that the corresponding 
coefficient $ K_{1}\pare{ f, f'; x }  $
 is identically zero, as verified in \cref{sec:Hamiltonian_identity}. 
Such a structure, which ultimately stems by the Hamiltonian nature of \eqref{eq:SQG_Hamiltonian}, 
could be proven up to an arbitrary negative order. 

Concerning  the first term in \eqref{eq:Taylor_pseudoenergy2}, we use   
that $ R\pare{f, \ldots , f^{\pare{\mathsf{j+1}}}; x, z} $ is $ \mathpzc{o}\pare{ \av{z}^{\mathsf{J}-\alpha}} $ as $ z\to 0 $ so that, 
modulo regularizing operators, it can be expressed as a paradifferential operator of order \( \alpha - (\mathsf{J} + 1) \), which is a bounded vector field 
taking $ \mathsf{J}\geq 2 $, see \Cref{prop:reminders_integral_operator}. 
\\[1mm]
{\bf Reduction of \eqref{eq:exp_approx} to  Birkhoff normal form up to cubic terms.}
In Section \ref{sec:constant_coeff} we  first conjugate the  paradifferential equation
\eqref{eq:exp_approx} 
into an equation with constant coefficient symbols, modulo smoothing  operators, 
\begin{equation}
\label{eq:exp_approxredu}
\pa_t \cg + \partial_x \circ \OpBW{ \pare{1+\mathpzc{c}_0\pare{f}} 
L_\alpha\pare{\av{\xi}} + \mathsf{H}_{\alpha}\pare{f;  \xi} } \cg = \text{smoothing terms}
\end{equation}
where $ \mathpzc{c}_0\pare{f}  $ is the average of a real nonlinear function of 
$ \nu{\pare{f;x}} $
and $ \mathsf{H}_{\alpha}\pare{f;  \xi} $ is a $ x $-independent symbol with imaginary part 
$ \Im  \mathsf{H}_{\alpha}\pare{f;  \xi} $ of order $ - 1 $, see 
Proposition \ref{prop:cc_ao}. Thus 
\eqref{eq:exp_approxredu} is still Hamiltonian up to order zero and 
thus it satisfies  $ H^s $-energy estimates. 
The unknowns $\cg\pare{t}$ and $f\pare{t}$ have equivalent Sobolev norms 
$\norm{\cg\pare{t}}_s \sim_{s,\alpha} \norm{f\pare{t}}_s$.
We remark that in \eqref{eq:exp_approxredu}
the constant $ \mathpzc{c}_0\pare{f} $ and the symbol 
$ \mathsf{H}_{\alpha}\pare{f;  \xi} $ vanish 
quadratically at $ f = 0 $ and thus 
the only term which can disturb the quadratic life span 
of the solution $ \cg \pare{t} $
is the smoothing operator $ R_1 \pare{f}  $ in the decomposition 
$$
\text{smoothing terms} = R_1 \pare{f} \cg + R_{\geq 2} \pare{f} \cg \, .  
$$
Then in Lemma \ref{lem:BNF1step}  
we implement a  Birkhoff 
 normal form step  to cancel  $ R_1 \pare{f} \cg $.  
An algebraic  ingredient is to verify the absence of three wave interactions, namely that,   
for any  $ n,j,k\in\bZ\setminus \set{0} $ satisfying $ k = j + n $, 
$$
\av{\omega_\alpha\pare{k} -  \omega_\alpha\pare{j} -  \omega_\alpha\pare{n} } 
\geq c  > 0   \, ,
$$
where $\omega_\alpha \pare{j} $ are the normal $ \alpha$-SQG frequencies in 
\eqref{lineazero}. Such a property follows by proving the {\it convexity}  of the 
the map  $ \omega_\alpha \pare{j} $ for $ j \in \N  $, 
see \cref{lem:nonres_cond}. 

The final outcome is an {\it energy estimate} for any 
small enough 
solution of \eqref{eq:SQG_Hamiltonian} of the form 
\[
  \norm{f\pare{t}}^{2}_{H^s}   \lesssim_{s,\alpha}   \norm{f\pare{0}}^{2}_{H^s} 
+   \int_0^t \norm{f\pare{\tau}}^{4}_{H^s} \, \dd \tau \, , \qquad t > 0 \, , 
\]
which implies Theorem \ref{thm:main}.
\\[1mm]
{\bf Structure of the manuscript.}
Section \ref{sec:preliminaries} contains the 
paradifferential calculus used along the paper. 
In Section \ref{sec:para}
we report the main results in \cite{BD2018,BMM2022}. Then 
in Section \ref{sec:paradiff}  we introduce a $ z $-dependent 
paradifferential calculus used for the paralinearization 
of \eqref{eq:exp_approx}
in \Cref{sec:paralinearization}. 
\Cref{sec:linearized} is dedicated to  the linearization of \eqref{eq:SQG_Hamiltonian} at the stationary state $ f \equiv 0 $. 
\Cref{lem:linearization,prop:Lalpha_asymptotic} extend
to any $ \alpha\in\pare{0, 2} $ the asymptotic expansions of the 
normal frequencies $ \omega_\alpha \pare{j} $ 
proved in \cite{HHM2021} for $ \alpha \in (0,1)$. 
In \Cref{sec:paralinearization} we provide 
the paralinearization \eqref{eq:exp_approx} of the Hamiltonian equation 
\eqref{eq:SQG_Hamiltonian} for any $ \alpha \in (0,1) \cup (1,2) $. 
In Section \ref{sec:constant_coeff} we  conjugate the  paradifferential equation
\eqref{eq:exp_approx} into an equation with constant coefficients, modulo smoothing  operators. 
In \Cref{sec:quadratic_normal_forms} we perform the 
Birkhoff  normal form step and prove Theorem \ref{thm:main}.
\\[1mm]
{\bf Notation.}
We denote with $ C $ a positive constant which does not depend on any parameter of the problem. We write $ A\lesssim_{c_1, \ldots, c_M} B $ if $ A\leq C \pare{c_1, \ldots, c_M} B $ and $ A\sim_{c_1, \ldots, c_M} B $ if $ A\lesssim_{c_1, \ldots, c_M} B $ and $ B\lesssim_{c_1, \ldots, c_M} A $.  
We denote with $ \bN = 1, 2, \ldots $ the set of natural numbers and 
$ \bN_0\defeq \bN\cup\set{0} $. For any  $ x \geq 0 $  we denote 
 $ \ceil{x} \defeq \min\set{n\in\bN_0 \ \middle| \ x\leq n} $.
We denote $ \bT \defeq \bR \setminus (2 \pi \bZ ) $ the one-dimensional torus
with norm  $ \av{x}_{\bT} \defeq \inf_{j \in \bZ} \av{x + 2 \pi j } $.
We denote $ D = - \ii \partial_x $ and  $ \comm{A}{B} 
  $ the commutator $ \comm{A}{B}  = A B- B A =: \Ad_A B $.
Given a linear real self-adjoint operator $ A $ any operator of the form $ \partial_x \circ A $ will be referred as {\it linear Hamiltonian}. 
We denote  $ \fint \bullet \dd x = \frac{1}{2\pi  } \int_{\bT} \bullet \dd x $.

\section{Functional setting}\label{sec:preliminaries}

Along the paper we deal with real parameters 
\begin{equation}\label{eq:parameters}
s\geq s_0  \gg K \gg \rho \gg N  \geq 0
\end{equation}
where $ N \in \N $. 
The values of $ s, s_0, K $ and $ \rho $ may vary from line to line while still being true the relation \eqref{eq:parameters}. 
For the proof of Theorem \ref{thm:main}  we shall take  $ N = 1 $.
 \smallskip

We expand a  $2\pi$-periodic function $u(x)$ in  $ L^2 (\T;\C)$ in Fourier series as
\begin{equation}\label{Fourierser}
u(x)= \sum_{j \in \mathbb{Z}} \hat{u}\pare{j} e^{\im j x}\, ,
\qquad \hat{u}\pare{j} \defeq {\cal F}_{x \to j}\pare{j} \defeq u_j \defeq \frac{1}{2\pi}\int_{\mathbb{T}}u(x) e^{- \im j x }\,\di x \, . 
\end{equation}
A function $ u(x) $ is real if and only if  $ \overline{u_j}  = u_{-j}  $, for any $ j \in \Z $. 
\noindent
For any $ s\in\bR $ we define the Sobolev space $  H^{s} \defeq H^{s}(\T;\C) $ 
with norm
\begin{equation*}
\norm{u}_{s} \defeq \norm{u}_{H^{s}} = \pare{
\sum_{ j \in \bZ } \angles{j}^{2s}  \av{\hat u\pare{j}}^2 
} ^{\frac12} \, , \qquad \angles{j} \defeq \max (1, |j|) \, . 
\end{equation*}
We define $ \Pi_0 u \defeq \hat u_0 $  the average of $ u $ and 
\begin{equation}\label{eq:Pi0bot}
\Pi_0^\bot  \defeq  \Id - \Pi_0 \, . 
\end{equation}
We define $ H^s_0 $ the subspace of zero average functions of $ H^s $, for which
 we also denote $ \norm{u}_s = \norm{u}_{H^s}  = \norm{u}_{H^s_0} $.
Clearly  $ H^0_0(\T;\C) = L^2_0(\T;\C)  $
with scalar product, for any $ u, v \in  L^2_0 (\T;\C)$,
\begin{equation}\label{scpr12hom}
\la u, v \ra_{ L^2_0 } = \int_\T  u(x)\, \overline{ {  v(x)}} \, \di x \, .
\end{equation}
Given an interval $ I\subset \R$ symmetric with respect to $ t = 0 $
and $s\in \R$, we define the space
$$
C_*^K \pare{ I;H_0^s\pare{ \mathbb{T};\mathbb{X} } } \defeq
\bigcap_{k=0}^K C^k \pare{ I; H_0^{s- \alpha k}\pare{ \mathbb{T};\mathbb{X} }  } \, ,
\qquad \quad
\mathbb{X}=\bR \, , \ \bC \, ,
$$
resp. $  C^{K}_{*}(I;H^{s}(\T;\mathbb{X}))  $,  
endowed with the norm
\begin{equation} \label{Knorm}
\sup_{t\in I} \norm{ u (t, \cdot)}_{K,s} \qquad
{\rm where} \qquad
\norm{ u(t, \cdot)}_{K,s}\defeq \sum_{k=0}^K \norm{ \partial_t^k u(t, \cdot)} _{H ^{s- \alpha k}} \, .
\end{equation}
We denote $B^K_s(I;\epsilon_0)$, resp. $B^K_{s,\R}(I;\epsilon_0)$,
 the ball of radius $\epsilon_0 > 0 $ in $C_*^K(I,H_0^s\left(\mathbb{T};\mathbb{C})\right)$,
resp.  in $C_*^K(I,H_0^s\left(\mathbb{T};\mathbb{R})\right)$. We 
also we define $ B_{C^{K}_{*}(I,H^{s}(\T;\C))}\pare{0;\epsilon_0} $ the ball of center zero and radius $ \epsilon_0 $ in $ C^{K}_{*}(I,H^{s}(\T;\C)) $.

\begin{remark}
The parameter $ s $ in \eqref{Knorm} denotes the spatial Sobolev regularity of the solution $ u(t, \cdot) $
and $ K $  its regularity in the time variable.
The $ \alpha$-SQG vector field
 loses $ \alpha $-derivatives,
and therefore,  differentiating the solution $ u(t) $ for  $k$-times in the time variable,
there is a loss of $ \alpha k$-spatial derivatives.
The parameter $\rho$ in \eqref{eq:parameters} denotes the order where we decide to stop our regularization of the system.
\end{remark}

We  set some further notation.
For $n\in \mathbb{N}$ we denote by $\Pi_n$ the orthogonal projector from
$L^2(\mathbb{T};\mathbb{C})$ to the linear subspace spanned by
 $\{ e^{\im nx}, e^{-\im nx}\}$,
$  (\Pi_n u)(x) \defeq
  \hat{u}(n) e^{\im nx} + \hat{u}(-n) e^{-\im nx} $. 
 If $ \, \cU=(u_1, \dots , u_p)$ is a $p$-tuple of functions and $\vec{n}=(n_1,\dots,n_p)\in \mathbb{N}^p $, we set
 $ \Pi_{\vec{n}} \cU \defeq \big( \Pi_{n_1}u_1,\dots,\Pi_{n_p}u_p \big) $ and 
$ \st_\varsigma \cU \defeq \big(  \st_\varsigma u_1,\dots,  \st_\varsigma u_p \big) $, 
 where $ \st_\varsigma $ is the translation operator
 \begin{equation}\label{X.tra0}
\st_\varsigma \colon u(x) \mapsto u(x + \varsigma) \, .
\end{equation}
For $ \vec{\jmath}_p = (j_1,\dots,j_p) \in \Z^p$
we  denote
$ |\vec{\jmath}_p | \defeq \max(|j_1|, \ldots, |j_p| ) $ and
$ u_{\vec{\jmath}_p} \defeq u_{j_1} \dots u_{j_p}  $. 
Note that 
the Fourier coefficients of $\st_\varsigma u$ are 
$ (\st_\varsigma u)_j  = e^{\im  j \varsigma} u_j $. 

A vector field $ X(u) $ is {\it translation invariant} if
$ X \circ \st_\varsigma = \st_\varsigma \circ X $ 
for any $  \varsigma \in \R $.

Given a linear operator  $ R(u) [ \cdot ]$ acting on $ L^2_0(\T;\C)$
we associate the linear  operator  defined by the relation
$ \ov{R(u)} v \defeq \ov{R(u) \ov{v} } $ for any  $ v \in L^2_0(\T;\C)  $
An operator $R(u)$ is {\em real } if $R(u) = \ov{R(u)} $ for any $ u $ real. 

\subsection{Paradifferential calculus}\label{sec:para}

We introduce paradifferential  operators (Definition \ref{quantizationtotale})
following \cite{BD2018}, with minor modifications
due to the fact that
we deal with a scalar equation and not a system, and the fact that 
we consider operators acting on $  H_0^s $ and $ H^s $ and not 
on  homogenous spaces $ \dot H^s $. In this way we will mainly rely on 
results in \cite{BD2018,BMM2022}.

\paragraph{Classes of symbols.}
Roughly speaking the class $\wt{\Gamma}_p^m$ contains symbols of order $m$ and homogeneity $p$ in $u$, whereas the class $\Gamma_{K,K',p}^m$ contains non-homogeneous symbols of order $m$ that vanish at degree at least $p$ in $u$ and that are $(K-K')$-times differentiable in $t$. We can think the parameter $K'$ like the number of time derivatives of $u$ that are contained in the symbols.
We denote $ H_0^{\infty}(\mathbb{T};\mathbb{C})
\defeq \bigcap_{s \in \R} H_0^{s}(\mathbb{T};\mathbb{C})$.

\begin{definition}[Symbols]\label{def:symbols}
Let $m\in \R$, $p,N\in \N_0 $,
$ K, K' \in \N_0 $ with $ K' \leq K  $, and $ \epsilon_0>0$.
\begin{enumerate}[i)]

\item $p$-{\bf homogeneous symbols.} We denote by $\wt{\Gamma}^m_p$ the space of symmetric $p$-linear maps from $ \pare{H_0^{\infty}\pare{\mathbb{T};\mathbb{C}}}^p$ to the space of $ \cC^\infty $ functions from $\mathbb{T}\times \R$ to $\mathbb{C}$,
$ (x, \xi) \mapsto a(\cU;x,\xi)$,  satisfying the following: there exist $\mu \geq 0$ and, for any $\gamma, \beta\in \N_0$,
there is a constant $C>0$ such that
\begin{equation}\label{homosymbo}
\av{\partial_x^{\gamma}\partial_{\xi}^{\beta}
a\pare{  \Pi_{\vec n} \cU;x,\xi }}
\leq
C |\vec{n}|^{\mu+\gamma} \langle \xi \rangle^{m-\beta}
\prod_{j=1}^p  \norm{\Pi_{n_j} u_j }_{L^2}
\end{equation}
for any $ \cU = (u_1,\dots,u_p)\in \pare{ H_0^{\infty}\pare{\mathbb{T};\mathbb{C}} }^p$ and $\vec{n}=(n_1,\dots,n_p)\in \mathbb{N}^p$.
Moreover we assume that, if for some $(n_0,\dots, n_p)\in \N_0\times \N^p$, $\Pi_{n_0}a\left( \Pi_{\vec n} \cU ;\cdot\right)\not=0$, then there exists a choice of signs $ \eta_j 
\in \{ \pm 1 \} $ such that $\sum_{j=1}^p  \eta_j n_j=n_0 $.  In addition we require the translation invariance property
\begin{equation} \label{mome}
a\left( \st_{\varsigma} \cU; x,\xi\right)= a\left( \cU; x+\varsigma, \xi\right),\quad \forall
\varsigma\in \R \, ,
\end{equation}
where $\st_\varsigma$ is the translation operator in \eqref{X.tra0}.

For $ p = 0 $ we denote by $\wt{\Gamma}^m_0 $ the space of constant coefficients symbols $ \xi \mapsto a(\xi) $ which satisfy \eqref{homosymbo} with $ \gamma = 0 $ and the right hand side replaced by $ C \la \xi \ra^{m - \beta} $ and we call them Fourier multipliers.

\item  {\bf Non-homogeneous symbols. }   We denote by $\Gamma_{K,K',p}^m[\epsilon_0]$ the space of functions  $ a(u;t, x,\xi) $,
defined for $ u \in B_{s_0}^{K'}(I;\epsilon_0)$ for some $s_0$ large enough, with complex values, such that for any $0\leq k\leq K-K'$, any $s\geq s_0$, there are $C>0$, $0<\epsilon_0(s)<\epsilon_0$ and for any $ u \in B_{s_0}^K\pare{ I;\epsilon_0(s) }\cap C_{*}^{k+K'}\pare{I, H_0^{s}\pare{\mathbb{T};\mathbb{C}}}$ and any $\gamma,\beta \in \N_0$, with $\gamma \leq s-s_0$ one has the estimate
\begin{equation}\label{nonhomosymbo}
\av{ \partial_t^k\partial_x^\gamma \partial_\xi^\beta a\pare{ u;t, x,\xi }}  \leq C \langle \xi \rangle^{m-\beta} \| u \|_{k+K',s_0}^{p-1}\|u\|_{k+K',s} \, .
\end{equation}
If $ p = 0 $ the right hand side has to be replaced by $ C \langle \xi \rangle^{m-\beta} $. We say that a non-homogeneous symbol  $a(u;x,\xi) $ is \emph{real} if it is real valued for any
$ u \in B^{K'}_{s_0,\R}(I;\epsilon_0)$.

\item
{\bf Symbols.} We denote by $\Sigma \Gamma_{K,K',p}^m[\epsilon_0,N]$ the space of 
symbols 
$$
a(u;t, x,\xi)= \sum_{q=p}^{N} a_q\pare{ u, \ldots, u;x,\xi } + a_{>N}(u;t, x,\xi) 
$$
where $a_q $, $q=p,\dots, N$
are homogeneous symbols in $ \wt{\Gamma}_q^m $ and  
$a_{>N} $ is 
a non-homogeneous symbol in $ \Gamma_{K,K',N+1}^m $. 

We say that a symbol  $a(u;t, x,\xi) $ is \emph{real} if it is real valued for any
$ u \in B^{K'}_{s_0,\R}(I;\epsilon_0)$.
\end{enumerate}
\end{definition}

\begin{notation}\label{notation:multilinear_polyniomials}
If $ a ( \cU; \cdot  )$  is a $ p $-homogenous symbol we also denote $ a (u) \defeq
a(u, \ldots, u; \cdot ) $ the corresponding polynomial and we identify the $ p $-homogeneous monomial $a (u;\cdot) $ with the $ p $-linear symmetric form $  a ( \cU; \cdot  ) $.
\end{notation}

Actually also the non-homogeneous component of the symbols that we will 
encounter in Section \ref{sec:paralinearization}  depends
on time and space only through $ u $, but since this information 
is not needed it is not included in Definition \ref{def:symbols} (as in \cite{BD2018}).

\begin{rem}\label{rem:simb1}
 If $ a ( \cU; \cdot  )$ is a homogeneous
symbol in $ \widetilde \Gamma_p^m $ then
$ a (u, \ldots, u; \cdot ) $ belongs to  $\Gamma^m_{K,0,p} [\epsilon_0] $, for any $ \epsilon_0 > 0 $.
\end{rem}

\begin{rem}
 If $a $ is a symbol in $ \Sigma \Gamma^m_{K,K',p}[\epsilon_0,N] $
then $ \partial_x a  \in \Sigma \Gamma^{m}_{K,K',p}[\epsilon_0,N]   $ and
$ \partial_\xi a \in  \Sigma \Gamma^{m-1}_{K,K',p}[\epsilon_0,N]   $.
If in addition $ b $ is a symbol in $ \Sigma \Gamma^{m'}_{K,K',p'}[\epsilon_0,N]  $ then
$a b \in \Sigma \Gamma^{m+m'}_{K,K',p+p'}[\epsilon_0,N]  $.
\end{rem}

\begin{remark}[ Fourier representation of symbols]\label{rem:symbol}
 The translation invariance property \eqref{mome}
 means that the dependence with respect to the variable $x$ of a symbol
$a(\cU;x,\xi)$  enters only through the functions $\cU(x)$,
implying that a symbol
$ a_q(u;x,\xi)$ in  $\wt{\Gamma}_q^m$, $ m\in \mathbb{R} $, has the form 
\begin{equation}\label{sviFou}
a_q(u;x,\xi)=  \!\!\!\!
\sum_{\vec \jmath_q \in {(\Z \setminus
 \{ 0 \}})^{q}}  \! \! \! \! \!\!\!\!\!  \left( a_q\right)_{\vec \jmath_q}(\xi)
u_{j_1} \cdots u_{j_q}
 e^{\im \pare{ j_1+ \cdots + j_q } x}
\end{equation}
where $ (a_q)_{\vec \jmath}(\xi) \in \C $ are  Fourier multipliers of order $m$ satisfying: there exists $ \mu \geq 0  $, and
for any $ \beta \in \N_0 $, there is $ C_\beta > 0 $ such that
\begin{equation}\label{eq:fourier_char_homsymbols}
  \av{ \pa_\xi^\beta\left( a_q\right)_{\vec \jmath_q}(\xi) } \leq C_\beta
\av{   \vec \jmath_q} ^\mu \angles{ \xi }^{m-\beta} ,
\quad
\forall  \vec \jmath_q \in (\Z \setminus \{0\})^q   \, .
\end{equation}
A symbol
$ a_q(u;x,\xi) $ as in \eqref{sviFou} is  real if
\begin{equation}\label{realsim} 
\overline{\left( a_q\right)_{\vec \jmath_q}(\xi)} =
\left( a_q\right)_{- \vec \jmath_q}(\xi) 
\end{equation} 
By \eqref{sviFou}
a symbol
$ a_{1} $ in  $\widetilde{\Gamma}_{1}^{m}$
can be written as
$ a_{1}(u;x,\x)=
\sum_{ \substack{j \in  \Z \setminus \{0\} } }
(a_{1})_{j}(\x)
u_{j} e^{\ii  j x} $, and therefore,
if  $ a_1 $ is \text{independent} of $x$, it  is actually $ a_1\equiv0$.
\end{remark}

We also define classes of functions in analogy with our classes of symbols.

\begin{definition}[Functions] \label{def:functions}
Let $p, N \in \N_0 $,
 $K,K'\in \N_0$ with $K'\leq K$, $\epsilon_0>0$.
We denote by $\widetilde{\mathcal{F}}_{p}$, resp. $\mathcal{F}_{K,K',p}[\epsilon_0]$,
 $\Sigma\mathcal{F}_{K,K',p}[\epsilon_0,N]$,
the subspace of $\widetilde{\Gamma}^{0}_{p}$, resp. $\Gamma^0_{K,K',p}[\epsilon_0]$,
resp. $\Sigma\Gamma^{0}_{K,K',p}[\epsilon_0,N]$,
made of those symbols which are independent of $\xi $.
We write $\widetilde{\mathcal{F}}^{\R}_{p}$,   resp. $\mathcal{F}_{K,K',p}^{\R}[\epsilon_0]$,
$\Sigma\mathcal{F}_{K,K',p}^{\R}[\epsilon_0,N]$,  to denote functions in $\widetilde{\mathcal{F}}_{p}$,
resp. $\mathcal{F}_{K,K',p}[\epsilon_0]$,    $\Sigma\mathcal{F}_{K,K',p}[\epsilon_0,N]$,
which are real valued for any $ u \in B^{K'}_{s_0,\R}(I;\epsilon_0)$.
\end{definition}

The above class of symbols is closed under composition by a change of variables,
see  \cite[Lemma 3.23]{BD2018}.
\begin{lemma}\label{lem:closure_comp_symbols}
Let $K'\leq K\in \N$, $m\in \R$, $p\in\N_0$, $N\in \N$ with $p\leq N$, $\epsilon_0>0$ small enough. Consider
a symbol $ a $ in
$\sg{m}{K,K',p}{N}$ and functions $ b, c $ in $\sFR{K,K',1}{N}$. Then
$  a\bigl(v;t,x+b(v;t, x),\xi( 1 + c(v;t, x))\bigr) $
is in $\sg{m}{K,K',p}{N}$. In particular, if
$a$  is a function
in $\sF{K,K',p}{N}$, then  $a(v;t,x+b(v;t, x))$ is in $\sF{K,K',p}{N}$.
\end{lemma}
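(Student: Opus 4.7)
The plan is to verify, separately, that the composed symbol satisfies the three structural requirements of Definition \ref{def:symbols}: (i) it splits as a sum of $q$-homogeneous symbols for $p\leq q\leq N$ plus a non-homogeneous remainder in $\Gamma^m_{K,K',N+1}[\epsilon_0]$, each satisfying the relevant pointwise estimates \eqref{homosymbo}--\eqref{nonhomosymbo}; (ii) it is translation-invariant in the sense of \eqref{mome}; (iii) its multilinear components obey the spectral localization condition. The first step is to write $a=\sum_{q=p}^{N}a_q+a_{>N}$, $b=\sum_{q=1}^{N}b_q+b_{>N}$, $c=\sum_{q=1}^{N}c_q+c_{>N}$, which reduces the problem to treating each block $a_q\bigl(v;t,x+b(v;t,x),\xi(1+c(v;t,x))\bigr)$ (plus the analogous expression involving $a_{>N}$).

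For the homogeneous pieces I would Taylor expand in $(b,\xi c)$ around the base point $(x,\xi)$, truncating at total order $N-q$:
\begin{equation*}
a_q\bigl(v;t,x+b,\xi(1+c)\bigr)
=\sum_{\gamma+\beta\leq N-q}\frac{1}{\gamma!\beta!}
\bigl(\partial_x^\gamma\partial_\xi^\beta a_q\bigr)(v;t,x,\xi)\, b^\gamma\, (\xi c)^\beta
\;+\;R_q(v;t,x,\xi),
\end{equation*}
where $R_q$ is an explicit integral Taylor remainder. Each explicit term has order $(m-\beta)+\beta=m$ because the loss of $\langle\xi\rangle^\beta$ coming from $(\xi c)^\beta$ is matched by the gain $\langle\xi\rangle^{-\beta}$ from $\partial_\xi^\beta a_q$. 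The components whose total homogeneity in $v$ (counting $b$ and $c$) does not exceed $N$ are then grouped into the new homogeneous symbols in $\widetilde{\Gamma}^m_q$ (the symmetrization in the $v$-arguments is harmless), and everything else -- the higher homogeneity tails, the remainder $R_q$, the contribution of $a_{>N}$, and the non-homogeneous parts $b_{>N},c_{>N}$ -- falls into the non-homogeneous class $\Gamma^m_{K,K',N+1}[\epsilon_0]$, by the Moser-type product/chain rule and \eqref{nonhomosymbo} applied to $a_{>N}$ and $a_q$. Smallness of $\epsilon_0$ is used to keep $1+c$ bounded away from zero, so that $\xi(1+c)$ stays in the regime where $\langle\xi(1+c)\rangle\sim\langle\xi\rangle$.

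Translation invariance \eqref{mome} of the composition follows from that of each factor: applying \eqref{mome} first to $b$ and $c$ rewrites $b(\st_\varsigma v;t,x)=b(v;t,x+\varsigma)$ and $c(\st_\varsigma v;t,x)=c(v;t,x+\varsigma)$, and then applying \eqref{mome} to $a_q$ at the shifted point yields exactly $a_q\bigl(v;t,(x+\varsigma)+b(v;t,x+\varsigma),\xi(1+c(v;t,x+\varsigma))\bigr)$. The spectral condition in Remark \ref{rem:symbol} is preserved under products and partial derivatives: the Fourier frequencies of each explicit Taylor term are sums of the frequencies of $a_q,b,c$, and the existence of a signed relation $\sum\eta_j n_j=n_0$ for the combined symbol follows by concatenating such relations for the individual factors.

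The main technical obstacle, in my view, is controlling the Taylor remainder $R_q$ with the correct class information: one must use an integral representation
$
R_q=\sum_{\gamma+\beta=N-q+1}\frac{N-q+1}{\gamma!\beta!}\int_0^1(1-\tau)^{N-q}(\partial_x^\gamma\partial_\xi^\beta a_q)(v;t,x+\tau b,\xi(1+\tau c))\,b^\gamma(\xi c)^\beta\, d\tau
$
and then apply \eqref{homosymbo} inside the integral together with the bounds for $b,c\in\sFR{K,K',1}{N}$ to place $R_q$ in $\Gamma^m_{K,K',N+1}[\epsilon_0]$. Once this bookkeeping is in place, the ``in particular'' statement for functions in $\sF{K,K',p}{N}$ is immediate: taking $c\equiv 0$ and $a$ independent of $\xi$, the argument above reduces to the translation part alone and no $\xi$-rescaling issue arises, so the composition stays in $\sF{K,K',p}{N}$.
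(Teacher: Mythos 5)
The paper itself gives no proof of \Cref{lem:closure_comp_symbols}; it is stated as a citation to \cite[Lemma 3.23]{BD2018}. Your proposal is therefore not being compared to an argument in the manuscript but rather against the known content of the cited reference.

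That said, your sketch is essentially the correct and standard route (Taylor expansion in the perturbations $(b,\xi c)$, order-by-order reorganization in homogeneity, integral remainder), and the key analytic points are in place: the cancellation $\langle\xi\rangle^{-\beta}\cdot\langle\xi\rangle^\beta$ between $\partial_\xi^\beta a_q$ and $(\xi c)^\beta$ keeps the order at $m$; the smallness of $\epsilon_0$ ensures $1+c$ stays bounded away from zero so that $\langle\xi(1+c)\rangle\sim\langle\xi\rangle$; the translation invariance and spectral localization pass to the composition by inheriting them from the factors; the Taylor remainder vanishes at order $\geq N+1$ in $v$ because the $q$-homogeneity of $a_q$ combines with the $N-q+1$ factors of $b,c$. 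Two small points worth being explicit about if one were to flesh this out: first, after truncating the Taylor series at total order $N-q$, the terms $b^\gamma(\xi c)^\beta$ must themselves be expanded into homogeneity degrees, since $b,c$ lie in $\Sigma\cF^\bR_{K,K',1}[\epsilon_0,N]$ and each power of $b$ or $c$ contributes a range of homogeneities, not a single one; the pieces of combined homogeneity $\leq N$ go into $\widetilde\Gamma^m_q$ and the rest into the remainder, as you note but rather compactly. Second, in the non-homogeneous remainder class, membership in $\Gamma^m_{K,K',N+1}[\epsilon_0]$ is a tame-estimate condition, not literal vanishing; verifying \eqref{nonhomosymbo} for the integral remainder requires pushing the derivatives $\partial_t^k\partial_x^\gamma\partial_\xi^\beta$ past the $\tau$-integral and using Faà di Bruno together with \Cref{lem:LemA3} (inverse diffeomorphism) to control $b,c$ composed at the shifted point. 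Neither of these is a gap in the idea — they are exactly the bookkeeping you flag as ``the main technical obstacle'' — and your overall argument is sound.
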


The following result is  \cite[Lemma 3.21]{BD2018}.
\begin{lemma}[Inverse diffeomorphism] \label{lem:LemA3}
Let $ 0 \leq K' \leq K $ be in $ \N $ and $  \beta (f; t,  x ) $ be a real function
$ \beta (f; t, \cdot )  $ in $ \Sigma {\cal F}^\R_{K,K',1}[\epsilon_0,N] $.
If $s_0$ is large enough, and $ f \in B^K_{s_0}\pare{I;\epsilon_0} $ then the
map
$  \Phi_f : x\to x+ \beta (f;t,x) $
is, for $\epsilon_0$ small enough, a diffeomorphism of  $ \, \Tu$, and
its  inverse diffeomorphism may be written as
$ \Phi_f^{-1} : y\to y + \breve{\beta} (f;t,y) $
for some $ \breve{\beta}  $ in $\sFR{K,K',1}{N}$.
\end{lemma}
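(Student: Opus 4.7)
The plan is to reduce the statement to the classical inverse function theorem on $\T$ (for the diffeomorphism property) and then identify the multilinear structure of $\breve\beta$ by formally inverting the power series in $f$ that defines $\beta$.

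First I would check that $\Phi_f$ is a diffeomorphism. Since $\beta\in\sFR{K,K',1}{N}$ vanishes at $f=0$, applying \eqref{nonhomosymbo} to the nonhomogeneous component and the estimate \eqref{homosymbo} to each homogeneous piece, combined with Sobolev embedding for $s_0$ large enough, yields
\[
\norm{\partial_x\beta(f;t,\cdot)}_{L^\infty(\T)}\lesssim\norm{f}_{K',s_0}\lesssim\epsilon_0.
\]
For $\epsilon_0$ sufficiently small this gives $1+\partial_x\beta(f;t,x)>0$ pointwise on $\T$, so the degree-one map $\Phi_f$ is a $C^1$-diffeomorphism of $\T$; consequently its inverse has the form $\Phi_f^{-1}\colon y\mapsto y+\breve\beta(f;t,y)$ for a unique real function $\breve\beta(f;t,\cdot)$ on $\T$.

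Next I would show that $\breve\beta\in\sFR{K,K',1}{N}$. The function $\breve\beta$ satisfies the implicit equation $\breve\beta(f;t,y)=-\beta(f;t,y+\breve\beta(f;t,y))$, which I would solve by iteration starting from $\breve\beta^{(0)}\equiv 0$. The map $w\mapsto-\beta(f;t,\cdot+w)$ is a contraction on the ball of radius $\sim\epsilon_0$ in $C^K_\ast(I;H^{s_0}_0)$ with constant $\lesssim\epsilon_0$ (thanks to $\norm{\partial_x\beta}_{L^\infty}\lesssim\epsilon_0$), yielding a unique real fixed point; standard tame estimates based on Lemma \ref{lem:closure_comp_symbols} then propagate $\norm{\breve\beta(f;t,\cdot)}_{K,s}\lesssim\norm{f}_{K,s}$ for every $s\geq s_0$. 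To identify the multilinear structure, decompose $\beta=\sum_{q=1}^{N}\beta_q+\beta_{>N}$ and seek $\breve\beta=\sum_{q=1}^{N}\breve\beta_q+\breve\beta_{>N}$. Taylor expanding $\beta_r(f;t,y+z)$ in $z$ and matching homogeneity degrees in $f$ in the fixed point equation produces the recursion $\breve\beta_1=-\beta_1$ and, for $2\le q\le N$,
\[
\breve\beta_q=-\beta_q-\sum_{\ell\geq 1}\sum_{\substack{r+q_1+\cdots+q_\ell=q\\ r\geq 1,\;q_i\geq 1}}\tfrac{1}{\ell!}\,(\partial_x^\ell\beta_r)\,\breve\beta_{q_1}\cdots\breve\beta_{q_\ell}.
\]
Each $\breve\beta_q$ is a universal polynomial in $\beta_1,\dots,\beta_q$ and their $x$-derivatives, and therefore defines a $q$-linear symmetric form in $\wt{\mathcal F}^{\R}_q$ since reality and the translation invariance \eqref{mome} are preserved under products and $x$-differentiation.

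It remains to control the remainder $\breve\beta_{>N}:=\breve\beta-\sum_{q=1}^{N}\breve\beta_q$. Substituting this decomposition back into the fixed point equation, $\breve\beta_{>N}$ is seen to be the sum of $-\beta_{>N}(f;t,\cdot+\breve\beta)$ and of the higher-order Taylor remainders in the expansion of the $\beta_r(f;t,y+\breve\beta)$ that involve at least $N+1$ factors among the $\beta_{r'}$; by Lemma \ref{lem:closure_comp_symbols} together with tame composition and product estimates, each of these contributions vanishes to order $N+1$ in $f$ and obeys the bound \eqref{nonhomosymbo} with $p=N+1$. This bookkeeping is the step I expect to be the main obstacle: the tame estimate must cleanly separate one copy of $\norm{f}_{K,s}$ from the remaining $N$ copies of $\norm{f}_{K,s_0}$, and losses of $x$-derivatives coming from Taylor expansion must be absorbed using the smoothness built into Definition \ref{def:functions} and Lemma \ref{lem:closure_comp_symbols}. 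The contraction and diffeomorphism arguments in the first two steps are by contrast routine.
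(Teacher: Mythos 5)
The paper does not prove this lemma itself: it is quoted verbatim from \cite[Lemma 3.21]{BD2018}. Your argument is the standard one used there — inverse function theorem for the diffeomorphism property, the implicit fixed-point equation $\breve\beta=-\beta(f;t,\cdot+\breve\beta)$ solved by contraction, and Taylor expansion in $\breve\beta$ to read off the homogeneous components by matching degrees — and it is correct in outline. Two small corrections: the contraction should be performed in $C^{K-K'}_*(I;H^{s_0}(\T))$ rather than $C^K_*(I;H^{s_0}_0)$, since $\breve\beta$ need not have zero space average and the non-homogeneous part of a function in $\sFR{K,K',1}{N}$ carries only $K-K'$ time derivatives; and when you absorb the $x$-derivative loss coming from $\partial_x^\ell\beta_r$ into the class $\wt{\mathcal F}_q$, you should note explicitly that each $\breve\beta_q$ involves only finitely many $\beta_r$ with $r\le q$ and finitely many $x$-derivatives of each, so the constant $\mu$ in \eqref{homosymbo} increases by a finite, $q$-dependent amount — which is permitted since $\mu$ is not required to be uniform in $q$.
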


\paragraph{Paradifferential quantization.}
Given $p\in \N $ we consider  {\it admissible cut-off} \, functions
  $\psi_{p}\in C^{\infty}(\R^{p}\times \R;\R)$ and $\psi\in C^{\infty}(\R\times\R;\R)$,
  even with respect to each of their arguments, satisfying, for $0<\delta\ll 1$,
\begin{align}
&{\rm{supp}}\, \psi_{p} \subset\set{(\xi',\xi)\in\R^{p}\times\R; |\xi'|\leq\delta \langle\xi\rangle } \, ,\qquad \psi_p (\xi',\xi)\equiv 1\,\,\, \rm{ for } \,\,\, |\xi'|\leq \delta \langle\xi\rangle / 2 \, ,
\label{admcutoff1} 
\\
&\rm{supp}\, \psi \subset\set{(\xi',\xi)\in\R\times\R; |\xi'|\leq\delta \langle\xi\rangle } \, ,\qquad \quad
 \psi(\xi',\xi) \equiv 1\,\,\, \rm{ for } \,\,\, |\xi'|\leq \delta   \langle\xi\rangle / 2 \, . \label{admcutoff2}
\end{align}
For $p=0$ we set $\psi_0\equiv1$.
We assume moreover that
\begin{equation}\label{admispapb}
\av{\partial_{\xi}^{\gamma}\partial_{\xi'}^{\beta}\psi_p(\xi',\xi)}\leq C_{\gamma,\beta}\langle\xi\rangle^{-\gamma-|\beta|} \, , \  \forall \gamma \in \N_0, \,\beta\in\N_0^{p} \, ,
\ \
\av{\partial_{\xi}^{\gamma}\partial_{\xi'}^{\beta}\psi(\xi',\xi)} \leq C_{\gamma,\beta}\langle\xi\rangle^{-\gamma-\beta}, \  \forall \gamma, \,\beta\in\N_0 \, .
\end{equation}
If $ a (x, \xi) $ is a smooth symbol
we define its Weyl quantization  as the operator
acting on a
$ 2 \pi $-periodic function
$u(x)$ (written as in \eqref{Fourierser})
 as
\begin{equation}\label{Opweil}
{\rm Op}^{W}\bra{a}u=\sum_{k\in \Z}
\pare{ \sum_{j\in\Z}\hat{a} \pare{ k-j, \frac{k+j}{2} } \hat{u}\pare{ j }  }
e^{\im k x}
\end{equation}
where $ \hat{a}(k,\xi) $ is the $ k$-Fourier coefficient of the $2\pi-$periodic function $x\mapsto a(x,\xi)$.

\begin{definition}[Bony-Weyl quantization]
\label{quantizationtotale}
If $ a $ is a symbol in $\widetilde{\Gamma}^{m}_{p}$,
respectively in $\Gamma^{m}_{K,K',p}[\epsilon_0]$,
we set
$$
\begin{aligned}
& a_{\psi_{p}}(\mathcal{U};x,\xi) \defeq \sum_{\vec{n}\in \N^{p}}\psi_{p}\left(\vec{n},\xi \right) \ a \pare{ \Pi_{\vec{n}}\mathcal{U};x,\xi }  \, , \\
& a_{\psi}(u;t, x,\xi) \defeq\frac{1}{2\pi}\int_{\mathbb{R}}
\psi (\xi',\xi )\hat{a} \pare{ u;t,\xi',\xi } e^{\im \xi' x}\di \xi'  \, ,
\end{aligned}
$$
where  $  \hat a $ stands for the Fourier transform with respect to the $ x $ variable, and
we define the \emph{Bony-Weyl} quantization of $ a $ as
\begin{equation}\label{BW}
\OpBW{a(\mathcal{U};\cdot)} = {\rm Op}^{W} \bra{ a_{\psi_{p}} \pare{\mathcal{U};\cdot} } \, ,\qquad
\OpBW{a(u;t,\cdot)} = {\rm Op}^{W} \bra{a_{\psi}\pare{u;t,\cdot} } \, .
\end{equation}
If  $a$ is a symbol in  $\Sigma\Gamma^{m}_{K,K',p}[\epsilon_0,N]$,
we define its \emph{Bony-Weyl} quantization
$$
\OpBW{a(u;t,\cdot)} =\sum_{q=p}^{N}
\OpBW{a_q(u,\ldots,u;\cdot)} + \OpBW{a_{>N}(u;t,\cdot) }  \, .
$$
\end{definition}

\begin{rem} \label{rem:OpBW_firstproperties}

$ \bullet $
 The operator
$ \OpBW{a} $
maps functions with zero average in functions with zero average, 
and $ \Pi_0^\bot \OpBW{a} = \OpBW{a}\Pi_0^\bot $. 

$ \bullet $
If $ a$ is a homogeneous  symbol, 
the two definitions  of quantization in \eqref{BW} differ 
by a  smoothing operator according to
Definition \ref{def:smoothing} below. 

$ \bullet $
Definition \ref{quantizationtotale}
is  independent of the cut-off functions $\psi_{p}$, $\psi$,
up to smoothing operators (Definition \ref{def:smoothing}).

$ \bullet $
The action of
$ \OpBW{a} $ on  the spaces $ H^s_0 $ only depends
on the values of the symbol $  a(u;t, x,\xi)$
for $|\xi|\geq 1$.
Therefore, we may identify two symbols $ a(u;t, x,\xi)$ and
$ b(u;t, x,\xi)$ if they agree for $|\xi| \geq 1/2$.
In particular, whenever we encounter a symbol that is not smooth at $\xi=0 $,
such as, for example, $a = g(x)|\x|^{m}$ for $m\in \R\setminus\{0\}$, or $ \sign (\xi) $,
we will consider its smoothed out version
$\chi(\xi)a$, where
$\chi\in  C^{\infty}(\R;\R)$ is an even and positive cut-off function satisfying
\begin{equation}\label{eq:chi}
\chi(\x) =  0 \;\; {\rm if}\;\; |\x|\leq \tfrac{1}{8}\, , \quad
\chi (\x) = 1 \;\; {\rm if}\;\; |\x|>\tfrac{1}{4} \, ,
\quad  \pa_{\x}\chi(\x)>0\quad\forall  \x\in \big(\tfrac{1}{8},\tfrac{1}{4} \big) \, .
\end{equation}
\end{rem}

\begin{rem}
Given  a paradifferential  operator
$ A = \OpBW{a(x,\xi)} $ it results
\begin{equation}\label{A1b}
\ov{ A} = \OpBW{\overline{a(x, - \xi)}} \, , \quad
A^\intercal = \OpBW{a(x, - \xi)} \, , \quad
A^*= \OpBW{\overline{a(x,  \xi)}} \, ,
\end{equation}
where $ A^\intercal $  is the transposed  operator with respect to the real scalar product
$ \la u, v \ra_r = \int_\T  u(x)\,  {  v(x)} \, \di x $, and
$ A^* $ denotes the adjoint  operator  with respect to the complex
scalar product of $  L^2_0 $ in \eqref{scpr12hom}. It results $ A^* = \ov{A}^\intercal $.

$ \bullet $
A paradifferential operator $A= \OpBW{a(x,\xi)} $ is {\it real} (i.e. $A = \ov{A} $) if
\begin{equation}\label{areal} 
\ov{a(x,\xi)}= a(x,-\xi)   \, . 
\end{equation}
It is {\it symmetric} (i.e. $A = A^\intercal $)
 if $  a(x,\xi) = a(x,-\xi) $.
A operator $ \pa_x \OpBW{a(x,\xi)} $ is Hamiltonian if and only if
 \begin{equation}\label{Hamassy}
 a(x, \xi ) \in \R  \qquad \text{and} \qquad
 a(x, \xi ) = a(x, - \xi ) \quad \text{is \ even \ in \ } \xi \, .
 \end{equation}
\end{rem}

We now provide the action of a paradifferential operator on Sobolev spaces, cf.  \cite[Prop. 3.8]{BD2018}.

\begin{lemma}[Action of a paradifferential operator]
  \label{prop:action}
    Let $ m \in \R $.
  
  \begin{enumerate}[i)]
  \item \label{item:OpBWmaps1}
If $ p \in \N $, there is $ s_0 > 0 $ such that for any symbol $ a $ in $\Gt{m}{p}$,
there is  a constant $ C > 0 $, depending only on $s$ and on \eqref{homosymbo}
with $ \gamma = \beta = 0 $,
such that, for any $ (u_1, \ldots, u_p ) $, for $ p \geq 1  $,
\begin{equation*}
  \norm{\OpBW{a(u_1, \ldots, u_p;\cdot)} u_{p+1}}_{H^{s-m}_0}\leq C
\norm{ u_1 }_{H^{s_0}_0}  \cdots \norm{ u_p }_{H^{s_0}_0}
\norm{u_{p+1}}_{H^{s}_0} \, .
\end{equation*}
If  $ p = 0 $
the above bound holds replacing the right hand side with
$ C \norm{u_{p+1}}_{H^{s}_0} $.
\item \label{item:OpBWmaps2}
Let $\epsilon_0>0$, $p\in \N $, $K'\leq K\in \N $, $a$ in $\Gr{m}{K,K', p}$.
There is $ s_0 > 0 $,
and  a constant $ C $, depending only on $s$, $\epsilon_0 $, and on \eqref{nonhomosymbo} with $ 0 \leq \gamma  \leq 2, \beta = 0$,
such that, for any $ t $ in $ I $, any $ 0\leq k\leq K-K'$, any $ u $ in $\Br{K}{}$,
$$
  \norm{\OpBW{ \partial_t^ka(u;t,\cdot) } }_{\Lcal(H^{s}_0,H^{s-m}_0)}\leq C
 \| u(t, \cdot)\|_{k+K',s_0}^p \, ,
$$
so that
$\| \OpBW{a(u;t,\cdot)} v(t) \|_{K-K', s-m} \leq C  \| u(t, \cdot)\|_{K,s_0}^p
\| v(t) \|_{K- K', s} $.

  \end{enumerate}
\end{lemma}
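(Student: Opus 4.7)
The plan is to reduce both parts to Fourier-side operator bounds on frequency-localized pieces, using the Bony-Weyl cutoffs $\psi_p$ and $\psi$ to restrict symbols to the paraproduct region where the symbol-frequency $\xi'$ is small relative to the output frequency $\xi$. I would follow the standard scheme of \cite{BD2018}, with two separate Schur-type arguments, one for the homogeneous piece and one for the non-homogeneous remainder.

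For part \emph{i}), I first write a homogeneous symbol $a(u_1,\dots,u_p;x,\xi)$ in its Fourier form \eqref{sviFou}, so that
\begin{equation*}
\OpBW{a(u_1,\dots,u_p;\cdot)}u_{p+1}
= \sum_{k\in\Z}\sum_{\vec\jmath_p,j_{p+1}}
\psi_p\!\pare{\vec\jmath_p,\tfrac{k+j_{p+1}}{2}}
(a_p)_{\vec\jmath_p}\!\pare{\tfrac{k+j_{p+1}}{2}}\,\hat u_1(j_1)\cdots \hat u_p(j_p)\,\hat u_{p+1}(j_{p+1})\,e^{\ii k x},
\end{equation*}
with the constraint $j_1+\cdots+j_{p+1}=k$ from the spectral localization \eqref{mome}. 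The admissible cutoff \eqref{admcutoff1} then forces $|\vec\jmath_p|\lesssim \delta\langle k\rangle$ and hence $\langle k\rangle\sim \langle j_{p+1}\rangle$. I then use the pointwise bound \eqref{eq:fourier_char_homsymbols} to replace $(a_p)_{\vec\jmath_p}(\xi)$ by $C|\vec\jmath_p|^\mu\langle\xi\rangle^m$, take $\langle k\rangle^{s-m}$-weighted $\ell^2$-norm in $k$, and apply Cauchy-Schwarz on the $\vec\jmath_p$ sum. Choosing $s_0>\mu+\tfrac12$ lets the weight $|\vec\jmath_p|^\mu$ be absorbed into the $H^{s_0}_0$ norms of $u_1,\dots,u_p$, while the factor $\langle j_{p+1}\rangle^s$ produces $\|u_{p+1}\|_{H^s_0}$. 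This gives the claimed estimate.

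For part \emph{ii}), I work directly with the integral definition of $a_\psi$: if $\hat a(u;t,\xi',\xi)$ denotes the $x$-Fourier transform of $a$, then $\OpBW{a}$ has Fourier matrix coefficients
\begin{equation*}
[\OpBW{a}]_{k,j} \;=\; \psi\!\pare{k-j,\tfrac{k+j}{2}}\,\hat a\!\pare{u;t,k-j,\tfrac{k+j}{2}}.
\end{equation*}
Integrating by parts twice in $x$ in the definition of the Fourier transform, the symbol estimate \eqref{nonhomosymbo} with $\gamma\leq 2$ yields
$|\hat a(u;t,\xi',\xi)|\leq C\langle\xi'\rangle^{-2}\langle\xi\rangle^m\|u\|_{K',s_0}^p$.
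The cutoff $\psi$ and \eqref{admcutoff2} again force $\langle k\rangle\sim\langle j\rangle$ on the support. A standard Schur test on the kernel $\langle k-j\rangle^{-2}\langle j\rangle^{m}$ (summable in both $k$ and $j$ after weighting by $\langle k\rangle^{s-m}$ and $\langle j\rangle^{-s}$) then controls the operator norm $H^s_0\to H^{s-m}_0$ by $\|u\|_{K',s_0}^p$. The $\partial_t^k$ bound follows identically since differentiating in $t$ just gives another symbol in $\Gamma^m_{K,K'+k,p}$.

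The main technical obstacle is keeping track of the admissible-cutoff conditions \eqref{admcutoff1}-\eqref{admispapb} carefully enough to ensure $\langle k\rangle\sim\langle j_{p+1}\rangle$ (resp.\ $\langle k\rangle\sim\langle j\rangle$) on the support, so that the factor $\langle\xi\rangle^m$ in the symbol estimate translates into $\langle k\rangle^m$ and produces the correct $m$-derivative loss; once this equivalence is established, the rest is a routine Schur / Cauchy-Schwarz computation. All the needed structural properties -- spectral localization \eqref{mome}, polynomial growth \eqref{eq:fourier_char_homsymbols} and derivative decay \eqref{nonhomosymbo} -- are built into Definitions \ref{def:symbols} and \ref{quantizationtotale}, so no new ingredient beyond careful bookkeeping is required.
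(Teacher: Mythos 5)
The paper does not prove this lemma — it is cited directly from \cite[Prop.\ 3.8]{BD2018} — so there is no ``paper's own proof'' to compare against. Your sketch is a correct reconstruction of the standard argument used there: reduce to Fourier-side matrix-coefficient bounds, use the admissible cutoff to force $\langle k\rangle\sim\langle j\rangle$ on the support, convert the symbol growth $\langle\xi\rangle^m$ into $\langle k\rangle^m$, and close by Cauchy--Schwarz/Young (homogeneous case) or a Schur test after two $x$-integrations by parts (non-homogeneous case). Two bookkeeping points worth flagging so the argument is airtight: (a) in part \emph{i}) the relevant equivalence is really $\langle\tfrac{k+j_{p+1}}{2}\rangle\sim\langle k\rangle$, since the Weyl symbol is evaluated at the midpoint $\tfrac{k+j_{p+1}}{2}$; this follows from the cutoff exactly as you say, but it is the midpoint, not $j_{p+1}$, that enters $\langle\xi\rangle^m$. (b) In part \emph{ii}) the estimate \eqref{nonhomosymbo} only allows $\gamma\le s-s_0$, so using $\gamma\le 2$ requires taking the Sobolev index in \eqref{nonhomosymbo} to be $s_0+2$ and then renaming $s_0\leadsto s_0+2$; this is the usual ``$s_0$ large enough'' convention, but should be said explicitly, since otherwise the bound $\|u\|_{k+K',s_0}^p$ in the conclusion does not literally follow from \eqref{nonhomosymbo} at level $s_0$. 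Neither point is a gap in the idea; both are routine and consistent with \cite{BD2018}.
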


\paragraph{Classes of $m$-Operators and smoothing Operators.}
Given integers $(n_1,\ldots,n_{p+1})\in \N^{p+1}$, we denote by $\max_{2}(n_1 ,\ldots, n_{p+1})$
the second largest among  $ n_1,\ldots, n_{p+1}$.

We now define  $ m $-operators.
The class $\widetilde{\mathcal{M}}^{m}_{p}$ denotes multilinear
 operators that lose $m$ derivatives
 and are $p$-homogeneous in $u $,
while the class $\mathcal{M}_{K,K',p}^{m}$ contains non-homogeneous
operators  which lose $m$ derivatives,
vanish at degree at least $ p $ in $ u $, satisfy tame estimates
 and are $(K-K')$-times differentiable in $ t $.
The  constant $ \mu $ in \eqref{eq:bound_fourier_representation_m_operators} takes into account possible loss of derivatives in
the ``low" frequencies. The following definition is a small 
adaptation of  \cite[Def. 2.5]{BMM2022} as it defines $ m$-operators acting 
on $ H^{\infty}(\T;\C) $
and not  $	\dot H^{\infty}(\T;\C^2)$ (and we state it directly in Fourier 
series representation).

\begin{definition}[ Classes of $m$-operators]\label{def:moperators}
Let  $ m \in \R $,  $p,N\in \N_0 $,
$K,K'\in\N_0$ with $K'\leq K$, and $ \epsilon_0 > 0 $.

\begin{enumerate}[i)]

\item \label{item:maps1}
{\bf $p$-homogeneous $m$-operators.}
We denote by $\widetilde{\mathcal{M}}^{m}_{p}$
 the space of $ (p+1)$-linear translation invariant operators from 
$ \pare{H^\infty \pare{\bT;\bC}}^{p} \times H^\infty \pare{\bT;\bC} $ to 
$ H^\infty \pare{\bT;\bC} $, symmetric in $ (u_1, \ldots, u_p ) $,  
with Fourier expansion 
\begin{equation} \label{smoocara0}
M(u)v \defeq M\pare{u, \ldots, u} v = \sum_{\substack{(j_1, \ldots, j_{p},j,k) \in \Z^{p+2} \\  
j_1 + \ldots + j_p + j = k} } M_{j_1, \ldots, j_{p}, j,k} \ 
u_{j_1} \ldots u_{j_p} v_{j}  e^{\ii  k x}  \, ,
 \end{equation}
with coefficients $ M_{j_1, \ldots, j_{p}, j,k} $ symmetric in $ j_1, \ldots, j_{p} $, 
 satisfying the following:  there are $\mu \geq0$, $C>0$ such that, for any 
 $  j_1, \ldots, j_{p}, j, k \in \Z^{p+2} $, it results 
 \begin{equation}\label{eq:bound_fourier_representation_m_operators}
\av{ M_{ j_1, \ldots, j_{p}, j, k} } \leq C \   {\rm max} _2 \set{\angles{j_1}, \ldots,  \angles{j_{p}}, \angles{ j} } ^\mu \   \max\set{ \angles{j_1}, \ldots , \angles{j_{p}},\angles{ j} }^m  \, , 
\end{equation}
and the reality condition holds: 
\begin{equation}
\label{eq:reality_cond_reminder}
\overline{ M_{\vec \jmath_{p}, j,k}  } = M_{- \vec \jmath_{p}, -j, -k}  \, ,
\qquad \forall
 \vec \jmath_{p} = \pare{ j_1, \ldots, j_{p} } \in \bZ^p, \pare{ j,k }  \in \bZ^{2} \, .
\end{equation}
If $ p=0 $ the right hand side of \eqref{smoocara0} must be substituted with $ \sum _{j\in \bZ} M_j v_j e^{\ii j x} $ with $ \av{M_j}\leq C \angles{ j}^m $. 
 \item
 {\bf Non-homogeneous $m$-operators.}
  We denote by  $\mathcal{M}^{m}_{K,K',p}[\epsilon_0]$
  the space of operators $(u,t,v)\mapsto M(u;t) v $ defined on { $B_{C^{K'}_{*}(I,H^{s_0}(\T;\C))}\pare{0;\epsilon_0}\times I \times C^0_{*}(I,H^{s_0}(\T;\C))$} for some $ s_0 >0  $,
  which are linear in the variable $ v $ and such that the following holds true.
  For any $s\geq s_0$ there are $C>0$ and
  $\epsilon_0(s)\in]0,\epsilon_0[$ such that for any
  { $u \in B_{C^{K'}_{*}(I,H^{s_0}(\T;\C))}\pare{0;\epsilon_0} \cap C^K_{*}(I,H^{s}(\T;\C))$,
  any $ v \in C^{K-K'}_{*}(I,H^{s}(\T;\C))$}, any $0\leq k\leq K-K'$, $t\in I$, we have that { 
\begin{equation}
\label{piove}
\norm{{\partial_t^k\left(M(u;t)v\right)}}_{s- \alpha k-m}
 \leq C   \sum_{k'+k''=k}  \pare{ \|{v}\|_{k'',s}\|{u}\|_{k'+K',{s_0}}^{p}
 +\|{v}\|_{k'',{s_0}}\|u\|_{k'+K',{s_0}}^{p-1}\| u \|_{k'+K',s} } \, .
\end{equation}}
In case $ p = 0$ we require  the estimate
$ \|{\partial_t^k\left(M(u;t) v \right)}\|_{s- \alpha k-m}
 \leq C  \| v \|_{k,s}$.  
  We say that a non-homogeneous $ m $-operator  $M\pare{u;t} $ is \emph{real} if it is real valued for any
$ u \in B_{C^{K'}_{*}(I,H^{s_0}(\T;\R))}\pare{0;\epsilon_0} $.

 \item
 {\bf $m$-Operators.}
We denote by $\Sigma\mathcal{M}^{m}_{K,K',p}[\epsilon_0,N]$
the space of operators 
\begin{equation}
\label{maps}
M(u;t)v =\sum_{q=p}^{N}M_{q}(u,\ldots,u)v+M_{>N}(u;t)v 
\end{equation}
where $M_{q} $ are homogeneous $m$-operators in $ \widetilde{\mathcal{M}}^{m}_{q}$, $q=p,\ldots, N$  and
$M_{>N}$  is a non--homogeneous $m$-operator
in $\mathcal{M}^{m}_{K,K',N+1}[\epsilon_0]$.  We say that a  $ m $-operator  $M\pare{u;t} $ is \emph{real} if it is real valued for any
$ u \in B_{C^{K'}_{*}(I,H^{s_0}(\T;\R))}\pare{0;\epsilon_0}$.

\item
{\bf Pluri-homogeneous $m$-Operator. }
We denote 
by $ \Sigma_p^N \wt {\cM}^{m}_q $ the 
pluri-homogeneous $m$-operators  of the form \eqref{maps}
with $ M_{>N} = 0 $.
\end{enumerate}

We denote with $ \dot{\wt \cM}^m_p, \ \dot \cM^m_{K, K', p}\bra{\epsilon_0} $ and $ \Sigma \dot \cM^m_{K, K', p}\bra{\epsilon_0,N}  $ the subspaces of $m$-operators 
in $ {\wt \cM}^m_p $, respectively  $ \cM^m_{K, K', p}\bra{\epsilon_0} $ and $ \Sigma  \cM^m_{K, K', p}\bra{\epsilon_0,N}  $, 
 defined on zero-average functions 
 taking value $ M(u)v $  in zero-average functions. 
\end{definition}

\begin{rem}\label{rem:first_properties_moperators}
 By  \cite[Lemma 2.8]{BMM2022},
if $ M( u_1,\dots , u_p)$ is a $p$--homogeneous
 $m$-operator in $  \widetilde \mM_p ^m$ then
  $ M(u) = M(u, \ldots, u) $ is a  non-homogeneous $ m$-operator in
 $  {\cal M}^m_{K,0,p}[\epsilon_0] $ for any $\epsilon_0>0$ and $K\in \N_0$.
 We shall say that $ M(u) $ is in $  \widetilde \mM_p ^m $. 
  \end{rem}

\begin{rem}\label{item:propMop2} The multiplication operator  $ v \mapsto \frac{1}{1+2f} v $
belongs to $ \Sigma \cM^0_{K, 0, 0}\bra{\epsilon_0, N} $. 
 \end{rem}

If $m  \leq 0 $ the  operators in $ \Sigma \mM^{m}_{K,K',p}[\epsilon_0,N]$ are referred to as smoothing operators.

 \begin{definition}[Smoothing operators] \label{def:smoothing}
Let $ \rho\geq0$. A $ (-\rho)$-operator $R(u)$ belonging to $ \Sigma \mM^{-\rho}_{K,K',p}[\epsilon_0,N]$ 
is called  a smoothing operator. 
We also denote
\begin{align*}
 \widetilde{\mathcal{R}}^{-\rho}_{p}\defeq \widetilde{\mathcal{M}}^{-\rho}_{p} \, ,
&& 
 \mathcal{R}^{-\rho}_{K,K',p}[\epsilon_0]\defeq\mathcal{M}^{-\rho}_{K,K',p}[\epsilon_0] \, , 
 && 
  \Sigma\mathcal{R}^{-\rho}_{K,K',p}[\epsilon_0,N]\defeq\Sigma\mathcal{M}^{-\rho}_{K,K',p}[\epsilon_0,N] \, .
\end{align*}
We define $ \dot{\wt \cR}^{-\rho}_p = \dot{\wt \cM}^{-\rho}_p $, 
$ \dot \cR^{-\rho}_{K, K', p}\bra{\epsilon_0}  = 
\dot \cM^{-\rho}_{K, K', p}\bra{\epsilon_0} $ and 
$ \Sigma \dot \cR^{-\rho}_{K, K', p}\bra{\epsilon_0,N}  = \Sigma \dot \cM^{-\rho}_{K, K', p}\bra{\epsilon_0,N} 
$ as in \Cref{def:moperators}. 
\end{definition}

If $ R (u) $ is a homogenous smoothing operator in $ {\wt \cR}^{\, -\rho}_p $
then $ \Pi_0^\bot R(u) $, where $ \Pi_0^\bot $ is defined 
\eqref{eq:Pi0bot}, restricted to zero average functions 
$ u $, belongs  to $  \dot{\wt \cR}^{-\rho}_p $.

\begin{rem} \label{rem:smoo}
$ \bullet $
Lemma \ref{prop:action} implies that,  if  $a(u; t, \cdot)$ is  in $\Sigma\Gamma^{m}_{K,K',p}\bra{\epsilon_0 , N}$, 
 $m\in\bR $,  then $ \OpBW{a(u; t, \cdot)} $ defines a $ m $-operator in
$\Sigma  \cM^{m}_{K,K',p}\bra{\epsilon_0 , N}$.

$ \bullet $
The composition of smoothing operators $ R_1 \in \sr{-\rho}{K,K',p_1}{N}$
 and  $ R_2  \in \sr{-\rho}{K,K',p_2}{N} $ is a smoothing operator $ R_1 R_2 $  in 
$ \sr{-\rho}{K,K',p_1+p_2}{N} $.  This is a particular case of Proposition \ref{compositionMoperator}-($i)$.
\end{rem}

\begin{lemma}
\label{lem:MtoFunctions}
Let $ m\in \bR$, $ \epsilon_0>0 $, $ K, K'\in\bN_0 $, $ K'\leq K $, $ N, p\in \bN_0  $, 
$ u \in\Ball{K}{s} $
and  $ M(u;t) $ be a real operator in $ \Sigma \cM^m_{K, K', p}\bra{\epsilon_0, N} $. Then 
$M(u;t) u  $ is a real function in $ \Sigma \cF^\R_{K, K', p+1} \bra{\epsilon_0, N+1} $ according to \Cref{def:functions}. 
\end{lemma}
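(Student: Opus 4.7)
The plan is to decompose $M(u;t) = \sum_{q=p}^{N} M_q(u,\ldots,u) + M_{>N}(u;t)$ as in \eqref{maps} and to verify separately that, for each $q = p,\ldots, N$, the function $M_q(u,\ldots,u)u$ belongs to $\widetilde{\cF}^{\R}_{q+1}$, and that $M_{>N}(u;t)u$ belongs to $\cF^{\R}_{K,K',N+2}[\epsilon_0]$. Summing these contributions yields membership in $\Sigma \cF^{\R}_{K,K',p+1}[\epsilon_0,N+1]$ according to \Cref{def:functions}.

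For the homogeneous part, given $q$, I introduce the $(q+1)$-linear symmetrisation
\begin{equation*}
\mathsf F_{q+1}(u_1,\ldots,u_{q+1};x) \defeq \frac{1}{q+1}\sum_{\ell=1}^{q+1} M_q(u_1,\ldots,\widehat{u_\ell},\ldots,u_{q+1})\, u_\ell \, ,
\end{equation*}
where the hat denotes omission; this coincides with $M_q(u,\ldots,u)\,u$ on the diagonal. The Fourier representation \eqref{smoocara0} together with the coefficient bound \eqref{eq:bound_fourier_representation_m_operators} yield, for the Fourier coefficients of $\mathsf F_{q+1}$ at multi-index $\vec\jmath_{q+1}$, the estimate $C\,\mathrm{max}_2\{\langle j_\ell\rangle\}^{\mu}\,\max\{\langle j_\ell\rangle\}^m$. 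Since the output frequency $k = j_1+\cdots+j_{q+1}$ satisfies $|k|\leq (q+1)|\vec n|$ whenever $|j_\ell|\leq n_\ell$, and each spectral projector $\Pi_{n_\ell}$ localises on only two modes, taking $\gamma$ spatial derivatives and estimating the resulting finite sum in $L^\infty$ gives
\begin{equation*}
\av{\partial_x^\gamma \mathsf F_{q+1}(\Pi_{\vec n}\cU;x)} \leq C\, |\vec n|^{\mu+\max(m,0)+\gamma}\prod_{\ell=1}^{q+1}\norm{\Pi_{n_\ell}u_\ell}_{L^2} \, ,
\end{equation*}
which is precisely the bound \eqref{homosymbo} of \Cref{def:symbols}(i) with order $0$. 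The spectral support condition and the translation invariance \eqref{mome} follow from the index constraint $j_1+\cdots+j_{q+1}=k$ and the translation invariance of $M_q$, while reality in the sense of \eqref{realsim} is a direct consequence of \eqref{eq:reality_cond_reminder}.

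For the non-homogeneous remainder, specialising \eqref{piove} to $v=u$ produces, for $0\leq k\leq K-K'$,
\begin{equation*}
\norm{\partial_t^k\pare{M_{>N}(u;t)u}}_{s-\alpha k-m} \leq C\, \norm{u}_{k+K',s_0}^{N+1}\norm{u}_{k+K',s} \, .
\end{equation*}
To reach the pointwise estimate \eqref{nonhomosymbo} required by $\cF^{\R}_{K,K',N+2}[\epsilon_0]$ (with $\beta=0$, since there is no $\xi$-dependence), I apply $\partial_x^\gamma$ with $\gamma\leq s-s_0$, landing in $H^{s-\alpha k - m -\gamma}(\bT)$, and invoke the one-dimensional Sobolev embedding $H^\sigma(\bT)\hookrightarrow L^\infty(\bT)$ for $\sigma>1/2$; this is legitimate once $s_0$ is enlarged so that $s_0 - \alpha K - m - 1/2 > 0$. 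Reality is inherited from that of $M_{>N}$. The only mildly subtle point, which I expect to be the main hurdle to write up cleanly, is that $M$ may have positive order $m>0$: the corresponding loss of derivatives, encoded in the Fourier-side factor $\max\{\langle j_\ell\rangle\}^m$, does not contradict membership in a symbol class of order $0$ because $M(u;t)u$ is $\xi$-independent, so the loss gets absorbed into the polynomial exponent in $|\vec n|$ at the homogeneous level, and into a sufficiently large choice of $s_0$ at the non-homogeneous level. For this reason the statement holds for arbitrary $m\in\bR$.
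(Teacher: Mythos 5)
Your proposal is correct and follows essentially the same route as the paper: decompose $M$ into its homogeneous components and the non-homogeneous tail, verify that $M_q(u)u\in\widetilde{\cF}^\R_{q+1}$ directly from the Fourier coefficient bound \eqref{eq:bound_fourier_representation_m_operators} and the reality condition \eqref{eq:reality_cond_reminder}, and control $M_{>N}(u;t)u$ by combining \eqref{piove} (with $v=u$) with Sobolev embedding after a suitable choice of $s_0$ to absorb the loss of $m+\alpha k$ derivatives. The only cosmetic differences are that you make the $(q+1)$-linear symmetrisation explicit (the paper leaves it implicit via Notation \ref{notation:multilinear_polyniomials}) and you phrase the homogeneous bound in the $L^\infty$/projector form \eqref{homosymbo} rather than the Fourier-coefficient form \eqref{eq:fourier_char_homsymbols}; these are equivalent by Remark \ref{rem:symbol}.
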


\begin{proof}
We decompose $ M (u;t)= \sum_{q=p}^N M_q (u)+ M_{>N} (u; t)  $
in the usual homogeneous and non-homogeneous components. 
We assume $ u $ is in $ \Ball{K}{s} $ so that $ u $ has zero average. 
We now prove that $  M_q (u) u $ is a function in $ \wt\cF^\bR_{q+1}  $. 
For any  zero average function $ u $, according
 to \eqref{smoocara0} we have
\begin{equation*}
(M_q (u) u) \pare{x} = \sum_{\substack{(j_1, \ldots, j_{q},j) \in \pare{ \Z\setminus \set{0} }^{q+1}  \\  
j_1 + \ldots + j_p + j = k} } M_{j_1, \ldots, j_{p}, j,k} \ 
u_{j_1} \ldots u_{j_q} u_{j} \  e^{\ii  \pare{j_1+\ldots + j_q + j} x}  .
\end{equation*}
Moreover, by \eqref{eq:bound_fourier_representation_m_operators}, for any 
$ (j_1, \ldots, j_{p}, j) = \pare{\vec{\jmath}_q, j}\in \pare{\bZ\setminus \{ 0\}}^{q+1} $, we have 
\begin{align*}
\av{M_{j_1, \ldots, j_{p}, j,k}} \lesssim & \  {\rm max}_2 \set{\angles{j_1}, \ldots, \angles{j_q}, \angles{j}}^\mu  {\rm max} \set{\angles{j_1}, \ldots, \angles{j_q}, \angles{j}}^m
\\
\lesssim & \  {\rm max} \set{\angles{j_1}, \ldots, \angles{j_q}, \angles{j}}^{2\max\set{\mu,m}} 
\lesssim  \  \av{\pare{\vec{\jmath}_q, j}}^{2\max\set{\mu,m}} \, , 
\end{align*}
and, in view of \eqref{eq:fourier_char_homsymbols}, 
we thus obtain that $ M_q (u)u $ is a function in $ \wt\cF_{q+1} $. In view of  \eqref{eq:reality_cond_reminder}  the function $ M_q (u) u $ is real.

We now prove that $  \pare{M_{>N}\pare{u;t}u}(t,x) $ is a function in 
$  \cF^\bR_{K,K',N+2}\pare{\epsilon_0} $. 
Let $ s_0 \defeq 1 +\alpha \pare{ K -K' } + m $. 
For any $ 0\leq k \leq K-K' $, for any $ s\geq s_0 $,  and $ 0\leq \gamma \leq s-s_0 $ we have that 
$ s - \alpha k - m > \gamma +1 $, and 
\begin{equation*}
\begin{aligned}
 \av{\partial_t^k\partial_x^\gamma ( M_{> N}\pare{u;t} u) }
&  \lesssim \norm{\partial_t^k (M_{>N}\pare{u;t}u)}_{\gamma + 1} 
 \leq \norm{\partial_t^k (M_{>N}\pare{u;t}u)}_{s-\alpha k - m}
\stackrel{\eqref{piove}}\lesssim \norm{u}_{k+K', s_0}^{N+1} \norm{u}_{k+K', s} 
\end{aligned}
\end{equation*}
proving, in view of 
\Cref{def:symbols,def:functions}, 
that $ M_{>N}\pare{u;t}u $ is a function in 
$ \cF_{K,K',N+2}\pare{\epsilon_0} $. The reality condition is verified since $ M_{>N} $ is a real $ m $-operator per hypothesis. 
\end{proof}

\paragraph{Symbolic calculus.}
Let
$ \s(D_{x},D_{\x},D_{y},D_{\eta}) \defeq D_{\x}D_{y}-D_{x}D_{\eta}  $
where $D_{x}\defeq\frac{1}{\ii}\pa_{x}$ and $D_{\x},D_{y},D_{\eta}$ are similarly defined.
The following is   Definition 3.11 in \cite{BD2018}.

\begin{definition}[Asymptotic expansion of composition symbol]
\label{def:as.ex}
Let $ p $, $ p' $ in $\N_0 $, $ K, K' \in \N_0 $ with $K'\leq K$,  $ \rho  \geq 0 $, $m,m'\in \R$, $\epsilon_0>0$.
Consider symbols $a \in \Sigma\Gamma_{K,K',p}^{m}[\epsilon_0,N]$ and $b\in \Sigma \Gamma^{m'}_{K,K',p'}[\epsilon_0,N]$. For $u$ in $B_{\s}^{K}(I;\epsilon_0)$
we define, for $\rho< \s- s_0$, the symbol
\begin{equation}\label{espansione2}
(a\#_{\rho} b)\pare{ u;t, x,\x } \defeq\sum_{k=0}^{\rho}\frac{1}{k!}
\left(
\frac{\ii}{2}\s\pare{ D_{x},D_{\x},D_{y},D_{\eta} } \right)^{k}
\Big[a(u;t, x,\x)b(u;t,y,\eta)\Big]_{|_{\substack{x=y, \x=\eta}}}
\end{equation}
modulo symbols in $ \Sigma \Gamma^{m+m'-\rho}_{K,K',p+p'}[\epsilon_0,N] $.
\end{definition}

The symbol $ a\#_{\rho} b $ belongs  to $\Sigma\Gamma^{m+m'}_{K,K',p+p'}[\epsilon_0,N]$.
Moreover
\begin{equation} \label{asharpb}
a\#_{\rho}b = a b + \frac{1}{2 \ii }\{a,b\} 
\end{equation} 
up to a symbol in $\Sigma\Gamma^{m+m'-2}_{K,K',p+p'}[\epsilon_0,N]$,
where
$$
\{a,b\}  \defeq  \pa_{\xi}a \ \pa_{x}b -\pa_{x}a \ \pa_{\xi}b
$$
denotes the Poisson bracket.
\smallskip
The following result is proved in Proposition $3.12$ in \cite{BD2018}.

\begin{proposition}[Composition of Bony-Weyl operators] \label{prop:composition_BW}
Let $p,q,N, K, K'  \in \N_0 $ with $ K' \leq K $,  $\rho \geq 0 $, $m,m'\in \R$, $\epsilon_0>0$.
Consider  symbols
$a\in \Sigma {\Gamma}^{m}_{K,K',p}[\epsilon_0,N] $ and $b\in \Sigma {\Gamma}^{m'}_{K,K',q}[\epsilon_0, N]$.
Then
\begin{equation}\label{smoospec}
\OpBW{a(u;t, x,\x)}\circ\OpBW{b(u;t, x,\x)} - \OpBW{(a\#_{\rho} b)(u;t, x,\x)}
\end{equation}
is a smoothing operator in $ \Sigma \dot{\mathcal{R}}^{-\rho+m+m'}_{K,K',p+q}[\epsilon_0,N]$.
\end{proposition}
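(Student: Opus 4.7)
The plan is to split both symbols via their homogeneity decomposition $a=\sum_{j=p}^{N}a_{j}+a_{>N}$, $b=\sum_{k=q}^{N}b_{k}+b_{>N}$, and by bilinearity reduce the claim to each individual contribution, collecting terms according to the total degree of vanishing $p+q$ at $u=0$. The essential tool is the Moyal product formula for Weyl quantization on $\bR$,
$$
\textnormal{Op}^{W}[\tilde a]\circ\textnormal{Op}^{W}[\tilde b]=\textnormal{Op}^{W}[\tilde a\# \tilde b],\qquad \tilde a\#\tilde b=\sum_{k=0}^{\infty}\frac{1}{k!}\Big(\tfrac{\ii}{2}\sigma(D_{x},D_{\xi},D_{y},D_{\eta})\Big)^{k}\bigl[\tilde a(x,\xi)\tilde b(y,\eta)\bigr]_{x=y,\xi=\eta},
$$
which, once truncated at order $\rho$ via Taylor's formula with integral remainder, reproduces exactly the symbol $a\#_{\rho}b$ of Definition~\ref{def:as.ex} plus an explicit remainder $r_{\rho}(a,b)$ given by a $(\rho+1)$-fold oscillatory integral in $\sigma$.

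The key step is to show that $\textnormal{Op}^{BW}[r_{\rho}]$ belongs to $\Sigma\dot{\cR}^{-\rho+m+m'}_{K,K',p+q}[\epsilon_{0},N]$. Since each factor of $\sigma$ amounts to trading one derivative in $\xi$ for one in $x$ between the two symbols, after $\rho+1$ applications the symbolic order drops by $\rho+1$, while the spectral localization imposed by the admissible cutoffs $\psi_{p},\psi_{q},\psi$ in \eqref{admcutoff1}--\eqref{admispapb} guarantees that the frequency differences produced by the composition stay controlled by $\delta\langle\xi\rangle$. For the homogeneous pieces $a_{j}\#b_{k}$ I would compute the Fourier series representation \eqref{sviFou} of the remainder directly, deriving \eqref{eq:bound_fourier_representation_m_operators} at order $m+m'-\rho$, with the second-largest frequency loss $\mu$ arising from the derivatives of the cutoffs; the reality condition \eqref{eq:reality_cond_reminder} is inherited from \eqref{realsim}. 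For the non-homogeneous pieces I would combine the remainder formula with the tame symbol bounds \eqref{nonhomosymbo} and Lemma~\ref{prop:action} to verify the tame operator estimate \eqref{piove} with $m$ replaced by $m+m'-\rho$. The preservation of the degree of vanishing $p+q$ at $u=0$ is automatic from the bilinear structure of $r_{\rho}$.

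The main technical obstacle will be the careful integration-by-parts bookkeeping in the oscillatory integral defining $r_{\rho}$, ensuring simultaneously that the full $\rho$-derivative gain is recovered, that the spectral localization imposed by the paradifferential cutoffs is preserved through each integration, and that the constants depend polynomially and tamely on $\|u\|_{k+K',s_{0}}$ and $\|u\|_{k+K',s}$. A minor feature specific to the torus setting is that the quantization \eqref{Opweil} is a discrete sum of Fourier coefficients rather than a continuous integral, but this is harmless because zero-mode projection commutes with all Bony-Weyl operators (Remark~\ref{rem:OpBW_firstproperties}), so the action on $H_{0}^{s}$ follows at once from that on $H^{s}$. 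All the quantitative estimates are carried out in detail in \cite[Proposition 3.12]{BD2018}, which treats the analogous systems case.
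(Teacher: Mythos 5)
Your proposal is correct and, modulo the more detailed sketch you supply, takes the same approach as the paper: the paper does not prove this proposition at all, but simply refers to Proposition~3.12 of~\cite{BD2018}, which is exactly the reference your argument outlines and ultimately defers to. Your description of the Moyal expansion, the treatment of the truncation remainder, the role of the admissible cutoffs, and the separate handling of homogeneous and non-homogeneous parts is a faithful summary of the argument carried out there.
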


  We have the following result, see e.g. Lemma 7.2 in \cite{BD2018}.
\begin{lemma}[Bony paraproduct decomposition]
\label{lem:paraproduct_Weyl}
Let  $u_1, u_2$ be functions in $H^\s(\T;\C)$   with
$\s >\frac12$. Then
\begin{equation}\label{bonyeq}
u_1 u_2  =  \OpBW{u_1 }u_2 + \OpBW{u_2 }u_1 +
R_1(u_1)u_2 +  R_2(u_2)u_1
\end{equation}
where for $ \mathsf{j} =1, 2 $, $R_\mathsf{j}$ is a homogeneous smoothing operator in $ \widetilde \mR^{-\rho}_{1}$ for any $ \rho \geq 0$. 
\end{lemma}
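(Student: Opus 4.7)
The plan is to express each of the three objects on the right-hand side of \eqref{bonyeq} as an explicit bilinear Fourier multiplier acting on the pair $(\hat u_1, \hat u_2)$, and then to show that the remainder is supported in the ``high--high'' diagonal regime $\langle j_1\rangle \sim \langle j_2\rangle$, where smoothing of arbitrary order is automatic. Applying the Weyl formula \eqref{Opweil}--\eqref{BW} to the purely $x$-dependent symbol $u_1(x)$ yields
\begin{equation*}
\OpBW{u_1} u_2 \;=\; \sum_{j_1+j_2=k} \psi\!\pare{j_1,\,j_2+\tfrac{j_1}{2}}\,\hat u_1(j_1)\,\hat u_2(j_2)\,e^{\ii k x},
\end{equation*}
and symmetrically for $\OpBW{u_2} u_1$ after interchanging the roles of $u_1$ and $u_2$. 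Subtracting both from the product $u_1 u_2 = \sum_{j_1+j_2=k}\hat u_1(j_1)\hat u_2(j_2)\,e^{\ii k x}$ therefore exhibits the remainder as a bilinear Fourier multiplier with symbol
\begin{equation*}
m(j_1,j_2) \;\defeq\; 1 \,-\, \psi\!\pare{j_1,\,j_2+\tfrac{j_1}{2}} \,-\, \psi\!\pare{j_2,\,j_1+\tfrac{j_2}{2}}.
\end{equation*}

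The key step will be the support analysis of $m$. Using \eqref{admcutoff2} together with the triangle inequality, the cutoff $\psi(j_1, j_2+j_1/2)$ equals $1$ as soon as $|j_1|\leq \tfrac{\delta}{2}\langle j_2+j_1/2\rangle$, which in particular holds when $|j_1|\ll \langle j_2\rangle$; conversely, $\psi(j_2, j_1+j_2/2)$ vanishes whenever $|j_2|\gg\langle j_1\rangle$. Combining these observations yields $m(j_1,j_2)=0$ whenever $|j_1|\ll\langle j_2\rangle$, and by symmetry whenever $|j_2|\ll\langle j_1\rangle$. Hence $\mathrm{supp}\,m \subset \set{c\langle j_2\rangle \leq \langle j_1\rangle \leq C\langle j_2\rangle}$ for constants depending only on $\delta$, so that on the support $\max_2\set{\langle j_1\rangle, \langle j_2\rangle}\sim \max\set{\langle j_1\rangle, \langle j_2\rangle}$. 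Since $|m|$ is uniformly bounded, the Fourier coefficient bound \eqref{eq:bound_fourier_representation_m_operators} defining $\wt{\cR}^{-\rho}_1$ then holds for \emph{any} $\rho\geq 0$ with $\mu=\rho$.

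To conclude, I would write $m = m_1 + m_2$ symmetrically (for instance $m_1 = m_2 = m/2$) and set
\begin{equation*}
R_1(u_1) u_2 \defeq \sum_{j_1+j_2=k} m_1(j_1,j_2)\,\hat u_1(j_1)\,\hat u_2(j_2)\,e^{\ii k x},
\qquad
R_2(u_2) u_1 \defeq \sum_{j_1+j_2=k} m_2(j_1,j_2)\,\hat u_1(j_1)\,\hat u_2(j_2)\,e^{\ii k x},
\end{equation*}
matching the placement of $R_\mathsf j$ in \eqref{bonyeq}. The reality condition \eqref{eq:reality_cond_reminder} and translation invariance of $R_\mathsf j$ follow from the evenness of $\psi$ in each of its arguments and from the fact that the coefficients $m_\mathsf j$ depend only on $(j_1,j_2)$. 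I expect no serious obstacle in this argument; it amounts to the classical Bony low-low/low-high/high-high trichotomy, and the only small technical point is to correctly read the Weyl midpoint $(k+j)/2$ in \eqref{Opweil} and propagate it through the triangle inequality to obtain the diagonal support condition on $m$.
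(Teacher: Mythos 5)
Your proof is correct and follows the standard route, which is also the approach of the cited reference (the paper itself only points to \cite[Lemma 7.2]{BD2018}): expand the two Bony--Weyl paraproducts as bilinear Fourier multipliers via \eqref{Opweil}, subtract from the full product, and use the cutoff constraints \eqref{admcutoff1}--\eqref{admispapb} to show the remainder multiplier $m(j_1,j_2)$ is supported where $\langle j_1\rangle\sim\langle j_2\rangle$, which forces $\max_2\sim\max$ and makes \eqref{eq:bound_fourier_representation_m_operators} hold for every $\rho$ with $\mu=\rho$. One small bookkeeping point: since $u_1$ enters as a $1$-homogeneous symbol, the quantization in Definition~\ref{quantizationtotale} uses the discrete cutoff $\psi_1(n,\xi)$ summed over $n\in\N=\{1,2,\dots\}$ (not the continuous $\psi$), so the mean $\hat u_1(0)$ never appears in $\OpBW{u_1}u_2$; your support argument should therefore be read on $H^\sigma_0$ (zero-mean data, as the paper uses throughout), or else the diagonal term $\hat u_1(0)\hat u_2(j)$, which is not smoothing, must be tracked separately. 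The switch from $\psi$ to $\psi_1$ does not affect your support estimates, which go through unchanged for either admissible cutoff, and Remark~\ref{rem:OpBW_firstproperties} guarantees the two quantizations differ only by a smoothing operator.
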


We now state other composition results for  $m$-operators which follow as in 
\cite[Proposition 2.15]{BMM2022}.

\begin{proposition}[Compositions of $m$-operators] \label{compositionMoperator}
Let $p, p', N, K, K' \in \N_0$ with $K'\leq K$ and $\epsilon_0>0$. Let $m,m' \in \R$.
Then
\begin{enumerate}[i)]
\item
 If  $M(u;t) $ is  in
$ \Sigma\mathcal{M}^{m}_{K,K',p}[\epsilon_0,N]$ and $M'(u;t) $ is  in
$ \Sigma\mathcal{M}^{m'}_{K,K',p'}[\epsilon_0,N] $ then the composition
$ M(u;t)\circ M'(u;t) $
is  in $\Sigma\mathcal{M}^{m+\max(m',0)}_{K,K',p+p'}[\epsilon_0,N]$.

\item
If  $M (u) $ is a homogeneous $ m$-operator in $  \widetilde{\mathcal{M}}_{p}^{m}$
and $M^{(\ell)}(u;t)$, $\ell=1,\dots,p+1$, are matrices of  $ m_\ell $-operators  in
$ \Sigma \mM^{m_\ell}_{K,K',q_\ell}[\epsilon_0,N]$ with  $m_\ell \in \R$,
$q_\ell\in \N_0$,
then
$$
M \pare{ M^{(1)}(u;t)u, \ldots,  M^{(p)}(u;t)u }M^{(p+1)}(u;t)
$$
belongs to $ \Sigma\mM_{K,K',p+\bar q}^{m+ \bar m}[\epsilon_0,N]$ with $ \bar m\defeq\sum_{\ell=1}^{p+1} \max(m_\ell,0)$ and $\bar q\defeq \sum_{\ell=1}^{p+1}q_\ell$.

\item
If   $M(u;t) $ is in $ {\mathcal{M}}_{K,0,p}^{m}[\breve \epsilon_0]$ for any $\breve \epsilon_0\in \R^+$ and $ \bM_0(u;t) $ belongs to  $ \mM^0_{K,K',0}[\epsilon_0] $,
then $M(\bM_0(u;t)u;t)$ is in  $  \mM^{m}_{K,K',p}[\epsilon_0]$.

\item
Let $a$ be a symbol in $\sg{m}{K,K',p}{N}$ with $m\geq 0$ and $R$ a smoothing operator in $\sr{-\rho}{K,K',p'}{N}$. Then 
$$
\OpBW{a(u; t, \cdot)} \circ R(u;t) \, , \quad R(u;t) \circ \OpBW{a(u; t, \cdot)} \, , 
$$
are in $\sr{-\rho+m}{K,K',p+p'}{N}$.
\end{enumerate}
\end{proposition}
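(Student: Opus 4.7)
The plan is to follow the blueprint of \cite[Proposition 2.15]{BMM2022}, adapted to the scalar setting used here. First I would decompose each input $m$-operator as in \eqref{maps} into its homogeneous and non-homogeneous components, and then expand the composition by multilinearity so that every term is of one of four types: homogeneous-homogeneous, homogeneous-non-homogeneous (two cases), or non-homogeneous-non-homogeneous. Each of these pieces must be shown to be an $m$-operator of the advertised order and degree, after which the asserted class membership follows by regrouping.

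For item $i)$ the delicate case is the homogeneous-homogeneous composition $M_{q}\circ M'_{q'}$ with $M_q\in\wt{\cM}^m_q$, $M'_{q'}\in\wt{\cM}^{m'}_{q'}$. Using \eqref{smoocara0} I would compute the Fourier coefficients of the composed $(q+q'+1)$-linear operator as a convolution of the coefficients of $M_q$ and $M'_{q'}$, noting that translation invariance is preserved and the reality property \eqref{eq:reality_cond_reminder} transfers. The key arithmetic inequality is
\[
\max\{\langle j_1\rangle,\dots,\langle j_{q+q'}\rangle,\langle j\rangle\}^{m+\max(m',0)}\,\ \mathrm{max}_2\{\cdots\}^{\mu+\mu'}
\]
which dominates the product of the two pointwise bounds \eqref{eq:bound_fourier_representation_m_operators}: here the $\max(m',0)$ appears because the intermediate frequency produced by $M'_{q'}$ is bounded by the maximum of $\langle j\rangle$ and the input frequencies, and contributes only when $m'>0$. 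For the mixed and non-homogeneous pieces I would instead invoke the tame bounds \eqref{piove} twice in succession, distributing the Leibniz rule over the $K-K'$ time derivatives and using Remark \ref{rem:first_properties_moperators} to re-absorb homogeneous pieces into the non-homogeneous class when convenient. Item $iv)$ is essentially the same calculation with $m'=-\rho$ replaced by $-\rho+m$, relying on Lemma \ref{prop:action} to read $\OpBW{a}$ as an $m$-operator of order $m$ via Remark \ref{rem:smoo}.

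For item $ii)$ I would argue by induction on $p$. The multilinear operator $M(u_1,\dots,u_p)$ is in $\wt\cM^m_p$, and substituting $u_\ell\leadsto M^{(\ell)}(u;t)u$ amounts to composing $M$ with the tensor product of the $M^{(\ell)}$. Applying \eqref{piove} slot-by-slot yields a tame estimate with loss $m+\sum_\ell\max(m_\ell,0)$ and homogeneity $p+\sum_\ell q_\ell$; the verification that the Fourier representation \eqref{smoocara0} is preserved by substitution uses translation invariance of each factor. For item $iii)$ the content is the check that composition with the $0$-order operator $\bM_0$ does not spoil the smallness hypothesis on the argument: since $\bM_0(u;t)$ maps $B^{K'}_{s_0}(I;\epsilon_0)$ into $B^{K'}_{s_0}(I;\breve\epsilon_0)$ for some $\breve\epsilon_0$ depending on $\bM_0$, one can apply the estimate \eqref{piove} of $M$ with $u$ replaced by $\bM_0(u;t)u$ and conclude.

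The main obstacle I anticipate is purely bookkeeping: tracking the $\max_2$ versus $\max$ loss structure in \eqref{eq:bound_fourier_representation_m_operators} under convolutions, and simultaneously keeping control of the number of time derivatives $K'$ versus $K$ in the non-homogeneous tame bounds. Both features must survive each of the four composition types and then be reassembled consistently when collecting the pieces into a single element of $\Sigma\cM^{m+\max(m',0)}_{K,K',p+p'}[\epsilon_0,N]$. No new analytic idea is required beyond what is already encoded in Definitions \ref{def:moperators}--\ref{def:smoothing}; the effort lies in the careful indexing of homogeneities and the uniform dependence of constants on $s$ and $\epsilon_0$.
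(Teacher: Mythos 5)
The paper does not give a self-contained proof of Proposition~\ref{compositionMoperator}; it states that the claims ``follow as in \cite[Proposition 2.15]{BMM2022}'', and your proposal explicitly takes the same route, so for items {\it i)}--{\it iii)} you are following the paper's own strategy. Your expansion into homogeneous/non-homogeneous pieces, the convolution-of-Fourier-coefficients picture for the homogeneous-homogeneous compositions, and the observation that the $\max(m',0)$ (rather than $m+m'$) is what survives when the intermediate frequency cancels while $m'<0$, are all consistent with what a detailed writing-out of \cite[Prop.~2.15]{BMM2022} would contain.

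There is, however, a genuine gap in your sketch of item {\it iv)}. You write that one should ``read $\OpBW{a}$ as an $m$-operator of order $m$ via Remark~\ref{rem:smoo}'' and then run ``the same calculation''. If one literally does that and applies item {\it i)}, the composition $\OpBW{a(u;t,\cdot)}\circ R(u;t)$ lands only in $\Sigma\mathcal{M}^{m+\max(-\rho,0)}_{K,K',p+p'}[\epsilon_0,N]=\Sigma\mathcal{M}^{m}_{K,K',p+p'}[\epsilon_0,N]$, since $\max(-\rho,0)=0$ — i.e.\ a bounded, not smoothing, operator. The claimed membership in $\Sigma\mathcal{R}^{-\rho+m}_{K,K',p+p'}[\epsilon_0,N]$ cannot be obtained by viewing $\OpBW{a}$ as a generic $m$-operator. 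What one must use is the stronger fact recorded in Lemma~\ref{prop:action}: $\OpBW{a}$ maps $H^{s'}_0$ to $H^{s'-m}_0$ for \emph{every} $s'$, with no threshold on the target Sobolev level, so it cannot destroy the extra $\rho$ derivatives that $R$ has gained; composing the explicit mapping bounds then yields $\OpBW{a}\circ R:H^s_0\to H^{s+\rho-m}_0$, i.e.\ a $(-\rho+m)$-operator. For the reverse order $R\circ\OpBW{a}$ your plan works: item {\it i)} gives $-\rho+\max(m,0)=-\rho+m$ directly, using the hypothesis $m\ge 0$. So the two compositions in item {\it iv)} need genuinely different justifications, and your sketch only really covers one of them.
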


\begin{notation}
 In the sequel if $ K'= 0 $ we  denote a 
symbol $ a (u; t, x, \xi) $ in $\Gamma_{K,0,p}^m[\epsilon_0]$  simply as
$ a (u; x, \xi)  $,   
and a smoothing operator in $ R (u;t) $ 
in 
$  \Sigma\mathcal{R}^{-\rho}_{K,0,p}[\epsilon_0,N] $ simply as $ R(u)  $, 
without writing the $ t $-dependence.
\end{notation}

We finally provide the Bony paralinearization formula
of the composition operator. 

\begin{lemma}[Bony Paralinearization formula] 
  \label{lem:Bony_paralinearization_W}
Let $F$ be a smooth $\C$-valued function defined on a neighborhood of zero in $\C $, vanishing at zero at order $q\in
\N $. Then there is $ \epsilon_0 >  0 $
 and a  smoothing operator  $ R\pare{u} $
in \, $\Sigma \cR^{-\rho}_{K,0,q'}\bra{\epsilon_0, N}$, $q' \defeq \max(q-1,1)$, for any $\rho$,  such that
\begin{align}
  \label{eq:2313}
    F(u) =  \  \OpBW{ F'\pare{u} } u + R\pare{u}u  \,      .
\end{align}
\end{lemma}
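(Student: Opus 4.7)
The idea is to reduce $F$ to a polynomial of degree $N+1$ plus a smooth high-order tail, and to paralinearize each piece separately. Concretely, write
\[
F(z)=\sum_{n=q}^{N+1}\frac{F^{(n)}(0)}{n!}\,z^{n}+F_{>N+1}(z),
\qquad F'(z)=\sum_{n=q}^{N+1}\frac{F^{(n)}(0)}{(n-1)!}\,z^{n-1}+F'_{>N+1}(z),
\]
with $F_{>N+1}$ smooth and vanishing at $0$ at order $N+2$. Because $v\mapsto\OpBW{v}u$ is linear in the symbol, it is enough to establish the identity \eqref{eq:2313} separately for each monomial $z^{n}$ with $n\geq q$, producing a homogeneous smoothing contribution in $\widetilde{\cR}^{-\rho}_{n-1}$, and for the tail $F_{>N+1}$, producing a non-homogeneous smoothing contribution in $\cR^{-\rho}_{K,0,N+1}[\epsilon_0]$. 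Summing these gives a total remainder in $\Sigma\cR^{-\rho}_{K,0,q'}[\epsilon_0,N]$.

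\textbf{Homogeneous induction.} I shall prove by induction on $n\geq 1$ that $u^{n}=\OpBW{n\,u^{n-1}}u+R_n(u)u$ with $R_n\in\widetilde{\cR}^{-\rho}_{n-1}$ for every $\rho\geq 0$. The case $n=1$ is trivial. For the step $n\to n+1$, apply the Bony paraproduct decomposition (\Cref{lem:paraproduct_Weyl}) to $u^{n+1}=u\cdot u^{n}$:
\[
u^{n+1}=\OpBW{u}\,u^{n}+\OpBW{u^{n}}\,u+R_{1}(u)\,u^{n}+R_{2}(u^{n})\,u.
\]
The last two terms already have the claimed form after invoking the inductive hypothesis on $u^{n}$ and the composition rules in \Cref{compositionMoperator} (i)--(iv). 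For the first term, apply the inductive hypothesis to $u^{n}$ and compose: since the symbols $u(x)$ and $n\,u^{n-1}(x)$ are $\xi$-independent, every Moyal correction in \eqref{espansione2} vanishes identically, so \Cref{prop:composition_BW} yields
\[
\OpBW{u}\,\OpBW{n\,u^{n-1}}=\OpBW{n\,u^{n}}\;+\;\text{smoothing in }\widetilde{\cR}^{-\rho}_{n}.
\]
Regrouping gives $u^{n+1}=\OpBW{(n+1)\,u^{n}}u+R_{n+1}(u)u$, closing the induction.

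\textbf{Non-homogeneous tail.} Write $F_{>N+1}(u)=u^{N+2}\,G(u)$ for a smooth function $G$; by Bony once more,
\[
F_{>N+1}(u)=\OpBW{G(u)}\,u^{N+2}+\OpBW{u^{N+2}}\,G(u)+\widetilde R(u),
\]
where $\widetilde R(u)\in\cR^{-\rho}_{K,0,N+1}[\epsilon_0]$ by \Cref{lem:paraproduct_Weyl}. Applying the homogeneous step to $u^{N+2}$ in the first summand, and using symbolic calculus to combine $\OpBW{G(u)}\OpBW{(N+2)u^{N+1}}$ into $\OpBW{(N+2)u^{N+1}G(u)}=\OpBW{F'_{>N+1}(u)-u^{N+2}G'(u)}$, one recovers the leading paradifferential term $\OpBW{F'_{>N+1}(u)}u$. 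All residual terms, including $\OpBW{u^{N+2}}G(u)$ handled by treating $G(u)$ via the Bony paralinearization applied to $G$ (which, being smooth and vanishing at $0$ at order $0$, also falls under the same scheme via Moser-type composition estimates in the class $\sF{K,0,0}{N}$), are seen to satisfy the tame bound \eqref{piove} and thus to lie in $\cR^{-\rho}_{K,0,N+1}[\epsilon_0]$. Reality is preserved throughout since $\OpBW{\cdot}$ commutes with complex conjugation when the symbol is real, and each decomposition manipulates only real symbols when $F$ itself is real; the complex case follows by splitting into real and imaginary parts.

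\textbf{Main obstacle.} The technical heart of the argument is the non-homogeneous tail: one must verify that, after extracting the leading paradifferential contribution $\OpBW{F'_{>N+1}(u)}u$, every leftover term really gains $\rho$ derivatives in the sense of \eqref{piove}, uniformly in $\rho$. This relies essentially on the $C^{\infty}$ hypothesis on $F$ (through Moser estimates for $G$, $G'$, and their higher derivatives applied to $u$ with $\|u\|_{s_0}\leq\varepsilon_0$) and on the gain-of-regularity composition property \Cref{compositionMoperator}(iv) coupling bounded paradifferential operators with smoothing operators; together these ensure that the decomposition can be iterated indefinitely while preserving the $\rho$-smoothing norm.
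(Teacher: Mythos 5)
Your homogeneous (polynomial) induction is sound: using \Cref{lem:paraproduct_Weyl} and the fact that for $\xi$-independent symbols the $\#_\rho$ product degenerates to the pointwise product, you correctly obtain $u^n = \OpBW{n u^{n-1}}u + R_n(u)u$ with $R_n$ an $(n-1)$-homogeneous smoothing operator, and the degree bookkeeping closes. This is a legitimate, self-contained argument for the Taylor-polynomial part of $F$, and it is different in spirit from the paper's one-line proof, which simply invokes the general Bony paralinearization theorem of Berti--Delort (their Lemmata 3.19 and 7.2).

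The non-homogeneous tail, however, is where your argument has a genuine gap, and the gap is structural rather than merely a missing estimate. After writing $F_{>N+1}(u)=G(u)u^{N+2}$ and expanding by Bony paraproduct, you are left with terms such as $\OpBW{u^{N+2}}G(u)$ and (after extracting $\OpBW{F'_{>N+1}(u)}u$) a residual $-\OpBW{u^{N+2}G'(u)}u$. Neither of these lies, on its own, in the smoothing class $\cR^{-\rho}_{K,0,N+1}[\epsilon_0]$: they are paradifferential operators of order $0$ applied to $u$, and the estimate \eqref{piove} with $m=-\rho$ is precisely the gain of $\rho$ derivatives that you have not established. Your proposed fix, ``treating $G(u)$ via the Bony paralinearization applied to $G$'', is exactly the statement of the lemma for the function $G-G(0)$, so the argument becomes circular: to close it you would have to iterate, $G$ producing another smooth tail $G_{>N+1}(z)=z^{N+2}H(z)$, and so on, and you give no reason why this regress terminates or why the accumulated remainders satisfy the tame $(-\rho)$-smoothing bound. (You are right that, if one did have the paralinearization of $G-G(0)$, the two unwanted terms $\OpBW{u^{N+2}G'(u)}u$ and the corresponding piece of $\OpBW{u^{N+2}}G(u)$ cancel; but this only shifts the problem, it does not solve it.) The underlying point is that Bony paralinearization of a genuinely nonlinear $C^\infty$ composition map cannot be reduced to its Taylor polynomial plus algebra: the smoothing gain on the tail comes from a Littlewood--Paley/second-microlocalization argument (the content of Lemma 3.19 of Berti--Delort), and that analytic ingredient is absent from your proof. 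To repair the argument you would either need to reproduce that frequency-decomposition proof for the tail, or, as the paper does, cite it.
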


\begin{proof}
The  formula
follows by combination  of \cite[Lemmata 3.19 and 7.2]{BD2018}.
\end{proof}

\subsection{$ z$-dependent paradifferential calculus}  \label{sec:paradiff}

Along the 
paralinearization process of the $\alpha$-SQG equation 
in Section \ref{sec:paralinearization} we 
shall encounter parameter dependent paradifferential operators depending on a  $ 2\pi $-periodic variable $ z $.
The following  ``Kernel-functions"
have to be considered as Taylor remainders of maps of the form $ F\pare{u; x, z} $ at $ z=0 $
which are smooth  in $ u $ and 
with finite regularity in $ x $ and $ z $. We are interested in the behavior of such 
functions close to $ z=0 $.

\begin{definition}[Kernel functions]\label{def:kernel_functions}
Let $n\in \R$,  $p,N\in \N_0 $,  
$ K \in \N_0 $, and $ \epsilon_0>0 $.
\begin{enumerate}[i)]
\item $p$-{\bf homogeneous Kernel-functions.} If $ p \in \N $
we denote  $\wt{\KF}^n_p $ the space of 
$ z $-dependent, $ p $-homogeneous maps 
from $ H^\infty_0 \pare{\bT;\bC} $ to the space of 
$ x $-translation invariant real functions  
$ \varrho (u;x,z) $ of class $ \cC^\infty $ 
in $ (x, z ) \in \bT^2 $ with Fourier expansion 
\begin{equation}\label{homoresz}
\varrho (u;x,z)=  
\sum_{j_1, \ldots, j_p \in {\Z \setminus
 \{ 0 \}}}  \! \varrho_{j_1, \ldots, j_p} (z) 
u_{j_1} \cdots u_{j_p}
 e^{\im (j_1+ \cdots + j_p) x} \, , \quad z \in \T\setminus \{ 0 \}  \, , 
\end{equation} 
with coefficients $ \varrho_{j_1, \ldots, j_p} (z) $ of class  $ \cC^\infty\pare{\bT; \C }  $,  
symmetric in $ (j_1, \ldots, j_p ) $, 
satisfying the reality condition 
$ \overline{\varrho_{j_1, \ldots, j_p}}\pare{z} =
\varrho_{-j_1, \ldots, -j_p} \pare{z} $ 
and the 
following: for any $ l \in \N_0 $, there exist $ \mu > 0 $ and 
a constant $ C > 0 $ such that
\begin{equation}\label{homores1}
\av{\partial_{z}^{l} \varrho_{j_1, \ldots, j_p} (z) }
\leq C \av{\vec{\jmath}}^{\mu}\  \av{z}^{n-l}_{\bT}  \, , \quad
\forall  \vec \jmath = (j_1, \ldots, j_p) \in (\Z \setminus \{0\})^p   \, .
\end{equation}
For $ p = 0 $ we denote by $\wt{\KF}^n _0 $ the space of 
maps $ z \mapsto \varrho (z) $ which satisfy 
$ \av{\partial_{z}^{l} \varrho (z) } \leq C \,  \av{z}^{n-l}_{\bT} $.
\item  {\bf Non-homogeneous Kernel-functions.}   We denote by $\KF _{K,0,p}^{n}[\epsilon_0]$ the space of $ z $-dependent, 
 real functions  $ \varrho (u;x,z) $,
defined for $ u \in B_{s_0}^0 (I;\epsilon_0)  $ for some $s_0$ large enough, such that for any $0\leq k\leq K $ and $ l\leq  \max\set{0, \ceil{1+ n }\big. } $, any $s\geq s_0$, there are $C>0$, $0<\epsilon_0(s)<\epsilon_0$ and for any $ u \in B_{s_0}^K\pare{ I;\epsilon_0(s) }\cap C_{*}^{k}\pare{ I, H_0^{s} \pare{ \mathbb{T};\mathbb{C} } }$ and any $\gamma \in \N_0$, with $\gamma\leq s-s_0,  $ one has the estimate
\begin{equation}\label{homores1.1}
\av{ \partial_t^k\partial_x^\gamma\partial_z ^l \varrho \pare{ u;x, z } }  \leq C  \| u \|_{k,s_0}^{p-1}\|u\|_{k,s} \ \av{z} ^{n-l}_{\bT} \, , \qquad z\in\bT\setminus \{ 0 \}   \, .
\end{equation}
If $ p = 0 $ the right hand side in \eqref{homores1.1} 
has to be replaced by $   \av{z} ^{n-l}_{\bT}  $.

\item
{\bf Kernel-functions.} We denote by $\Sigma \KF _{K,0,p}^{n}[\epsilon_0,N]$ the space of real functions  
of the form 
\begin{equation}\label{espsymbol}
\varrho (u;x,z)= \sum_{q=p}^{N} 
\varrho _q\pare{ u;x,z } + \varrho _{>N}(u;x,z )  
\end{equation}
where $\varrho_q \pare{ u;x,z }  $, $q=p,\dots, N$ 
are homogeneous Kernel functions in $ \wt{\KF}_q^n $,  and 
$\varrho _{>N}(u;x,z )    $ 
is a non-homogeneous Kernel function in $ \KF _{K,0,N+1}^{n} [\epsilon_0] $.

A Kernel function  $\varrho(u;x,z) $ is \emph{real} if it is real valued for any
$ u \in  B_{s_0,\R}^0 (I;\epsilon_0) $.
\end{enumerate}
\end{definition}

In view of Remark \ref{rem:simb1}, 
 a homogeneous Kernel function $ \varrho ( u; x,z) $ in $ \wt{\KF}^n_p $ 
defines  a non-homogenous  Kernel function  in $ \KF _{K,0,p}^{n}[\epsilon_0]  $
for any $ \epsilon_0 >0 $. 

\begin{rem}\label{rem:firstremaKernelfunctions}
Let   $ \vr \pare{u;x, z} $ be 
a Kernel function in $ \Sigma \cF^{n}_{K, 0, p}\bra{\epsilon_0, N} $ with $ n\geq 0 $, 
which admits a continuous  extension in $ z=0 $. Then its trace 
 $  \vr \pare{u;x, 0} $ is a function in $ \Sigma \cF^{\bR}_{K, 0, p}\bra{\epsilon_0, N} $. 
\end{rem}

\begin{rem} \label{rem:2quantiz} 
If $ \varrho (u; x, z)$ is a homogeneous   Kernel function  $ \wt{\KF}^n_p $, 
the two definitions  of quantization in \eqref{BW} differ 
by a  Kernel smoothing operator in $ \wt{\KR}^{-\rho,n}_p $, for any $ \rho > 0 $, 
according to
Definition \ref{def:KR} below. 
\end{rem}

\begin{rem} \label{rem:operation_z}  
If $ \vr_1 (u;x,z) $ is a  Kernel function in 
$ \Sigma \KF ^{n_1}_{K, 0, p_1}\bra{\epsilon_0, N} $ 
and $ \vr_2 (u;x,z) $  in
$ \Sigma \KF ^{n_2}_{K, 0, p_2}\bra{\epsilon_0, N} $, 
then the sum 
$ (\vr_1 + \vr_2 )(u;x,z)$ is a  Kernel function in $ \Sigma \KF^{\min\set{n_1, n_2}}_{K, 0, \min\set{p_1, p_2}}\bra{\epsilon_0, N} $ and the product
$ ( \vr_1  \vr_2 )(u;x,z)$ is a  Kernel function in 
$ \Sigma \KF^{n_1+ n_2}_{K, 0, p_1 + p_2}\bra{\epsilon_0, N} $.
\end{rem}

\begin{rem}\label{prop:integral_kernelfunction}
Let $ \varrho \pare{u; x, z} $ be a Kernel function  in $ \Sigma \KF^{n}_{K, 0, p}\bra{\epsilon_0, N} $ with $  n>-1 $. Then 
$ \fint \varrho\pare{u; x, z}\dd z $ is a function in 
$ \Sigma \cF^{\bR}_{K, 0, p}\bra{\epsilon_0, N}$. 
This follows directly integrating  \eqref{homores1} and \eqref{homores1.1} in $ z $.
\end{rem}

The $ m $-Kernel-operators  defined below are a $ z $-dependent family of $  m $-operators with coefficients small as $ |z|^n_{\bT}  $. They appear for example as 
smoothing operators in the  composition of Bony-Weyl quantizations of 
Kernel-functions.

\begin{definition}\label{def:KM}
Let  $ m,n \in \R $,  $p,N\in \N_0 $,
$K\in\N_0$ with $ \epsilon_0 > 0 $.
\begin{enumerate}[i)]
\item
{\bf $p$-homogeneous $m$-Kernel-operator.}
We denote by $\widetilde{\KM}^{m,n}_{p}$ 
 the space  of 
$ z $-dependent, $ x $-translation invariant  homogeneous $ m $-operators 
according to  \Cref{def:moperators}, \cref{item:maps1},  in which the constant 
$ C $  is substituted with $ C\av{z}^n_{\bT} $, equivalently
\begin{equation}\label{homoresMz}
M( u ;z)v \pare{x} = 
\sum_{\substack{(\vec \jmath_{p},j,k) \in \Z ^{p+2} \\  
j_1 + \ldots + j_p + j = k } } M_{\vec \jmath_{p}, j,k}\pare{z} u_{j_1} \ldots u_{j_p} v_{j}  
e^{\ii  k x}  \, ,  \qquad 
  z \in \bT\setminus \{ 0 \}  \, , 
\end{equation} 
 with coefficients satisfying
\be \label{kernel1smo}
\big| M_{\vec \jmath _p, j, k}\pare{z} \big| \leq C \   {\rm max} _2 \set{\angles{j_1}, \ldots,  \angles{j_{p}}, \angles{j}} ^\mu \ \max\set{\angles{j_1}, \ldots , \angles{j_{p}}, \angles{j}} ^m \av{z}^n_{\bT} \, . 
\ee
If $ p=0 $ the right hand side  of \eqref{homoresMz} is replaced by
 $ \sum _{j\in \bZ} M_j \pare{z} v_j e^{\ii j x} $ with $ \big| M_j \pare{z} \big| 
 \leq C \angles{ j}^m \av{z}^n_{\bT} $. 
 \item
 {\bf Non-homogeneous $m$-Kernel-operator.}
  We denote by  $\KM ^{m,n}_{K,0,p}[\epsilon_0]$
  the space of $ z $-dependent, 
  non-homogeneous operators $ M(u;z) v  $ defined for any 
$ z  \in\bT\setminus \{ 0 \}   $,  such that for any $ 0\leq k\leq K $
\begin{equation}\label{piove2}
\norm{{\partial_t^k\left(M(u;z)v\right)}} _{s- \alpha k-m} \\
 \leq C  \av{z}^n_{\bT} \   \sum_{k'+k''=k}  \pare{ \|{v}\|_{k'',s}\|{u}\|_{k',{s_0}}^{p}
 +\|{v}\|_{k'', {s_0}}\|u\|_{k', {s_0}}^{p-1}\| u \|_{k', s} } \, .
\end{equation}
 \item
 {\bf $m$-Kernel-Operator.}
We denote by $\Sigma\KM^{m,n}_{K,0,p}[\epsilon_0,N]$
the space of operators 
of the form 
\begin{equation}\label{mapz}
M(u;z)v =\sum_{q=p}^{N}M_{q}(u,\ldots,u)v+M_{>N}(u;z)v 
\end{equation}
where   $M_{q} $  are homogeneous $m$-Kernel 
 operators  in $ \widetilde{\KM}^{m,n}_{q}$, $q=p,\ldots, N$  and $M_{>N}$ is a non--homogeneous $m$-Kernel-operator  in 
$\mathcal{M}^{m,n}_{K,0,N+1}[\epsilon_0]$. 

\item {\bf Pluri-homogeneous $m$-Kernel-Operator. } We denote 
by $ \Sigma_p^N \wt {\cM}^{m}_q $ the 
pluri-homogeneous $m$-operators  of the form \eqref{mapz}
with $ M_{>N} = 0 $.
\end{enumerate}
\end{definition}


\begin{rem}\label{rem:OpBWisanMop-z}
Given $ \varrho\pare{u;x, z}\in \Sigma \KF^{n}_{K,0,p}\bra{\epsilon_0, N} $ then $ \OpBW{\varrho\pare{u;x, z}} \in \Sigma \KM^{0, n}_{K, 0, p}\bra{\epsilon_0, N} $.
\end{rem}

\begin{definition}[Kernel-smoothing operators]\label{def:KR}
Given 
$ \rho > 0 $ we define the homogeneous and non-homogeneous Kernel-smoothing operators as
\begin{align*}
\wt \KR ^{-\rho,n}_p \defeq \wt \KM ^{-\rho,n}_p , 
&&
\KR^{-\rho,n}_{K, 0, p}\bra{\epsilon_0} \defeq \KM^{-\rho,n}_{K, 0, p}\bra{\epsilon_0}, 
&&
\Sigma \KR^{-\rho,n}_{K, 0, p}\bra{\epsilon_0, N}
\defeq \Sigma \KM^{-\rho,n}_{K, 0, p}\bra{\epsilon_0, N}. 
\end{align*}
\end{definition}

In view of \cite[Lemma 2.8]{BMM2022}, 
if $ M\pare{u, \ldots , u;z} $ is a homogenous 
$ m$-Kernel operator in $  \wt\KM^{m,n}_p $ then $  M\pare{u, \ldots , u; z}  $
defines a non-homogenous  $ m$-Kernel operator in $ \KM^{m,n}_{K, 0, p}\bra{\epsilon_0} $ for any $ \epsilon_0 > 0 $ and $ K\in \bN_0 $.

\begin{prop}[Composition of $ z $-dependent operators] \label{prop:comp_z} Let $ m,n,m',n'\in\bR $, and integers 
$ K, p, p', N\in\bN_0 $ with $ p, p'\leq N $.
\begin{enumerate}
\item \label{item:OpOp_ext_z} Let $ \vr \pare{u;x,z} \in \Sigma \KF ^{n}_{K, 0, p}\bra{\epsilon_0, N} $ and $  \vr' \pare{u;x,z} \in \Sigma \KF ^{n'}_{K, 0, p'}\bra{\epsilon_0, N}  $ 
be Kernel functions. Then 
\begin{equation*}
\OpBW{\vr \pare{u;x,z}}\circ \OpBW{\vr'\pare{u;x,z}} = \OpBW{\vr \ \vr' \pare{u;x,z}} + R\pare{u;z}
\end{equation*}
where $ R\pare{u;z} $ is a Kernel-smoothing operator in $ \Sigma\KR^{-\rho, n+n'}_{K, 0, p+p'}\bra{\epsilon_0, N} $  for any $ \rho \geq 0 $; 

\item  \label{item:MM_ext_z} Let $ M \pare{u;  z} $ be a $ m$-operator in 
$ \Sigma \KM^{m, n}_{K, 0 ,p}\bra{\epsilon_0, N} $ and 
$ M' \pare{u;  z} $ be an $ m'$-operator in 
$ \Sigma \KM^{m', n'}_{K, 0,p'}\bra{\epsilon_0, N} $. Then $ M\pare{u;  z} \circ M' \pare{u;  z} $ belongs to 
$ \Sigma \KM^{m+\max\pare{m',0}, n+n'}_{K, 0 , p+p'}\bra{\epsilon_0, N} $; 

\item  \label{item:OpR_ext_z} Let $ \varrho \pare{u;x,z} $ be a Kernel function in 
$ \Sigma\KF^n_{K, 0, p}\bra{\epsilon_0, N} $ and $ R\pare{u;z} $ 
be a Kernel smoothing operator in $ \Sigma\KR^{-\rho, n'}_{K, 0, p'}\bra{\epsilon_0, N} $ then $ \OpBW{\varrho\pare{u; x, z}}\circ R\pare{u;z} $ and $  R\pare{u;z} \circ \OpBW{\varrho\pare{u; x, z}} $   are a Kernel smoothing operator  in $ \Sigma \KR^{-\rho, n+n'}_{K, 0, p+p'}\bra{\epsilon_0, N} $; 

\item  \label{item:MM_int_z} 
Let $ M \pare{u;z} $ be an homogeneous $ m$-Kernel operator 
in $ \wt \KM^{m,n}_1 $, and $ M' \pare{u;z} $ in $ \Sigma \KM^{0,0}_{K,0,0}\bra{\epsilon_0, N} $ then  $ M\pare{M'\pare{u;z}u;z} \in \Sigma \KM^{m,n}_{K,0,1}\bra{\epsilon_0, N} $.
\end{enumerate}
\end{prop}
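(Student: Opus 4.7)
The strategy for all four items is to freeze $z\in \bT\setminus\{0\}$ as a parameter and invoke the $z$-independent composition results \Cref{prop:composition_BW,compositionMoperator}, then propagate the Kernel decay factors $\av{z}_{\bT}^n$ through each estimate. The guiding observation is that, for each fixed $z$, a $p$-homogeneous Kernel function (resp.\ $m$-Kernel operator) is a symbol in $\wt\Gamma^{0}_{p}$ (resp.\ an $m$-operator in $\wt\cM^{m}_{p}$) whose Fourier coefficients satisfy the bounds of \Cref{def:symbols,def:moperators} multiplied by $\av{z}_{\bT}^n$; the same holds for the non-homogeneous pieces, compare \eqref{homores1.1} with \eqref{nonhomosymbo} and \eqref{piove2} with \eqref{piove}. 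Consequently, every bounding constant appearing in the $z$-independent composition estimates acquires a multiplicative factor $\av{z}_{\bT}^{n+n'}$, which is exactly what \Cref{def:kernel_functions,def:KM,def:KR} require for membership in the stated Kernel classes.

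For item $(1)$, since $\varrho(u;x,z)$ and $\varrho'(u;x,z)$ are independent of $\xi$, every $k\geq 1$ summand in the asymptotic symbol \eqref{espansione2} involves at least one $\partial_\xi$ derivative of one of the two symbols and therefore vanishes, so $\varrho\#_\rho \varrho' = \varrho\varrho'$ exactly. Applying \Cref{prop:composition_BW} with $m=m'=0$, viewing $\varrho,\varrho'$ as symbols in $\Sigma\Gamma^{0}_{K,0,p}[\epsilon_0,N]$ parametrically in $z$, yields
\[
\OpBW{\varrho(u;x,z)}\circ \OpBW{\varrho'(u;x,z)} = \OpBW{\varrho(u;x,z)\,\varrho'(u;x,z)} + R(u;z),
\]
with $R(u;z)\in \Sigma\dot\cR^{-\rho}_{K,0,p+p'}[\epsilon_0,N]$. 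Since the construction of $R$ in the proof of \cite[Prop.\ 3.12]{BD2018} is bilinear in the two symbols and the admissible cut-offs \eqref{admcutoff1}--\eqref{admispapb} do not involve $z$, the scalar factor $\av{z}_{\bT}^{n+n'}$ is preserved, giving $R(u;z)\in \Sigma\KR^{-\rho,n+n'}_{K,0,p+p'}[\epsilon_0,N]$, while $\varrho\varrho'\in \Sigma\KF^{n+n'}_{K,0,p+p'}[\epsilon_0,N]$ by \Cref{rem:operation_z}. Item $(2)$ is the verbatim transcription of the proof of \Cref{compositionMoperator}-($i$), where every bounding constant is multiplied by $\av{z}_{\bT}^n$ from $M$ and by $\av{z}_{\bT}^{n'}$ from $M'$; the derivative loss $m+\max(m',0)$ and the homogeneity degree $p+p'$ arise exactly as in the non-parametric setting.

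Item $(3)$ follows at once from the two previous items: by \Cref{rem:OpBWisanMop-z}, $\OpBW{\varrho(u;x,z)}$ belongs to $\Sigma\KM^{0,n}_{K,0,p}[\epsilon_0,N]$, while $R(u;z)$ is by definition in $\Sigma\KM^{-\rho,n'}_{K,0,p'}[\epsilon_0,N]$, so item $(2)$ places the composition in $\Sigma\KM^{-\rho,n+n'}_{K,0,p+p'}[\epsilon_0,N] = \Sigma\KR^{-\rho,n+n'}_{K,0,p+p'}[\epsilon_0,N]$. For item $(4)$, the hypothesis $M'(u;z)\in \Sigma\KM^{0,0}_{K,0,0}[\epsilon_0,N]$ together with \eqref{piove2} imply that $\cw(u;z)\defeq M'(u;z)u$ is a function whose Sobolev norm is uniformly controlled in $z$ by a tame quantity in $u$; substituting $\cw(u;z)$ as argument of the homogeneous $m$-Kernel operator $M\in \wt\KM^{m,n}_{1}$ and using \eqref{kernel1smo} with the multilinearity of $M$ produces a bound of the form \eqref{piove2} of order $m$, homogeneity one and decay $\av{z}_{\bT}^n$, exactly as in the proof of \Cref{compositionMoperator}-($iii$). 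The main obstacle is the bookkeeping in item $(1)$: one must carefully revisit \cite[Prop.\ 3.12]{BD2018} to check that the scalar factor $\av{z}_{\bT}^{n+n'}$ is retained through the admissible cut-off manipulations; all remaining points are straightforward transpositions of the $z$-independent composition theorems.
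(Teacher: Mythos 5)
Your items (1), (2), and (4) track the paper's argument: the paper likewise reduces item (1) to \cite[Prop.\ 3.12]{BD2018}, item (2) to \cite[Prop.\ 2.15-i]{BMM2022}, and item (4) to \cite[Prop.\ 2.15-ii]{BMM2022} (the multilinear substitution result, essentially what you sketch, rather than the \Cref{compositionMoperator}-($iii$) you name), in each case multiplying the relevant seminorm bounds by the scalar factor $\av{z}_{\bT}^{n}$ or $\av{z}_{\bT}^{n+n'}$. Your observation in item (1) that $\varrho\#_\rho\varrho'=\varrho\varrho'$ exactly for $\xi$-independent symbols is a correct and useful clarification, implicit in the paper.

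However, your argument for item (3) has a genuine gap. You claim it ``follows at once'' from item (2), but apply item (2) to $M=\OpBW{\varrho}\in \Sigma\KM^{0,n}_{K,0,p}$ and $M'=R\in\Sigma\KM^{-\rho,n'}_{K,0,p'}$: the conclusion of item (2) gives $M\circ M'\in\Sigma\KM^{m+\max(m',0),\,n+n'}_{K,0,p+p'}=\Sigma\KM^{0+\max(-\rho,0),\,n+n'}_{K,0,p+p'}=\Sigma\KM^{0,n+n'}_{K,0,p+p'}$, since $\max(-\rho,0)=0$. This is order $0$, \emph{not} order $-\rho$, so one only learns that $\OpBW{\varrho}\circ R$ is a bounded Kernel operator, not a smoothing one. (The other composition $R\circ\OpBW{\varrho}$ does come out right, since then $m=-\rho$ and $\max(m',0)=0$.) To get the claimed $\Sigma\KR^{-\rho,n+n'}$ membership for $\OpBW{\varrho}\circ R$ you must not use the generic composition rule but the sharper statement that composing a paradifferential operator of order $m\geq 0$ with a $(-\rho)$-smoothing operator \emph{on either side} yields a $(-\rho+m)$-smoothing operator; this is \Cref{compositionMoperator}-($iv$) (equivalently \cite[Prop.\ 2.19-i]{BMM2022}), whose proof exploits the special structure of $\OpBW{\cdot}$ — essentially near-diagonal concentration in frequency from the admissible cut-offs — rather than the crude order count in item (2). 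That is exactly the result the paper invokes for item (3), with the $\av{z}_{\bT}^{n+n'}$ factor carried through.
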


\begin{proof}
The proof of item \ref{item:OpOp_ext_z} 
is performed in  \cite[Proposition 3.12]{BD2018} keeping track of the dependence in the variable $ z $ of the symbols as in \eqref{homores1},
 \eqref{homores1.1} when $ \gamma =0 $. 
More precisely $ \varrho $ and $ \varrho' $ satisfy $ z $-dependent inequalities (cf. \eqref{homosymbo}, \eqref{nonhomosymbo})
\begin{align*}
\av{\partial_x^\alpha \varrho_q\pare{\Pi_{\vec n}\cU; x,z}} \leq \  C \av{z}^n_{\bT} \av{\vec n}^{\mu+\alpha}\prod_{j=1}^p \norm{\Pi_{n_j} u_j}_{L^2} \, , 
&& 
\av{\partial_t^k \partial_x^\alpha \varrho\pare{u; x, z}} \leq   \ C \av{z}^n_{\bT} \norm{u}_{k, s_0}^{p-1}\norm{u}_{k, s} \, ,  
\end{align*}
and,  in the proof of \cite[Proposition 3.12]{BD2018},  the seminorm of the composed symbol always appear as a product of the seminorms of the factor symbols. 
The proof of item \ref{item:MM_ext_z}  is the same as in \cite[Proposition 2.15-i]{BMM2022}, keeping track of the dependence in $ z $ of the $ m $-operators. 
For 
item \ref{item:OpR_ext_z}, see \cite[Proposition 2.19-i]{BMM2022} factoring 
the dependence on $ z $. 
Item  
\ref{item:MM_int_z}  is a consequence of  \cite[Proposition 2.15-ii]{BMM2022} factoring 
 the dependence on $ z $.
\end{proof}

Finally  integrating \eqref{kernel1smo} and \eqref{piove2} in $ z $ we deduce the following lemma.

\begin{lemma}\label{prop:action_z_smoothing}
 Let $ R\pare{u;z} $ be a Kernel smoothing  operator in $ \Sigma \cR^{-\rho, n}_{K,0, p}\bra{\epsilon_0, N} $ 
 with $ n >  - 1 $.
  Then 
\begin{equation*}
\fint R\pare{u;z} g\pare{x-z} \dd z = R_1\pare{u}g \, , \qquad 
\fint R\pare{u;z}  \dd z = R_2\pare{u} \, , 
\end{equation*}
where $ R_1 \pare{u} $, $ R_2 \pare{u} $ are smoothing operators 
 in $ \Sigma \cR^{-\rho}_{K,0,p}\bra{\epsilon_0, N} $. 
\end{lemma}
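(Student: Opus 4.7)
The plan is to decompose $R(u;z)$ into its homogeneous and non-homogeneous components as in \eqref{mapz},
$$
R(u;z) = \sum_{q=p}^{N} R_q(u,\ldots,u;z) + R_{>N}(u;z),
$$
with $R_q \in \wt{\KR}^{-\rho,n}_q$ and $R_{>N} \in \KR^{-\rho,n}_{K,0,N+1}[\epsilon_0]$, and to integrate each summand in $z$ separately. The crucial analytic input is $\fint |z|^n_\bT\, dz < \infty$ for $n>-1$, which absorbs the factors $|z|^n_\bT$ appearing in the Kernel bounds \eqref{kernel1smo} and \eqref{piove2}.

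For the homogeneous piece $R_q$, I would use the Fourier representation \eqref{homoresMz}. Since the Fourier coefficients of the translated function $g(\cdot-z)$ are $e^{-\im jz} g_j$, term-by-term integration yields
$$
\left(\fint R_q(u;z)\, g(\cdot-z)\, dz\right)(x) = \!\!\!\!\!\sum_{\substack{\vec{\jmath}_q, j, k\\ j_1+\cdots+j_q+j=k}} \!\!\!\!\!\widetilde R_{\vec{\jmath}_q,j,k}\, u_{j_1}\cdots u_{j_q}\, g_j\, e^{\im kx},\qquad \widetilde R_{\vec{\jmath}_q,j,k} := \fint R_{\vec{\jmath}_q,j,k}(z)\, e^{-\im jz}\, dz.
$$
By \eqref{kernel1smo} with $m=-\rho$ and the integrability of $|z|^n_\bT$, the coefficients satisfy
$|\widetilde R_{\vec{\jmath}_q,j,k}| \leq C \max_2\{\langle j_1\rangle,\ldots,\langle j_q\rangle,\langle j\rangle\}^\mu \max\{\langle j_1\rangle,\ldots,\langle j_q\rangle,\langle j\rangle\}^{-\rho}$, which is exactly \eqref{eq:bound_fourier_representation_m_operators} for a smoothing operator in $\wt{\mR}^{-\rho}_q$. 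The reality condition \eqref{eq:reality_cond_reminder} is preserved because
$\overline{\widetilde R_{\vec{\jmath}_q,j,k}} = \fint R_{-\vec{\jmath}_q,-j,-k}(z)\, e^{\im jz}\, dz = \widetilde R_{-\vec{\jmath}_q,-j,-k}$. The second identity $\fint R_q(u;z)\, dz = R_2^{(q)}(u)$ follows by the same argument without the factor $e^{-\im jz}$.

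For the non-homogeneous remainder $R_{>N}(u;z)$, I would apply Minkowski's inequality in $z$ combined with the tame estimate \eqref{piove2} (with $m=-\rho$), using that the translation $g\mapsto g(\cdot-z)$ preserves all Sobolev norms. This yields, for every $0\leq k\leq K$,
$$
\Big\|\partial_t^k \!\fint\! R_{>N}(u;z)\, g(\cdot-z)\, dz\Big\|_{s-\alpha k+\rho}\! \leq C\!\! \sum_{k'+k''=k}\!\!\Big(\|g\|_{k'',s}\|u\|_{k',s_0}^{N+1} + \|g\|_{k'',s_0}\|u\|_{k',s_0}^{N}\|u\|_{k',s}\Big),
$$
which is exactly condition \eqref{piove} for a non-homogeneous smoothing operator in $\mR^{-\rho}_{K,0,N+1}[\epsilon_0]$. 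The corresponding statement for $\fint R_{>N}(u;z)\, dz$ is obtained analogously (or by formally specializing to $g\equiv 1$, adapted to the zero-average setting). Summing the homogeneous contributions and adding the tail gives $R_1(u), R_2(u) \in \Sigma \mR^{-\rho}_{K,0,p}[\epsilon_0,N]$.

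The proof is mostly bookkeeping; the only delicate point is checking that the three defining features of the target classes, namely translation invariance, the precise form of the tame bound in \eqref{piove}, and the reality condition, all survive integration in $z$. Translation invariance is automatic since the Fourier representation is unaffected by integrating a scalar function of $z$; reality is handled by the symmetry computation above; and the tame bound survives because the $z$-integral simply contributes the finite factor $\fint |z|^n_\bT\, dz$ on the right-hand side of the estimates.
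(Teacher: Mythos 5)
Your proof is correct and follows exactly the route the paper indicates — the paper dispenses with the argument in one line ("integrating \eqref{kernel1smo} and \eqref{piove2} in $z$"), and you have simply filled in the bookkeeping (Fourier-coefficient integration for the homogeneous part, Minkowski plus the tame bound for the non-homogeneous tail, and preservation of translation invariance and the reality condition). No issues.
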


The following proposition will be crucial  in Section \ref{sec:paralinearization}.

\begin{prop}
\label{prop:reminders_integral_operator}
Let  $ n > -1 $ and $ \varrho \pare{u; x , z} $ be a Kernel-function  in 
$ \Sigma\KF^{n}_{K,0,p}\bra{\epsilon_0 , N} $. 
Let  us define the operator, for any $ g \in H^s_0 (\bT) $, $ s \in \bR  $,   
\begin{equation}\label{defTf}
\pare{\cT_\varrho g} 
\pare{x} \defeq \fint \OpBW{\varrho \pare{u; \bullet , z}} g\pare{x-z} \dd z   \, .
\end{equation}
Then there exists
\begin{itemize}
\item   a symbol $ a \pare{u; x ,  \xi}  $ 
in $ \Sigma\Gamma^{-\pare{1+n}}_{K,0, p}\bra{\epsilon_0 , N} $ satisfying \eqref{areal};

\item a pluri-homogeneous smoothing operator $R\pare{u}  $ 
in $ \Sigma_p^N \wt {\cR}^{-\rho}_q $ 
for any $ \rho > 0 $;
\end{itemize}
 such that $ \cT_\varrho g = \OpBW{ a \pare{u; x,  \xi}} g + R\pare{u} g $.
\end{prop}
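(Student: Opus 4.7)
The strategy is to realize the shift $g(\cdot-z)$ as the Fourier multiplier $e^{-\ii z D}g$, so that
\[
(\cT_\varrho g)(x)=\fint \OpBW{\varrho(u;\bullet,z)}\bigl[e^{-\ii z D}g\bigr](x)\,\dd z,
\]
and then combine the Bony--Weyl symbolic calculus (\Cref{prop:composition_BW} together with its $z$-parametric extension \Cref{prop:comp_z}) with integration in $z$. Because $\varrho$ is independent of $\xi$ and $e^{-\ii\xi z}$ of $x$, only the terms $(-D_xD_\eta)^k$ survive in the Poisson-bracket expansion of \Cref{def:as.ex}, and a direct computation yields
\[
\varrho\#_{\rho'}e^{-\ii\xi z}=\sum_{k=0}^{\rho'}\frac{(z/2)^k}{k!}\,\partial_x^k\varrho(u;x,z)\,e^{-\ii\xi z},
\]
which is precisely the $\rho'$-truncated Taylor expansion in $x$ of $\varrho(u;x+z/2,z)\,e^{-\ii\xi z}$. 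This motivates defining
\[
a(u;x,\xi)\defeq \fint \varrho(u;x+z/2,z)\,e^{-\ii\xi z}\,\dd z.
\]

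To verify $a\in\Sigma\Gamma^{-(1+n)}_{K,0,p}\bra{\epsilon_0,N}$, I would work component by component: for each homogeneous $\varrho_q$ with Fourier representation \eqref{homoresz}, the $\ell$-th Taylor term reads
\[
\frac{(\ii J/2)^\ell}{\ell!}\fint z^\ell \varrho_{\vec{\jmath}_q}(z)\,e^{-\ii\xi z}\,\dd z,\qquad J\defeq j_1+\cdots+j_q.
\]
Since $z^\ell \varrho_{\vec{\jmath}_q}(z)$ vanishes like $|z|^{n+\ell}_\bT$ at the origin (by \eqref{homores1}), the classical Fourier-decay estimate, obtained by splitting the $z$-integral into $|z|_\bT<\langle\xi\rangle^{-1}$ and $|z|_\bT>\langle\xi\rangle^{-1}$ and repeatedly integrating by parts in the latter region using the $z$-derivative bounds \eqref{homores1}, shows that each term is a symbol in $\wt\Gamma^{-(1+n+\ell)}_q$ (after absorbing the factor $|J|^\ell\lesssim|\vec{\jmath}_q|^\ell$ into the parameter $\mu$). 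Thus the truncation at $L=\rho'$ belongs to $\wt\Gamma^{-(1+n)}_q$, and the non-homogeneous tail coming from $\varrho_{>N}$ is handled analogously using \eqref{homores1.1}. The reality condition \eqref{areal} follows directly from the realness of $\varrho$ via the change of variable $z\mapsto -z$ in the defining integral: $\overline{a(u;x,\xi)}=\fint \varrho(u;x+z/2,z)\,e^{\ii\xi z}\,\dd z=a(u;x,-\xi)$.

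The remainder $R(u)\defeq\cT_\varrho-\OpBW{a}$ splits into two pieces: the Taylor remainder of order $-(1+n+L)$ in $\xi$ with $L=\rho'$, and the Bony--Weyl composition remainder from \Cref{prop:comp_z}-\ref{item:OpOp_ext_z}, which is a Kernel-smoothing operator of $z$-decay $|z|^n_\bT$, converted into a smoothing operator on $L^2_0$ by \Cref{prop:action_z_smoothing} (applicable since $n>-1$ guarantees integrability in $z$). Both contributions become smoothing of arbitrary order $-\rho$ upon taking $L,\rho'$ sufficiently large. Because the composition formula respects $u$-homogeneity, each degree-$q$ component yields a homogeneous smoothing operator $R_q\in\wt\cR^{-\rho}_q$, and the non-homogeneous smoothing contribution stemming from $\varrho_{>N}$ is reabsorbed into the non-homogeneous part $a_{>N}$ of the symbol (any non-homogeneous operator of order $\leq -(1+n)$ fits into the symbol class of order $-(1+n)$), giving the pluri-homogeneity $R(u)\in\Sigma_p^N\wt\cR^{-\rho}_q$. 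The principal technical difficulty will be the uniform Fourier-decay bookkeeping with polynomial $|\vec{\jmath}_q|^\mu$-growth in the kernel data as higher $\xi$-derivatives are taken, together with the careful interplay between the Taylor remainder and the paraproduct cutoffs $\psi_q$ in the Bony--Weyl quantization.
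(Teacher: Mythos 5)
Your approach — realising the shift as $e^{-\ii z D}$, recognising that only $(-D_xD_\eta)^k$ survives in the Weyl composition, and hence that the candidate symbol should be $a(u;x,\xi)=\fint\varrho(u;x+z/2,z)e^{-\ii\xi z}\,\dd z$ — arrives at essentially the same oscillatory-integral symbol as the paper: its Fourier coefficients are $\hat a(u;m,\xi)=\fint\hat\varrho(u;m,z)e^{-\ii(\xi-m/2)z}\,\dd z$, which is exactly \eqref{simboloa} up to cutoffs. The near/far split in $z$ and the integration-by-parts argument you sketch to obtain the $\langle\xi\rangle^{-(1+n)}$ decay is also the mechanism used in the paper (their $\chi_1,\chi_2$ decomposition in \eqref{eq:oscillatory_integral}). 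So the core of the argument is sound. However, your handling of the remainder has two genuine gaps.

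First, you invoke \Cref{prop:comp_z}-\ref{item:OpOp_ext_z} (and implicitly \Cref{prop:composition_BW}) to produce a ``Bony--Weyl composition remainder''. Neither applies: $e^{-\ii\xi z}$ is not a Kernel function according to \Cref{def:kernel_functions} (it depends on $\xi$, which Kernel functions by definition do not), and it is not a symbol of order~$0$ with $z$-uniform seminorms either, since $\partial_\xi^\beta e^{-\ii\xi z}=(-\ii z)^\beta e^{-\ii\xi z}$ does not gain any factor $\langle\xi\rangle^{-\beta}$. In fact this detour is unnecessary: the Weyl composition of any symbol with a Fourier multiplier $b(\xi)$ is \emph{exact} — one has $\widehat{a\# b}(m,\xi)=\hat a(m,\xi)\,b(\xi-m/2)$ with no asymptotic remainder — so there simply is no ``composition remainder'' from a truncated expansion. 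The operator remainder $R(u)$ in the statement comes instead from the mismatch between the Bony--Weyl and Weyl quantizations and between the various cutoffs involved: this is what \eqref{diffqua} and the passage from \eqref{concutoff} to \eqref{loper} isolate, using \Cref{rem:2quantiz} and \Cref{prop:action_z_smoothing}. Second, and relatedly, your symbol $a$ as written does not lie in $\Sigma\Gamma^{-(1+n)}_{K,0,p}[\epsilon_0,N]$ on the whole of $\bZ\times\bR$: the oscillatory phase $\xi-m/2$ degenerates along $\xi=m/2$, where $\hat a(m,m/2)=\fint\hat\varrho(u;m,z)\,\dd z$ has no $\langle\xi\rangle^{-(1+n)}$ decay. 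This region $|m|\sim 2|\xi|$ lies outside the paradifferential zone $|m|\leq\delta\langle\xi\rangle$ and so is invisible to $\OpBW{a}$, but to place $a$ in the stated symbol class one must insert the cutoffs $\tilde\psi(m,\xi)$ and $\chi(2\xi-m)$ as the paper does in \eqref{simboloa} (and pay for them with the pluri-homogeneous smoothing operator $R$). Your writeup does not account for this, and the claim that ``any non-homogeneous operator of order $\leq-(1+n)$ fits into the symbol class'' in the last sentence is also not how the non-homogeneous tail is absorbed in the paper's argument.
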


\begin{proof} 
In view of Definition \ref{quantizationtotale} and  \cref{rem:2quantiz} we have that
\begin{equation}\label{diffqua}
\OpBW{\vr\pare{u;x,z}} - \OpW{  \vr_\psi \pare{u;x,z}}  =: R \pare{u;z} 
\end{equation}
is a pluri-homogeneous Kernel smoothing operator in 
$ \Sigma_p^N \wt {\KR}^{-\rho,n}_q $ for any $ \rho $.  
Since $ n > - 1 $, integrating in $ z $, 
we deduce that $ \fint R \pare{u;z} g(x-z) \dd z  = R \pare{u} g $ where 
$ R \pare{u} $
 is a pluri-homogeneous smoothing operator
in $ \Sigma_p^N \wt {\cR}^{-\rho}_q $ (cf.
\Cref{prop:action_z_smoothing}). 

In view of  \eqref{BW} and \eqref{Opweil} we compute for any $ \nu\in\bZ $
\begin{equation*}
\cF_{x\to \nu}\pare{ \fint\OpW{\vr_\psi \pare{u; x, z}}  g\pare{x-z} \dd z } \pare{\nu}
\\
=
\sum _{k \in \bZ} \psi\pare{\nu-k, \frac{\nu+k}{2}} \fint  \hat{\varrho} \pare{u; \nu-k, z} e^{-\ii kz} \dd z \ \hat{g}\pare{k} 
\end{equation*}
where $ \psi  (\xi', \xi) $ is an  admissible cut-off function, namely satisfying
 \eqref{admcutoff2}-\eqref{admispapb}. 
Introducing  another  admissible cut-off function $ \tilde{\psi} (\xi', \xi) $   
 identically equal to one on the support of 
$ \psi (\xi', \xi) $, and since $ \widehat{g}(0)=0 $, 
\begin{multline}\label{concutoff}
\cF_{x\to \nu}\pare{ \fint\OpW{\vr_\psi \pare{u; x, z}} \dd z } \pare{\nu} 
\\
=
\sum  _{k\in \mathbb{Z}} \psi\pare{\nu-k, \frac{\nu+k}{2}} \tilde \psi\pare{\nu-k, \frac{\nu+k}{2}} \chi\pare{2k} \fint  \hat{\varrho} \pare{u; \nu-k, z} e^{-\ii kz} \dd z \ \hat{g}\pare{k} 
\end{multline}
where $ \chi ( \cdot ) $ is the $ \cC^\infty $ function defined 
in \eqref{eq:chi}. 
Introducing a $ \cC^\infty $ function $ \eta : \bR \to [0,1]  $ with compact support 
such that 
\begin{align}\label{etacut}
\eta (z) = 1 \, , \ \forall |z| \leq \frac{\pi}{2} \, , && 
\eta (z) = 0 \, ,  \ \forall |z| \geq \frac{3\pi}{2} \, , &&  
\sum_{j \in \bZ} \eta ( z + 2 \pi j) = 1 \, , \ \forall z \in \bR \, ,  
\end{align}
we may write the integral on $ \bT $ as
\begin{equation}\label{intsuR}
\fint  \hat{\varrho} \pare{u; \nu-k, z} e^{-\ii kz} \dd z = 
\left. \frac{1}{2\pi} \int_{\bR}  \hat{\varrho} \pare{u; j, z} \eta (z) e^{-\ii  \xi z} \dd z \  \right|_{\pare{ j, \xi }= \pare{ \nu-k , k }} \, . 
\end{equation}
Therefore by \eqref{diffqua}, \eqref{concutoff} and \eqref{intsuR}
the operator $ \cT_\varrho $ in \eqref{defTf} is equal to 
\begin{equation}\label{loper}
\cT_\vr = \OpW{a_\psi\pare{u;x,\xi}} = \OpBW{a\pare{u;x, \xi}} + R\pare{u} 
\qquad \text{where} \qquad 
R \pare{u} \in \Sigma_p^N \wt {\cR}^{-\rho}_q  
\end{equation}
and
\begin{equation}\label{simboloa}
a\pare{u; x , \xi}  = \sum_{j \in \Z} \hat{a}\pare{u; j , \xi} e^{\ii j x}  \, , \quad 
\hat{a}\pare{u; j , \xi} \defeq  \tilde{\psi}\pare{j, \xi} \chi\pare{2\xi - j}  
\frac{1}{2\pi}\int_\bR  \hat{\varrho} \pare{u; j, z} \eta (z)  e^{-\ii\pare{ \xi - \frac{j}{2} } z} \dd z  \, . 
\end{equation}
In order to prove the lemma, in view of \eqref{loper}, 
it is sufficient to show that  
$ a \pare{u; x , \xi} $ defined in \eqref{simboloa} is a symbol in 
$ \Sigma\Gamma^{-\pare{1+n}}_{K,0, p}\bra{\epsilon_0 , N} $
according to \Cref{def:symbols}.
Notice that 
$ a\pare{u;x, \xi} $ satisfies the reality condition \eqref{areal}. 
Moreover, in view of the support properties of
$ \tilde{\psi}\pare{j, \xi} $ and $ \chi\pare{2\xi - j} $, it results that 
\begin{equation}\label{eq:restriction_freq}
\hat{a}\pare{u; j , \xi} \neq 0 \qquad \Longrightarrow \qquad 
\av{\xi-\frac{j}{2}} \sim \av{\xi}  \, , \quad \av{\xi} \gtrsim 1 \, , \quad 
\av{\xi}\sim\angles{\xi}  
\, .
\end{equation}
We decompose the Kernel function 
$$ 
\varrho \pare{u; x , z} \in\Sigma\KF^{n} _{K,0,p}\bra{\epsilon_0 , N} 
 \qquad \text{as} \qquad 
\vr \pare{u;x,z} = \sum_{q=p}^N 
\vr_q \pare{u; x , z} + \vr _{> N}\pare{u;x, z} \, , 
$$ 
where 
$ \varrho_q \pare{u; x , z} $ are homogenous Kernel functions in $ \widetilde{\KF}^{n}_q $ and 
$ \varrho_{> N} \pare{u; x , z} $ is a non-homogenous Kernel function
in $ \KF^{n}_{K,0,N+1} [\epsilon_0] $. Accordingly we decompose the 
symbol $a \pare{u; x, \xi } $ in \eqref{simboloa}  as
$$
a \pare{u;x,\xi} = \sum_{q=p}^{N} a_q \pare{u;x,\xi}  + a_{>N} \pare{u;x,\xi}
$$
where
\begin{footnotesize}
\begin{equation}\label{simbolsqN}
\begin{aligned}
a_q \pare{u;x,\xi} = \sum_{j \in \Z} \hat{a}_q \pare{u; j, \xi} e^{\ii j x} \, ,  & \quad  
\hat{a}_q \pare{u; j, \xi} \defeq \tilde{\psi}\pare{j, \xi} \chi\pare{2\xi - j}  \frac{1}{2\pi} \int_{\bR} 
 \hat{\varrho}_q  \pare{u; j, z} \eta (z) e^{-\ii\pare{ \xi - \frac{j}{2} } z} \dd z \, , \\
a_{>N} \pare{u;x,\xi} 
= \sum_{j \in \Z} \hat{a}_{>N} 
\pare{u; j, \xi} e^{\ii j x} \, , & \quad  
\hat{a}_{>N} \pare{u; j, \xi} \defeq \tilde{\psi}\pare{j, \xi} \chi\pare{2\xi - j} 
\frac{1}{2\pi} \int_{\bR}  \hat{\varrho}_{>N}  \pare{u; j, z} 	\eta (z) e^{-\ii\pare{ \xi - \frac{j}{2} } z} \dd z \, . 
\end{aligned}
\end{equation}
\end{footnotesize}
We now prove that, according to \Cref{def:symbols}, 
\begin{align}
\label{eq:res_h}
a_q 
\in & \ \widetilde{\Gamma}^{-\pare{1+n}}_q \,, \quad \forall q = p, \ldots, N \, , 
\\
\label{eq:res_nh}
a_{>N} \in & \ \Gamma^{-\pare{1+n}} _{K,0,N+1}\bra{\epsilon_0}.
\end{align}
\begin{step}[Proof of \eqref{eq:res_h}]
In view of \eqref{homoresz}  
the $ q $-homogeneous component $ a_q  (u;x, \xi ) $ in 
\eqref{simbolsqN} has an expansion as in 
\eqref{sviFou} (recall the notation $\vec \jmath_q = (j_1, \ldots, j_q) $)
\begin{align*}
a_{{\vec \jmath}_q} (\xi)  
= 
\tilde{\psi}\pare{j, \xi} \chi\pare{2\xi - j}  \frac{1}{2\pi} \int_{\bR}  \varrho_{j_1, \ldots, j_q} (z) \, \eta (z) \,  
 e^{-\ii \xi  z} \dd z   \, , &&  j = j_1 + \ldots + j_q \, . 
\end{align*}
Let us prove it satisfies \eqref{eq:fourier_char_homsymbols} with $ m = - (1+n ) $. 
Decomposing $ 1 = \chi_1 (\cdot ) + \chi_2 (\cdot) $
where  $\chi_1 : \R \to [0,1] $ is a smooth cutoff  function 
supported and equal to $1$ near $0$,  
 we decompose 
\begin{equation} \label{eq:oscillatory_integral}
a_{{\vec \jmath}_q} (\xi) =  
a_{{\vec \jmath}_q}^{(1)} (\xi)  
+  a_{{\vec \jmath}_q}^{(2)} (\xi)  =
 \sum _{\mathsf{j}=1}^2
\tilde{\psi}\pare{j, \xi} \chi\pare{2\xi - j}  
\frac{1}{2\pi} \int_{\bR} \chi _\mathsf{j} \pare{\angles{\xi} z}
\varrho_{j_1, \ldots, j_q} (z) \, \eta (z) \,  e^{-\ii \pare{ \xi - \frac{j}{2} } z} \dd z \, . 
\end{equation} 
By \eqref{homores1} (with $ l = 0 $)
and since  $n >-1 $ we deduce
\begin{equation}\label{eq:I1_est}
\av{a_{{\vec \jmath}_q}^{(1)} \pare{ \xi }}
 \lesssim
\int_{\av{z}\lesssim \xfrac{1}{\angles{\xi}}}
\av{{\vec \jmath}_q}^{\mu}  |z|^n 
\dd z \lesssim \av{{\vec \jmath}_q}^{\mu}  \angles{\xi}^{-\pare{1+n}} \, . 
\end{equation}
We now estimate $ a_{\vec \jmath}^{(2)} (\xi) $.  From
$ e^{-\ii z \pare{ \xi - \frac{j}{2}  } } = 
\bra{-\ii\pare{ \xi - \frac{j}{2} }}^{-l}\partial_z^l \Big(  e^{-\ii\pare{ \xi - \frac{j}{2} } z} \Big)  $ for any $ l\in\bN_0 $,
we obtain, by an integration by parts (use \eqref{etacut} and that 
$ \chi _2\pare{\angles{\xi} z} $ vanishes near zero), that 
\begin{multline}\label{a2si}
a_{{\vec \jmath}_q}^{(2)} (\xi) = 
\bra{-\ii\pare{ \xi - \frac{j}{2} }}^{-l} \tilde{\psi}\pare{j, \xi}
\chi\pare{2\xi - j} 
\sum_{l_1+l_2+l_3=l} c_{l_1, l_2, l_3}
\frac{1}{2\pi} \int_{\bR} e^{-\ii\pare{ \xi - \frac{j}{2} } z}  \,   Y_{l_1, l_2, l_3} (z)   \,  \dd z  
\\
 \text{where} \quad 
Y_{l_1, l_2, l_3} (z)  \defeq \angles{\xi}^{l_1}  \pare{ \partial_z^{l_1} \chi _2 }\pare{ z}  \, \partial_z^{l_2}\eta (z) \,  \partial_z^{l_3}\varrho_{{\vec \jmath}_q} (z)  \, . 
\end{multline}
Since $ \varrho_q \pare{u;x,z} $ is a Kernel function 
in $\wt{\KF}^n_p $, using \eqref{homores1} 
and exploiting that $\angles{\xi} ^{-1}\sim |z|$ 
on the support of $ \chi _2^{\pare{l_1}}  \pare{\angles{\xi} z} $ for any $ l_1 \ge 1 $, we get 
\begin{equation}\label{palder}
\begin{aligned}
 \int_\bR \av{   Y_{l_1, l_2, l_3} (z)} \dd z  \lesssim \av{\vec{\jmath}_q}^{\mu }
     \angles{\xi}^{l_1}\ \int _{ \frac{1}{\angles{\xi}}\sim |z| }  \av{z}^{n-l_3}  \dd z \lesssim \av{\vec \jmath_q}^\mu \angles{\xi}^{l_1+l_3 - \pare{n+1}}
   .
  \end{aligned} 
\end{equation}
When $ l_1 = 0 $ we have that
\begin{equation}\label{eq:l1zero}
\int_\bR \av{   Y_{0, l_2, l_3} (z)} \dd z  \lesssim \av{\vec\jmath_q}^\mu
\int_{\frac{\delta}{\angles{\xi}}\leq \av{z}\leq \frac{3\pi}{2} }
   \av{ 
  z }^{n-l_3} \dd z
  \lesssim \av{\vec\jmath_q}^\mu
\pare{1+\angles{\xi}^{l_3-\pare{n+1}}} . 
\end{equation}
Then by \eqref{a2si}, \eqref{palder}, \eqref{eq:l1zero}
and \eqref{eq:restriction_freq},  we deduce, for $ l > n + 1 $,  
\begin{equation}\label{bounda2}
\big| a_{{\vec \jmath}_q}^{(2)} (\xi) \big|
\lesssim_l \av{\vec{\jmath}_q}^{\mu } \angles{\xi}^{-\pare{1+n}} \, . 
\end{equation}
The bounds \eqref{eq:I1_est} and \eqref{bounda2} prove 
that $ a_{{\vec \jmath}_q} (\xi) $  in \eqref{eq:oscillatory_integral} satisfies
the estimate \eqref{eq:fourier_char_homsymbols} (for $ \beta = 0 $ and $ m = 
- (1+ n) $).  
Since, for any $ \beta \in \N $, 
\begin{align}\label{rem:simplification_oscint}
\partial_\xi^ \beta a_{{\vec \jmath}_q} (\xi) = 
\sum_{\beta_1+ \beta_2+ \beta_3 = \beta}
C_{\beta_1,\beta_2,\beta_3}
\pa_{\xi}^{\beta_1} \tilde{\psi}\pare{j, \xi} 
\pa_{\xi}^{\beta_2}  \chi\pare{2\xi - j}  
\int _{\bR} \varrho_{{\vec \jmath}_q} (z) 
 \pare{-\ii z}^{\beta_3}   e^{-\ii\pare{ \xi - \frac{j}{2} } z} \dd z \, , 
\end{align}
using \eqref{admispapb}, \eqref{eq:restriction_freq}, the fact that $ \chi $ is supported near $ 0 $, 
 that 
 $ z^{\beta_3}  \varrho_{{\vec \jmath}_q} (z)  $ satisfies \eqref{homores1} 
 with ($ n $ replaced by $ n+\beta_3$, cf. Remark \ref{rem:operation_z})
and repeating the bound obtained for $ \beta_3 =0 $ for the integral term in \cref{rem:simplification_oscint}, we obtain 
$$
\av{\partial_\xi^\beta a_{{\vec \jmath}_q} (\xi)}  \lesssim_\beta
\sum_{\beta_1+\beta_2+\beta_3 = \beta} \angles{\xi}^{- {\beta_1} }
\angles{\xi}^{- {\beta_2} }
\av{\vec{\jmath}_q}^{\mu } \angles{\xi}^{- {(1 +
 n + \beta_3)} } 
 \lesssim_\beta 
\av{\vec{\jmath}_q}^{\mu } \angles{\xi}^{- {(1 +
 n + \beta)} } \, .  
$$ 
Note that actually for any $ j \in \Z $, $ \beta_2 \geq 1 $,  the derivative
$ \partial_\xi^{{\beta}_2} \chi \pare{  2 \xi - j }   = 0 $, 
for any $ |\xi| \geq  2 $. 
This concludes the proof of \eqref{eq:res_h}.
\end{step}

\begin{step}[Proof of \eqref{eq:res_nh}]
We argue similarly to the previous step. 
Recalling \eqref{simbolsqN},  for any $ 0 \leq k \leq K $ and $ \gamma \in \N_0 $, 
we decompose, with $ \chi_\mathsf{j}, \ \mathsf{j}=1, 2 $ defined as in \eqref{eq:oscillatory_integral}, 
\begin{equation} \label{eq:anonhom}
\begin{aligned}
& \partial_t^k \partial_x^\gamma a_{>N}\pare{u; x,\xi}  =  \  I_1 + I_2 \qquad \text{where}  \\
& I_\mathsf{j} \defeq  \sum _{j\in \bZ}   \tilde{\psi}\pare{j, \xi} \chi\pare{2\xi-j}  \frac{1}{2\pi} \int_\bR  \chi_\mathsf{j} \pare{z\angles{\xi}}  \widehat{ \partial_t^k   \partial_x^\gamma\varrho}_{>N} \pare{u;j,z} \eta\pare{z} e^{-\ii \pare{ \xi -\frac{j}{2} } z} \dd z \ e^{\ii j x} \, .
\end{aligned} 
\end{equation}
Fix $ \mu_0 > 1 $, let $ s_0 > 0 $ associated to $ \vr_{>N} $ as per \Cref{def:kernel_functions}, let $ \gamma \leq s-\pare{s_0+\mu_0} $. 
The term $ I_1 $ can be  estimated using \eqref{homores1.1}, 
the fact that $ \chi_1 (z \angles{\xi} ) $ is supported for $ \av{z}\lesssim 1/\angles{\xi} $
and  $ n > - 1 $, as
\begin{equation}
\label{I1bound}
\av{I_1} \lesssim \sum_{j\in \bZ} \angles{j}^{-\mu_0} \int _{\av{z}\lesssim 1/\angles{\xi}} \av{ \widehat{ \partial_t^k \partial_x^\gamma \pare{1-\partial_x^2}^{\frac{\mu_0}{2}} \varrho_{>N}}\pare{u;j,z} } \dd z
\\
\lesssim
\norm{u}_{k, s_0 +\mu_0}^{N}\norm{u}_{k, s}\angles{\xi}^{-\pare{1+n}} .
\end{equation}
Next we estimate $ I_2 $. After an integration by parts,  setting $  l = \max \set{0, \ceil{1+n}} $, we have from \cref{eq:anonhom}
\begin{equation}
\label{passs1}
\begin{aligned}
\av{I_2} 
\lesssim & \ \sum_{j \in\bZ} \av{\tilde{\psi}\pare{j, \xi}\chi\pare{2\xi-j}} \av{\xi-\frac{j}{2}}^{-l} \sum_{l_1+l_2+l_3 = l}  \int _\bR
\av{Z_{l_1, l_2, l_3}\pare{z}}\dd z \, . 
\end{aligned}
\end{equation}
where
\begin{equation}
\label{passs3}
Z_{l_1, l_2, l_3}\pare{z} \defeq  \angles{\xi}^{l_1}\pare{ \partial_z^{l_1} \chi_2 }\pare{z\angles{\xi}}  \ \partial_z^{l_2}\eta\pare{z} \ \partial_t^k \widehat{\partial_x^\gamma \partial_z^{l_3}  \vr} _{> N}\pare{u;j, z} \, . 
\end{equation}
For any  $ j\in \bZ $
\begin{equation}\label{passs2}
\av{ \widehat{ \partial_t^k  \partial_x^\gamma  \partial_z^{l_3} \varrho}_{>N}\pare{u;j,z}}
\lesssim \angles{j}^{-\mu_0} \sup _{x\in\bT} \av{\partial_t^k \partial_x^{\gamma } \partial_z^{l_3}  \pare{1-\partial_x^2}^{\frac{\mu_0}{2}} \varrho _{>N}\pare{u;x,z}}
\lesssim \angles{j}^{-\mu_0} \norm{u}_{k, s_0+\mu_0}^{N} \norm{u}_{k , s}\av{z}^{n - l_3} \, .
\end{equation}
With computations analogous to the ones performed in \cref{palder,eq:l1zero} we obtain using \cref{passs2,passs3}, that
\begin{equation}
\label{passs4}
\begin{aligned}
\int _{\bR}\av{Z_{l_1, l_2, l_3} \pare{z}} \dd z \lesssim & \ \angles{j}^{-\mu_0} \norm{u}_{k, s_0+\mu_0}^{N} \norm{u}_{k , s} \angles{\xi}^{\pare{l_1+l_3}-\pare{n+1}}, 
& \text{ if } & l_1 \neq 0 , \\
\int _{\bR}\av{Z_{0, l_2, l_3} \pare{z}} \dd z \lesssim & \ \angles{j}^{-\mu_0} \norm{u}_{k, s_0+\mu_0}^{N} \norm{u}_{k , s} \pare{ 1+ \angles{\xi}^{l_3 -\pare{n+1}} }. 
\end{aligned}
\end{equation}
Since $ l = l_1+l_2+l_3 > 1+n $ and $ \mu_0 > 1 $ we obtain, by \cref{passs1,passs4,eq:restriction_freq} that
\begin{equation}\label{I2bound}
\av{I_2} \lesssim 
 \norm{u}_{k , s_0+\mu_0}^{N} \norm{u}_{k , s}  \angles{\xi}^{-\pare{1+n}} \sum_{j\in\bZ} \angles{j}^{-\mu_0} . 
\end{equation}
Inserting \cref{I1bound,I2bound} in \eqref{eq:anonhom} we conclude that
\begin{equation*}
\av{\partial_t^k \partial_x^\gamma  a_{>N}\pare{u; x,\xi} }   \lesssim   \norm{u}_{k , s_0+\mu_0}^{N} \norm{u}_{k , s}   \angles{\xi}^{-\pare{1+n}} .
\end{equation*}
Arguing as in \eqref{rem:simplification_oscint} we thus obtain that, for any $ \beta \in \N $, 
$ \av{\partial_t^k \partial_x^\gamma  \partial_\xi^\beta a_{>N}\pare{u; x,\xi} }   \lesssim   \norm{u}_{k , s_0+\mu_0}^{N} \norm{u}_{k, s}   \angles{\xi}^{-\pare{1+n+\beta}} $
concluding the proof of \eqref{eq:res_nh}.
\end{step}
\end{proof}

\section{The linearized problem at $ f = 0 $}\label{sec:linearized}

The  linearized equation  \eqref{eq:SQG_Hamiltonian}
 at $ f = 0 $ is
\begin{equation}\label{flin0}
\pa_t f = \partial_x \, \dd \nabla E_{\alpha}(0) f \, .
\end{equation}
In this section we  prove that the linear Hamiltonian operator
$ \partial_x \,  \dd \nabla E_\alpha(0) $
is a Fourier multiplier with symbol 
$ -\ii \, \omega_\alpha \pare{j}   $ 
and we provide its 
asymptotic expansion, see Lemmas \ref{lem:linearization} and \ref{prop:Lalpha_asymptotic}. 
We also prove a convexity  property of the frequency map 
$ j \mapsto \omega_\alpha \pare{j}  $  and that $ \omega_\alpha \pare{j} $ are positive 
for any $ j \geq	 2 $, whereas $ \omega_\alpha (0) =   \omega_\alpha (1) = 0 $, cf. Remark 
\ref{rem:regmis}.   These latter results do not rely  on
oscillatory integrals
expansions 
and enable to prove 
the absence of three-wave resonances in \Cref{lem:nonres_cond}.

The following result extends
the computations 
 in \cite[Section 3]{HHM2021}, valid
for  $ \alpha\in\pare{0, 1} $, to the whole range  $ \alpha\in\pare{0, 2} $.

\begin{lemma}[Linearization of $  \nabla E_\alpha $ at zero]\label{lem:linearization}
For any $ \alpha\in\pare{0, 2} $,  
it results that 
\begin{equation}\label{diffinzero}
\dd \nabla E_\alpha(0)  = - L_\alpha\pare{\av{D}}
\, ,
\end{equation}
where  $ L_\alpha\pare{\av{D}} $ is the Fourier multiplier operator 
\begin{equation}\label{eq:Lalpha}
 L_\alpha\pare{\av{D}} \defeq  \
\frac{c_\alpha}{2\pare{1-\frac{\alpha}{2}}}  \bra{ \sT^1_\alpha\pare{\av{D}} - \sT^2_\alpha\pare{\av{D}} - \frac{\Gamma\pare{2-\alpha }}{\Gamma\pare{1 - \frac{\alpha}{2}}^2}}
\end{equation}
with 
\begin{align}  
 \label{eq:T1alpha} \sT^1_\alpha \pare{\av{j}} \defeq & \ \frac{ \Gamma\pare{2-\alpha} }{\Gamma\pare{1-\frac{\alpha}{2}} \Gamma \pare{ \frac{\alpha}{2}  } }  \sum_{k=0}^{\av{j}-1} \frac{\Gamma \pare{ \frac{\alpha}{2} +k }}{\Gamma \pare{1-\frac{\alpha}{2} +k} } \frac{1}{1-\frac{\alpha}{2} + k} \,, & & \sT^1_\alpha \pare{0} = 0 \, ,  \\
\label{eq:T2alpha} \sT^2 _\alpha \pare{\av{\xi}} \defeq & \ \frac{ \Gamma\pare{2-\alpha}}{\Gamma\pare{1-\frac{\alpha}{2}} \Gamma\pare{\frac{\alpha}{2}} }    \frac{ \Gamma\pare{\frac{\alpha}{2} +\av{\xi}} }{ \Gamma\pare{1-\frac{\alpha}{2} +\av{\xi}} }  &=& \ \bra{\av{\xi}^2 - \pare{1-\frac{\alpha}{2}}^2}\sM_\alpha\pare{\av{\xi}} , \\
\sM_\alpha\pare{\av{\xi}} \defeq &\frac{\Gamma\pare{2-\alpha}}{\Gamma\pare{1-\frac{\alpha}{2}} \Gamma\pare{ \frac{\alpha}{2}} } \  \frac{1}{\av{\xi}^2 - \pare{ 1-\frac{\alpha}{2} }^2 } \frac{\Gamma\pare{\frac{\alpha}{2} +\av{\xi}}}{\Gamma\pare{ 1 - \frac{\alpha}{2} +\av{\xi}}}
\, . \label{eq:Malpha}
\end{align}
The map $   j \mapsto \sT^\mathsf{1}_\alpha \pare{\av{j}}
$ is a Fourier  multiplier in $ \wt \Gamma^{\max\pare{0, \alpha-1}}_0 $ and 
$ j \mapsto  \sT^\mathsf{2}_\alpha \pare{\av{j}} $ 
is a Fourier  multiplier in $ \wt \Gamma^{\alpha-1}_0  $.
\end{lemma}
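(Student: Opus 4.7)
The plan is to compute the Gâteaux derivative of $\nabla E_\alpha$ at $f=0$ directly from \eqref{eq:gradient-pseudoenergy} and exploit the fact that, when $f=0$, the integrand depends only on $x-y$: the linearized operator is therefore translation invariant, hence a Fourier multiplier. The explicit formulas for $\sT^1_\alpha$ and $\sT^2_\alpha$ then come from evaluating this multiplier on the exponentials $e^{\im j x}$ and cleaning up the resulting Gamma-function expressions.

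First, I expand to first order in $f$: using $\sqrt{1+2f(x)} \simeq 1+f(x)$ and
\begin{equation*}
\bigl[1+2f(x)+1+2f(y) - 2\sqrt{1+2f(x)}\sqrt{1+2f(y)}\cos(x-y)\bigr]^{\alpha/2} \simeq D_0(x-y)^{\alpha/2}\bigl\{1+\tfrac{\alpha}{2}[f(x)+f(y)]\bigr\},
\end{equation*}
with $D_0(z) \defeq 2-2\cos z = |2\sin(z/2)|^2$, and differentiating the numerator of \eqref{eq:gradient-pseudoenergy} (integrating by parts in $y$ the $\sqrt{1+2f(y)}$ term whose $y$-derivative carries across $\sin(x-y)$), I obtain that $d\nabla E_\alpha(0)[h](x) = -\fint K_\alpha(x-y)\,h(y)\,dy$ where $K_\alpha$ is a fixed linear combination of $|2\sin(z/2)|^{-\alpha}$, $\cos z\cdot|2\sin(z/2)|^{-\alpha}$ and $|2\sin(z/2)|^{2-\alpha}$ (plus a $\sin z\cdot|2\sin(z/2)|^{-\alpha}$ contribution pairing against $h'(y)$).

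Testing on $h(y)=e^{\im j y}$ reduces the computation to the classical Fourier integrals
\begin{equation*}
J_\beta(j) \defeq \fint \frac{e^{-\im j z}}{|2\sin(z/2)|^\beta}\,dz = \frac{\Gamma(1-\beta)}{\Gamma\!\bigl(1-\tfrac{\beta}{2}+j\bigr)\,\Gamma\!\bigl(1-\tfrac{\beta}{2}-j\bigr)}\,, \qquad \beta\in\{\alpha,\alpha-2\},
\end{equation*}
together with their $\cos z$ and $\sin z$ modulations (which translate into index shifts $j\mapsto j\pm 1$). This identity is classical for $\beta\in(0,1)$ and extends to $\beta\in(0,2)$ by analytic continuation, the singular kernel being interpreted as a principal value when $\beta\geq 1$. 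Successive application of the recursion $\Gamma(z+1)=z\Gamma(z)$ to normalize all denominators to $\Gamma(1-\alpha/2+|j|)$ and peeling off the $j=0$ contribution rearranges the resulting sum into exactly \eqref{eq:Lalpha}: the $j$-independent remainder yields the constant $\Gamma(2-\alpha)/\Gamma(1-\alpha/2)^2$; the closed-form Gamma ratio at $|j|$ becomes $\sT^2_\alpha(|j|)$ as in \eqref{eq:T2alpha}; and the telescoping correction produced by iterating the recursion from $k=0$ to $k=|j|-1$ assembles precisely into the finite sum $\sT^1_\alpha(|j|)$ of \eqref{eq:T1alpha}.

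For the symbol orders I use Stirling's asymptotic $\Gamma(\tfrac{\alpha}{2}+|\xi|)/\Gamma(1-\tfrac{\alpha}{2}+|\xi|) = |\xi|^{\alpha-1}\bigl(1+O(|\xi|^{-1})\bigr)$: then \eqref{eq:T2alpha} gives $\sT^2_\alpha(|\xi|)=O(|\xi|^{\alpha-1})$, and smoothly extending the Gamma ratio to $|\xi|\geq 1$ (via its standard integral representation) provides the $\xi$-derivative bounds required by \eqref{homosymbo}. For $\sT^1_\alpha$ the $k$-th summand behaves like $k^{\alpha-2}$ by the same Stirling estimate, so the partial sum is uniformly bounded when $\alpha<1$ and grows like $|\xi|^{\alpha-1}$ when $\alpha>1$, giving order $\max(0,\alpha-1)$; derivative estimates follow from Euler--Maclaurin applied to the interpolated sum. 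The main technical obstacle is the bookkeeping of Gamma-function identities required to identify the telescoping structure \eqref{eq:T1alpha} and the closed form \eqref{eq:T2alpha}, together with the careful justification of the analytic continuation of $J_\beta$ for $\beta\in[1,2)$ and the integration by parts in $y$ against the principal-value kernel.
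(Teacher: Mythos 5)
Your overall strategy is the same as the paper's: linearize $\nabla E_\alpha$ at $f=0$, observe that translation invariance forces the resulting operator to be a Fourier multiplier, compute that multiplier by reducing to trigonometric Gamma-integrals, and then clean up with the recursion $\Gamma(z+1)=z\Gamma(z)$ and the Euler reflection identity. The paper does this by splitting $d\nabla E_\alpha(0)$ into four explicit operators and evaluating each via the Magnus--Oberhettinger identity $\int_0^\pi \sin^X(z)e^{\im Yz}\,\dd z$; your single ``master'' formula $J_\beta(j)$ is a more compact device but rests on the same classical identity. There is, however, a concrete error.

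Your formula
\[
J_\beta(j) \;=\; \fint \frac{e^{-\im j z}}{\av{2\sin\pare{z/2}}^\beta}\,\dd z \;=\; \frac{\Gamma\pare{1-\beta}}{\Gamma\pare{1-\tfrac{\beta}{2}+j}\,\Gamma\pare{1-\tfrac{\beta}{2}-j}}
\]
is missing a factor of $(-1)^{j}$. Substituting $z=2u$ and applying the same identity \eqref{eq:magic_identity_1} with $X=-\beta$, $Y=-2j$ gives $e^{\im Y\pi/2}=(-1)^{j}$, hence the correct value is
\[
J_\beta(j) \;=\; \frac{(-1)^{j}\,\Gamma\pare{1-\beta}}{\Gamma\pare{1-\tfrac{\beta}{2}+j}\,\Gamma\pare{1-\tfrac{\beta}{2}-j}}
\;=\;\frac{\Gamma\pare{1-\beta}}{\Gamma\pare{\tfrac{\beta}{2}}\,\Gamma\pare{1-\tfrac{\beta}{2}}}\,\frac{\Gamma\pare{\tfrac{\beta}{2}+\av{j}}}{\Gamma\pare{1-\tfrac{\beta}{2}+\av{j}}}\,,
\]
where the second form uses \eqref{eq:consequence_Euer_duplication} and is the one matching \eqref{eq:T1alpha}--\eqref{eq:Malpha}. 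You can verify the discrepancy at $j=1$, small $\beta>0$: expanding $\av{2\sin(z/2)}^{-\beta}\approx 1-\beta\log\av{2\sin(z/2)}$ and using the Fourier series $\log\av{2\sin(z/2)}=-\sum_{k\ge 1}\cos(kz)/k$ gives $J_\beta(1)\approx +\beta/2$, positive, whereas your formula gives $-\beta/2$. (Your expression is actually the Fourier coefficient of the complementary kernel $\av{2\cos(z/2)}^{-\beta}$, peaked at $z=\pi$.) Since you then ``shift indices $j\mapsto j\pm 1$'' for the $\cos z$ and $\sin z$ modulations, this sign error propagates inconsistently through the cancellation you claim, so the final ``assembles precisely into \eqref{eq:Lalpha}'' does not hold with the formula as written.

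Two further remarks. First, for the most singular kernel ($\beta=\alpha\ge 1$) your statement that the integral is ``interpreted as a principal value'' is incorrect: $\av{2\sin(z/2)}^{-\alpha}$ has an even singularity of order $\av{z}^{-\alpha}$, which a principal value does not tame. What actually converges, and what the paper computes, is the difference $\fint (1-e^{-\im jz})\av{2\sin(z/2)}^{-\alpha}\,\dd z$, whose integrand is $O(\av{z}^{1-\alpha})$ near $z=0$; your ``analytic continuation'' remark is the right repair but the PV claim should go. Second, for the symbol-order statement, Stirling gives only the leading asymptotic; the paper needs bounds on all $\xi$-derivatives, which it obtains by writing the NIST expansion \eqref{eq:asymptgamma00} with a long tail and running a Cauchy-formula argument on the holomorphic error, and for $\sT^1_\alpha$ (a partial sum, defined only at integers) it estimates discrete derivatives and invokes an interpolation/extension lemma from \cite{SV2002}. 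Your Euler--Maclaurin idea is viable but you would need to address these two points explicitly.
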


By the previous lemma, in Fourier, the linear equation
\eqref{flin0} amounts to the decoupled
scalar equations
\begin{equation}\label{lineqomegan}
\partial_t \hat f\pare{j} + \ii \,  \omega_\alpha \pare{j}  \hat f\pare{j} = 0  \, , \qquad 
j \in \bZ \setminus \{0\} \, ,
\end{equation}
with linear frequencies of oscillations
$  \omega_\alpha \pare{j} \defeq j  L_\alpha \pare{\av{j}} $.

\begin{proof}[Proof of \Cref{lem:linearization}]
By differentiating
$ \nabla E_\alpha \pare{f} $  in \eqref{eq:gradient-pseudoenergy} we deduce that
\begin{equation}
\label{eq:linearization_gradient_pseudoenergy_1}
\begin{aligned}
\dd \nabla E_\alpha (0) \ \phi = & \  \frac{c_\alpha}{2\pare{1-\frac{\alpha}{2}}}  \fint \frac{2\phi\pare{y} - \pare{\phi\pare{x} + \phi\pare{y}}\cos\pare{x-y}}{ \bra{ 2\pare{1-\cos\pare{x-y}}}^{\frac{\alpha}{2}}} \ \dd y \\
& \ + \frac{c_\alpha}{2\pare{1-\frac{\alpha}{2}}}  \fint \frac{\phi'\pare{y} \sin\pare{x-y} }{ \bra{ 2\pare{1-\cos\pare{x-y}}}^{\frac{\alpha}{2}}} \ \dd y
- \frac{\alpha}{4}\frac{c_\alpha}{2\pare{1-\frac{\alpha}{2}}}  \fint \frac{\phi\pare{x} + \phi\pare{y}}{ \bra{ 2\pare{1-\cos\pare{x-y}}}^{\frac{\alpha}{2}-1}} \ \dd y  \\
= & \  - \frac{c_\alpha}{2\pare{1-\frac{\alpha}{2}}}  \fint \frac{  \phi\pare{x} - \phi\pare{y} }{ \bra{ 2\pare{1-\cos\pare{x-y}}}^{\frac{\alpha}{2}}} \ \dd y
 +\frac{c_\alpha}{2\pare{1-\frac{\alpha}{2}}}  \fint \frac{\phi'\pare{y} \sin\pare{x-y} }{ \bra{ 2\pare{1-\cos\pare{x-y}}}^{\frac{\alpha}{2}}} \ \dd y \\
& \ + \frac{ c_\alpha}{4 }  \fint \frac{  \phi\pare{y} }{ \bra{ 2\pare{1-\cos\pare{x-y}}}^{\frac{\alpha}{2}- 1}} \ \dd y  + \frac{ c_\alpha}{4 }  \fint \frac{ \dd y}{ \bra{ 2\pare{1-\cos\pare{x-y}}}^{\frac{\alpha}{2}- 1}} \ \phi\pare{x} 
=:  \ \sum_{\kappa=1}^4 L_{\nabla E_\alpha, \kappa} \phi \, .
\end{aligned}
\end{equation} 
We now compute these operators.

\begin{step}[Evaluation of $ L_{\nabla E_\alpha , 1} $]\label{step:1}
We claim that
\begin{equation}\label{eq:step:1}
\pare{L_{\nabla E_\alpha, 1} \phi} \pare{x} \defeq
- \frac{c_\alpha}{2\pare{1-\frac{\alpha}{2}}}  \fint \frac{  \phi\pare{x} - \phi\pare{y} }{ \bra{ 2\pare{1-\cos\pare{x-y}}}^{\frac{\alpha}{2}}} \ \dd y
=  
 - \frac{c_\alpha}{2\pare{1-\frac{\alpha}{2}}}      \sT_\alpha^1  \pare{\av{D}} \phi \, .
\end{equation}
Indeed, setting  $ y=x-z $ we have
\begin{equation}\label{eq:step:1-}
L_{\nabla E_\alpha, 1} \phi = - \frac{c_\alpha}{2\pare{1-\frac{\alpha}{2}}}  \fint \frac{\phi\pare{x}-\phi\pare{x-z}  }{ \bra{ 2\pare{1-\cos z}}^{\alpha/2}}\dd z   = - \frac{c_\alpha}{2\pare{1-\frac{\alpha}{2}}}  \fint \frac{\phi\pare{x}-\phi\pare{x-z} }{ \av{ 2  \sin \pare{ \frac{z}{2}}}^{\alpha }}\dd z \, .
\end{equation}
We compute the action of  $L_{\nabla E_\alpha, 1}$  on
$ \phi\pare{x} =  \ \sum_{j \in\bZ} \hat{\phi}\pare{j} e^{\ii j x} = \hat{\phi}(0) + \underbrace{\sum_{j\geq 1} \hat{\phi}\pare{j} e^{\ii j x}}_{=: \phi_+\pare{x}} + \underbrace{\sum_{j \geq 1} \overline{\hat{\phi}\pare{j}} e^{-\ii j x}}_{=: \phi_-\pare{x}}  $.

By \eqref{eq:step:1-} we immediately get $ L_{\nabla E_\alpha, 1} \hat{\phi}(0) = 0 $.
Moreover, by \eqref{eq:step:1-},
\begin{equation}\label{svifour1}
L_{\nabla E_\alpha, 1} \phi_+ \pare{x} =  \  -\frac{c_\alpha}{2\pare{1-\frac{\alpha}{2}}}  
\sum_{j \geq 1}  \hat{\phi}\pare{j} e^{\ii j x}  \fint \frac{ 1-e^{- \ii  jz} }{\av{ 4 
\sin^2\pare{z/2}}^{\alpha / 2}} \dd z \, .
\end{equation}
We compute
\begin{multline} 
 \frac{1}{2\pi} \int_0^{2\pi} \frac{1-e^{- \ii  jz }}{\av{ 4 \sin^2\pare{z/2}}^{\alpha/2}} \dd z =   \  \frac{1}{2\pi} \int_0^{2\pi} \frac{1-e^{- \ii  jz}}{\av{1-e^{- \ii z}}^{\alpha}} \dd z    \\
  =  \  \frac{1}{ \pi} \int_0^{\pi} \frac{1-e^{-2 \ii  jz}}{\av{1-e^{-2 \ii z}}} \av{1-e^{-2 
  \ii z}}^{1-\alpha} \dd z
   = - \frac{2^{1-\alpha}}{ \ii \pi} \sum_{k=0}^{j-1} \int_0^{\pi} e^{- \ii  z \pare{2k+1}} \pare{\sin z}^{1-\alpha} \dd z \label{intcas1L}
\end{multline}
having also written
$ \av{1-e^{-2 \ii z}} = -  \ii\pare{1-e^{-2 \ii z}}e^{\ii z} $, for any $  z \in \bra{0, \pi} $.
We use now the identity (cf. \cite[p. 8]{MO1943})
\begin{align}\label{eq:magic_identity_1}
\int_0^\pi \sin^X\pare{z} e^{\ii Y z}\dd z = \frac{\pi e^{\ii \frac{Y\pi}{2}}\Gamma\pare{X+1}}{2^X \Gamma\pare{1 + \frac{X+Y}{2}} \Gamma\pare{1+\frac{X-Y}{2}}} \, , && \pare{X, Y}\in \pare{-1, \infty}\times\bR \, .
\end{align}
Setting $ X=1-\alpha $ and $ Y=-\pare{2k+1} $,
and using $ e^{- \ii \pare{2k+1}\frac{\pi}{2}} = - \ii \pare{-1}^k $,
we obtain
\begin{equation}\label{passint1}
\int_0^{\pi} e^{- \ii  z \pare{2k+1}} \pare{\sin z}^{1-\alpha} \dd z = \frac{
- \ii \pare{-1}^k \pi
 \Gamma\pare{2-\alpha} }{2^{1-\alpha}\Gamma\pare{1-k-\frac{\alpha}{2}} 
 \Gamma\pare{2+k-\frac{\alpha}{2}}} \, .
\end{equation}
The following consequence of Euler's reflection formula (cf. \cite{NIST})
\begin{equation}
\label{eq:consequence_Euer_duplication}
\Gamma\pare{z-j} = \pare{-1}^{j-1} \frac{\Gamma\pare{-z}\Gamma\pare{1+z}}{\Gamma\pare{j+1-z}} \, , \qquad 
z\in \bR\setminus \bZ \, , \quad j \in \bZ \, ,
\end{equation}
implies,  setting $ z=1-\frac{\alpha}{2} $, $ j =k $, and since
$ \Gamma\pare{1+y}= y \ \Gamma\pare{y} $,
\begin{equation}\label{passint2}
 \Gamma\pare{1-k-\frac{\alpha}{2}} =
 \pare{-1}^{k-1} \frac{\Gamma\pare{\frac{\alpha}{2} - 1 }\Gamma\pare{2-\frac{\alpha}{2}}}{\Gamma\pare{\frac{\alpha}{2} + k}}
 =  \pare{-1}^k \frac{\Gamma\pare{1-\frac{\alpha}{2}}\Gamma\pare{\frac{\alpha}{2}}}{\Gamma\pare{\frac{\alpha}{2} + k}} \, .
\end{equation}
By  \eqref{passint1}-\eqref{passint2} we deduce
\begin{equation}\label{passint3}
\int_0^{\pi} e^{- \ii z \pare{2k+1}} \pare{\sin z}^{1-\alpha} \dd z = \frac{- \ii \pi }{2^{1-\alpha}} \frac{ \Gamma\pare{2-\alpha} }{\Gamma\pare{1-\frac{\alpha}{2}} \Gamma\pare{\frac{\alpha}{2}}  } \frac{\Gamma\pare{\frac{\alpha}{2}+k}}{\Gamma\pare{2-\frac{\alpha}{2} + k}} \, .
\end{equation}
Consequently, by \eqref{intcas1L} and \eqref{passint3}, we conclude that for any $ j \geq 1  $
\begin{align*}
\frac{1}{2\pi} \int_0^{2\pi} \frac{1-e^{- \ii  jz}}{\av{ 4 \sin^2\pare{z/2}}^{\alpha/2}} \dd z =  \frac{ \Gamma\pare{2-\alpha} }{\Gamma\pare{1-\frac{\alpha}{2}} \Gamma \pare{ \frac{\alpha}{2}  } }  \sum_{k=0}^{j-1} \frac{\Gamma \pare{ \frac{\alpha}{2} +k }}{\Gamma \pare{1-\frac{\alpha}{2} +k} } \frac{1}{1-\frac{\alpha}{2} + k}
=     \sT_\alpha^1 \pare{j} 
\end{align*}
defined in \eqref{eq:T1alpha},
which in turn, recalling \eqref{svifour1}, implies that
$ L_{\nabla E_\alpha, 1} \phi_+ \pare{x}  =  - \frac{c_\alpha}{2\pare{1-\frac{\alpha}{2}}}    \sum_{j \geq 1 }  \sT_\alpha^1\pare{j}  \hat{\phi}\pare{j} e^{\ii j x} $. 
Since $ L_{\nabla E_\alpha, 1} $ is a real operator
$   \sT_\alpha^1 \pare{-j} =   \overline{ \sT_\alpha^1 \pare{j}} =   \sT_\alpha^1 \pare{j} $
which, combined with    $ L_{\nabla E_\alpha, 1} \hat{\phi}(0) = 0 $, gives us \eqref{eq:step:1}.
\end{step}

\begin{step}[Evaluation of $ L_{\nabla E_\alpha , 2} $]\label{step:2}
We claim that
\begin{equation}\label{eq:Malpha1}
\pare{L_{\nabla E_\alpha, 2} \, \phi} \pare{x} =  \frac{c_\alpha}{2\pare{1-\frac{\alpha}{2}}}  \ \av{D}^2 \  \pare{\sM_{\alpha}  \pare{\av{D}} \phi } (x) \text{ .}
\end{equation}
Setting  $ y=x-z $ we have
$ (L_{\nabla E_\alpha , 2} \, \phi) (x) =
\frac{c_\alpha}{2\pare{1-\frac{\alpha}{2}}}  \fint \frac{\phi'\pare{x-z} \sin z }{ \bra{ 2\pare{1-\cos z}}^{\frac{\alpha}{2}}} \ \dd z $. 
As
$ \frac{ \sin z }{ \bra{ 2\pare{1-\cos z}}^{\frac{\alpha}{2}}} = \frac{1}{2\pare{1-\frac{\alpha}{2}}} \partial_z \pare{ \bra{2\pare{1-\cos z}}^{1-\frac{\alpha}{2}}  } $, integrating by parts
\begin{equation}\label{LabE2a}
(L_{\nabla E_\alpha , 2}\, \phi)(x) =  \  -
\frac{c_\alpha}{\bra{2 \pare{1-\frac{\alpha}{2}}}^2} \fint \frac{\partial_z \big[ \phi'\pare{x-z} \big]}{\bra{2\pare{1-\cos z}}^{\frac{\alpha}{2} - 1} } \dd z  =
\frac{c_\alpha}{\bra{2 \pare{1-\frac{\alpha}{2}}}^2} \fint \frac{ \phi'' \pare{x-z}}{\bra{2\pare{1-\cos z}}^{\frac{\alpha}{2} - 1} } \dd z \, .
\end{equation}
For $ j \geq 0 $ we compute
\begin{equation*}
\begin{aligned}
I_j \defeq \fint \frac{e^{-\ii j z}}{ \bra{ 2\pare{1-\cos z }}^{\frac{\alpha}{2}- 1}}  \ \dd z
= &  \frac{1}{\pi} \int_0^{\pi} e^{- \ii j z}
{ \bra{ 2\pare{1-\cos z }}^{\frac{\alpha}{2}- 1}}  \ \dd z
=  \frac{2^{2-\alpha} }{\pi} \int_0^{\pi}  e^{- \ii \ 2j z} \pare{\sin z} ^{2-\alpha } \dd z
\end{aligned}
\end{equation*}
and applying \cref{eq:magic_identity_1} with
$ X = 2 - \alpha $, $ Y = - 2 j  $, and  using $ \Gamma (x+1)
= x \Gamma (x) $, we obtain
\begin{equation*}
\fint \frac{e^{- \ii j z}}{ \bra{ 2\pare{1-\cos z }}^{\frac{\alpha}{2}- 1}}  \ \dd z
=
\pare{-1}^j  \frac{(2-\alpha) \Gamma\pare{2-\alpha }}{\Gamma\pare{2 -j-\frac{\alpha}{2}}\Gamma\pare{2 +j-\frac{\alpha}{2}}} \, .
\end{equation*}
We use now the  identities
\begin{align*}
\Gamma\pare{2-\frac{\alpha}{2} +j} =  \pare{1-\frac{\alpha}{2} + j} 
\  \Gamma\pare{1-\frac{\alpha}{2} +j} \, ,
&&
\Gamma\pare{2-\frac{\alpha}{2} - j} = \pare{1-\frac{\alpha}{2} -j}\pare{-1}^j \frac{\Gamma\pare{1-\frac{\alpha}{2}} \Gamma\pare{\frac{\alpha}{2}} }{\Gamma\pare{\frac{\alpha}{2} +j}} \, ,
\end{align*}
which follows by  $ \Gamma\pare{1+z}= z \ \Gamma\pare{z} $ and \eqref{eq:consequence_Euer_duplication}
(with $ z = - \alpha / 2 $ and $ j \leadsto j-  1 $),
to deduce,  for any $ j \geq 0 $, 
\begin{equation*}
I_j = \fint \frac{e^{- \ii jz}}{ \bra{ 2\pare{1-\cos z }}^{\frac{\alpha}{2}- 1}}  \ \dd z
=
\frac{\Gamma\pare{2-\alpha}}{\Gamma\pare{1-\frac{\alpha}{2}} \Gamma\pare{ \frac{\alpha}{2}} } \frac{2-\alpha}{\pare{ 1-\frac{\alpha}{2} }^2 -j^2 } \frac{\Gamma\pare{\frac{\alpha}{2} +j}}{\Gamma\pare{ 1 - \frac{\alpha}{2} +j}} =
 - 2\pare{1- \frac{\alpha}{2}} \sM_\alpha\pare{j}
\end{equation*}
with $ \sM_\alpha $ defined in \eqref{eq:Malpha}.
Since  $ I_{j}  = I_{-j} $  we conclude that
\begin{equation}
\label{eq:sMalpha_integral}
\fint \frac{e^{- \ii j z}}{ \bra{ 2\pare{1-\cos z }}^{\frac{\alpha}{2}- 1}}  \ \dd z
=
- 2\pare{1- \frac{\alpha}{2}} \sM_\alpha\pare{\av{ j }} \, .
\end{equation}
By \eqref{LabE2a} and
 \eqref{eq:sMalpha_integral} we deduce  \eqref{eq:Malpha1}.
\end{step}

\begin{step}[Evaluation of $ L_{\nabla E_\alpha , 3} $]\label{step:3}
The action of the operator $L_{\nabla E_\alpha , 3} $ in \eqref{eq:linearization_gradient_pseudoenergy_1}
on a function $ \phi (x) = \sum_{j \in \Z} \hat{\phi}\pare{j}\ e^{\ii jx}   $ is, 
setting $ y  = x - z $ and using \eqref{eq:sMalpha_integral},
\begin{equation}\label{eq:step:3}
\pare{L_{\nabla E_\alpha , 3}\phi}(x)  =  \ \frac{ c_\alpha}{4 } \sum_{j \in \Z} 
\hat{\phi}\pare{j}\ e^{\ii jx} \ \fint \frac{e^{- \ii jz}}{ \bra{ 2\pare{1-\cos z }}^{\frac{\alpha}{2}- 1}}  \ \dd z= \ -  \frac{c_\alpha}{2\pare{1-\frac{\alpha}{2}}}  \pare{1- \frac{\alpha}{2}}^2 \  
\pare{\sM_\alpha\pare{\av{ D }}  \phi}(x)  \, .
\end{equation}
\end{step}

\begin{step}[Evaluation of $ L_{\nabla E_\alpha , 4} $]
By \eqref{eq:sMalpha_integral} with $ j = 0 $ and \eqref{eq:Malpha} it results
that $L_{\nabla E_\alpha , 4} $ in \eqref{eq:linearization_gradient_pseudoenergy_1}
is
\begin{equation}\label{eq:Step3est1}
\pare{L_{\nabla E_\alpha , 4}\phi}(x)  =
\frac{ c_\alpha}{4 }  \fint \frac{ \phi\pare{x} }{ \bra{ 2\pare{1-\cos z }}^{\frac{\alpha}{2}- 1}} \ \dd z = \frac{c_\alpha}{2\pare{1-\frac{\alpha}{2}}}     \frac{\Gamma\pare{2-\alpha }}{\Gamma\pare{1 - \frac{\alpha}{2}}^2} \ \phi\pare{x} 	\, .
\end{equation}
In conclusion, by \eqref{eq:linearization_gradient_pseudoenergy_1},  \eqref{eq:step:1}, \eqref{eq:Malpha1}, \eqref{eq:step:3}, \eqref{eq:Step3est1} we deduce
that  $ \dd \nabla E_\alpha (0)$
is equal to $ L_\alpha\pare{\av{D}}  $ in  \eqref{eq:Lalpha}.
\end{step}

\begin{step}[$ \sT^1_\alpha\in \wt \Gamma^{\max\pare{0,\alpha-1}}$
and  $ \sT^2_\alpha\in  \wt \Gamma^{\alpha-1} $]
 We start with $ \sT^2_\alpha (|\xi|) $  in \eqref{eq:T2alpha}
 which is defined on $\mathbb{R}$.   We recall
 the   asymptotic expansion  for $ \av{\xi} \to\infty $, see
   \cite[Eq. (5.11.13)]{NIST},
  \begin{multline}\label{eq:asymptgamma00}
  \frac{\Gamma\pare{\xi + a}}{\Gamma \pare{ \xi + b }} - \xi^{a-b} \pare{\sum_{\kappa=0}^{N}\frac{G_\kappa\pare{a,b}}{\xi^\kappa}} = \cO\pare{\av{\xi}^{ \pare{a-b} - \pare{ 1 + N }}}\,
\\
\begin{aligned}
G_0\pare{a, b} = 1 \, , \ G_1\pare{a,b} = \frac{\pare{ a-b }\pare{a+b-1}}{2} \, ,
&&
\forall N\in \bN \, , \  \av{\textnormal{Arg} \ \xi }  < \pi \, , 
\end{aligned}
\end{multline}
  which involves holomorphic functions. We can focus on the case $\text{Re} (\xi)>0$. We claim that  formula
 \eqref{eq:asymptgamma00} implies automatically the estimates for the derivatives
 \begin{equation}\label{simboloN}
\av{\partial_\xi^\mu
\pare{\frac{\Gamma\pare{\xi+a}}{\Gamma\pare{\xi+b}}}}\lesssim_\mu \xi^{a-b-\mu}
\quad \text{ for large $\xi>0$ and for any $\mu \in \bN$} \, .
\end{equation}
Case $ \mu = 0 $ of \eqref{simboloN} follows trivially from \eqref{eq:asymptgamma00}.
For any   $ \mu \in \bN\setminus \set{0} $ and for $ N_1 \gg \mu \geq 1 $, let us set
\begin{align*}
M_1\pare{\xi} \defeq  \xi^{a-b} \sum_{\kappa=0}^{N} \frac{G_\kappa\pare{a, b}}{\xi^\kappa} ,
&&
M_2 \pare{\xi} \defeq  \xi^{a-b} \sum_{\kappa=N+1}^{N + N_1} \frac{G_\kappa\pare{a, b}}{\xi^\kappa} ,
&&
E \pare{\xi} \defeq  \frac{\Gamma\pare{\xi + a}}{\Gamma \pare{ \xi + b }} - \pare{M_1\pare{\xi} + M_2\pare{\xi}} \, .
\end{align*}
Obviously $ M_1\pare{\xi}\in\wt \Gamma^{a-b  }_0 $ and  $ M_2\pare{\xi}\in\wt\Gamma^{a-b -\pare{N+1}}_0 $. For $   \xi \gg 1 $, $ E $ is   holomorphic  in $ B\pare{\xi, 2} $. Thus
$ \partial_\xi^\mu E\pare{\xi} = \frac{c_\mu}{2\pi \ii} \int _{\partial B \pare{\xi, 1}} \frac{E\pare{\zeta}}{\pare{ \zeta -\xi}^{1+\mu}} \ \dd \zeta $
by  the Cauchy formula.
Moreover \eqref{eq:asymptgamma00} is true   in $ B\pare{\xi, 2} $ and so, by $\av {\zeta }\sim   \av {\xi } $,
\begin{equation*}
\av{\partial_\xi^\mu E\pare{\xi}}\lesssim_\mu \int _{\partial B\pare{\xi, 1}}\frac{\av{\zeta}^{a-b-\pare{1+N + N_1}}}{\av{\zeta-\xi}^{1+\mu}} \ \av{\dd \zeta } \lesssim_\mu \av{\xi}^{a-b-\pare{1+N + N_1}} \lesssim_\mu \av{\xi}^{a-b-\pare{1+N + \mu }} ,
\end{equation*}
which implies 
\begin{equation*}
\av{\partial_\xi^\mu \pare{\frac{\Gamma\pare{\xi + a}}{\Gamma \pare{ \xi + b }} - M_1\pare{\xi}}} \leq \av{\partial_\xi^\mu M_2\pare{\xi}} + \av{\partial_\xi^\mu E \pare{\xi}} \lesssim_\mu \av{\xi}^{a-b-\pare{1+N + \mu }} \, .
\end{equation*}
This proves  \eqref{simboloN}.
From \eqref{simboloN} we conclude that
$ \sT^2_\alpha \pare{\av{j}}$ in \eqref{eq:T2alpha}  is a Fourier multiplier of order $ \alpha-1 $.

We now consider $ \sT^1_\alpha \pare{\av{j}}$ defined in \eqref{eq:T1alpha}. 
For any  $  j \in \bN_0 $,
 the discrete derivative of $ \sT^1_\alpha \pare{j} $  is
 $$
( \Delta \sT^1_\alpha) \pare{j} \defeq
\sT^1_\alpha\pare{j+1} - \sT^1_\alpha\pare{j} =
\frac{\Gamma\pare{2-\alpha}}{\Gamma\pare{1-\frac{\alpha}{2}}\Gamma\pare{\frac{\alpha}{2}}} \frac{\Gamma\pare{\frac{\alpha}{2} + j}}{\Gamma\pare{1-\frac{\alpha}{2} +j}}
\frac{1}{1- \frac{\alpha}{2}+ j} =
\frac{\sT^2_\alpha \pare{j}}{1- \frac{\alpha}{2}+ j} \, .
 $$
 Since $\sT^2_\alpha  $ is a symbol of order $ \alpha - 1 $ we deduce that 
 $ \av{\sT^1_\alpha\pare{j}}\lesssim  1 + j^{\alpha-1} $ and,  
 for any $ \ell  \in \bN $,
the discrete derivatives satisfy
$ \av{ \pare{ \Delta^\ell  \sT^1_\alpha } \pare{ j }}  \lesssim j^{\alpha-1- \ell}  $.
By \cite[Lemma 7.1.1]{SV2002} there exists a $ \cC^\infty $ extension of $ \sT^1_\alpha $ to the whole $ \bR $ which is a symbol of order  $ \max\pare{  \alpha - 1 , 0 }$.
\end{step}
The proof of \Cref{lem:linearization} is complete. 
\end{proof}

\begin{rem}\label{rem:nuovo}
For $ \alpha \neq 1 $ the Fourier multiplier
 $ \sT^1_\alpha \pare{\av{j}}$  in \eqref{eq:T1alpha} is equal to 
\begin{equation*} 
\sT^1_\alpha \pare{\av{j}}  
=
\frac{ \Gamma\pare{2-\alpha} }{\Gamma\pare{1-\frac{\alpha}{2}} \Gamma \pare{ \frac{\alpha}{2}  } } 
\frac{1}{\alpha -1}
\pare{\frac{\Gamma ( \tfrac{\alpha}{2} + \av{j}) }{\Gamma ( 1 + \av{j} - \tfrac{\alpha}{2}) } -  
\frac{\Gamma ( \tfrac{\alpha}{2} ) }{\Gamma ( 1  - \tfrac{\alpha}{2}) } }
 \end{equation*}
 as follows by induction.
 \end{rem}

 \begin{rem} \label{rem:regmis}  The first linear frequency
 $ \omega_\alpha (1)=0$.    This is equivalent to prove that $ L_\alpha (1)=0$, that,
in view of \eqref{eq:Lalpha}-\eqref{eq:T2alpha}, amounts to show that
\begin{equation*}
   \sT^1_\alpha\pare{1} - \sT^2_\alpha\pare{1} - \frac{\Gamma\pare{2-\alpha }}{\Gamma\pare{1 - \frac{\alpha}{2}}^2}  \\ =
   \frac{\Gamma\pare{2-\alpha}}{\Gamma\pare{1-\frac{\alpha}{2}}} \bra{     \frac{\Gamma \pare{ \frac{\alpha}{2}   }}{\Gamma \pare{ \frac{\alpha}{2}  }\Gamma \pare{1-\frac{\alpha}{2} } \pare{1-\frac{\alpha}{2}  }}  -       \frac{ \Gamma\pare{\frac{\alpha}{2} +1} }{ \Gamma \pare{ \frac{\alpha}{2}  } \Gamma\pare{2-\frac{\alpha}{2}  } } - \frac{1}{\Gamma\pare{1 - \frac{\alpha}{2}} }} =0 \, .
\end{equation*}
This holds true  because, using the identity $\Gamma (y+1)= y \ \Gamma (y ) $,
\begin{equation*}
         \frac{1}{ \Gamma \pare{1-\frac{\alpha}{2} } \pare{1-\frac{\alpha}{2}  }}  -       \frac{  \Gamma\pare{\frac{\alpha}{2}  }  \frac{\alpha}{2}   }{ \Gamma \pare{ \frac{\alpha}{2}  } \Gamma\pare{1-\frac{\alpha}{2}  } \pare{1-\frac{\alpha}{2}  } } - \frac{1}{\Gamma\pare{1 - \frac{\alpha}{2}} }   \\
          =
 \frac{1}{ \Gamma \pare{1-\frac{\alpha}{2} } \pare{1-\frac{\alpha}{2}  }}   \bra{   1  -  \frac{\alpha}{2}    -\pare{1 - \frac{\alpha}{2}} } =0 \, .
\end{equation*}
The fact that  the first frequency  $ \omega_\alpha (1) =0 $
is zero has  a dynamical proof.  Indeed, 
in view of the translation invariance of the problem,
the patch equation \eqref{eq:SQG_Hamiltonian} possesses the vector prime integral
\begin{equation}
\label{eq:primeintegral}
\int_{\bT} \pare{\sqrt{1+2f\pare{x}}-1} \vec \gamma\pare{x} \ \dd x   =
\int_{\bT} f\pare{x} (\cos x, \sin x) \ \dd x + O\pare{ \norm{f}^2 }  \, .
\end{equation}
Let us consider a dynamical system
$ \dot f = Y\pare{f}$ with $  Y(0) = 0 $ and $  A \defeq \dd Y (0) $.
If $ b \pare{f} $ is a prime integral then
$ \nabla b\pare{f} \cdot Y \pare{f} = 0 $, $  \forall f $.
Hence, differentiating and since $ Y (0) = 0 $, we obtain
$ \nabla b(0) \cdot A f  = 0 $, $ \forall f $.
If $ A $ is non singular then $ \nabla b(0) = 0 $, i.e. the prime integral
$ b$ is quadratic at  $ f = 0 $. Here the linear operator $ A  $ (cf. \eqref{lineqomegan})
is degenerate in the one-Fourier mode on which \eqref{eq:primeintegral} has a linear component in $ f $.
\end{rem}

The  other linear frequencies $ \omega_\alpha\pare{j}   $, $ j \neq 0, \pm 1 $, are all
different from zero.

\begin{lemma} [Convexity of $ \omega_\alpha \pare{j} $]\label{lem:conve}
Let $ \alpha\in \pare{0, 2} $.
The frequency map
$ j \mapsto \omega_\alpha\pare{j} = j \, L_\alpha\pare{\av{j}} $,
$ j \in\bZ $,
where $ L_\alpha $ is  computed in \Cref{lem:linearization}, is odd and
satisfies the  convexity property
\begin{equation}
\label{eq:dUC1}
\triangle^2 \omega_\alpha \pare{j}\defeq\omega_\alpha \pare{j+1}+\omega_\alpha \pare{j-1}-2\ \omega_\alpha \pare{j}    =     \frac{ \Gamma\pare{2-\alpha} }{2^{1-\alpha}\Gamma ^2\pare{1-\frac{\alpha}{2}}   }   \frac{\Gamma\pare{\frac{\alpha}{2}-1 + j   }}{\Gamma\pare{2- \frac{\alpha}{2} + j }}  \alpha j >0  \, , \quad \forall \ j \ge 1 \, .
\end{equation}
The linear frequencies $ \omega_\alpha  \pare{j} $ are different from zero
for any $ \av{j}\geq 2 $,
in particular $ \omega_\alpha \pare{j} > 0 $ and increasing for any $ j \geq 2 $.
\end{lemma}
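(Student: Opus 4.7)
The oddness of $j \mapsto \omega_\alpha(j)=jL_\alpha(\av{j})$ is immediate from the evenness of $L_\alpha(\av{j})$, so it suffices to establish the convexity formula \eqref{eq:dUC1} for $j\geq 1$; I will do this by an explicit computation of the centered second difference using only the recursions already recorded in the proof of \Cref{lem:linearization}. Set $K\defeq c_\alpha/[2(1-\alpha/2)]$ and $\phi(j)\defeq \sT^1_\alpha(j)-\sT^2_\alpha(j)-\Gamma(2-\alpha)/\Gamma(1-\alpha/2)^2$, so that $\omega_\alpha(j)=Kj\phi(j)$ for $j\geq 0$. The constant piece $-KjC$ is linear in $j$ and thus contributes zero to $\Delta^2\omega_\alpha(j)$, so the problem reduces to evaluating the second difference of $j\sT^\mathsf{i}_\alpha(j)$ for $\mathsf{i}=1,2$.

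The elementary identity
\begin{equation*}
(j+1)\phi(j+1)+(j-1)\phi(j-1)-2j\phi(j) \;=\; (j+1)\Delta\phi(j)-(j-1)\Delta\phi(j-1),\qquad \Delta\phi(j)\defeq\phi(j+1)-\phi(j),
\end{equation*}
reduces the task to computing first differences. The proof of \Cref{lem:linearization} already records $\Delta\sT^1_\alpha(j)=\sT^2_\alpha(j)/(1-\alpha/2+j)$; a one-line computation from \eqref{eq:T2alpha} together with $\Gamma(x+1)=x\Gamma(x)$ gives moreover
\begin{equation*}
\Delta\sT^2_\alpha(j) \;=\; \sT^2_\alpha(j)\,\frac{\alpha-1}{1-\alpha/2+j}\,,
\end{equation*}
hence $\Delta\sT^1_\alpha(j)-\Delta\sT^2_\alpha(j)=(2-\alpha)\sT^2_\alpha(j)/(1-\alpha/2+j)$. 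The prefactor $K(2-\alpha)$ simplifies to $c_\alpha$, so
\begin{equation*}
\Delta^2\omega_\alpha(j) \;=\; c_\alpha\left[\frac{(j+1)\sT^2_\alpha(j)}{1-\alpha/2+j}-\frac{(j-1)\sT^2_\alpha(j-1)}{j-\alpha/2}\right].
\end{equation*}

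To finish, I pull out the common factor $\Gamma(\alpha/2+j-1)/\Gamma(1-\alpha/2+j)$ from both terms (using $\Gamma(\alpha/2+j)=(\alpha/2+j-1)\Gamma(\alpha/2+j-1)$ and $(j-\alpha/2)\Gamma(j-\alpha/2)=\Gamma(1-\alpha/2+j)$). The remaining bracket is a single rational function whose numerator
\begin{equation*}
(j+1)(\tfrac{\alpha}{2}+j-1)-(j-1)(1-\tfrac{\alpha}{2}+j) \;=\; \alpha j
\end{equation*}
collapses by a direct polynomial expansion. Combining with $c_\alpha\cdot \Gamma(2-\alpha)/[\Gamma(1-\alpha/2)\Gamma(\alpha/2)]=\Gamma(2-\alpha)/[2^{1-\alpha}\Gamma^2(1-\alpha/2)]$, which is immediate from \eqref{eq:calpha}, yields exactly \eqref{eq:dUC1}. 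Positivity is then clear: $\alpha>0$, $j\geq 1$, and every gamma factor is positive on $(0,2)$.

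For the final claims, set $d(j)\defeq\omega_\alpha(j+1)-\omega_\alpha(j)$. Then $d(j)-d(j-1)=\Delta^2\omega_\alpha(j)>0$ for all $j\geq 1$, while $d(0)=\omega_\alpha(1)-\omega_\alpha(0)=0$ by \Cref{rem:regmis} and the definition $\omega_\alpha(0)=0\cdot L_\alpha(0)=0$. Telescoping gives $d(j)>0$ for every $j\geq 1$, i.e.\ $\omega_\alpha$ is strictly increasing on $\{j\geq 1\}$; since $\omega_\alpha(1)=0$, this forces $\omega_\alpha(j)>0$ for $j\geq 2$, which also proves $\omega_\alpha(j)\neq 0$ for $\av{j}\geq 2$ by oddness. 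The only non-routine step in this plan is the polynomial simplification to $\alpha j$, but it is a short hand calculation; the rest is bookkeeping from \Cref{lem:linearization}.
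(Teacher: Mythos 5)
Your proof is correct and follows essentially the same route as the paper: the paper also reduces the second difference to a polynomial identity in $j$ via the telescoping of the finite sum $\sT^1_\alpha$ (their quantity $I$ in the proof is exactly $(j+1)\Delta\sT^1_\alpha(j)-(j-1)\Delta\sT^1_\alpha(j-1)$ up to the common prefactor) and the Gamma recursion for $\sT^2_\alpha$, arriving at the same cancellation to $\alpha j$. Your version is slightly tidier in presentation — discarding the constant term at the outset, combining $\sT^1_\alpha$ and $\sT^2_\alpha$ into $\Delta\phi(j)=(2-\alpha)\sT^2_\alpha(j)/(1-\alpha/2+j)$ before applying the discrete product-rule identity — but the underlying calculation is the same, and the final positivity/monotonicity argument from $\omega_\alpha(0)=\omega_\alpha(1)=0$ matches the paper's conclusion.
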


\begin{proof}
 In view of  \Cref{lem:linearization}, for any $ j \geq 1 $, 
and the identity $ \Gamma (1+y) = y \Gamma (y) $,  
the second discrete derivative  $ \triangle^2 \omega_\alpha \pare{j} $ is equal to
\begin{multline}
\label{eq:NR_est0}
\frac{c_\alpha}{2\pare{1-\frac{\alpha}{2}}} 
 \frac{\Gamma\pare{2-\alpha}}{\Gamma\pare{1-\frac{\alpha}{2}}
 \Gamma\pare{\frac{\alpha}{2}}}
\left\lbrace\pare{j+1} \sum_{k=0}^j \frac{\Gamma \pare{ \frac{\alpha}{2} +k }}{\Gamma \pare{2-\frac{\alpha}{2} +k} } + \pare{j-1} \sum_{k=0}^{j-2} \frac{\Gamma \pare{ \frac{\alpha}{2} +k }}{\Gamma \pare{2-\frac{\alpha}{2} +k} }
-2  j \sum_{k=0}^{j-1} \frac{\Gamma \pare{ \frac{\alpha}{2} +k }}{\Gamma \pare{2-\frac{\alpha}{2} +k} }\right.  \\
- \left. \pare{j+1} \frac{\Gamma\pare{\frac{\alpha}{2} + j +1}}{\Gamma\pare{1- \frac{\alpha}{2} + j +1}}
- \pare{j-1} \frac{\Gamma\pare{\frac{\alpha}{2} + j -1}}{\Gamma\pare{1- \frac{\alpha}{2} + j -1}}
+2j \frac{\Gamma\pare{\frac{\alpha}{2} + j }}{\Gamma\pare{1- \frac{\alpha}{2} + j }}
\right\rbrace
\, .
\end{multline}
The first term inside  the above bracket is equal to
\begin{equation}\label{primal}
\begin{aligned}
 I = & \  \pare{j+1} \frac{\Gamma \pare{ \frac{\alpha}{2} +j }}{\Gamma \pare{2-\frac{\alpha}{2} +j} } - \pare{j-1}\frac{\Gamma \pare{ \frac{\alpha}{2} + j -1 }}{\Gamma \pare{2-\frac{\alpha}{2} +j-1} }   \\
= & \ \frac{\Gamma \pare{ \frac{\alpha}{2} + j -1 }}{\Gamma\pare{2-\frac{\alpha}{2} + j}} \bra{\pare{j+1}\pare{\frac{\alpha}{2} + j -1 } -\pare{j-1}\pare{1-\frac{\alpha}{2} + j}}
=  \ \frac{\Gamma \pare{ \frac{\alpha}{2} + j -1 }}{\Gamma\pare{2-\frac{\alpha}{2} +j}} \ \alpha j  \, .
\end{aligned}
\end{equation}
Writing the  terms in the 2nd line of the bracket in \eqref{eq:NR_est0} as
\begin{equation}\label{secondlin}
\begin{aligned}
- \pare{j+1} \frac{\Gamma\pare{\frac{\alpha}{2} + j +1}}{\Gamma\pare{1- \frac{\alpha}{2} + j +1}}
= & \  - \frac{\Gamma\pare{\frac{\alpha}{2} + j -1 }}{\Gamma\pare{2- \frac{\alpha}{2} + j }} \ \pare{j+1}\pare{ \tfrac{\alpha}{2} +j}\pare{ \tfrac{\alpha}{2} +j -1 }, \\
- \pare{j-1} \frac{\Gamma\pare{\frac{\alpha}{2} + j -1}}{\Gamma\pare{1- \frac{\alpha}{2} + j -1}} = & \ - \frac{\Gamma\pare{\frac{\alpha}{2} + j -1 }}{\Gamma\pare{2- \frac{\alpha}{2} + j }} \ \pare{j-1}\pare{ - \tfrac{\alpha}{2} +j}\pare{ 1 - \tfrac{\alpha}{2} +j  } , \\
2 j \frac{\Gamma\pare{\frac{\alpha}{2} + j }}{\Gamma\pare{1- \frac{\alpha}{2} + j }} = & \ \frac{\Gamma\pare{\frac{\alpha}{2} + j -1 }}{\Gamma\pare{2- \frac{\alpha}{2} + j }}
\ 2 j \pare{j^2 - \pare{1-\tfrac{\alpha}{2}}^2} \, ,
\end{aligned}
\end{equation}
we conclude by \cref{eq:NR_est0,primal,secondlin} and
since $ c_\alpha = \frac{\Gamma\pare{\frac{\alpha}{2}}}{2^{1-\alpha}\Gamma\pare{1-\frac{\alpha}{2}}} $ (cf. \eqref{eq:calpha}), that
\begin{equation*}
	   \triangle^2 \omega_\alpha \pare{j}= \frac{1}{2^{1-\alpha}\pare{2-\alpha}}   \frac{ \Gamma\pare{2-\alpha} }{\Gamma ^2\pare{1-\frac{\alpha}{2}}   }   \frac{\Gamma\pare{\frac{\alpha}{2}-1 + j   }}{\Gamma\pare{2- \frac{\alpha}{2} + j }} \  X_\alpha\pare{j}
\end{equation*}
where
\begin{equation*}
X_\alpha\pare{j} \defeq \alpha j - \pare{j+1} \pare{\tfrac{\alpha}{2} +j}\pare{\tfrac{\alpha}{2} -1+j}- \pare{j-1}\pare{ - \tfrac{\alpha}{2} +j}\pare{ 1 - \tfrac{\alpha}{2} +j  }  +2j \pare{j^2- \pare{1-\tfrac{\alpha}{2}} ^2  } =  (2- \alpha  )\alpha j \, .
\end{equation*}	
This proves \eqref{eq:dUC1}. 
The positivity of $ \triangle^2 \omega_\alpha \pare{j} $ in \eqref{eq:dUC1} follows because
 the function $ \Gamma $ is positive on positive numbers.
Finally, the convexity property \eqref{eq:dUC1}
and $ \omega_\alpha\pare{0} = \omega_\alpha\pare{1} =0$ (cf. Remark \ref{rem:regmis}) imply that
$ \omega_\alpha \pare{j} > 0 $ and increasing for any $ j \geq 2 $. 
\end{proof}

The next lemma 
is crucial for the normal form construction of Section \ref{sec:quadratic_normal_forms}.

\begin{lemma}[Absence of three wave interactions]
\label{lem:nonres_cond}
Let $ \alpha\in\pare{0, 2} $. 
For any  $ n,j,k\in\bZ\setminus \set{0} $ satisfying
$ k = j + n $, it results
\begin{equation}
\label{eq:nonres_cond}
\av{\omega_\alpha\pare{k} -  \omega_\alpha\pare{j} -  \omega_\alpha\pare{n} } 
\geq \omega_\alpha\pare{2} > 0   \, .
\end{equation}
\end{lemma}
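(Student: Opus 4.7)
The strategy is to leverage systematically the convexity relation \eqref{eq:dUC1} together with the vanishing $\omega_\alpha(0)=\omega_\alpha(1)=0$ and oddness of $\omega_\alpha$ established in \Cref{lem:conve}. By the oddness, the quantity
$$ \Delta(j,n)\defeq \omega_\alpha(j+n)-\omega_\alpha(j)-\omega_\alpha(n) $$
changes sign under $(j,n)\mapsto(-j,-n)$, so its absolute value is invariant under this flip. Moreover, if exactly one of $j,n$ is negative, say $n=-m$ with $m>0$, setting $k=j-m$ we have $|\Delta(j,-m)|=|\omega_\alpha(j-m)-\omega_\alpha(j)+\omega_\alpha(m)|$, which, depending on the sign of $k$, equals either $|\Delta(k,m)|$ (case $j>m$, using $j=k+m$) or $|\Delta(j,m-j)|$ (case $j<m$, using $m=j+(m-j)$). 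In both cases we are reduced to two positive integer arguments. Hence it suffices to prove
\begin{equation}\label{eq:red}
\Delta(j,n)\geq \omega_\alpha(2)\qquad \text{for all integers }j,n\geq 1.
\end{equation}

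The first step is to study the forward-difference sequence $D(m)\defeq \omega_\alpha(m+1)-\omega_\alpha(m)$ for $m\geq 0$. By \eqref{eq:dUC1} one has $D(m+1)-D(m)=\triangle^2\omega_\alpha(m+1)>0$ for every $m\geq 0$, so $D$ is strictly increasing on $\mathbb{N}_0$. In particular, recalling $\omega_\alpha(0)=\omega_\alpha(1)=0$,
$$ D(m)\geq D(1)=\omega_\alpha(2)-\omega_\alpha(1)=\omega_\alpha(2),\qquad \forall\, m\geq 1. $$

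The second step proves \eqref{eq:red} by showing that $F(n)\defeq \Delta(j,n)$ is nondecreasing in $n$ for each fixed $j\geq 1$. Indeed
$$ F(n+1)-F(n)=\bigl[\omega_\alpha(j+n+1)-\omega_\alpha(j+n)\bigr]-\bigl[\omega_\alpha(n+1)-\omega_\alpha(n)\bigr]=D(j+n)-D(n)\geq 0, $$
again by monotonicity of $D$. Therefore for all $j,n\geq 1$,
$$ \Delta(j,n)=F(n)\geq F(1)=\omega_\alpha(j+1)-\omega_\alpha(j)-\omega_\alpha(1)=D(j)\geq D(1)=\omega_\alpha(2), $$
proving \eqref{eq:red}. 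Combined with the sign-reduction described above and the strict positivity $\omega_\alpha(2)>0$ provided by \Cref{lem:conve}, this yields \eqref{eq:nonres_cond}.

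The only step requiring genuine input beyond bookkeeping is the monotonicity of $D$, which is precisely the content of \eqref{eq:dUC1}; everything else is a transparent manipulation of oddness and telescoping. The one point to watch is that the reduction to positive arguments must exhaust all possible sign patterns of $(j,n,k)$, so in the write-up I will enumerate the four sign cases and verify that each one is equivalent to the canonical form $\Delta(a,b)$ with $a,b\geq 1$.
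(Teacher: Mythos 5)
Your proof is correct and follows essentially the same route as the paper: a sign reduction to $j,n\geq 1$ using oddness, and then telescoping with the convexity from \cref{lem:conve} and the vanishing $\omega_\alpha(0)=\omega_\alpha(1)=0$. The paper writes the same telescoping as a single double sum $\sum_{q=1}^n\sum_{q'=1}^j\triangle^2\omega_\alpha(q+q'-1)\geq\triangle^2\omega_\alpha(1)=\omega_\alpha(2)$, whereas you split it into two monotonicity statements (for $D$ and then for $F$), which is the same computation.
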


\begin{proof}
Since 
the map $ j \mapsto \omega_\alpha (j) $ is odd
and strictly increasing for  $ j \in \bN $,  
it is sufficient to consider the case $ k \geq j \geq n \geq 1 $, $ k = j + n $. 
Then, using that 
$\omega_\alpha (0) = \omega_\alpha\pare{1} =0 $,  
defining $  A_\alpha(\ell )\defeq \omega_\alpha (\ell)-\omega_\alpha (\ell-1) $,
we write by a  telescoping expansion, 
\begin{align*}
\omega_\alpha \pare{k}-\omega_\alpha \pare{j}-\omega_\alpha \pare{n}&= \sum _{q=1}^{j+n}\pare{\omega_\alpha \pare{q}-\omega_\alpha \pare{q-1}}-\sum _{q=1}^{j}\pare{\omega_\alpha \pare{q}-\omega_\alpha \pare{q-1}}-\sum _{q=1}^{n}\pare{\omega_\alpha \pare{q}-\omega_\alpha \pare{q-1}} \notag \\
& =  \sum _{q=1}^{n}\left (A_\alpha \pare{ q+j }-A_\alpha\pare{ q }  \right )  =  \sum _{q=1}^{n} \sum _{q'=1}^{j}   \left (A_\alpha\pare{ q+q' }-A_\alpha\pare{ q+q'-1   }  \right )   \notag \\
& =
 \sum _{q=1}^{n} \sum _{q'=1}^{j} \triangle^2 \omega_\alpha \pare{q+q'-1} \geq 
 \triangle^2 \omega_\alpha \pare{1} =  \omega_\alpha\pare{2} > 0 
\end{align*}
by \eqref{eq:dUC1} and 
\Cref{lem:conve}. 
This proves  \eqref{eq:nonres_cond}. 
\end{proof}

We finally prove an asymptotic expansion  of
the  frequencies $ \omega_\alpha \pare{j} $.
 We use the notation $ \sum_{j=p_1}^{p_2} a_j \equiv 0 $ if $ p_2 < p_1 $.
We   denote $ m_\beta $ 
a real Fourier multiplier of order $   \beta\in\bR $, and $ c_\alpha^\kappa $  real constants, which  may vary from line to line.

\begin{lemma}[Asymptotic behavior of $ L_\alpha \pare{\av{j}} $]
\label{prop:Lalpha_asymptotic}
Let 
\begin{equation}
\label{eq:bValpha}
\bV_\alpha \defeq
\system{
\begin{aligned}
&
  \frac{ \alpha c_\alpha}{2-\alpha} 
  \frac{\Gamma\pare{1-\alpha}}{\Gamma\pare{1-\frac{\alpha}{2}}^2} 
& \alpha & \neq 1 \, ,  \\
 &\frac{1}{\pi} \set{   \pare{\gamma_{\textnormal{EM}} - \frac{\pi^2}{12}- 2} + \sum_{k=1}^{\infty} \bra{ \frac{1}{\frac{1}{2} +k} - \frac{1}{k}\pare{1-\frac{1}{2k}}} }
 & \alpha & =1 \, ,
\end{aligned}}
\end{equation}
where  
$ \gamma_{\textnormal{EM}} \defeq \pare{ \lim_{n \to + \infty }
\sum_{k=1}^n \frac{1}{k} }- \log n
$ is the Euler-Mascheroni constant. 

Then the symbol  $ L_\alpha \pare{\av{j}}$    in \Cref{lem:linearization} has the following
asymptotic  expansion: for any $ \cK\in \bN, \ \cK\geq 3 $, 
\begin{itemize}
\item If $ \alpha\in\pare{0, 1}\cup\pare{1, 2} $   there exists real constants $ c^\kappa_\alpha, \ \kappa \in \set{3, \ldots , \cK-1}$ and 
a Fourier multiplier $ m_{\alpha-\cK} $ of order $ \alpha-\cK $ such that
\begin{equation}\label{expaLapha}
L_\alpha\pare{\av{j}} =  \bV_\alpha
+
\underbrace{\frac{c_\alpha}{2\pare{1-\frac{\alpha}{2}}} \frac{\Gamma\pare{3-\alpha}}{\Gamma\pare{1-\frac{\alpha}{2}}\Gamma\pare{\frac{\alpha}{2}}}  \frac{1}{\alpha-1}}_{\defeq c^1_\alpha} \  \av{j}^{\alpha - 1}
+ \sum_{\kappa =3}^{\cK-1} c_\alpha^\kappa \av{j}^{\alpha- \kappa}
+ m_{\alpha-\cK}\pare{ \av{j} } \, .
\end{equation}

\item If $ \alpha =1 $  there exists real constants $ c^\kappa_1, \ \kappa \in \set{3, \ldots , \cK-1} $ and a Fourier multiplier $ m_{1-\cK} $ of order $ 1-\cK $ such that
\begin{equation*}
L_1\pare{\av{j}} = \bV_1 +  \frac{1}{\pi} \log\av{j}   + \sum_{\kappa=3}^{\cK-1} c_1^\kappa \av{j}^{1-\kappa} + m_{1-\cK}\pare{ \av{j} } \, .
\end{equation*}
\end{itemize}
\end{lemma}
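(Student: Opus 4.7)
The strategy is to apply the Stirling-type asymptotic expansion \eqref{eq:asymptgamma00} for ratios of Gamma functions, already used in Step~5 of the proof of Lemma~3.1, to each of the two pieces $\sT^1_\alpha$ and $\sT^2_\alpha$ appearing in the definition \eqref{eq:Lalpha} of $L_\alpha$, and to track carefully the cancellations that produce the stated constants $\bV_\alpha$ and $c^1_\alpha$.

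\textbf{Case $\alpha\in(0,1)\cup(1,2)$.} The first step is to rewrite $\sT^1_\alpha$ in the closed form given in Remark~\ref{rem:nuovo}, namely
\[
\sT^1_\alpha(|j|)=\frac{\Gamma(2-\alpha)}{(\alpha-1)\Gamma(1-\tfrac{\alpha}{2})\Gamma(\tfrac{\alpha}{2})}\left(\frac{\Gamma(\tfrac{\alpha}{2}+|j|)}{\Gamma(1-\tfrac{\alpha}{2}+|j|)}-\frac{\Gamma(\tfrac{\alpha}{2})}{\Gamma(1-\tfrac{\alpha}{2})}\right).
\]
Both $\sT^1_\alpha(|j|)$ and $\sT^2_\alpha(|j|)$ now involve only the single ratio $\frac{\Gamma(\alpha/2+|j|)}{\Gamma(1-\alpha/2+|j|)}$, to which I apply \eqref{eq:asymptgamma00} with $a=\alpha/2$, $b=1-\alpha/2$, so that $a-b=\alpha-1$. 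A direct computation of $G_1(a,b)=\frac{(a-b)(a+b-1)}{2}$ gives $G_1(\alpha/2,1-\alpha/2)=0$, which explains the absence of an $|j|^{\alpha-2}$ term in \eqref{expaLapha} and produces, for any $\cK\geq 3$, real constants $g_\kappa$ and a Fourier multiplier $m_{\alpha-1-\cK}$ of order $\alpha-1-\cK$ with
\[
\frac{\Gamma(\tfrac{\alpha}{2}+|j|)}{\Gamma(1-\tfrac{\alpha}{2}+|j|)}=|j|^{\alpha-1}+\sum_{\kappa=2}^{\cK-1} g_\kappa\,|j|^{\alpha-1-\kappa}+m_{\alpha-1-\cK}(|j|).
\]
Plugging this into the combination $\sT^1_\alpha-\sT^2_\alpha-\tfrac{\Gamma(2-\alpha)}{\Gamma(1-\alpha/2)^2}$ and using the identity $\tfrac{1}{\alpha-1}-1=\tfrac{2-\alpha}{\alpha-1}$, the coefficient of $|j|^{\alpha-1}$ becomes $\tfrac{2-\alpha}{\alpha-1}\cdot\tfrac{\Gamma(2-\alpha)}{\Gamma(1-\alpha/2)\Gamma(\alpha/2)}$; multiplying by $\tfrac{c_\alpha}{2-\alpha}$ and using $\Gamma(3-\alpha)=(2-\alpha)\Gamma(2-\alpha)$ yields exactly the constant $c^1_\alpha$ in \eqref{expaLapha}. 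The constant term in $L_\alpha$ equals $\tfrac{c_\alpha}{2-\alpha}\bigl[-\tfrac{1}{\alpha-1}-1\bigr]\tfrac{\Gamma(2-\alpha)}{\Gamma(1-\alpha/2)^2}=-\tfrac{c_\alpha\alpha\,\Gamma(2-\alpha)}{(2-\alpha)(\alpha-1)\Gamma(1-\alpha/2)^2}$, which coincides with $\bV_\alpha$ after using $\Gamma(2-\alpha)=(1-\alpha)\Gamma(1-\alpha)$. The lower-order terms $|j|^{\alpha-\kappa}$ for $\kappa\geq 3$ come from the $g_\kappa|j|^{\alpha-1-\kappa}$ contributions, and the tail is a Fourier multiplier in $\wt\Gamma^{\alpha-\cK}_0$ by the same argument as in Step~5 of Lemma~3.1.

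\textbf{Case $\alpha=1$.} Here the formula of Remark~\ref{rem:nuovo} becomes singular, so I treat $\sT^1_1$ directly from the defining sum \eqref{eq:T1alpha}, which at $\alpha=1$ reduces to
\[
\sT^1_1(|j|)=\frac{1}{\pi}\sum_{k=0}^{|j|-1}\frac{1}{\tfrac{1}{2}+k}=\frac{1}{\pi}\bigl(\psi(|j|+\tfrac{1}{2})-\psi(\tfrac{1}{2})\bigr),
\]
where $\psi$ is the digamma function. The asymptotic expansion $\psi(z)=\log z-\tfrac{1}{2z}-\sum_{\ell\geq 1}\tfrac{B_{2\ell}}{2\ell z^{2\ell}}$ for $z\to+\infty$, combined with $\psi(1/2)=-\gamma_{\rm EM}-2\log 2$, produces a $\log|j|$ leading term, a constant, and a series of negative powers of $|j|$ of the form $|j|^{1-\kappa}$, $\kappa\geq 3$. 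Applied to $\sT^2_1(|j|)=\tfrac{1}{\pi}\tfrac{\Gamma(1/2+|j|)}{\Gamma(1/2+|j|)}\cdot(\cdot)$... more precisely, $\sT^2_1$ is again handled by \eqref{eq:asymptgamma00}; the $|j|^0$ constant piece of $\sT^1_1-\sT^2_1-\tfrac{\Gamma(1)}{\Gamma(1/2)^2}$ and the prefactor $\tfrac{c_1}{2(1-1/2)}=1$ combine to give precisely the series expression defining $\bV_1$ in \eqref{eq:bValpha}; one recognizes the sum $\sum_{k=1}^\infty\bigl(\tfrac{1}{1/2+k}-\tfrac{1}{k}(1-\tfrac{1}{2k})\bigr)$ as the regularized version of the partial sums obtained by subtracting the leading $\log k$ asymptotics from $1/(1/2+k)$. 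Equivalently, one can obtain the $\alpha=1$ expansion by taking the limit $\alpha\to 1$ in the formula of the first case, using $|j|^{\alpha-1}=1+(\alpha-1)\log|j|+O((\alpha-1)^2)$ so that the two $\frac{1}{\alpha-1}$ singularities (in $c^1_\alpha\,|j|^{\alpha-1}$ and in the constant) cancel, leaving the $\tfrac{1}{\pi}\log|j|$ term and a finite constant.

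\textbf{Main obstacle.} The routine ingredient is \eqref{eq:asymptgamma00}; the delicate part is the bookkeeping needed to show that the \emph{two} $\tfrac{1}{\alpha-1}$ singularities, one from the leading power $|j|^{\alpha-1}$ of $\sT^1_\alpha$ and one from the constant $-\tfrac{\Gamma(\alpha/2)}{(\alpha-1)\Gamma(1-\alpha/2)}$ arising from Remark~\ref{rem:nuovo}, conspire with the explicit $-\tfrac{\Gamma(2-\alpha)}{\Gamma(1-\alpha/2)^2}$ inside \eqref{eq:Lalpha} to produce, for $\alpha\neq 1$, the regular expression $\bV_\alpha=\tfrac{\alpha c_\alpha}{2-\alpha}\tfrac{\Gamma(1-\alpha)}{\Gamma(1-\alpha/2)^2}$ and, for $\alpha=1$, a logarithm plus the explicit series constant in \eqref{eq:bValpha}. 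The same bookkeeping, together with the vanishing $G_1=0$, guarantees that the expansion \eqref{expaLapha} has a gap at order $|j|^{\alpha-2}$, as stated.
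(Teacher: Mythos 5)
Your argument is correct and follows essentially the same route as the paper: the paper's own proof reduces \Cref{prop:Lalpha_asymptotic} to \Cref{lem:asympt}, which it proves precisely by applying the Gamma-ratio asymptotic \eqref{eq:asymptgamma00} together with the closed form of Remark \ref{rem:nuovo}, observing $G_1(\tfrac{\alpha}{2},1-\tfrac{\alpha}{2})=0$, and handling $\alpha=1$ via harmonic-number asymptotics. Your use of the digamma expansion for $\sT^1_1(|j|)=\tfrac{1}{\pi}\bigl(\psi(|j|+\tfrac12)-\psi(\tfrac12)\bigr)$ is an equivalent cosmetic variant of the paper's $\sum_{k=1}^j k^{-1}$ expansion, and your bookkeeping of the two $\tfrac{1}{\alpha-1}$ singularities correctly reproduces $\bV_\alpha$ and $c^1_\alpha$.
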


Note that in the expansion \eqref{expaLapha} there is not a term as
$ c_\alpha^2 |j|^{\alpha-2}$ and that  $\frac{1}{\alpha-1} \  \av{j}^{\alpha - 1} $ is,
for $ \alpha \in (1,2) $,  positive and tends to infinity,
whereas, for $ \alpha\in\pare{0, 1} $, it is  negative and tends to zero. 

We provide for completeness the expansion also in the cases $ \alpha =  1 $ and
$ \alpha \in (0,1) $, although  not needed for the proof of Theorem \ref{thm:main}.

  \Cref{prop:Lalpha_asymptotic} is a direct consequence of
 \eqref{eq:T2alpha}, \eqref{eq:Lalpha} and \eqref{eq:calpha}  
and the following lemma. 

\begin{lemma}\label{lem:asympt}
For any $ \cK\in \bN, \cK\geq 3  $, the following holds:
\begin{itemize}
\item  if $ \alpha\in \pare{0, 1}\cup\pare{1, 2} $, there exist real constants $ c_\alpha^\kappa $, $ \ \kappa \in\set{3, \ldots, \cK-1} $ such that 
\begin{equation}\label{eq:T1_second_expansion}
\sT^1_\alpha \pare{\av{j}} =
  \frac{ \Gamma\pare{1-\alpha} }{\Gamma\pare{1-\frac{\alpha}{2}}^2  }  
+ \frac{\Gamma\pare{2-\alpha}}{\Gamma\pare{1-\frac{\alpha}{2}} \Gamma\pare{\frac{\alpha}{2}}}  \frac{1}{\alpha-1} \ \av{j}^{\alpha-1}  + \sum_{\kappa=3}^{\cK-1} c_\alpha^\kappa \ \av{j}^{\alpha-\kappa}
  + m_{\alpha-\cK} \pare{\av{j}} \, . 
\end{equation}
\item if $ \alpha=1 $ there exist real constants $ c_1 ^\kappa, \ \kappa \in\set{3, \ldots, \cK-1} $ such that
\begin{equation}
\label{eq:T1_third_expansion}
\sT^1_1\pare{\av{j}} = \frac{1}{\pi} \set{ \log\av{j} +  \pare{\gamma_{\textnormal{EM}} - \frac{\pi^2}{12}} + \sum_{k=1}^{\infty} \bra{ \frac{1}{\frac{1}{2} +k} - \frac{1}{k}\pare{1-\frac{1}{2k}}} }  + \sum_{\kappa =3}^{\cK-1} c_1 ^\kappa \ \av{j}^{-\kappa}
  + m_{1-\cK} \pare{\av{j}} ; 
\end{equation}

\item if $ \alpha \in \pare{0, 2} $ there exist real constants $ c_\alpha^\kappa, \ \kappa\in\set{3, \ldots, \cK-1} $ such that
 \begin{equation}
\label{eq:asymptotic_Malpha2}
\sM_{\alpha }\pare{\av{j}}
=
\frac{\Gamma\pare{2-\alpha}}{\Gamma\pare{1-\frac{\alpha}{2}} \Gamma\pare{ \frac{\alpha}{2}} }  \frac{1}{\av{j}^2 - \pare{ 1-\frac{\alpha}{2} }^2  } \ \bra{ \av{j}^{\alpha-1}
+ \sum_{\kappa=3}^{\cK-1} c_\alpha^\kappa \ \av{j}^{\alpha-\kappa }
  + m_{\alpha-\cK} \pare{\av{j}} }.
\end{equation}
\end{itemize}
\end{lemma}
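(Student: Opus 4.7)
\textbf{Proof plan for Lemma \ref{lem:asympt}.} All three expansions stem from the Stirling-type expansion \eqref{eq:asymptgamma00} applied to the ratio $\Gamma(a+|\xi|)/\Gamma(b+|\xi|)$. The crucial preliminary observation I would record is that, for the particular choice $a = \frac{\alpha}{2}$, $b = 1-\frac{\alpha}{2}$, one has $a + b - 1 = 0$, hence $G_1(a,b) = (a-b)(a+b-1)/2 \equiv 0$. This explains why the $|\xi|^{\alpha-2}$ term is \emph{absent} in all three expansions, which is the cleanest structural feature of the statement.

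\emph{Expansion of $\sM_\alpha$ (eq.\eqref{eq:asymptotic_Malpha2}).} I would apply \eqref{eq:asymptgamma00} with $(a,b) = (\frac{\alpha}{2}, 1-\frac{\alpha}{2})$ to obtain
\begin{equation*}
\frac{\Gamma(\frac{\alpha}{2}+|\xi|)}{\Gamma(1-\frac{\alpha}{2}+|\xi|)} = |\xi|^{\alpha-1}\Bigl(1 + \sum_{\kappa=2}^{\cK-1}\frac{G_\kappa(\frac{\alpha}{2},1-\frac{\alpha}{2})}{|\xi|^\kappa}\Bigr) + m_{\alpha-\cK}(|\xi|),
\end{equation*}
where the $\kappa=1$ term has disappeared by the observation above. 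Substitution in \eqref{eq:Malpha} and relabelling constants $c_\alpha^\kappa$ yields \eqref{eq:asymptotic_Malpha2}; the factor $1/(|\xi|^2 - (1-\frac{\alpha}{2})^2)$ is treated as a smooth symbol of order $-2$ for $|\xi|$ large and its Neumann-type expansion only produces even negative integer shifts, preserving the gap at $\kappa=2$.

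\emph{Expansion of $\sT^1_\alpha$ for $\alpha\ne1$ (eq.\eqref{eq:T1_second_expansion}).} I would invoke the closed-form identity of Remark \ref{rem:nuovo}, which expresses $\sT^1_\alpha$ as a multiple of the Gamma ratio already expanded in the previous step minus a constant piece. The leading term $\frac{1}{\alpha-1}|j|^{\alpha-1}$ matches directly. The constant piece, using $\Gamma(2-\alpha) = (1-\alpha)\Gamma(1-\alpha)$, simplifies to $\frac{\Gamma(1-\alpha)}{\Gamma(1-\frac{\alpha}{2})^2}$. The intermediate $\kappa \geq 3$ terms inherit the expansion from $\sM_\alpha$ (including the same absence of a $\kappa=2$ contribution), and the tail is a Fourier multiplier of order $\alpha-\cK$.

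\emph{Expansion of $\sT^1_1$ (eq.\eqref{eq:T1_third_expansion}).} At $\alpha=1$ the formula of Remark \ref{rem:nuovo} is singular, so I would return to \eqref{eq:T1alpha}, which collapses, using $\Gamma(1/2)=\sqrt{\pi}$, to the telescopic sum
\begin{equation*}
\sT^1_1(|j|) = \frac{1}{\pi}\sum_{k=0}^{|j|-1}\frac{1}{\frac{1}{2}+k} = \frac{1}{\pi}\bigl(\psi(|j|+\tfrac{1}{2}) - \psi(\tfrac{1}{2})\bigr),
\end{equation*}
where $\psi$ denotes the digamma function. Using $\psi(1/2) = -\gamma_{\textnormal{EM}} - 2\log 2$ and the standard asymptotic $\psi(z)\sim \log z - \frac{1}{2z} - \sum_{n\geq 1}\frac{B_{2n}}{2n z^{2n}}$ applied at $z = |j| + 1/2$, I would extract the $\frac{1}{\pi}\log|j|$ leading term and the lower-order contributions $c_1^\kappa |j|^{-\kappa}$ for $\kappa\geq 3$. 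The explicit constant in \eqref{eq:T1_third_expansion} is then reassembled by the bookkeeping identity
\begin{equation*}
\sum_{k=0}^{|j|-1}\frac{1}{\frac{1}{2}+k} - \log|j| = \sum_{k=1}^{|j|}\frac{1}{k} - \log|j| + \sum_{k=1}^{|j|}\Bigl(\frac{1}{\frac{1}{2}+k} - \frac{1}{k}\bigl(1 - \tfrac{1}{2k}\bigr)\Bigr) - \sum_{k=1}^{|j|}\frac{1}{2k^2} + O(|j|^{-1}),
\end{equation*}
letting $|j|\to\infty$ and using $\gamma_{\textnormal{EM}}$ for the harmonic limit and $\zeta(2)=\pi^2/6$ for the quadratic tail; this accounts for the $-\pi^2/12$ and for the convergence of the residual series.

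\emph{Main obstacle.} Parts (i) and (iii) are essentially direct consequences of \eqref{eq:asymptgamma00} combined with the algebraic identity $G_1(\frac{\alpha}{2},1-\frac{\alpha}{2})=0$. The genuinely delicate point is part (ii): one must track how the \emph{logarithm} arises from the non-convergent harmonic sum and identify the finite constant $\gamma_{\textnormal{EM}} - \pi^2/12$ plus the convergent series in \eqref{eq:T1_third_expansion}. The $\pi^2/12$ constant is where most of the bookkeeping happens, and I would devote the bulk of the written proof to this single computation; the rest reduces to symbol-class remainder estimates already justified by Stirling.
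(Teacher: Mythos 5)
Your route is the same as the paper's for all three parts: expand the $\Gamma$--ratio via \eqref{eq:asymptgamma00}, observe $G_1(\tfrac{\alpha}{2},1-\tfrac{\alpha}{2})=0$ to see the absence of the next-to-leading order, use Remark \ref{rem:nuovo} to pass from that expansion to one for $\sT^1_\alpha$ when $\alpha\ne1$, and invoke the harmonic-number asymptotic for $\alpha=1$. Parts (i) and (iii) are handled correctly and faithfully reproduce the paper's (very terse) argument.

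Your bookkeeping identity for the $\alpha=1$ case is, however, wrong by an additive constant $2$, and this is precisely the kind of boundary-term slip that this sort of re-indexing invites. Simplifying your right-hand side, the telescoping leaves exactly $\sum_{k=1}^{|j|}\frac{1}{k+\frac12}-\log|j|+O(|j|^{-1})$, whereas the left-hand side is $\sum_{k=0}^{|j|-1}\frac{1}{k+\frac12}-\log|j|=2+\sum_{k=1}^{|j|}\frac{1}{k+\frac12}-\frac{1}{|j|+\frac12}-\log|j|$; the $k=0$ term $\frac{1}{1/2}=2$ survives and is not absorbed by the $O(|j|^{-1})$ error. In fact the two ways you propose to compute the limiting constant disagree: your digamma route, done cleanly, gives $\pi\sT^1_1(n)=\psi(n+\tfrac12)-\psi(\tfrac12)=\log n+\gamma_{\textnormal{EM}}+2\log 2+O(n^{-2})$, while the claimed series representation $\gamma_{\textnormal{EM}}-\frac{\pi^2}{12}+\sum_{k\geq1}\bigl[\frac{1}{k+\frac12}-\frac{1}{k}\bigl(1-\frac{1}{2k}\bigr)\bigr]$ collapses to $\gamma_{\textnormal{EM}}+2\log2-2$ (use $\sum_{k\geq1}\bigl[\frac{1}{k+\frac12}-\frac{1}{k}\bigr]=\psi(1)-\psi(\tfrac32)=2\log2-2$ and $\sum_{k\geq1}\frac{1}{2k^2}=\frac{\pi^2}{12}$). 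You should have noticed this inconsistency as a sanity check. Since the $\alpha=1$ case is stated only for completeness and is not used in the proof of \Cref{thm:main}, the error is harmless for the paper's purposes, but you should redo the $\alpha=1$ computation carefully, tracking the $k=0$ term, and reconcile the series representation with the digamma expansion.
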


\begin{proof}
By the proof of  \Cref{lem:linearization} (below \eqref{eq:asymptgamma00}) we know that 
 \begin{equation*}
 \frac{\Gamma\pare{\xi + a}}{\Gamma \pare{ \xi + b }} - \xi^{a-b} \sum_{\kappa=0}^{N} \frac{G_\kappa \pare{a, b}}{\xi^\kappa} = m_{a-b-\pare{ 1+N }} \pare{\xi} \, ,
  \end{equation*}
  where $ m_{a-b-\pare{ 1+N }} \pare{\xi}$ is a Fourier multiplier in $ \wt \Gamma^{a-b-\pare{ 1+N }}_0 $,
and therefore,  for any $ \cK\geq 3 $, 
 \begin{equation}  \label{eq:Gammaratios0}
 \frac{\Gamma\pare{\frac{\alpha}{2} + \av{j} }}{\Gamma\pare{ 1 - \frac{\alpha}{2} +\av{j} }} =  \ \av{j}^{\alpha-1}
+ \sum_{\kappa=2}^{\cK-1} G_\kappa \pare{\frac{\alpha}{2}, 1-\frac{\alpha}{2}} \ \av{j}^{\alpha-\pare{1+\kappa}}
  + m_{\alpha-\cK} \pare{\av{j}}   \, ,
 \end{equation}
 where we exploited  that $ G_0 (a,b) = 1 $ and 
 $ G_1\pare{\frac{\alpha}{2} , 1 - \frac{\alpha}{2}}=0$, by \eqref{eq:asymptgamma00}.
 By Remark \ref{rem:nuovo} and \eqref{eq:Gammaratios0} we deduce \eqref{eq:T1_second_expansion} for $ \cK = 3 $. 
Finally  \eqref{eq:T1_third_expansion} for 
$ \alpha = 1 $ follows by the asymptotic estimate  
of the harmonic numbers 
$
\sum_{k=1}^{j} k^{-1} = 
\gamma_{\textnormal{EM}}  + \log \pare{j} + \frac{1}{2j} + m_{-2} \pare{j} $. 
\end{proof}

\section{Paralinearization of the  Hamiltonian scalar field}
\label{sec:paralinearization}

The main result of this section is the following.

\begin{theorem}[Paralinearization of the $ \alpha $-SQG patch equation]
\label{prop:paralinearization_1}
Let $ \alpha\in\pare{0,1}\cup\pare{1,2} $ .  
Let $ N \in \bN $ and $ \rho \geq 0 $.
For any  $ K\in\bN_0 $,
there exist $  s_0 > 0 $,  $  \epsilon_0 > 0 $ such that,
if $ f\in \Ball{K}{s_0} $   solves    \cref{eq:SQG_Hamiltonian} then
\begin{equation}
\label{eq:paralinearized_1}
\pa_t f
 +\partial_x \circ \OpBW{\pare{ 1+\nu\pare{f; x} }  L_{\alpha}\pare{\av{\xi}} + V\pare{f; x}  + P \pare{f; x , \xi}  }  \  f \\
 = R \pare{f} f
\end{equation}
where
\begin{itemize}
\item $  L_{\alpha}\pare{\av{\xi}} $ is the
real valued Fourier multiplier   of order $ \max\{ 0,\alpha-1 \} $
defined   in \Cref{lem:linearization};

\item
$\nu\pare{f; x}, V \pare{f; x} $ are real valued functions
in $ \Sigma \cF^\bR_{K, 0, 1}\bra{\epsilon_0 , N} $ (see \Cref{def:functions});

\item
$ P \pare{f; x , \xi} $ is a symbol in
$ \Sigma \Gamma^{-1}_{K, 0, 1}\bra{\epsilon_0 , N} $ (see \Cref{def:symbols}) satisfying  \eqref{areal};

\item $ R \pare{f} $ is a real smoothing operator in
$ \Sigma \dot \cR^{-\rho}_{K, 0,1}\bra{\epsilon_0 , N}  $ (see \Cref{def:smoothing}).
\end{itemize}
 \end{theorem}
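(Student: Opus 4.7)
The plan is to paralinearize the nonlinear map $f \mapsto \nabla E_\alpha(f)$ from \eqref{eq:gradient-pseudoenergy} and then compose on the left with $\partial_x$; since $\partial_x \circ \OpBW{a}$ sends smoothing operators into smoothing operators (of one order less), the decomposition of $\nabla E_\alpha(f)$ immediately yields \eqref{eq:paralinearized_1}. The first task is to rewrite $\nabla E_\alpha(f)(x)$ as a single $z$-convolution integral: changing variables $y = x-z$, using $1-\cos z = 2\sin^2(z/2)$ to extract the singularity $|z|^{-\alpha}$, and symmetrizing odd parts in $z$ so that the constant piece (which gave $-L_\alpha(|D|)f$ in Lemma \ref{lem:linearization}) is separated out, one arrives at
\[
\nabla E_\alpha(f)(x) = \fint K(f;x,z)\,\frac{f(x)-f(x-z)}{|z|^\alpha}\,\dd z + (\text{Fourier multipliers acting on }f),
\]
for a real, $\mathcal{C}^\infty$ nonlinear kernel $K(f;x,z)$ depending smoothly on $f$ and on its values at $x-z$; this is a direct manipulation of \eqref{eq:gradient-pseudoenergy} using $\sqrt{1+2f(y)} = 1+h(y)$.

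Next I Taylor expand $z \mapsto K(f;x,z)$ at $z=0$ to order $\mathsf{J}$ (chosen later to be $\geq 2$):
\[
K(f;x,z) = \sum_{\mathsf{j}=0}^{\mathsf{J}} K_\mathsf{j}(f,\ldots,f^{(\mathsf{j}+1)};x)\,z^{\mathsf{j}} + R_{\mathsf{J}}(f,\ldots,f^{(\mathsf{J}+1)};x,z),
\]
with $R_{\mathsf{J}}$ a real Kernel-function in $\Sigma\KF^{\mathsf{J}+1-\alpha}_{K,0,1}[\epsilon_0,N]$ in the sense of Definition \ref{def:kernel_functions}. Using the Bony paraproduct formula (Lemma \ref{lem:Bony_paralinearization_W}) on the $f$-dependent coefficients $K_\mathsf{j}(f,\ldots;x)$, every term of the finite sum is rewritten as $\OpBW{K_\mathsf{j}} \cdot \fint (f(x)-f(x-z))\,z^{\mathsf{j}}/|z|^\alpha\,\dd z$ plus smoothing. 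A direct Fourier computation (expanding $f(x)-f(x-z) = \sum_k \hat f(k)(1-e^{-ikz})e^{ikx}$ and following the asymptotics in the proof of Lemma \ref{lem:linearization}) evaluates
\[
\fint (f(x)-f(x-z))\,\frac{z^{\mathsf{j}}}{|z|^\alpha}\,\dd z = \mathbb{V}_{\alpha-\mathsf{j}}\,f + m_{\alpha-(\mathsf{j}+1)}(D)\,f,
\]
with $\mathbb{V}_{\alpha-\mathsf{j}}\in\mathbb{R}$ (possibly zero) and $m_{\alpha-(\mathsf{j}+1)}$ a Fourier multiplier of order $\alpha-(\mathsf{j}+1)$; the symbolic calculus of Proposition \ref{prop:composition_BW} then turns $\OpBW{K_\mathsf{j}}\circ m_{\alpha-(\mathsf{j}+1)}(D)$ into a single paradifferential operator plus smoothing. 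The Kernel remainder $R_{\mathsf{J}}$ is handled by Proposition \ref{prop:reminders_integral_operator}, which converts $\fint \OpBW{R_{\mathsf{J}}(f;\bullet,z)}(f(x)-f(x-z))\,\dd z$ into $\OpBW{a}f$ plus pluri-homogeneous smoothing, with $a$ a symbol of order $-(1+(\mathsf{J}+1-\alpha))=\alpha-\mathsf{J}-2$; choosing $\mathsf{J}$ large enough this is absorbed into $P$ and into smoothing, respectively.

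Collecting, the $\mathsf{j}=0$ contribution supplies the leading symbol $(1+\nu(f;x))L_\alpha(|\xi|)$ (the $1$ coming from $K(0;x,0)$ reproducing the linearization and $\nu$ from the nonlinear dependence of $K$ at $z=0$) together with the order-zero piece $V(f;x)$ generated by the real constants $\mathbb{V}_{\alpha-\mathsf{j}}$; all remaining terms either are of order $\leq -1$ and go into $P$, or are smoothing and go into $R$. The one dangerous item is the $\mathsf{j}=1$ contribution: the multiplier $m_{\alpha-2}(\xi)$ arising from $\int(1-e^{-ikz})\,z/|z|^\alpha\,\dd z$ has a nonzero imaginary part, so $\OpBW{K_1(f,f';x)\,m_{\alpha-2}(\xi)}$ would be neither symmetric nor Hamiltonian at subprincipal order and would destroy the tame energy estimate. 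The crux of the proof is therefore to verify that $K_1(f,f';x)\equiv 0$ identically in $(f,x)$; this is the Hamiltonian cancellation whose algebraic check is isolated in Section \ref{sec:Hamiltonian_identity}, and is the genuine obstacle of the argument. Granted this identity, the reality of all remaining Fourier multipliers (proved by matching complex conjugate modes as in the proof of \eqref{realsim}) gives the symbol conditions \eqref{areal} for $P$ and the reality of $\nu,V$, finishing \eqref{eq:paralinearized_1}.
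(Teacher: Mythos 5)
Your proposal reproduces the heuristic roadmap described in the paper's introduction (the displays around \eqref{eq:Taylor_pseudoenergy}--\eqref{eq:Taylor_pseudoenergy2}) rather than the proof actually carried out in Section~\ref{sec:paralinearization}. This is a fine starting point, but there are two points where your sketch diverges from what one must actually prove, and the second of them is a genuine gap.

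First, the paper does not write $\nabla E_\alpha(f)$ as a single convolution with one kernel $K(f;x,z)$ times $\delta_z f/|z|^\alpha$. Instead it splits $\nabla E_\alpha = \nabla E_\alpha^{(1)} + \nabla E_\alpha^{(2)}$ as in \eqref{eq:deco_Hamiltoniana} (the first containing no $f'$, the second carrying the $f'(x-z)\sin z$ factor), writes each as a smooth composition $\mathsf{G}^\mathsf{j}_{\alpha,z}(\delta_z f/r^2)$, applies the Bony paralinearization \emph{of the composition} to produce the paradifferential integrals $\mathcal{I}(f)$ and $\mathcal{J}(f)$ of \eqref{eq:cI}, \eqref{eq:cJ}, and only then Taylor-expands the resulting \emph{three} kernel functions $\sK^{1}_{\alpha,z}, \sK^{2}_{\alpha,z}, \sK^{3}_{\alpha,z}$ in $z$. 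Your single-kernel factorization is possible in principle, but the $f'$-dependence does not naturally fold into the $\delta_z f$ factor, and the two-piece structure is what keeps track of exactly which Taylor coefficients enter the subprincipal symbol.

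Second, and more importantly, the assertion that ``$K_1(f,f';x)\equiv 0$ identically'' is not the cancellation that is actually verified. Computing the $z^1$-Taylor coefficients of the kernels, one finds $\sK^{3,1}_\alpha(0;x)=1+\tfrac{\alpha}{2}\neq 0$ (see \eqref{eq:coefficients_Kernels_zero}), so the naive Taylor coefficient does not vanish. The subprincipal imaginary-part contribution of order $\alpha-2$ receives two sources: the $z^1$-Taylor coefficients \emph{and} the Poisson bracket $\tfrac{1}{2\ii}\{a,b\}$ from the Weyl composition $\OpBW{\sK^{\mathsf{j},0}_\alpha}\circ T^1_\alpha(|D|)$, which supplies terms proportional to $\partial_x \sK^{\mathsf{j},0}_\alpha\,\partial_\xi T^1_\alpha(|\xi|)$. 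What the Hamiltonian structure forces, and what Section~\ref{sec:Hamiltonian_identity} checks, is the identity $A_{\alpha,1}(f;x) + \tfrac12(A_{\alpha,0}(f;x))_x=0$ of \eqref{eq:Hamiltonian_identity}, where $A_{\alpha,0}$ and $A_{\alpha,1}$ of \eqref{eq:A0}, \eqref{eq:A1} aggregate the zeroth- and first-order Taylor coefficients of all three kernels and the $r^{-\alpha}$ prefactors. If you attempted to verify ``$K_1\equiv 0$'' as stated you would fail; the cancellation only closes once the symbolic-calculus corrections from the $z^0$ level are folded in, which is precisely why the $\partial_x(A_{\alpha,0})$ term appears. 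Your sketch needs to be rephrased: the dangerous imaginary subprincipal symbol vanishes because the Hamiltonian structure makes $\dd\nabla E_\alpha(f)$ formally self-adjoint, and the concrete identity encoding this is \eqref{eq:Hamiltonian_identity}, not the vanishing of a raw Taylor coefficient.
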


Note that, since the symbol  
$ \pare{ 1+\nu\pare{f; x} }  L_{\alpha}\pare{\av{\xi}} + V\pare{f; x}  $
is real, 
the vector field in \eqref{eq:paralinearized_1} 
 is linearly Hamiltonian up to zero order operators. 
 
 \subsection{Isolating the integral terms}
 
 \noindent
{\bf Notation.}
 In this section  we use the following auxiliary functions
\begin{align}\label{eq:notation}
r = r\pare{f;x} \defeq \sqrt{1+2f\pare{x}} \, , && \delta_z f \defeq f\pare{x}-f\pare{x-z},
&&  \Delta_z f \defeq \frac{\delta_z f}{2\sin\pare{z/2}} \, , \ 
\forall z \in \T \setminus \{0\}  \, .
\end{align}
We shall denote by $ P\pare{f;x, \xi} $ a symbol  in $ \Sigma \Gamma^{-1}_{K,0,1}\bra{\epsilon_0, N} $
 (see \Cref{def:symbols})  
by $  R\pare{f} $ a smoothing operator in $ \Sigma \cR^{-\rho}_{K,0,1}\bra{\epsilon_0, N} $
 (see \Cref{def:smoothing}) 
and by $ R \pare{f;z} $ 
a Kernel-smoothing operator in 
$ \Sigma \KR^{-\rho,0}_{K,0,1}\bra{\epsilon_0, N} $ (see \Cref{def:KR}), whose explicit expression may vary from line to line.

 Note that  $ r\pare{f;x} $ is a  function in $ \Sigma \cF^\bR_{K,0,0}\bra{\epsilon_0, N} $
 and,   according to \Cref{def:KM},  
\begin{equation}\label{eq:deltaDelta}
\delta_z \in \wt \KM^{\ 1,1}_0 \, .
\end{equation}
In view of \eqref{eq:gradient-pseudoenergy}
and performing the change of variable $ y = x - z $, 
the gradient $ \nabla E_\alpha \pare{f}$   can be decomposed as
\begin{equation}\label{eq:deco_Hamiltoniana}
\begin{aligned}
\nabla E_\alpha \pare{f} = & \ \nabla E_\alpha^{\pare{1}} \pare{f} + \nabla E_\alpha^{\pare{2}} \pare{f}  \, ,  \\
\nabla E_\alpha^{\pare{1}} \pare{f} \defeq &  \frac{c_\alpha}{2\pare{1-\frac{\alpha}{2}}} \fint \frac{1+2f\pare{x-z} -\sqrt{1+2f\pare{x}} \ \sqrt{1+2f\pare{x-z}}\cos z  }
{ \bra{2
\pare{1+f\pare{x} + f\pare{x-z} - \sqrt{1+2f\pare{x}}\sqrt{1+2f\pare{x-z}}\cos z}
}^{\frac{\alpha}{2}} } \ \dd z \, ,
\\
\nabla E_\alpha^{\pare{2}} \pare{f} \defeq &  \frac{c_\alpha}{2\pare{1-\frac{\alpha}{2}}} \fint \frac{ \sqrt{\frac{1+2f\pare{x}}{1+2f\pare{x-z}} }  \ f '\pare{x-z} \sin z }{ \bra{2
\pare{1+f\pare{x} + f\pare{x-z} - \sqrt{1+2f\pare{x}}\sqrt{1+2f\pare{x-z}}\cos z}
}^{\frac{\alpha}{2}} } \ \dd z  \, .
\end{aligned}
\end{equation}
Then, recalling the notation in \eqref{eq:notation}, we write
\begin{equation} \label{eq:nablaHalpha(1)}
\nabla E_\alpha^{\pare{1}} \pare{f} =  \ \frac{c_\alpha}{2\pare{1-\frac{\alpha}{2}}} \fint \frac{ r^2 - 2\delta_z f -r \ \sqrt{r^2 - 2\delta_z f}\cos z  }
{ \bra{ 2\pare{ r^2 - \delta_z f  - r\sqrt{r^2 - 2\delta_z f}\cos z
} }^{\frac{\alpha}{2}} } \ \dd z  = \ \frac{c_\alpha}{2\pare{1-\frac{\alpha}{2}}} \ r^{2-\alpha} 
\fint
\mathsf{G}^1 _{\alpha , z}\pare{\frac{\delta_z f}{r^2}}  \ \dd z  
\end{equation}
with
\begin{equation}\label{eq:G1}
\mathsf{G}^1 _{\alpha , z} \pare{\sX} \defeq \ \frac{ 1 - 2 \sX - \sqrt{1 - 2\sX }\cos z  }
{ \bra{ 2\pare{  1 -\sX  -  \sqrt{1 - 2\sX}\cos z }
}^{\frac{\alpha}{2}} }  \, ,
\end{equation}
and
\begin{equation}\label{defHa2}
\nabla E^{\pare{2}}_\alpha \pare{f} =  
\ \frac{c_\alpha}{2\pare{1-\frac{\alpha}{2}}} \ 
\frac{1}{r^\alpha} 
\underbrace{\fint \sG^2 _{\alpha, z} \pare{\frac{\delta_z f}{r^2}} 
\  f'\pare{x-z}\sin z \ \dd z  }_{=: \cJ \pare{f}  }
=   \frac{c_\alpha}{2\pare{1-\frac{\alpha}{2}}} \ \frac{1}{r^\alpha}  \ \cJ \pare{f} 
\end{equation}
with
\begin{equation}
\label{eq:sG2}
\sG^2 _{\alpha, z}\pare{\sX}
\defeq   
 \frac{\frac{1 }{\sqrt{1-2 \sX }} }{\bra{2\pare{1- \sX  - \sqrt{1-2\sX  }\cos z}}^{\frac{\alpha}{2}}}.
\end{equation}
By \eqref{eq:deco_Hamiltoniana},  recalling \eqref{eq:Pi0bot} and that
$ \nabla E_{\alpha}^{\pare{1}}\pare{0} $ is a constant, using 
 \eqref{eq:nablaHalpha(1)}, 
\eqref{defHa2}, the  equation \eqref{eq:SQG_Hamiltonian}  
can be written as 
\begin{align}
\pa_t f
& =  \ \partial_x \bra{ \pare{ \nabla E_{\alpha}^{\pare{1}}\pare{f} - \nabla E_{\alpha}^{\pare{1}}\pare{0} } +  \nabla E_{\alpha}^{\pare{2}}\pare{f}  }  \notag 
\\
& = 
\frac{c_\alpha}{2\pare{1-\frac{\alpha}{2}}} 
\ 
\partial_x
\bra{
\pare{ r^{2-\alpha}  \Delta I \pare{f} }    + \fint  \mathsf{G}^1_{\alpha , z}\pare{0} \dd z  \ 
  \pare{r^{2-\alpha}-1}
+ 
\frac{1}{r^\alpha}  \ \cJ \pare{f} 
 }  \label{eq:fIJ}
\end{align} 
where 
\begin{equation}\label{eq:DeltaI_def}
\Delta I \pare{f} \defeq  \fint \mathsf{G}^1_{\alpha , z}\pare{\frac{\delta_z f}{r^2}} 
- \mathsf{G}^1_{\alpha , z}\pare{0} \dd z \, .  
\end{equation}
By \eqref{eq:Step3est1}, \eqref{eq:G1} we get 
\begin{equation}\label{eq:intG1(0)}
\fint \sG^1_z\pare{0} \dd z = \frac{1}{2} \fint \bra{2\pare{1-\cos z}}^{1-\frac{\alpha}{2}} \dd z = \frac{1}{1-\frac{\alpha}{2}} \frac{\Gamma\pare{2-\alpha}}{\Gamma\pare{1-\frac{\alpha}{2}}^2} \, . 
\end{equation}
The terms $ \Delta I \pare{f} $ and $ \cJ\pare{f} $ are yet not in a suitable form  to be paralinearized, since  the nonlinear convolution kernels need to be desingularized
at $ z = 0 $. 

\begin{lemma}
The term $ \Delta I \pare{f} $ in \eqref{eq:DeltaI_def} can be written as 
\begin{equation}\label{eq:DeltaI_as_cI}
 \Delta I \pare{f} =    
\cI\pare{f}   + R \pare{f} f 
\end{equation}
where $ R\pare{f} $ is a real smoothing operator in 
$ \Sigma {\cR}^{-\rho}_{K, 0, 1}\bra{\epsilon_0, N} $, and  
\begin{equation}
\cI\pare{f}\defeq \label{eq:cI}
\fint \OpBW{\sK^1_{\alpha, z}\pare{\frac{\Delta_z f}{r^2}}} {  \frac{\delta_z f}{r^2\av{2\sin\pare{z/2}}^\alpha }} \dd z 
\end{equation}
with
\begin{equation}\label{eq:sK1} 
\begin{aligned}
\sK^1 _{\alpha, z}\pare{\sX}  &
\  \defeq \pare{ \sG^1 _{\alpha, z} }' \pare{2 \sX \sin\pare{z/2} } \av{2\sin\pare{z/2}}^\alpha 
\\
& \   \begin{multlined}
 = \left[ -\frac{2-\frac{\cos z}{\sqrt{1-4\sX \sin\pare{z/2}}}}{\bra{2\pare{1-2 \sX \sin\pare{z/2}-\sqrt{1-4\sX \sin\pare{z/2}}\cos z}}^{\frac{\alpha}{2}}} \right.  \\
 \  \ \left.  + \alpha \frac{\pare{1-\frac{\cos z}{\sqrt{1-4\sX \sin\pare{z/2}}}} \pare{1-4\sX \sin\pare{z/2}-\sqrt{1-4\sX \sin\pare{z/2} }\cos z}}{\bra{2\pare{1-2\sX \sin\pare{z/2}-\sqrt{1-4\sX \sin\pare{z/2}}\cos z}}^{\frac{\alpha}{2}+1}}\right] \av{2\sin\pare{z/2}}^\alpha \, .
\end{multlined}
\end{aligned}
 \end{equation}
The term $\cJ \pare{f}  $ in \eqref{defHa2} 
 can be written as 
\begin{equation}\label{eq:cJ}
 \cJ \pare{f} 
=
\fint \sK^2 _{\alpha, z} \pare{\frac{\Delta_z f}{r^2}} \  f'\pare{x-z}\frac{\sin z}{\av{2\sin\pare{z/2}}^\alpha} \ \dd z 
\end{equation}
where
\begin{equation}
\label{eq:sK2}
\sK^2_{\alpha, z}\pare{\sX} \defeq \sG^2_{\alpha, z}\pare{\sX \ 2\sin\pare{z/2}} \ \av{2\sin\pare{z/2}}^\alpha
=
\frac{\frac{1 }{\sqrt{1-4\sX \sin\pare{z/2} }} \av{2\sin\pare{z/2}}^\alpha }{\bra{2\pare{1-2 \sX \sin\pare{z/2} - \sqrt{1-4\sX \sin\pare{z/2} }\cos z}}^{\frac{\alpha}{2}}}\,.
\end{equation}
The functions $ z\mapsto \sK^{\mathsf{j}}_{\alpha, z}\pare{\frac{\Delta_zf}{r^2}} $, $ \mathsf{j}=1,2, $
are $ {2\pi} $-periodic.
\end{lemma}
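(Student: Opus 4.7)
The plan is built around two substitutions and one application of the Bony paralinearization formula. First, observe the algebraic identity
\[
\frac{\delta_z f}{r^2} \;=\; \frac{2\sin(z/2)}{r^2} \,\Delta_z f ,
\]
which is the key to trading the singular factor $|2\sin(z/2)|^{-\alpha}$ appearing in the kernels of \eqref{eq:sK1}, \eqref{eq:sK2} for the smoother argument $\Delta_z f/r^2$. For $\cJ(f)$ this is essentially all that is needed: substituting the identity into the definition \eqref{eq:sG2} of $\mathsf{G}^2_{\alpha,z}$ evaluated at $\delta_z f/r^2$, and comparing with \eqref{eq:sK2}, gives exactly $\mathsf{G}^2_{\alpha,z}(\delta_z f/r^2)=\sK^2_{\alpha,z}(\Delta_z f/r^2)\,|2\sin(z/2)|^{-\alpha}$, so inserting into \eqref{defHa2} immediately produces \eqref{eq:cJ}.

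For $\Delta I(f)$ the strategy is to apply Lemma \ref{lem:Bony_paralinearization_W} to the smooth scalar function $F(\sX)\defeq \mathsf{G}^1_{\alpha,z}(\sX)-\mathsf{G}^1_{\alpha,z}(0)$, which vanishes at $\sX=0$ at order $q=1$, with the $x$-function $u(x)\defeq \delta_z f(x)/r(x)^2$, for each fixed $z$. This yields
\[
\mathsf{G}^1_{\alpha,z}\!\Big(\frac{\delta_z f}{r^2}\Big)-\mathsf{G}^1_{\alpha,z}(0)
= \OpBW{(\mathsf{G}^1_{\alpha,z})'(\delta_z f/r^2)}\!\Big[\frac{\delta_z f}{r^2}\Big] + R(f;z)\Big[\frac{\delta_z f}{r^2}\Big].
\]
Because $|2\sin(z/2)|^\alpha$ is independent of $(x,\xi)$ and the paradifferential quantization is $\C$-linear, I can absorb it into the argument and inside the symbol, using the identity $(\mathsf{G}^1_{\alpha,z})'(\delta_z f/r^2)=\sK^1_{\alpha,z}(\Delta_z f/r^2)\,|2\sin(z/2)|^{-\alpha}$ coming directly from \eqref{eq:sK1}. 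This rewrites the leading term as $\OpBW{\sK^1_{\alpha,z}(\Delta_z f/r^2)}\big[\delta_z f / (r^2|2\sin(z/2)|^\alpha)\big]$, which after $\fint\cdot\,dz$ is exactly $\cI(f)$ in \eqref{eq:cI}.

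It remains to dispose of $\fint R(f;z)[\delta_z f/r^2]\,dz$ and show it has the form $R(f)f$ with $R(f)\in\Sigma\cR^{-\rho}_{K,0,1}[\epsilon_0,N]$. The multiplication by $1/r^2$ belongs to $\Sigma\cM^0_{K,0,0}[\epsilon_0,N]$ by Remark \ref{item:propMop2}, while $\delta_z = 1-\st_{-z}\in \widetilde{\KM}^{\,1,1}_0$ by \eqref{eq:deltaDelta}; composing with the smoothing $R(f;z)$ (viewed as a Kernel smoothing operator in $\Sigma\KR^{-\rho,0}_{K,0,1}[\epsilon_0,N]$ through its dependence on $\delta_z f/r^2$) via Proposition \ref{prop:comp_z} produces a Kernel smoothing operator in $\Sigma\KR^{-\rho,\,1}_{K,0,1}[\epsilon_0,N]$ acting on $f$. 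Since $1>-1$, Lemma \ref{prop:action_z_smoothing} yields, upon integration in $z$, a genuine smoothing operator in $\Sigma\cR^{-\rho}_{K,0,1}[\epsilon_0,N]$, giving the desired decomposition \eqref{eq:DeltaI_as_cI}. Reality of this $R(f)$ follows from the real-valuedness of all the kernels involved.

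Finally, periodicity of $z\mapsto \sK^{\mathsf{j}}_{\alpha,z}(\Delta_z f/r^2)$ is a direct check: $\delta_{z+2\pi}f=\delta_z f$ while $2\sin((z+2\pi)/2)=-2\sin(z/2)$, so $\Delta_{z+2\pi}f=-\Delta_z f$; combined with $\mathsf{G}^{\mathsf{j}}_{\alpha,z+2\pi}=\mathsf{G}^{\mathsf{j}}_{\alpha,z}$ (since only $\cos z$ enters) and the evenness $|2\sin((z+2\pi)/2)|=|2\sin(z/2)|$, the definitions \eqref{eq:sK1}--\eqref{eq:sK2} yield $\sK^{\mathsf{j}}_{\alpha,z+2\pi}(-\sX)=\sK^{\mathsf{j}}_{\alpha,z}(\sX)$, which gives the claim. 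The main technical obstacle in this proof is precisely the bookkeeping of the $z$-dependent smoothing remainder: one must verify that the Bony remainder $R(f;z)$ inherits the Kernel order $n=0$ (hence after composition with $\delta_z$, order $n=1$) with bounds uniform in $z\in\T\setminus\{0\}$, so that \Cref{prop:action_z_smoothing} is applicable.
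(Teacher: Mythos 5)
Your proposal follows essentially the same route as the paper: apply the Bony paralinearization formula to the composed function, observe the substitution identity $(\sG^1_{\alpha,z})'(\delta_z f/r^2)\,|2\sin(z/2)|^{\alpha} = \sK^1_{\alpha,z}(\Delta_z f/r^2)$ to identify the paradifferential part with $\cI(f)$, then absorb the $z$-dependent Bony remainder into a Kernel-smoothing operator via \Cref{prop:comp_z} and integrate using \Cref{prop:action_z_smoothing}, and check periodicity via $\Delta_{z+2\pi}f=-\Delta_z f$. The only minor difference is in bookkeeping the remainder order: you exploit $\delta_z\in\wt\KM^{1,1}_0$ to arrive at $n=1$, while the paper decomposes $\delta_z/r^2$ as a sum $M_1(f)+M_2(f;z)$ of a $z$-independent and a shifted operator (each in $\Sigma\KM^{0,0}_{K,0,0}$) to get $n=0$; both satisfy $n>-1$, so the integration step goes through in either case after renaming $\rho$.
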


\begin{proof}
Applying \Cref{lem:Bony_paralinearization_W}  to \eqref{eq:DeltaI_def} we get 
\begin{align}
\label{eq:DeltaI_paralinearizzazione_1.1}
 \Delta I \pare{f} = \fint \OpBW{\pare{ \sG^1_{\alpha, z} }'\pare{\frac{\delta_z f}{r^2}}}  \frac{\delta_z f}{r^2} \dd z +  \fint R \pare{\frac{\delta_z f}{r^2}}  \frac{\delta_z f}{r^2}  \dd z 
\end{align}
where $ R $ is a smoothing operator in $ \Sigma \cR^{-\rho}_{K, 0, 1}\bra{\epsilon_0, N} $ and, recalling \eqref{eq:G1},  
\begin{equation}
 \label{eq:G1'}
\pare{ \sG^1 _{\alpha, z} }' \pare{\sX} =
-\frac{2-\frac{\cos z}{\sqrt{1-2\sX}}}{\bra{2\pare{1-\sX-\sqrt{1-2\sX}\cos z}}^{\frac{\alpha}{2}}} + \alpha \frac{\pare{1-\frac{\cos z}{\sqrt{1-2\sX}}} \pare{1-2\sX-\sqrt{1-2\sX }\cos z}}{\bra{2\pare{1-\sX-\sqrt{1-2\sX}\cos z}}^{\frac{\alpha}{2}+1}}.
\end{equation}
In view of \eqref{eq:G1'}, \eqref{eq:sK1} 
we have that  
$$ 
\sK^1_{\alpha, z}\pare{\sX} = \pare{ \sG^1 _{\alpha, z} }' \pare{2 \sX \sin\pare{z/2} } \av{2\sin\pare{z/2}}^\alpha 
$$ 
so that the first term on the right hand side of \eqref{eq:DeltaI_paralinearizzazione_1.1} is equal to $ \cI\pare{f} $ in \eqref{eq:cI}. 
Notice that, since $ \Delta_{z+2\pi} f = -\Delta_z f $ and $ \sK^1_{\alpha, z+2\pi}\pare{ - \mathsf{X}} = \sK^1_{\alpha, z}\pare{  \mathsf{X}} $ (cf. \eqref{eq:sK1}), 
the map $ z\mapsto  \sK^1_{\alpha, z}\pare{\frac{\Delta_z f}{r^2}} $ is $ 2\pi $-periodic. 
Similarly  $ z\mapsto \sK^2_{\alpha, z}\pare{\frac{\Delta_z f}{r^2}} $ is $ 2\pi $-periodic.  
 
We now prove that 
\begin{align}
\label{eq:reg_integral}
 \fint R \pare{\frac{\delta_z f}{1+2f}}\pare{ \frac{\delta_z f}{1+2f } } \dd z
= R\pare{f}f \qquad \text{where} \qquad 
 R \pare{f} \in \Sigma  \cR^{-\rho}_{K, 0, 1}\bra{\epsilon_0, N} \, . 
\end{align} 
We write 
$$
R \pare{\frac{\delta_z f}{1+2f}}\pare{ \frac{\delta_z f}{1+2f } }
 =
 R \pare{M \pare{f;z} f} M \pare{f;z} f 
$$
where 
$$ 
M \pare{f;z} = M_{1}\pare{f} + M_{2}\pare{f;z} \, , \quad
 M_{1}\pare{f}  \defeq \frac{1}{1+2f} \, , \quad 
  M_{2}\pare{f;z}  = -\frac{\st _{-z} }{1+2f} \, . 
$$ 
Remark \ref{item:propMop2}
 shows that $ M_{1} \pare{f}\in \Sigma \cM^{0,0}_{K, 0, 0}\bra{\epsilon_0, N} $ and  \Cref{prop:comp_z}, \cref{item:MM_ext_z} proves   that $ M_{2}\pare{f;z}
 $ belongs to $ \Sigma \KM^{0,0}_{K, 0, 0}\bra{\epsilon_0, N} $. Thus  
$ M \pare{f; z}
 \in \Sigma \KM^{0, 0}_{K, 0, 0}\bra{\epsilon_0, N} $ and  
\Cref{prop:comp_z}, \cref{item:MM_int_z,item:MM_ext_z} give that
$$
  R \pare{\frac{\delta_z f}{1+2f}}\pare{ \frac{\delta_z f}{1+2f } }
 =
  R \pare{M \pare{f;z} f} M \pare{f;z} f =  R\pare{f;z} f 
$$
for some  Kernel-smoothing  operator 
$ R \pare{f;z}$ in $ \Sigma \KR^{-\rho, 0}_{K, 0, 1}\bra{\epsilon_0, N} $. Finally \Cref{prop:action_z_smoothing} implies \eqref{eq:reg_integral}.
\end{proof}

Plugging \eqref{eq:DeltaI_as_cI}  in \eqref{eq:fIJ} we obtain
\begin{equation}\label{eq:fcIcJ}
\pa_t f = 
\frac{c_\alpha}{2\pare{1-\frac{\alpha}{2}}} 
\ 
\partial_x
\bra{
 r^{2-\alpha}  \pare{   \cI\pare{f} 
 + R \pare{f} f}    + \fint  \mathsf{G}^1_{\alpha , z}\pare{0} \dd z \   \pare{r^{2-\alpha}-1}
+
  \frac{1}{r^\alpha}  \ \cJ \pare{f} 
}. 
\end{equation}

\subsection{Analysis of the nonlinear convolution kernels}

The goal of this section is to 
represent the nonlinear convolution kernels in \eqref{eq:cI}
and 
\eqref{eq:cJ} as Kernel-functions 
according to \Cref{def:kernel_functions}. In \Cref{sec:paralinearization_cJ} 
we shall consider 
 as well the convolution kernel
\begin{multline}
\label{eq:sK3}
\sK^3 _{\alpha, z}\pare{\sX}
\defeq  \pare{\sG^2_{\alpha, z}}' \pare{\sX \ 2\sin\pare{z/2}} \  \sin z \av{2 \sin (z/2) }^\alpha
 =
 \left[ \frac{\frac{1}{\pare{1-4\sX \sin\pare{z/2}}^{3/2}}}{\bra{2\pare{1-2\sX \sin\pare{z/2}-\sqrt{1-4\sX \sin\pare{z/2}}\cos z}}^{\frac{\alpha}{2}}}\right. \\
 \left.
 + \alpha \frac{\pare{1-\frac{\cos z}{\sqrt{1-4\sX \sin\pare{z/2}}}} \frac{1}{\sqrt{1-4\sX \sin\pare{z/2}}}}{\bra{2\pare{1-2\sX \sin\pare{z/2}-\sqrt{1-4\sX \sin\pare{z/2}}\cos z}}^{\frac{\alpha}{2}+1}} \right] \av{2\sin\pare{z/2}}^\alpha \ \sin z \, .  
\end{multline}

\begin{lemma}\label{lem:characterization_Kernels}
Let  $\sK^\mathsf{j}_{\alpha, z} (\sX) $, $ \alpha\in\pare{0,2}, \  \mathsf{j} = 1, 2, 3, $ be the functions 
defined  in \eqref{eq:sK1}, \eqref{eq:sK2} and \eqref{eq:sK3}. Then 
\begin{equation}\label{eq:Kernel_in_kernelfunction}
\sK^\mathsf{j}_{\alpha, z} \pare{\frac{\Delta_z f}{r^2}}  = 
\sK^\mathsf{j}_{\alpha, z} \pare{\frac{\Delta_z f}{1+2f}}
\in 
 \Sigma\KF ^{0}_{K, 0, 0}\bra{\epsilon_0, N} 
\end{equation}
is a Kernel function, which admits the expansion 
\begin{equation}
  \label{eq:Taylor_expansion_kernels0}
 \sK^\mathsf{j}_{\alpha, z}\pare{\frac{\Delta_z f}{r^2}} = \sK^{\mathsf{j},0}_{\alpha}\pare{f;x} +  \sK^{\mathsf{j},1}_{\alpha}\pare{f;x} \ \sin z
+ \sK^{\mathsf{j},2}_\alpha  \pare{f;x} \pare{2\sin\pare{z/2}}^2
  +\varrho^{\mathsf{j},3}_\alpha\pare{f;x,z} \,  ,
 \end{equation}
 where
 \begin{align}\label{prop:fr}
   \sK^{\mathsf{j}, l}_{\alpha} \pare{f;x}
\in \Sigma \cF^\bR_{K,0, \underline{p}\pare{\mathsf{j},l} }\bra{\epsilon_0 , N} 
\, , 
&& 
\varrho^{\mathsf{j},3}_\alpha\pare{f;x,z} \in \Sigma\KF ^{3}_{K, 0, \underline{q} \pare{\mathsf{j}}}\bra{\epsilon_0, N} \, 
&&
\underline{q}\pare{ \mathsf{j} } \defeq \system{
\begin{aligned}
&1 && \text{ if } \  \mathsf{j} = 1, 2 \, , \\
& 0 && \text{ if } \ \mathsf{j} = 3 \, , 
\end{aligned}
} 
 \end{align}
with  $ \underline{p}\pare{\mathsf{j},l} \in \set{0, 1} $ 
and {constant 
functions}
 \begin{align}
 \label{eq:coefficients_Kernels_zero}
 \pare{
\begin{array}{ccc}
\sK^{1,0}_{\alpha}\pare{0;x} & \sK^{2,0}_{\alpha}\pare{0;x} & \sK^{3,0}_{\alpha}\pare{0;x} \\
\sK^{1,1}_{\alpha}\pare{0;x} & \sK^{2,1}_{\alpha}\pare{0;x} & \sK^{3,1}_{\alpha}\pare{0;x} \\
\sK^{1,2}_{\alpha}\pare{0;x} & \sK^{2,2}_{\alpha}\pare{0;x} & \sK^{3,2}_{\alpha}\pare{0;x}
\end{array}
 }
 =
 \pare{
\begin{array}{ccc}
-1 & 1 & 0 \\
0 & 0 & 1+\frac{\alpha}{2} \\
-\frac{1}{2}\pare{1-\frac{\alpha}{2}} & 0 & 0 \\
\end{array}
 } .
 \end{align}
\end{lemma}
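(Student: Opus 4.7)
My plan is to first establish the smoothness of the bare kernels $\sK^\mathsf{j}_{\alpha,z}(\sX)$ as functions of $(\sX, z)$ in a neighborhood of the origin, then promote this to a Kernel-function statement via composition with $\Delta_z f/r^2$, and finally Taylor expand in $z$ at $z=0$ to extract the coefficients.

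The first step is to verify that each $\sK^\mathsf{j}_{\alpha,z}(\sX)$ extends to a $\cC^\infty$ function of $(\sX, z)$ near $(0, 0)$. Setting $v \defeq \sqrt{1 - 4\sX\sin(z/2)}$, I would exploit the two algebraic identities
\begin{align*}
2\bigl(1 - 2\sX\sin(z/2) - v\cos z\bigr) & = (v - \cos z)^2 + \sin^2 z \, , \\
\bigl(1 - \tfrac{\cos z}{v}\bigr)\bigl(1 - 4\sX\sin(z/2) - v\cos z\bigr) & = (v - \cos z)^2 \, .
\end{align*}
Combined with the expansion $v = 1 - 2\sX\sin(z/2) + O(\sX^2\sin^2(z/2))$, the first identity yields
\[
(v-\cos z)^2 + \sin^2 z = (2\sin(z/2))^2 \bigl(1 + O(\sX, z)\bigr) \, ,
\]
so that the factor $\av{2\sin(z/2)}^\alpha$ in \eqref{eq:sK1}--\eqref{eq:sK3} cancels the apparent singularity at $z=0$ of the denominators, yielding smooth functions of $(\sX, z)$ whose derivatives are uniformly bounded on compact subsets.

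Next, to show $\sK^\mathsf{j}_{\alpha,z}(\Delta_z f/r^2) \in \Sigma\KF^0_{K,0,0}[\epsilon_0, N]$, I would use that $\Delta_z f = (2\sin(z/2))^{-1}\delta_z f$ is the divided difference of $f$, uniformly bounded in $z$ for $f \in \Ball{K}{s_0}$ and, by \eqref{eq:deltaDelta}, an element of $\wt\KM^{0,0}_0$; and that $r^2 = 1 + 2f$ inverts smoothly in $\Sigma\cF^\bR_{K,0,0}[\epsilon_0, N]$ per \Cref{item:propMop2}. Applying \Cref{lem:Bony_paralinearization_W} (Bony paralinearization of smooth composition) together with the $z$-dependent composition rules of \Cref{prop:comp_z} and \Cref{lem:closure_comp_symbols} then gives the desired membership, the estimates \eqref{homores1}--\eqref{homores1.1} (at $n = 0$) following from the uniform smoothness of Step 1.

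Finally, writing $\cos z = 1 - \tfrac{1}{2}(2\sin(z/2))^2$, the three monomials $\{1, \sin z, (2\sin(z/2))^2\}$ account for all polynomial $z$-dependence up to order 2. Taylor expanding $\sK^\mathsf{j}_{\alpha,z}(\Delta_z f/r^2)$ at $z = 0$ in this basis produces \eqref{eq:Taylor_expansion_kernels0}, with remainder $\varrho^{\mathsf{j}, 3}_\alpha$ in $\Sigma\KF^3_{K, 0, \underline{q}(\mathsf{j})}[\epsilon_0, N]$ via Taylor's integral formula, and trace coefficients $\sK^{\mathsf{j}, l}_\alpha(f;x)$ in $\Sigma\cF^\bR_{K, 0, \underline{p}(\mathsf{j}, l)}[\epsilon_0, N]$ by \Cref{rem:firstremaKernelfunctions}. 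To identify the values at $f = 0$, I would substitute $\sX = 0$ in the explicit formulas: since $v|_{\sX=0} = 1$, a direct computation gives
\[
\sK^1_{\alpha, z}(0) = (1-\tfrac{\alpha}{2})\cos z - (2 - \tfrac{\alpha}{2}) = -1 - \tfrac{1}{2}(1 - \tfrac{\alpha}{2}) (2\sin(z/2))^2 \, , \quad \sK^2_{\alpha, z}(0) = 1 \, ,
\]
and $\sK^3_{\alpha,z}(0) = (1 + \tfrac{\alpha}{2})\sin z$, matching exactly the table in \eqref{eq:coefficients_Kernels_zero}. The hardest step is the second: tracking the $f$- and $z$-dependence jointly through the composition while keeping Kernel-function estimates uniform in $z$; but once the cancellations of Step 1 are in hand, the composition machinery of \Cref{sec:paradiff} applies in a routine manner.
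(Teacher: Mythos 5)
Your Step 1 identities,
$2\bigl(1 - 2\sX\sin(z/2) - v\cos z\bigr) = (v - \cos z)^2 + \sin^2 z$ and
$\bigl(1 - \tfrac{\cos z}{v}\bigr)\bigl(1 - 4\sX\sin(z/2) - v\cos z\bigr) = (v - \cos z)^2$ with $v = \sqrt{1-4\sX\sin(z/2)}$,
are correct and give a cleaner structural explanation of why $\av{2\sin(z/2)}^\alpha$ removes the apparent singularity; the paper gets the same conclusion by direct substitution $\cos z = 1 - w^2/2$, $w = 2\sin(z/2)$, checking analyticity of each factor. Your trace computation in the final step also checks out, and the identity $\sK^1_{\alpha,z}(0) = (1-\tfrac{\alpha}{2})\cos z - (2-\tfrac{\alpha}{2}) = -1 - \tfrac12(1-\tfrac{\alpha}{2})(2\sin(z/2))^2$ agrees with the paper's \eqref{eq:K0}.

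The gap is in Step 2, which is where the bulk of the paper's proof actually lives, and two concrete things go wrong there. First, you assert $\Delta_z \in \wt\KM^{0,0}_0$ citing \eqref{eq:deltaDelta}; but \eqref{eq:deltaDelta} says $\delta_z \in \wt\KM^{1,1}_0$, and dividing by $2\sin(z/2) \sim |z|$ trades the $z$-decay for nothing while keeping the loss of one derivative, so $\Delta_z \in \wt\KM^{1,0}_0$. This loss of a derivative is not cosmetic: it is the reason the paper needs the Fourier-level bound $|\partial_z^l\triangle_j(z)| \lesssim_l |j|^{l+1}$ in \eqref{eq:bound_finite_difference_Fourier}, which feeds the growth exponent $\mu$ in the Kernel-function estimates \eqref{homores1}. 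Second, invoking \Cref{lem:Bony_paralinearization_W} plus \Cref{prop:comp_z} and \Cref{lem:closure_comp_symbols} does not produce a proof of membership in $\Sigma\KF^0_{K,0,0}[\epsilon_0,N]$: Bony paralinearization yields a decomposition $F(u) = \OpBW{F'(u)}u + R(u)u$, a statement about operators, whereas what is required here is a verification that a specific scalar quantity $\sK^\mathsf{j}_{\alpha,z}(\Delta_z f/r^2)$, as a function of $(f;x,z)$, satisfies the frequency- and $z$-weighted inequalities of \Cref{def:kernel_functions}, and \Cref{lem:closure_comp_symbols} addresses composition with diffeomorphisms of $\T$, not composition of a nonlinear kernel with the divided difference $\Delta_z f$.

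What the paper actually does in that step, and what you would need to reproduce, is: expand $\mathsf{J}^\mathsf{j}_{\alpha,w}(f,\Delta_z f)$ in a double Taylor series in $(f,\Delta_z f)$, use a Cauchy estimate on the analytic extension to get factorial-in-$z$-derivative bounds \eqref{eq:jzineq} on the Taylor coefficients $\mathsf{k}^{\mathsf{j}}_{\alpha;p_1,p_2}(z)$ with geometric control $\varepsilon_R^{-p_1-p_2}$, plug in the Fourier expansion of $\Delta_z f$, and combine with \eqref{eq:bound_finite_difference_Fourier} to verify \eqref{homores1} for each homogeneous degree and to sum the tail for the non-homogeneous part. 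Note that the geometric control on the Taylor coefficients requires genuine analyticity; your phrasing ``smooth functions of $(\sX,z)$ whose derivatives are uniformly bounded on compact subsets'' is weaker than what is needed to sum the series defining the non-homogeneous Kernel-function remainder. Your algebraic identities do in fact yield analyticity (each factor reduces to an analytic function of $\sX\sin(z/2)$ and $z$), so the ingredient is available; the point is that the quantitative Cauchy estimates and the Fourier-level bookkeeping are not a ``routine application of the composition machinery'' but the core of the argument.
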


\begin{proof}
The statement \eqref{eq:Kernel_in_kernelfunction}
follows by 	\eqref{eq:Taylor_expansion_kernels0}-\eqref{prop:fr} that we now prove.
We first claim that for any $R> 0$, there exists $\varepsilon _R>0$  such that  the functions
 \begin{equation} \label{def:Jfun}
\mathsf{J}^\mathsf{j}_{\alpha , w}\pare{\mathsf{x}, \mathsf{y}}
\defeq
\sK^\mathsf{j}_{\alpha, z}\pare{\frac{\mathsf{y}}{1+2\mathsf{x} }} \, , \qquad
w\defeq2\sin\pare{z/2}  \, , 
\end{equation}
where $ \sK^\mathsf{j}_{\alpha, z} \pare{ \cdot } $, $\mathsf{j} = 1, 2 ,  3 $  are defined 
in 
\eqref{eq:sK1}, \eqref{eq:sK2} and \eqref{eq:sK3},
are
 analytic in $(\mathsf{x}, \mathsf{y},w)$  in the domain 
\begin{equation}
\label{eq:dominiow}
   |\mathsf{x}|\le  \varepsilon _R \, , \qquad 
   |\mathsf{y}|\le  \varepsilon _R \,, \qquad |w|\le R \, , 
\end{equation}
and there exists $C_R>0$ such that 
$\av{ \mathsf{J}^\mathsf{j}_{\alpha , w} \pare{\mathsf{x},\mathsf{y}} } \le C_R $ in this domain. 
Let us prove the analiticity of 
$ \mathsf{J}^\mathsf{1}_{\alpha , w}\pare{\mathsf{x}, \mathsf{y}} $.
Substituting $ \sX = \frac{\mathsf{y}}{1+2\mathsf{x}} $, $w = 2\sin\pare{z/2} $ and
 $ \cos (z) = 1 - \frac{w^2}{2} $ in \eqref{eq:sK1} we have  
 \begin{subequations}
 \begin{align}
\mathsf{J}^\mathsf{1}_{\alpha , w}\pare{\mathsf{x}, \mathsf{y}}  
&  =
   -\frac{2-\frac{1 - \frac{w^2}{2}}{\sqrt{1- 2 \sX w}}}{\bra{2
  \pare{1-  \sX w -\sqrt{1- 2 \sX w}
  + \sqrt{1- 2 \sX w}  \frac{w^2}{2}  }}^{\frac{\alpha}{2}}} \av{w}^\alpha  \label{J1pr} \\
 &  \ \  + \alpha 
\frac{1}{\sqrt{1- 2 \sX w }} 
 \frac{\pare{\sqrt{1- 2 \sX w } -
 1 + \frac{w^2}{2}   } 
 \pare{1- 2 \sX w - \sqrt{1- 2 \sX w } 
 + \sqrt{1- 2 \sX w }  \frac{w^2}{2} 
 }  }{\bra{2\pare{1-\sX w
 - \sqrt{1- 2 \sX w } 
 + \sqrt{1- 2 \sX w }  \frac{w^2}{2}
  }}^{\frac{\alpha+2}{2}}} \av{w}^\alpha \, .\label{J1pr2}
\end{align}
 \end{subequations}
Since the function 
$  1-  \sX  w - \sqrt{1- 2 \sX  w} = \pare{\sX w}^2 + O \pare{\sX w}^3  $ is analytic in $ \sX w $  small, 
the function in \eqref{J1pr} is analytic in the domain 
\eqref{eq:dominiow} for $ \varepsilon_R $ small enough. 
Furthermore, noting that  
the functions 
$\sqrt{1- 2 \sX w } - 1 = - \sX w + O(\sX w)^2 $ and 
$ 1 - 2 \sX w  - \sqrt{1- 2 \sX w }  = - \sX w + O(\sX w)^2$  are analytic  in $ \sX w $  small, 
we deduce that also \eqref{J1pr2} is analytic 
in $(\mathsf{x}, \mathsf{y},w)$   in the domain 
\eqref{eq:dominiow}. 
The analiticity of 
$ \mathsf{J}^\mathsf{2}_{\alpha , w}\pare{\mathsf{x}, \mathsf{y}} $ 
and  $ \mathsf{J}^\mathsf{3}_{\alpha , w}\pare{\mathsf{x}, \mathsf{y}} $
follow similarly.

Then by Cauchy integral formula, 
\begin{equation}\label{tayJex}
   \mathsf{J}^\mathsf{j}_{\alpha , w}\pare{\mathsf{x}, \mathsf{y}} 
   = \sum_{p_1,p_2=0}^{\infty} \underbrace{\frac{1}{p_1! p_2!}   \partial _{\mathsf{x}} ^{p_2} \partial _{\mathsf{y}} ^{p_1}  \mathsf{J}^\mathsf{j}_{\alpha , w}\pare{0, 0}}_{ =: \mathsf{k}_{\alpha; p_1, p_2}^\mathsf{j}\pare{z}   } 
    {\mathsf{x}} ^{p_2} {\mathsf{y}} ^{p_1} \quad \text{where} \quad 
    \av{\partial _{\mathsf{x}} ^{p_2} \partial _{\mathsf{y}} ^{p_1}  \mathsf{J}^\mathsf{j}_{\alpha , w}\pare{0, 0}}\le p_1! p_2! C_R \varepsilon _R ^{-p_1-p_2} \, . 
\end{equation}
In view of \eqref{def:Jfun} we have 
$ \mathsf{J}^\mathsf{j}_{\alpha , w}\pare{\mathsf{x}, 0}
\defeq
\sK^\mathsf{j}_{\alpha, z}\pare{0} $ for any $ \mathsf{x} $, and therefore 
\begin{equation}\label{quasizero}
\mathsf{k}^{\mathsf{j}} _{\alpha;0,0}\pare{z} =  \sK^\mathsf{j}_{\alpha, z}\pare{0}  \, , \qquad
\mathsf{k}^{\mathsf{j}} _{\alpha;0, p_2}\pare{z}\equiv 0 \, , \ \forall  p_2\geq 1 \, , 
\end{equation}
and, by \eqref{eq:sK1}, \eqref{eq:sK2}, \eqref{eq:sK3}, one computes that 
 \begin{equation}\label{eq:K0} 
    \sK^1_{\alpha, z}\pare{0} =- \frac{1}{2} \pare{1-\frac{\alpha}{2}} \pare{ 2\sin \pare{\frac{z}{2}} } ^2-1 \, ,  \qquad  \sK^2_{\alpha, z}\pare{0}  =1 \, ,  \qquad 
      \sK^3_{\alpha, z}\pare{0} =  \pare{1+\frac{\alpha}{2}} \sin \pare{z}  \, . 
 \end{equation}
Choosing above $ R = 4 $  and   $ \underline{{\varepsilon}}>0$ such that
$ \av{\sin\pare{z/2}}\le 2$ for $\av{\Im z}\le \underline{{\varepsilon}}$, we 
deduce that
each $ \mathsf{k}_{\alpha; p_1, p_2}^\mathsf{j}\pare{z} \defeq  \frac{1}{p_1! p_2!}   \partial _{\mathsf{x}} ^{p_2} \partial _{\mathsf{y}} ^{p_1}  \mathsf{J}^\mathsf{j}_{\alpha , w}\pare{0, 0} $, $\mathsf{j} = 1, 2 ,  3 $,    
satisfies
\begin{equation}\label{eq:jzineq}
\av{\partial_z^{l } \mathsf{k}_{\alpha; p_1, p_2}^\mathsf{j}\pare{z}} \leq          l!   \underline{{\varepsilon}}  ^{-l}    C_R \varepsilon _R ^{-p_1-p_2} \, , \qquad
 \forall  z\in \bR \, , \  l\ge 0 \, ,  p_1, p_2 \geq 0 \, .
\end{equation}
Now, in view of \eqref{def:Jfun}, \eqref{tayJex} and  \eqref{quasizero} we expand 
\begin{equation}
\label{eq:decomposition_Kernel_homogeneous}
\begin{aligned}
\sK^\mathsf{j}_{\alpha, z}\pare{\frac{\Delta_z f}{1+2f}}  = 
{ \mathsf{J}^\mathsf{j}_{\alpha , w}\pare{f , \Delta_z f } } & =
   \sK^\mathsf{j}_{\alpha, z}\pare{0}  +  \sum_{p\geq 1} 
   \underbrace{\sum_{\substack{p_1 \geq 1 \\ p_1 + p_2 = p }} \mathsf{k}_{\alpha; p_1, p_2}^\mathsf{j}\pare{z} \ f^{p_2} \pare{\Delta_z f}^{p_1} }_{ {=: 
   \wt \sK^{\mathsf{j},p}_{\alpha} \pare{f;x, z} } }\\
 & =   \sK^\mathsf{j}_{\alpha, z}\pare{0} +  \sum_{p = 1}^N \wt \sK^{\mathsf{j},p}_{\alpha} \pare{f;x, z} + \underbrace{\sum_{p > N} \wt \sK^{\mathsf{j},p}_{\alpha} \pare{f;x, z}  }_{ =: \wt \sK^{\mathsf{j},> N}_{\alpha} \pare{f;x, z}} \, . 
\end{aligned}
\end{equation} 
We claim that  for any $  p\in\bN $ and  $  \ell =0,\ldots , 7 $,
\begin{align}\label{eq:claim_Khomog}
 \partial_z^\ell \wt \sK^{\mathsf{j},p}_\alpha\pare{f; x, z}  \ & \in \wt \KF^0_p 
 \\
 \label{eq:tail_homogeneous_deco_kernels}
   \partial_z^\ell \wt \sK^{\mathsf{j}, > N}_\alpha\pare{f;x, z}   & \in \KF^0_{K, 0, N +1}\bra{\epsilon_0} \, . 
\end{align}

\begin{step}
[Proof of \eqref{eq:claim_Khomog}] 
We  expand in Fourier 
\begin{equation}\label{laFdae}
\partial_z^l \wt \sK^{\mathsf{j},p}_\alpha\pare{ f ; x, z} = \sum_{\vec\jmath_p \in \pare{\bZ\setminus \set{0}}^p } \partial_z^l\hat{\sK}^{\mathsf{j}, p}_{\vec\jmath_p}\pare{z} \ f_{j_1} \cdots f_{j_p} e^{\ii \pare{j_1+\cdots + j_p} x}  
\end{equation}
where, in view of \eqref{eq:decomposition_Kernel_homogeneous} and \eqref{eq:notation}, 
\begin{equation*}
\hat{\sK}^{\mathsf{j}, p}_{\vec\jmath_p}\pare{z} \defeq \sum_{\substack{p_1\geq 1 \\ p_1+p_2 =p}} \mathsf{k}_{\alpha;p_1,p_2}^\mathsf{j} \pare{z} \prod_{q=1}^{p_1}\triangle_{j_q}\pare{z} \, , \qquad \quad
\triangle_{j_q}\pare{z} \defeq \frac{1-e^{-\ii j_q z}}{2\sin\pare{z/2}} \, . 
\end{equation*}
For any $ l \in \N_0 $ we have
\begin{equation}\label{eq:Kappas_in_Fourier}
\partial_z^l \hat{\sK}^{\mathsf{j}, p}_{\vec\jmath_p}\pare{z} 
= 
\sum_{\substack{l_2+l_{1,1} + \cdots + l_{1,p_1} = l}} 
\sum_{\substack{p_1\geq 1 \\ p_1+p_2 =p}}  \ \partial_z^{l_2} \mathsf{k}_{\alpha;p_1,p_2}^\mathsf{j} \pare{z}   \prod_{q=1}^{p_1} \partial_z^{l_{1, q}}\triangle_{j_q}\pare{z}. 
\end{equation}
For any $ j \in \Z \setminus \{ 0 \} $, we may write 
$
\triangle_j\pare{z} = \frac{1-e^{-\ii jz}}{2\sin\pare{z/2}}
= \ii\,    \sum_{j'=0}^{\av{j}-1}  e^{-\ii   \sgn\pare{j}\pare{j' + \sgn\pare{j} \frac{1}{2}} z}
$
and then, for any $ l \in \N_0 $
\begin{align} \label{eq:bound_finite_difference_Fourier}
  \av{ \partial_z^{l}\triangle_j\pare{z}} \lesssim 
  \sum_{j'=1}^{\av{j} }  \pare{ j' }^l 
  \lesssim_l \av{j}^{l+1}  \, ,  \quad \forall z\in\bR \, .
\end{align}
By \eqref{eq:jzineq},  \eqref{eq:bound_finite_difference_Fourier}, 
we estimate \eqref{eq:Kappas_in_Fourier} as (the constant $ \varepsilon_{4} $ 
is the one in \eqref{eq:jzineq} for $ R=4 $)
\begin{align}
\av{\partial_z^l \hat{\sK}^{\mathsf{j}, p}_{\vec\jmath_p}\pare{z} }
& \leq  \ 
\sum_{\substack{l_2+l_{1,1} + \cdots + l_{1,p_1} = l}} 
\sum_{\substack{p_1\geq 1 \\ p_1+p_2 =p}}  C_{l} \  \varepsilon _{4} ^{-p_1-p_2} \prod_{q=1}^{p_1}   \av{j_q}^{l_{1,q}+1} \notag  \\
&  \lesssim_{l}   \   p^2  \pare{ \frac{l}{\varepsilon_{4}} }^{p}\av{\vec\jmath_p}^l \prod_{q=1}^{p_1} \av{j_q}  \ \leq  C_l^p \av{\vec\jmath_p}^l  \ \prod_{q=1}^{p} \av{j_q} \label{eq:est_Fouirer_homog_K}
\end{align} 
for some constant  $ C_l > 0 $, for any $ z \in \bT $.
The bound \eqref{eq:est_Fouirer_homog_K} 
implies, recalling \Cref{def:kernel_functions}, 
  the claim  \eqref{eq:claim_Khomog}.   
\end{step}

\begin{step}[Proof of \eqref{eq:tail_homogeneous_deco_kernels}]
Recalling \eqref{eq:decomposition_Kernel_homogeneous} and 
\eqref{laFdae}, we have, 
for any $ 0\leq k\leq K $, $ l, \gamma \in \N_0 $,  
  \begin{align*}
  \av{\partial_t^k \partial_x^\gamma \partial_z^l \wt \sK^{\mathsf{j}, > N}_\alpha\pare{f;x, z}}
  \leq & \ 
  \sum_{p> N}
  \sum_{\vec\jmath_p \in \pare{\bZ\setminus \set{0}}^p }
  \av{\vec\jmath_p}^\gamma \av{\partial_z^l \hat \sK^{\mathsf{j}, p}_{\vec\jmath_p}\pare{z}} \av{\partial_t^k \pare{f_{j_1}\cdots f_{j_p}}}
  \\
  \leq & \ 
   \sum_{p> N}
  \sum_{\vec\jmath_p \in \pare{\bZ\setminus \set{0}}^p }
  \sum_{k_1+\ldots + k_p = k }
  \av{\vec\jmath_p}^\gamma \av{\partial_z^l \hat \sK^{\mathsf{j}, p}_{\vec\jmath_p}\pare{z}} \prod _{q=1}^p \av{\partial_t^{k_q} f_{j_q}} \\
    \leq & \
  \sum_{p> N}
  \sum_{\vec\jmath_p \in \pare{\bZ\setminus \set{0}}^p }
  \sum_{k_1+\ldots + k_p = k } C_l^p
  \av{\vec\jmath_p}^{\gamma + l}  \prod _{q=1}^p  \av{j_q} \av{\partial_t^{k_q} f_{j_q}}  
  \end{align*}
using \eqref{eq:est_Fouirer_homog_K}. 
Then, assuming with no loss of generality 
that $  \av{\vec \jmath _p} = \max\{ \av{j_1}, \ldots, \av{j_p} \} = \av{j_1}  $ we have  
  \begin{align*}
   \av{\partial_t^k \partial_x^\gamma \partial_z^l \wt \sK^{\mathsf{j}, > N}_\alpha\pare{f;x, z}}
  \leq
  & \
  \sum_{p> N}
  \sum_{\vec\jmath_p \in \pare{\bZ\setminus \set{0}}^p } \sum_{k_1+\ldots + k_p = k } 
    C_l ^p 
    \pare{ \prod _{q=2}^p  \av{j_q}^{-2}
     \norm{\partial_t^{k_q} f}_3 } \ \av{j_1}^{-2} \norm{\partial_t^{k_1} f}_{3+\gamma + l},
  \\
  \leq &  \ \sum_{p> N} \ C_l ^p \
\sum_{k_1, \ldots , k_p =0}^k  \pare{  \prod _{q=2}^p  \norm{\partial_t^{k_q} f}_3 } 
   \norm{\partial_t^{k_1}  f }_{3+\gamma + l} 
  \\
  \leq &  \ \sum_{p> N} \pare{ C_l k }^p \ \norm{ f}_{k, 3 + \alpha k} ^{p-1} \norm{ f }_{k, 3+\gamma + l+\alpha k} 
  \end{align*}
  recalling \eqref{Knorm}.  
  Summing in $ p $ and  
 setting $ s_0 \defeq 11 +\alpha k  $, we get,  for any $ l\leq 8 $, for any 
  $ 0\leq \gamma \leq s-s_0 $,   
$$
   \av{\partial_t^k \partial_x^\gamma \partial_z^l \wt \sK^{\mathsf{j}, > N}_\alpha\pare{f;x, z}}
   \leq  C_k   ^{N+1}    \norm{f}_{k,s_0}^{N}
    \norm{f}_{k, s} \, , \quad 
    \forall x, z\in\bT \, ,  
$$
  which, recalling \cref{def:kernel_functions}, proves the claim in \eqref{eq:tail_homogeneous_deco_kernels}. 
  \end{step}
   \Cref{eq:claim_Khomog,eq:tail_homogeneous_deco_kernels}
 thus prove  \eqref{eq:Kernel_in_kernelfunction}. 
\\[1mm]
[Proof  of  
\eqref{eq:Taylor_expansion_kernels0}-\eqref{eq:coefficients_Kernels_zero}]
In view of \eqref{eq:decomposition_Kernel_homogeneous},
in order to expand $ \sK^\mathsf{j}_{\alpha, z}\pare{\frac{\Delta_z f}{r^2}} $ 
as in \eqref{eq:Taylor_expansion_kernels0}, 
we perform a Taylor expansion 
in $ z $ of the functions $\sK^\mathsf{j}_{\alpha, z}\pare{0}  $ and 
 $ \wt \sK^{\mathsf{j},p}_{\alpha} \pare{f;x, z} $,  
for any $ p \geq 1 $. 
By \eqref{eq:K0} we have   
\begin{equation}\label{exKj0}
\sK^\mathsf{j}_{\alpha, z}\pare{0}  = 
\sK^{\mathsf{j}, 0}_{\alpha} \pare{0;x}  +
\sK^{\mathsf{j}, 1}_{\alpha} \pare{0;x}  \sin z +
\sK^{\mathsf{j}, 2}_{\alpha} \pare{0;x}  \pare{2\sin\pare{z/2}}^2 
 + \varrho ^{\mathsf{j},3}_\alpha\pare{0;x,z}
\end{equation}
with $\sK^{\mathsf{j}, l}_{\alpha} \pare{0;x}  $, $ \mathsf{j} = 1,2, 3 $, $ l = 0, 1,2 $,
are the constants computed in \eqref{eq:coefficients_Kernels_zero} and  
\begin{equation}\label{perf=0}
\varrho ^{\mathsf{1},3}_\alpha\pare{0;x,z} = \varrho ^{\mathsf{2},3}_\alpha\pare{0;x,z} = 0 
\qquad \text{and} \qquad 
\varrho^{\mathsf{j},3}_\alpha\pare{0;x,z} \in \wt \KF^3_0  
\end{equation}
is $ x $-independent. 
Then,
for any $ p \geq 1 $, we expand 
\begin{equation}\label{eq:TaylorHomogeneousComponents}
\begin{aligned}
\wt \sK^{\mathsf{j},p}_{\alpha} \pare{f;x, z}
= & \ \sum_{l=0}^2 \wt  \sK^{\mathsf{j},p, l}_{\alpha} \pare{f;x} z^l + {\sR}^{\mathsf{j},p,3}_\alpha\pare{f;x,z}
\\
= & \   \wt  \sK^{\mathsf{j},p, 0}_{\alpha} \pare{f;x} + \wt  \sK^{\mathsf{j},p, 1}_{\alpha} \pare{f;x} \sin z + \wt  \sK^{\mathsf{j},p, 2}_{\alpha} \pare{f;x} \pare{2\sin\pare{z/2}}^2 + \varrho ^{\mathsf{j},p,3}_\alpha\pare{f;x,z},
\end{aligned}
\end{equation}
where, for $ l = 0, 1, 2 $, 
\begin{equation}
\label{eq:Ktildes}
\wt \sK^{\mathsf{j},p,l}_{\alpha} \pare{f;x} \defeq \
\frac{1}{l!}
\left. \partial_z^l \wt \sK^{\mathsf{j},p}_{\alpha} \pare{f;x, z}\right|_{z=0} 
\end{equation}
and 
\begin{equation}
\label{eq:expression_remainders_z_homogeneous}
\begin{aligned}
\varrho ^{\mathsf{j},p,3}_\alpha\pare{f;x,z}
\defeq 
&
\
{\sR}^{\mathsf{j},p,3}_\alpha\pare{f;x,z} + \tilde{\sR}^{\mathsf{j},p,3}_\alpha\pare{f;x,z}
\\
{\sR}^{\mathsf{j},p,3}_\alpha\pare{f;x,z}
\defeq & \ \frac{1}{2!}  \int_0^1 \pare{1-\vartheta}^2
 \partial_z ^3\wt \sK^{\mathsf{j},p}_{\alpha} \pare{f;x, \vartheta z}
 \dd \vartheta \ z^3
\\
\tilde{\sR}^{\mathsf{j},p,3}_\alpha\pare{f;x,z}
\defeq & \
 \wt \sK^{\mathsf{j},p, 1}_{\alpha} \pare{f;x } \pare{ z - \sin z } + \wt \sK^{\mathsf{j},p, 2}_{\alpha} \pare{f;x } \pare{ z^2 - \pare{2\sin\pare{z/2}}^2 } .
\end{aligned}
\end{equation}
Notice that $ z\mapsto \varrho ^{\mathsf{j},p,3}_\alpha\pare{f;x,z} $ is $ 2\pi $-periodic thanks to \eqref{eq:TaylorHomogeneousComponents}. 
In view of \eqref{exKj0} and \eqref{eq:TaylorHomogeneousComponents}, 
 we obtain the expansion \eqref{eq:Taylor_expansion_kernels0} 
 with,  for any $ \mathsf{j}=1,2,3 $,  
 \begin{equation}\label{eq:SHC1}
 \begin{aligned}
&  \sK^{\mathsf{j},l}_\alpha \pare{f; x} \defeq \ 
{\sK^{\mathsf{j},l}_{\alpha}\pare{0;x} } 
+ 
\sum_{p=1 }^{N} \wt  \sK^{\mathsf{j},p, l}_{\alpha} \pare{f;x}  
+  \wt \sK^{\mathsf{j},> N , l}_{\alpha} \pare{f;x}  \, , 
\qquad  l = 0, 1, 2 \, , \\  
& 
 \varrho^{\mathsf{j},3}_\alpha \pare{f;x,z} 
 \defeq { \varrho^{\mathsf{j},3}_\alpha \pare{0;x,z}  } + 
 \sum_{p=1}^N  \varrho^{\mathsf{j},p,3}_\alpha \pare{f;x,z} + \varrho^{\mathsf{j},>N,3}_\alpha \pare{f;x,z}  \, , 
 \end{aligned}
 \end{equation}
and
 \begin{equation} \label{eq:expression_kernels_NH}
 \wt \sK ^{\mathsf{j}, >N, l}_\alpha \pare{f;x} \defeq  \ \sum_{p>N} \wt \sK ^{\mathsf{j}, p, l}_\alpha\pare{f;x} \, , \qquad 
  \varrho^{\mathsf{j}, >N, 3}_\alpha \pare{f;x,z} \defeq  \ \sum_{p>N} \varrho^{\mathsf{j}, p, 3}_\alpha\pare{f;x,z} \, .
 \end{equation}
 Let us prove  \eqref{prop:fr}. 
 We deduce that  each 
$ \wt \sK^{\mathsf{j},p,l}_{\alpha} \pare{f;x} = 
\frac{1}{l!}
   \partial_z^l \wt \sK^{\mathsf{j},p,l}_\alpha\pare{f;x, 0} $, $ p \geq 1 $,  
is a homogenous function  in $ \wt\cF^\bR_p $ by 
\eqref{eq:claim_Khomog} and 
Remark \ref{rem:firstremaKernelfunctions}.   
Analogously the non-homogenous term
$ \wt \sK ^{\mathsf{j}, >N, l}_\alpha \pare{f;x} $ 
is in $ \cF^\bR_{K, 0, N+1}\bra{\epsilon_0} $ by \eqref{eq:tail_homogeneous_deco_kernels}. 
Next,  by \eqref{eq:claim_Khomog} an integration in $ z $ 
give that
$ \varrho^{\mathsf{j},p,3}_\alpha \pare{f;x,z} $, $ p \geq 1 $,  defined in 
\eqref{eq:expression_remainders_z_homogeneous}
is a homogenous Kernel-function in $  \wt \KF ^3_p $ and, by \eqref{eq:tail_homogeneous_deco_kernels},   
the non-homogenous 
term $ \varrho^{\mathsf{j}, >N, 3}_\alpha \pare{f;x,z} $
 in \eqref{eq:expression_kernels_NH} is a Kernel function in 
$ \KF ^3 _{K, 0, N+1}\bra{\epsilon_0} $.

Finally the zero-homogenous functions 
$ {\sK^{\mathsf{j},l}_{\alpha}\pare{0;x} } $ are the constants 
in \eqref{eq:coefficients_Kernels_zero} (cf. \eqref{exKj0})
and the Kernel functions $ \varrho^{\mathsf{j},3}_\alpha \pare{0;x,z} $
are in \eqref{perf=0}.
\end{proof}

\subsection{Paralinearization of the quasilinear integral term $ \cI \pare{f} $}

In this section we paralinearize $ \cI \pare{f} $.

\begin{lemma}\label{lem:paralinearization_cI}
The term  $ \cI\pare{f} $ defined in \eqref{eq:cI} can be written as 
\begin{equation}\label{eq:paralinearization_cI}
\cI\pare{f} =  \OpBW{  - \pare{1+\nu_\cI\pare{f;x}} L_\cI \pare{\av{\xi}} + 
\ii
 S_{\cI, \alpha-2}\pare{f;x, \xi} +   V\bra{\cI}\pare{f;x}  + P\pare{f;x,\xi}} f +R\pare{f}f 
\end{equation}
 where 
\begin{itemize}
\item $\nu_\cI \pare{f; x} $ is the real function 
\begin{equation}
\label{eq:LcI}
\nu_\cI \pare{f; x} \defeq   \ - \pare{  r^{-2} \sK^{1,0}_{\alpha}\pare{f;x} +1 }   \in \Sigma \cF^\bR_{K, 0, 1}\bra{\epsilon_0, N} ;  
\end{equation}
\item $ L_\cI \pare{\av{\xi}}\defeq  \    \sT_\alpha^1  \pare{\av{\xi}}  +\frac{\Gamma\pare{2-\alpha}}{\Gamma\pare{1-\frac{\alpha}{2}}^2}  + \pare{1-\frac{\alpha}{2}}^2 \sM_\alpha\pare{\av{\xi}} $ is a real Fourier multiplier in 
$ \wt \Gamma^{\max \{ 0,\alpha-1 \} }_0 $ (the Fourier multipliers \ $  \sT_\alpha^1 \pare{\av{\xi}}  $ and $ \sM_\alpha  \pare{\av{\xi}}  $
are defined in \Cref{lem:linearization});
\item $ S_{\cI, \alpha-2}\pare{f;x, \xi}\defeq  \ 
 - \frac{1}{2 } \pare{  \nu_\cI\pare{f;x} }_x\ \partial_\xi L_\cI \pare{\av{\xi}} +  \ r^{-2}\sK^{1,1}_{\alpha}\pare{f;x}  \sM_{\alpha}\pare{\av{\xi}}\xi
$ is a real symbol in $ \Sigma \Gamma^{\alpha-2} _{K, 0, 1}\bra{\epsilon_0, N} $; 
\item $ V\bra{\cI}\pare{f;x} $ is a function in $ \Sigma \cF^\bR_{K, 0, 1}\bra{\epsilon_0, N} $; 
\item $ P \pare{f;x, \xi} $ is a symbol in $ \Sigma \Gamma^{-1} _{K, 0, 1}\bra{\epsilon_0, N}$ satisfying \eqref{areal}; 
\item  $ R \pare{f} $ is a real smoothing operator in 
$ \Sigma \cR^{-\rho} _{K, 0, 1}\bra{\epsilon_0, N}  $. 
\end{itemize}
 \end{lemma}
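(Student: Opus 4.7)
The plan is to paralinearize $\cI(f)$ in four steps. First, inserting the Taylor expansion \eqref{eq:Taylor_expansion_kernels0} of Lemma 4.3 for the kernel $\sK^1_{\alpha,z}(\Delta_z f/r^2)$ into \eqref{eq:cI}, I would decompose
\begin{equation*}
\cI(f) = \sum_{l=0}^{2} \OpBW{\sK^{1,l}_\alpha(f;x)} \circ M_{1/r^2} \circ I_l(f) + \cI_\varrho(f),
\end{equation*}
where $M_{1/r^2}$ denotes multiplication by $1/r^2$, the $z$-independent symbols $\sK^{1,l}_\alpha$ and $1/r^2$ have been pulled out of the $z$-integral, and $I_0(f) \defeq \fint \delta_z f/|2\sin(z/2)|^\alpha\,dz$, $I_1(f) \defeq \fint (\sin z)\,\delta_z f/|2\sin(z/2)|^\alpha\,dz$, $I_2(f) \defeq \fint \delta_z f\,|2\sin(z/2)|^{2-\alpha}\,dz$, while $\cI_\varrho(f)$ is the residual integral containing the Kernel-function $\varrho^{1,3}_\alpha$.

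Second, I would evaluate these integrals in Fourier: $I_0 = \sT^1_\alpha(|D|)f$ by Step 1 of the proof of Lemma 3.1; $I_1 = \sM_\alpha(|D|)\partial_x f$, which follows from Step 2 of that proof applied to $e^{\ii jx}$ yielding $\fint e^{-\ii jz}\sin z/|2\sin(z/2)|^\alpha\,dz = -\ii j\,\sM_\alpha(|j|)$; and $I_2(f) = (2-\alpha)[\sM_\alpha(|D|)-\sM_\alpha(0)]f$ on zero-mean functions, obtained by applying \eqref{eq:magic_identity_1} with $X=2-\alpha$, $Y=-2j$ together with \eqref{eq:consequence_Euer_duplication} and the definition \eqref{eq:Malpha} of $\sM_\alpha$. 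Then I would paralinearize each $M_{1/r^2}=\OpBW{1/r^2}+R(f)$ via Bony's formula (Lemma 2.24), combine the two $\xi$-independent paradifferential operators as $\OpBW{\sK^{1,l}_\alpha}\circ\OpBW{1/r^2}=\OpBW{\sK^{1,l}_\alpha/r^2}$ modulo smoothing (the Poisson bracket vanishes), and compose with the Fourier multiplier $\OpBW{L_l(|\xi|)}$, where $L_0=\sT^1_\alpha$, $L_1=\ii\xi\sM_\alpha$, $L_2=(2-\alpha)[\sM_\alpha-\sM_\alpha(0)]$, via the symbolic calculus of Proposition 2.17, keeping the principal (product) and subprincipal (Poisson bracket $\tfrac{1}{2\ii}\{\cdot,\cdot\}$) symbols.

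Setting $\nu_\cI\defeq -(\sK^{1,0}_\alpha/r^2+1)$ as in \eqref{eq:LcI}, so that $1+\nu_\cI=-\sK^{1,0}_\alpha/r^2$, and using the identity $(1-\alpha/2)^2\sM_\alpha(0)=-\Gamma(2-\alpha)/\Gamma(1-\alpha/2)^2$ together with the initial values $\sK^{1,0}_\alpha(0;x)=-1$, $\sK^{1,2}_\alpha(0;x)=-(1-\alpha/2)/2$ from \eqref{eq:coefficients_Kernels_zero}, the sum of principal symbols reorganizes as $-(1+\nu_\cI)L_\cI+V_1(f;x)+P_1(f;x,\xi)$: the discrepancy between the $\sM_\alpha$-coefficient coming from $l=2$ (namely $(\sK^{1,2}_\alpha/r^2)(2-\alpha)$) and that from the $(1-\alpha/2)^2$-piece of $-(1+\nu_\cI)L_\cI$ is a real function vanishing at $f=0$ times $\sM_\alpha(|\xi|)$ of order $\alpha-3\le-1$, absorbed into $P_1\in\Sigma\Gamma^{-1}_{K,0,1}$, while the analogous constant-in-$\xi$ discrepancy gives $V_1\in\Sigma\cF^\bR_{K,0,1}$ entering $V[\cI]$. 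The $l=0$ Poisson bracket contributes $\ii\cdot[-\tfrac{1}{2}(\nu_\cI)_x\partial_\xi\sT^1_\alpha]$ to the composition symbol, which equals $\ii\cdot[-\tfrac{1}{2}(\nu_\cI)_x\partial_\xi L_\cI]$ modulo a symbol of order $\alpha-4$ that enters $P$; the $l=1$ principal symbol $(\sK^{1,1}_\alpha/r^2)\ii\xi\sM_\alpha$, which is purely imaginary and vanishes at $f=0$ since $\sK^{1,1}_\alpha(0;x)=0$, supplies the remaining piece of $\ii S_{\cI,\alpha-2}$; all other subprincipal contributions from $l=1,2$ are of order $\le\alpha-4\le-1$ and enter $P$.

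Finally, for $\cI_\varrho$ I would observe that $\tilde\varrho\defeq\varrho^{1,3}_\alpha/|2\sin(z/2)|^\alpha$ is a Kernel function of order $3-\alpha>1$ (product, by \cref{rem:operation_z}, of $\varrho^{1,3}_\alpha$ of order $3$ with the zero-homogeneous Kernel function $|2\sin(z/2)|^{-\alpha}$ of order $-\alpha$), vanishing at $f=0$ by \eqref{prop:fr}. Splitting $\delta_z f=f(x)-f(x-z)$, the $f(x)$-term equals $\OpBW{\fint\tilde\varrho\,dz}(f/r^2)$, where $\fint\tilde\varrho\,dz\in\Sigma\cF^\bR_{K,0,1}$ by \cref{prop:integral_kernelfunction}, contributing to $V[\cI]$ after one more paralinearization of $1/r^2$; the $f(x-z)$-term, after bringing $1/r^2(x)$ under the symbol via Bony's formula, is handled by Proposition 2.38 to produce $\OpBW{a(f;x,\xi)}f+R(f)f$ with $a$ of order $-(1+(3-\alpha))=\alpha-4\le-2$, entering $P$. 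The hard part will be the careful bookkeeping in the third step: tracking how the mismatch between the $\sK^{1,0}_\alpha$- and $\sK^{1,2}_\alpha$-coefficients in the principal symbol reorganizes as the single modulation $-(1+\nu_\cI)L_\cI$ plus corrections vanishing at $f=0$, and verifying that the reality and parity conditions declared for $-(1+\nu_\cI)L_\cI$, $\ii S_{\cI,\alpha-2}$, $V[\cI]$ and $P$ are preserved throughout.
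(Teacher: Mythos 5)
Your overall strategy—Taylor-expanding the kernel $\sK^1_{\alpha,z}$ first and then applying Bony's paraproduct, as opposed to the paper's order of first decomposing $\frac{\delta_z f}{r^2}$ via Bony and then Taylor-expanding—is a legitimate reorganization, and your Fourier-multiplier evaluations $I_0=\sT^1_\alpha(|D|)f$, $I_1=\sM_\alpha(|D|)\partial_x f$, $I_2=(2-\alpha)[\sM_\alpha(|D|)-\sM_\alpha(0)]f$ as well as the symbol bookkeeping in your third step (matching $-(1+\nu_\cI)L_\cI=\sK^{1,0}_\alpha\sT^1_\alpha/r^2$ and routing the $f$-vanishing discrepancies into $V[\cI]$ and $P$) are correct.

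There is, however, a genuine gap in your claim that $M_{1/r^2}=\OpBW{1/r^2}+R(f)$ ``via Bony's formula (Lemma 2.24)''. \Cref{lem:Bony_paralinearization_W} paralinearizes the nonlinear \emph{function} $1/r^2-1$ as $\OpBW{-2r^{-4}}f+R(f)f$, not the multiplication \emph{operator} $M_{1/r^2}$. The correct tool for the latter is the Bony paraproduct decomposition of \Cref{lem:paraproduct_Weyl}, which gives
\begin{equation*}
M_{1/r^2}\,g=\OpBW{1/r^2}g+\OpBW{g}\pare{1/r^2-1}+R_1\pare{1/r^2-1}g+R_2\pare{g}\pare{1/r^2-1}\,,
\end{equation*}
and the reversed paraproduct $g\mapsto\OpBW{g}(1/r^2-1)$ is \emph{not} a smoothing operator in $\Sigma\cR^{-\rho}_{K,0,1}$: its matrix coefficients with respect to the input frequency of $g$ do not decay in the output (high) frequency index. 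This piece is exactly the paper's $\cI_2$ in \eqref{eq:cI_decomposition}, which the paper treats separately: after integrating in $z$, the operator $\fint\OpBW{\sK^1_{\alpha,z}\,\delta_z f}\,|2\sin(z/2)|^{-\alpha}\,dz$ is recognized (by \Cref{prop:integral_kernelfunction}) as $\OpBW{\tilde V[\cI_2]}$ for a function $\tilde V[\cI_2]\in\Sigma\cF^\bR_{K,0,1}$, and then $\OpBW{\tilde V[\cI_2]}(r^{-2}-1)=\OpBW{-2r^{-4}\tilde V[\cI_2]}f+R(f)f$ via \eqref{eq:parabeta}. Your proof drops this contribution to $V[\cI]$. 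The fix in your organization would be the analogous extra step: rewrite $\OpBW{\sK^{1,l}_\alpha}\OpBW{I_l(f)}(1/r^2-1)$ as $\OpBW{\sK^{1,l}_\alpha\, I_l(f)\,(-2r^{-4})}f+R(f)f$ by \eqref{eq:parabeta} and symbolic calculus, and absorb the resulting real $\xi$-independent symbol (which vanishes at $f=0$ since $I_l(f)$ is linear in $f$) into $V[\cI]$.
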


The rest of this section is devoted to prove \Cref{lem:paralinearization_cI}.

By \Cref{lem:paraproduct_Weyl}  we have 
\begin{equation*}
 \frac{\delta_z f}{r^2} = \OpBW{r^{-2}}\delta_z f + \OpBW{\delta_z f}\pare{r^{-2}-1} + R_1
 \pare{r^{-2}-1}\delta_z f + R_2 \pare{\delta_z f}\pare{r^{-2}-1} 
\end{equation*}
with smoothing operators $ R_1, R_2 $ in $ \wt \cR^{-\rho}_1 $ for any $ \rho \geq 0 $.
Hence, recalling the definition of $ \cI\pare{f} $ in \eqref{eq:cI}, we write 
\begin{equation}
\label{eq:cI_decomposition}
\begin{aligned}
\cI\pare{f} = & \ \sum_{{\mathsf{j}}=1}^4 \cI_{\mathsf{j}}\pare{f} \, , 
\\
\cI_1 \pare{f} \defeq & \ \fint \OpBW{\sK^1_{\alpha, z}\pare{\frac{\Delta_z f}{r^2}}} \OpBW{r^{-2}} \frac{\delta_z f}{\av{2\sin\pare{z/2}}^\alpha } \dd z \, , 
\\
\cI_2 \pare{f} \defeq & \ \fint \OpBW{\sK^1_{\alpha, z}\pare{\frac{\Delta_z f}{r^2}}} \OpBW{\delta_z f} \frac{1}{\av{2\sin\pare{z/2}}^\alpha } \dd z \  \pare{ r^{-2}-1 } \, , 
\\
\cI_3 \pare{f} \defeq & \ \fint \OpBW{\sK^1_{\alpha, z}\pare{\frac{\Delta_z f}{r^2}}} R_1
\pare{r^{-2}-1} \frac{\delta_z f}{\av{2\sin\pare{z/2}}^\alpha } \dd z \, , 
\\
\cI_4 \pare{f} \defeq & \ \fint \OpBW{\sK^1_{\alpha, z}\pare{\frac{\Delta_z f}{r^2}}} R_2
\pare{\frac{\delta_z f}{\av{2\sin\pare{z/2}}^\alpha }}  \dd z \ \pare{r^{-2}-1} \, . 
\end{aligned}
\end{equation}

\begin{step}[Paralinearization of $ \cI_1 $ in \eqref{eq:cI_decomposition}]
By \eqref{eq:Taylor_expansion_kernels0} and 
isolating by \eqref{eq:coefficients_Kernels_zero} 
the zero-order components in $ f $, we write
\begin{align}
&  \OpBW{\sK^1_{\alpha, z}\pare{\frac{\Delta_z f}{r^2}}} \OpBW{r^{-2}} 
= \notag \\ 
&  
\OpBW{ \sK^{1,0}_{\alpha}\pare{f;x} +  \sK^{1,1}_{\alpha}\pare{f;x} \, \sin z
+ \sK^{1,2}_\alpha  \pare{f;x} \pare{2\sin\pare{z/2}}^2
  +\varrho^{1,3}_\alpha\pare{f;x,z}} \OpBW{r^{-2}}  \notag   \\
& 
 =  
 -1 -\frac{1}{2}\pare{1-\frac{\alpha}{2}} \pare{2\sin\pare{z/2}}^2 \notag 
 \\
 & \ + \ 
\OpBW{ \pare{ \sK^{1,0}_{\alpha}\pare{f;x} +1 } +  \sK^{1,1}_{\alpha}\pare{f;x} \, \sin z 
+ \pare{ \sK^{1,2}_\alpha  \pare{f;x} + \frac{1}{2}\pare{1-\frac{\alpha}{2}} } \pare{2\sin\pare{z/2}}^2
  +\varrho^{1,3}_\alpha\pare{f;x,z}} \notag 
  \\
 & \  + \
\OpBW{ \sK^{1,0}_{\alpha}\pare{f;x} +  \sK^{1,1}_{\alpha}\pare{f;x} \, \sin z 
+ \sK^{1,2}_\alpha  \pare{f;x} \pare{2\sin\pare{z/2}}^2
  +\varrho^{1,3}_\alpha\pare{f;x,z}} \OpBW{r^{-2}-1}  \label{eq:cI1_1}
 \end{align}
where  $ \varrho^{1,3}_\alpha\pare{f;x,z} $ is a Kernel function in $ \Sigma\KF ^{3}_{K, 0, 1}\bra{\epsilon_0, N} $  by \eqref{prop:fr}. 
We now expand the last line \eqref{eq:cI1_1}.
 By Proposition \ref{prop:composition_BW} 
 there exists a smoothing operator 
 $ R \pare{f} $ in $ \Sigma \cR^{-\rho}_{K,0,1}\bra{\epsilon_0, N} $  such that
\begin{multline}
\label{eq:cI1_2}
\OpBW{ \sK^{1,0}_{\alpha}\pare{f;x} +  \sK^{1,1}_{\alpha}\pare{f;x} \, \sin z
+ \sK^{1,2}_\alpha  \pare{f;x} \pare{2\sin\pare{z/2}}^2
  } \OpBW{r^{-2}-1}
  \\
  = \OpBW{ \pare{r^{-2}-1} \pare{  \sK^{1,0}_{\alpha}\pare{f;x} + 
  \sK^{1,1}_{\alpha}\pare{f;x} \, \sin z
+ \sK^{1,2}_\alpha  \pare{f;x} \pare{2\sin\pare{z/2}}^2 }
  } \\
   + R \pare{f} + \underbrace{ \pare{\sin\pare{z} + \pare{ 2\sin\pare{z/2} }^2} R\pare{f}}_{\defeq R_{\pare{1}}\pare{f;z} \in  \Sigma \KR ^{-\rho, 1}_{K,0,1}\bra{\epsilon_0, N} } \, . 
\end{multline}
Moreover due to \Cref{prop:comp_z}, \cref{item:OpOp_ext_z}, 
there exists 
a Kernel-smoothing operator 
$ R_1 \pare{f;z} $ in $ \Sigma\ \KR^{-\rho,3}_{K, 0, 1}\bra{\epsilon_0, N} $   such that
\begin{align}\label{eq:cI1_3}
\OpBW{\varrho^{1,3}_\alpha\pare{f;x,z}} \OpBW{r^{-2}-1} =
\OpBW{\pare{r^{-2}-1}\varrho^{1,3}_\alpha\pare{f;x,z}} + R_1\pare{f;z}  \, . 
\end{align}
Plugging \eqref{eq:cI1_2} and \eqref{eq:cI1_3} in \eqref{eq:cI1_1} we get 
\begin{align}
&  \OpBW{\sK^1_{\alpha, z}\pare{\frac{\Delta_z f}{r^2}}} \OpBW{r^{-2}} 
  =  
 -1 -\frac{1}{2}\pare{1-\frac{\alpha}{2}} \pare{2\sin\pare{z/2}}^2 \notag 
 \\
 & \ + \ 
\OpBW{ \pare{ r^{-2} \sK^{1,0}_{\alpha}\pare{f;x} +1 } + r^{-2}  \sK^{1,1}_{\alpha}\pare{f;x} \, \sin z
+ \pare{ r^{-2}\sK^{1,2}_\alpha  \pare{f;x} + \frac{1}{2}\pare{1-\frac{\alpha}{2}} } \pare{2\sin\pare{z/2}}^2
 } \notag
  \\
 & \  + \ \OpBW{r^{-2}\varrho^{1,3}_\alpha\pare{f;x,z}} + 
 R\pare{f} + R_{\pare{1}}\pare{f;z}  \label{eq:cI1_4}
 \end{align}
where  $ R_{\pare{1}}\pare{f;z} $ is a 
 Kernel smoothing operator in $ \Sigma \ \KR^{-\rho,1}_{K, 0, 1}\bra{\epsilon_0,N} $. 
 Inserting the decomposition \eqref{eq:cI1_4} in 
the expression of $ \cI_1 \pare{f} $ 
in \eqref{eq:cI_decomposition} we obtain that
\begin{equation}\label{eq:deco_cI1}
\cI_1 \pare{f} = - \fint   \frac{\delta_z f}{\av{2\sin\pare{z/2}}^\alpha } \dd z 
- \frac{1}{2}\pare{1-\frac{\alpha}{2}}\fint   \frac{ \delta_z f }{\av{2\sin\pare{z/2}}^{\alpha-2} } \  \dd z
+
 \sum_{{\mathsf{j}}=1}^5 \cI_{1,{\mathsf{j}}} \pare{f}
\end{equation}
where
\begin{equation}\label{ultedivi}
\begin{aligned}
\cI_{ 1,1 } \pare{f} \defeq & \ \OpBW{ r^{-2} \sK^{1,0}_{\alpha}\pare{f;x} +1} \fint   \frac{\delta_z f}{\av{2\sin\pare{z/2}}^\alpha } \dd z \, , \\
\cI_{ 1,2 } \pare{f} \defeq & \
 \OpBW{ r^{-2}\sK^{1,1}_{\alpha}\pare{f;x}} \fint   \frac{\sin z}{\av{2\sin\pare{z/2}}^\alpha } \ \delta_z f\  \dd z \, , 
\\
\cI_{ 1,3 } \pare{f} \defeq & \  \OpBW{  \pare{ r^{-2}\sK^{1,2}_\alpha  \pare{f;x} + \frac{1}{2}\pare{1-\frac{\alpha}{2}} } } \fint   \frac{\delta_z f}{\av{2\sin\pare{z/2}}^{\alpha-2} } \  \dd z \, , 
\\
\cI_{ 1,4 } \pare{f} \defeq & \  \fint \OpBW{r^{-2}\varrho^{1,3}_\alpha\pare{f;x,z}}  \frac{\delta_z f}{\av{2\sin\pare{z/2}}^{\alpha-2} } \  \dd z \, , 
\\
\cI_{ 1,5 } \pare{f} \defeq & \ \fint \pare{R\pare{f} + R_{\pare{1}}\pare{f;z}} \frac{\delta_z f}{\av{2\sin\pare{z/2}}^{\alpha}} \ \dd z \, . 
\end{aligned}
\end{equation}
By recalling \eqref{eq:notation} and \eqref{eq:step:1} we have 
\begin{equation}\label{primofm}
 \fint   \frac{\delta_z f}{\av{2\sin\pare{z/2}}^\alpha } \dd z 
 =
 \OpBW{ \sT_\alpha^1  \pare{\av{\xi}}} f \, .
\end{equation}
Next, by \cref{eq:step:3,eq:Step3est1} we deduce that
\begin{equation}\label{eq:costante_linearizzato_1}
\frac{1}{2}\pare{1-\frac{\alpha}{2}}\fint   \frac{ \delta_z f }{\av{2\sin\pare{z/2}}^{\alpha-2} } \  \dd z
=
\frac{\Gamma\pare{2-\alpha}}{\Gamma\pare{1-\frac{\alpha}{2}}^2} f\pare{x} + \pare{1-\frac{\alpha}{2}}^2 \sM_\alpha\pare{\av{D}} f \, . 
\end{equation}
By \eqref{primofm}, 
using also Proposition \ref{prop:composition_BW} and \eqref{asharpb},
and that $ \sT_\alpha^1  \pare{\av{\xi}} $ is a symbol of order $ \alpha - 1 $, we have  
\begin{align}\label{eq:cI11'}
\cI_{ 1,1 } \pare{f} & =   
\OpBW{ r^{-2} \sK^{1,0}_{\alpha}\pare{f;x} +1} \OpBW{ \sT_\alpha^1  \pare{\av{\xi}}} f 
 \\
& = \OpBW{ \pare{ r^{-2} \sK^{1,0}_{\alpha}\pare{f;x} +1 } \sT_\alpha^1  \pare{\av{\xi}} + \frac{\ii}{2 }\partial_x\pare{ r^{-2} \sK^{1,0}_{\alpha}\pare{f;x} +1 }\ \partial_\xi \sT_\alpha^1  \pare{\av{\xi}} + P\pare{f;x, \xi}} f  \notag
+R\pare{f}f  
\end{align}
where $  P\pare{f;x, \xi} $  is a symbol in 
$  \Sigma \Gamma^{-1} _{K, 0, 1}\bra{\epsilon_0, N} $. 

In order to compute $ \cI_{ 1,2 } \pare{f} $   in \eqref{ultedivi} 
 we need the following lemma.
\begin{lemma}\label{lem:mult2}
We have
\begin{equation}\label{ausilia1}
\fint \frac{\sin z}{\av{2\sin\pare{z/2}}^\alpha }\delta_z \phi\ \dd z =
\ii \ \sM_{\alpha}\pare{\av{D}} D \  \phi    ,
\end{equation}
where $ \sM_{\alpha} \pare{\av{\xi}} $ is defined in \eqref{eq:Malpha}.
\end{lemma}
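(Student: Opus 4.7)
The plan is to compute the Fourier symbol of the operator $\phi \mapsto \fint \frac{\sin z}{|2\sin(z/2)|^\alpha} \delta_z \phi \, \dd z$ and recognize it as $\ii\,\xi\, \sM_\alpha(|\xi|)$. Expanding $\phi(x) = \sum_{j\in\Z} \hat\phi(j) e^{\ii j x}$, and using $\delta_z (e^{\ii j \cdot})(x) = e^{\ii j x}(1 - e^{-\ii j z})$, the claim \eqref{ausilia1} reduces to showing that for every $j\in\Z$,
\begin{equation*}
I(j) \defeq \fint \frac{\sin z \,(1 - e^{-\ii j z})}{\av{2\sin\pare{z/2}}^\alpha}\, \dd z  = \ii\, j\, \sM_\alpha(|j|) \, .
\end{equation*}

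The key observation is that the kernel $\frac{\sin z}{|2\sin(z/2)|^\alpha} = \frac{\sin z}{[2(1-\cos z)]^{\alpha/2}}$ is an exact $ z $-derivative: a direct computation gives
\begin{equation*}
\frac{\sin z}{\bra{2(1-\cos z)}^{\alpha/2}} = \frac{1}{2(1-\frac{\alpha}{2})}\,\partial_z \bra{2(1-\cos z)}^{1-\frac{\alpha}{2}} \, .
\end{equation*}
This is precisely the same trick already used at the start of Step~2 in the proof of \Cref{lem:linearization} (see \eqref{LabE2a}). Integrating by parts on $\bT$ (the boundary terms vanish by $2\pi$-periodicity, and the factor $[2(1-\cos z)]^{1-\alpha/2}$ is integrable since $\alpha<2$), I obtain
\begin{equation*}
I(j) = -\frac{\ii j}{2(1-\frac{\alpha}{2})}\fint \frac{e^{-\ii j z}}{\bra{2(1-\cos z)}^{\frac{\alpha}{2}-1}}\, \dd z \, .
\end{equation*}

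At this point the integral is exactly the one that was evaluated in \eqref{eq:sMalpha_integral}, which gives
\begin{equation*}
\fint \frac{e^{-\ii j z}}{\bra{2(1-\cos z)}^{\frac{\alpha}{2}-1}}\, \dd z = -2\pare{1-\frac{\alpha}{2}} \sM_\alpha(|j|) \, .
\end{equation*}
Substituting this yields $I(j) = \ii\, j\, \sM_\alpha(|j|)$, so
\begin{equation*}
\fint \frac{\sin z}{\av{2\sin\pare{z/2}}^\alpha} \delta_z \phi\, \dd z = \sum_{j\in\Z} \ii\, j\, \sM_\alpha(|j|)\, \hat\phi(j)\, e^{\ii j x} = \ii\, \sM_\alpha(|D|)\, D\, \phi \, ,
\end{equation*}
which is \eqref{ausilia1}. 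There is no real obstacle here: the whole argument is a single integration by parts plus a direct appeal to \eqref{eq:sMalpha_integral}; the only thing to check is the absence of boundary terms, which follows from $2\pi$-periodicity of $[2(1-\cos z)]^{1-\alpha/2}(1-e^{-\ii j z})$ on $\bT$ together with the integrability of this factor for $\alpha\in(0,2)$.
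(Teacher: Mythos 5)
Your proof is correct and follows essentially the same route as the paper: both rely on recognizing the kernel $\sin z /[2(1-\cos z)]^{\alpha/2}$ as the exact $z$-derivative of $\tfrac{1}{2(1-\alpha/2)}[2(1-\cos z)]^{1-\alpha/2}$, integrating by parts, and then invoking \eqref{eq:sMalpha_integral}. The only cosmetic difference is that you compute the Fourier multiplier mode by mode, while the paper first uses oddness of the kernel to replace $\delta_z\phi$ by $-\phi(x-z)$ and then integrates by parts directly on the convolution; the substance is identical.
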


\begin{proof}
By oddness 
$ \fint \frac{\sin z }{\av{2\sin\pare{z/2}}^\alpha }\dd z
= 0 $ and thus, integrating by parts, 
\begin{align*}
\fint \frac{\sin z}{\av{2\sin\pare{z/2}}^\alpha }\delta_z \phi\ \dd z
& =    - \fint \frac{\sin z}{\bra{2\pare{1-\cos z}}^{\alpha / 2} } \phi\pare{x-z} \ \dd z 
=    -  \fint \partial_z \pare{ \frac{\bra{2\pare{1-\cos z}}^{1- \frac{\alpha}{2}}}{2\pare{1-\frac{\alpha}{2}}} } \phi\pare{x-z} \, \dd z \\
& 
 =  - \frac{1}{2\pare{1-\frac{\alpha}{2}}} \fint   \frac{\phi'\pare{x-z}}{\bra{2\pare{1-\cos z}}^{\frac{\alpha}{2}-1}} \ \dd z 
= \ii \ \sM_{\alpha}\pare{\av{D}} D \  \phi  
\end{align*}
using    \eqref{eq:sMalpha_integral}. This proves \eqref{ausilia1}.
\end{proof}
\Cref{lem:mult2} and \Cref{prop:composition_BW} and since 
$ \sM_{\alpha}\pare{\av{\xi}} $ is a symbol of order $ \alpha - 3 $
gives that
\begin{equation}\label{eq:cI12}
\cI_{1,2}\pare{f} = \OpBW{ \ii \ r^{-2}\sK^{1,1}_{\alpha}\pare{f;x}  \sM_{\alpha}\pare{\av{\xi}} \xi + P\pare{f;x,\xi}} f +R\pare{f}f 
\end{equation}
where $ P\pare{f;x,\xi} $ is a symbol in 
$  \Sigma \Gamma^{-1} _{K, 0, 1}\bra{\epsilon_0, N} $ satisfying \eqref{areal}. 

Let us now compute $ \cI_{1,3} \pare{f} $ in \eqref{ultedivi}. 
Applying \Cref{prop:reminders_integral_operator} we deduce that
\begin{equation}\label{eq:cI13}
\cI_{1, 3}\pare{f} = \OpBW{V\bra{\cI_{1, 3}}\pare{f;x} + P\pare{f;x, \xi}} f   
\end{equation}
 where 
\begin{equation}\label{eq:VI13}
V\bra{\cI_{1,3}}\pare{f;x} \defeq   \pare{ r^{-2}\sK^{1,2}_\alpha  \pare{f;x} + \frac{1}{2}\pare{1-\frac{\alpha}{2}} }  \fint   \frac{1}{\av{2\sin\pare{z/2}}^{\alpha-2} } \  \dd z
\end{equation}
is a function in $ \Sigma \cF^{\bR}_{K,0,1}\bra{\epsilon_0, N} $ 
and $ P\pare{f; x, \xi} $ is a symbol in  
$ \Sigma \Gamma^{- 1}_{K, 0, 1}\bra{\epsilon_0, N} $,  being 
$ \alpha \in (0,2)$.  

Similarly, applying \Cref{prop:reminders_integral_operator}, 
\begin{equation}\label{eq:cI14}
\cI_{1, 4}\pare{f} = \OpBW{V\bra{\cI_{1, 4}}\pare{f;x} + P\pare{f;x, \xi}} f 
\end{equation}
where 
\begin{equation}\label{eq:VI14}
V\bra{\cI_{1,4}}\pare{f;x} \defeq 
 \fint   \frac{r^{-2}\varrho^{1,3}_\alpha\pare{f;x,z}}{\av{2\sin\pare{z/2}}^{\alpha-2} } \  \dd z 
\end{equation}
is a function  in $ \Sigma \cF^{\bR}_{K,0,1}\bra{\epsilon_0, N} $  
by \cref{prop:integral_kernelfunction}, 
and $ P\pare{f; x, \xi} $ is a symbol in  
$ \Sigma \Gamma^{-1}_{K, 0, 1}\bra{\epsilon_0, N} $. 

Finally, the last term in \eqref{ultedivi} is, 
applying \Cref{prop:action_z_smoothing} since $ \frac{R_{\pare{1}}\pare{f;z}}{\av{2\sin\pare{z/2}}^\alpha} \in \Sigma \KR^{-\rho, 1-\alpha}_{K, 0, 1}\bra{\epsilon_0, N} $,  
\begin{equation}\label{eq:cI15}
\cI_{1,5}\pare{f} = R\pare{f} \OpBW{ \sT_\alpha^1  \pare{\av{\xi}}} f + \tilde{R}\pare{f} f \, , \qquad R\pare{f}, \tilde{R}\pare{f} \in 
\Sigma \cR^{-\rho}_{K,0,1}\bra{\epsilon_0, N} \, . 
\end{equation}
We thus plug \eqref{primofm}, \eqref{eq:costante_linearizzato_1}
\eqref{eq:cI11'},  \eqref{eq:cI12}, \eqref{eq:cI13}, \eqref{eq:cI14},  \eqref{eq:cI15}, 
in \Cref{eq:deco_cI1} and obtain
\begin{multline}\label{eq:I1_paralinearization}
\cI_{1}\pare{f} = -  \sT_\alpha^1  \pare{\av{D}} f -\frac{\Gamma\pare{2-\alpha}}{\Gamma\pare{1-\frac{\alpha}{2}}^2} \ f - \pare{1-\frac{\alpha}{2}}^2 \sM_\alpha\pare{\av{D}} f
\\
+
\OpBW{ \pare{ r^{-2} \sK^{1,0}_{\alpha}\pare{f;x} +1 } \sT_\alpha^1  \pare{\av{\xi}} + \frac{\ii}{2 }\partial_x\pare{ r^{-2} \sK^{1,0}_{\alpha}\pare{f;x} +1 }\ \partial_\xi \sT_\alpha^1  \pare{\av{\xi}}+ \ii \ r^{-2}\sK^{1,1}_{\alpha}\pare{f;x}  \sM_{\alpha}\pare{\av{\xi}} \xi} f
\\
+
\OpBW{  V\bra{\cI_1}\pare{f;x}  + P\pare{f;x,\xi}} f +R\pare{f}f 
\end{multline}
where  $V\bra{\cI_1}\pare{f;x} $ is the function  (cf. \cref{eq:VI14,eq:VI13})
\begin{equation}\label{eq:VI1}
V\bra{\cI_1}\pare{f;x} \defeq V\bra{\cI_{1,3}}\pare{f;x} + V\bra{\cI_{1,4}}\pare{f;x} \in \Sigma \cF^{\bR}_{K,0,1}\bra{\epsilon_0, N} \, .
\end{equation}
\end{step}

\begin{step}[Paralinearization of $ \cI_2 $ in \eqref{eq:cI_decomposition}]
Since $ \sK^1_{\alpha, z}\pare{\frac{\Delta_z f}{r^2}}\in \Sigma \KF^0_{K, 0, 0}\bra{\epsilon_0, N} $ (cf. \Cref{lem:characterization_Kernels}) and $ \delta_zf\in \wt \KF ^{1}_1 $ we apply \Cref{prop:comp_z}, \cref{item:OpOp_ext_z} and obtain that, 
for some $ R_2\pare{f;z} \in \Sigma \ \KR^{-\rho, 1-\alpha}_{K, 0, 1}\bra{\epsilon_0, N} $
\begin{align*}
 \fint \OpBW{\sK^1_{\alpha, z}\pare{\frac{\Delta_z f}{r^2}}} \OpBW{\delta_z f} \frac{\dd z}{\av{2\sin\pare{z/2}}^\alpha }  &  =
\fint \OpBW{\sK^1_{\alpha, z}\pare{\frac{\Delta_z f}{r^2}}\delta_z f} \frac{\dd z}{\av{2\sin\pare{z/2}}^\alpha } \dd z 
+\fint R_2 \pare{f;z} \dd z \\
& = \OpBW{\tilde{V}\bra{\cI_2}} + R\pare{f}
\end{align*}
where $ R \pare{f} $ is a smoothing operator in 
$ \Sigma \cR^{-\rho}_{K, 0, 1}\bra{\epsilon_0, N} $ (by \Cref{prop:action_z_smoothing}) 
and 
\begin{equation*}
\tilde V\bra{\cI_2} \pare{f;x} \defeq \fint \sK^1_{\alpha, z}\pare{\frac{\Delta_z f}{r^2}}\delta_z f \frac{1}{\av{2\sin\pare{z/2}}^\alpha } \dd z \, 
\end{equation*}
is a function in  $ \Sigma \cF^{\bR}_{K, 0, 1}\bra{\epsilon_0, N} $, by 
\Cref{prop:integral_kernelfunction}  and since 
since $ \sK^1_{\alpha, z}\pare{\frac{\Delta_z f}{r^2}}\delta_z f \ \frac{1}{\av{2\sin\pare{z/2}}^\alpha } $ is in $ \Sigma\ \KF^{1-\alpha}_{K, 0, 1}\bra{\epsilon_0, N} $.

Finally, using the identity (cf. \Cref{lem:Bony_paralinearization_W})
\begin{equation}\label{eq:parabeta}
r^{\beta} - 1 = \OpBW{\beta r^{\beta-2}} f + R\pare{f}f \, , \qquad 
\forall\beta\in\bR \, , 
\end{equation}
 we write the term  $ \cI_2  \pare{f} $ in \eqref{eq:cI_decomposition} as,
 using  \Cref{prop:composition_BW,compositionMoperator},  
\begin{align}
\cI_2 \pare{f} & = \pare{\OpBW{\tilde{V}\bra{\cI_2} \pare{f;x}} + R\pare{f}}\pare{\OpBW{-2r^{-4}}  f + R \pare{f}f}  \notag \\
& =\OpBW{V\bra{\cI_2} \pare{f;x}}f + R\pare{f}f \label{eq:I2_paralinearization}
\end{align}
where 
\begin{equation}\label{eq:VI2}
V\bra{\cI_2} \pare{f;x} \defeq -2r^{-4} \tilde{V}\bra{\cI_2}  \pare{f;x} 
\in \Sigma \cF^{\bR}_{K, 0, 1}\bra{\epsilon_0, N} \, . 
\end{equation} 
\end{step}

\begin{step}[Paralinearization of $ \cI_3 $ in \eqref{eq:cI_decomposition}]
We first note that, 
in view of \eqref{eq:parabeta}, the fact that
$ \OpBW{\beta r^{\beta-2}} $ and $ R\pare{f} $ 
are $ 0 $-operators,  
and Proposition \ref{compositionMoperator}-(ii), we deduce that 
 $$ 
 R_1\pare{r^{-2}-1} = \breve{R}\pare{f}  \in \Sigma \cR^{-\rho}_{K, 0, 1}\bra{\epsilon_0, N}
 $$ 
 is a smoothing operator, that we may also regard as a Kernel-smoothing operator  
 in $ \Sigma \KR^{-\rho,0}_{K, 0, 1}\bra{\epsilon_0, N} $. 
 Furthermore by \eqref{eq:deltaDelta}
$ \frac{\delta_z }{\av{2\sin\pare{z/2}}^\alpha }  $ is in $ \widetilde{\KM}^{\ 1,1-\alpha}_{0} $
and $  \sK^\mathsf{j}_{\alpha, z} \pare{\frac{\Delta_z f}{r^2}}  $ is a Kernel function in 
$  \Sigma\KF ^{0}_{K, 0, 0}\bra{\epsilon_0, N} $
by \eqref{eq:Kernel_in_kernelfunction}. 
Therefore by \Cref{prop:comp_z} {\color{blue} \cref{item:MM_ext_z,item:OpR_ext_z}} and 
\Cref{prop:action_z_smoothing} we obtain that 
\begin{equation}\label{eq:I3_paralinearization}
\cI_3\pare{f} =  \fint R\pare{f;z} f \dd z = R\pare{f}f 
\end{equation}
where $ R \pare{f} $ is a  smoothing operator
 in $ \Sigma \cR^{-\rho}_{K,0,p}\bra{\epsilon_0, N} $. 
\end{step}

\begin{step}[Paralinearization of $ \cI_4 $ in \eqref{eq:cI_decomposition}]
Reasoning as in the previous step  there is a 
   smoothing operator $ \breve R \pare{f} $ 
 in $ \Sigma \cR^{-\rho}_{K,0,p}\bra{\epsilon_0, N} $ such that  
\begin{equation}\label{eq:I4_paralinearization}
\cI_4\pare{f} = \breve R\pare{f}\pare{r^{-2}-1} 
= R\pare{f}f 
\end{equation}
(use \eqref{eq:parabeta})  where 
$  R \pare{f} $ 
 is a smoothing operator in  $ \Sigma \cR^{-\rho}_{K,0,p}\bra{\epsilon_0, N} $. 
\end{step}

\begin{step}[Conclusion]
Inserting 
\cref{eq:I1_paralinearization,eq:I4_paralinearization,eq:I3_paralinearization,eq:I2_paralinearization} 
in \cref{eq:cI_decomposition}, recalling the definition of $  L_{\cal I } (|\xi | ) $
in \Cref{lem:paralinearization_cI},  
and that 
$ \nu_\cI \pare{f; x} \defeq  - \big(  r^{-2} \sK^{1,0}_{\alpha}\pare{f;x} +1  \big) $
(cf. \cref{eq:LcI})
we obtain 
\begin{multline}\label{eq:I_paralinearization}
\cI \pare{f} = - L_{\cal I } (|\xi | ) f 
+
\OpBW{ - \nu_\cI \pare{f; x}  \sT_\alpha^1  \pare{\av{\xi}} - 
\frac{\ii}{2 } \pare{\nu_\cI \pare{f; x}}_x 
\ \partial_\xi \sT_\alpha^1  \pare{\av{\xi}}+ \ii \ r^{-2}\sK^{1,1}_{\alpha}\pare{f;x}  \sM_{\alpha}\pare{\av{\xi}} \xi} f
\\
+
\OpBW{  V\bra{\cI_1} \pare{f;x}  + V\bra{\cI_2} \pare{f;x}  + P\pare{f;x,\xi}} f +R\pare{f}f  \, . 
\end{multline}
Finally, substituting 
$\sT^1_\alpha\pare{\av{\xi}} = L_\cI \pare{\av{\xi}} - \frac{\Gamma(2- \alpha)}{
\Gamma(1- \frac{\alpha}{2})^2 } - (1- \frac{\alpha}{2})^2 M_\alpha (|\xi| ) $, 
we deduce that \eqref{eq:I_paralinearization}  
is the paralinearization \eqref{eq:paralinearization_cI} with 
(cf. \cref{eq:VI1,eq:VI2})
\begin{equation}\label{eq:VI}
V\bra{\cI} \pare{f;x}  \defeq V\bra{\cI_1} \pare{f;x}  + V\bra{\cI_2} \pare{f;x} 
+ \nu_\cI \pare{f; x} \frac{\Gamma(2- \alpha)}{
\Gamma(1- \frac{\alpha}{2})^2 } 
 \in \Sigma \cF^{\bR}_{K,0,1}\bra{\epsilon_0, N} 
\end{equation}
and another symbol 
$ P \pare{f; x , \xi} $ in
$ \Sigma \Gamma^{-1}_{K, 0, 1}\bra{\epsilon_0 , N} $  satisfying  \eqref{areal}.
\end{step}

\subsection{Paralinearization of the quasilinear integral term  $ \cJ\pare{f} $}
\label{sec:paralinearization_cJ}

In this section we paralinearize  $ \cJ\pare{f} $. 

\begin{lemma}\label{lem:paralinearization_cJ}
The term $ \cJ \pare{ f }  $ defined in \eqref{eq:cJ} can be written as 
\begin{equation}
\label{eq:paralinearization_cJ}
 \cJ\pare{f}
=
  \OpBW{ - \pare{1+\nu_{\cJ}\pare{f;x}} L_\cJ \pare{\av{\xi}}  + \ii \  S_{\cJ, \alpha-2}\pare{f;x, \xi} +  V\bra{\cJ}\pare{f;x} + P\pare{f;x, \xi}} f + R\pare{f}f 
\end{equation}
where
\begin{itemize}
\item $\nu_\cJ \pare{f; x} $ is the real function 
\begin{equation}
\label{eq:LcJ}
\nu_\cJ \pare{f; x} \defeq  \ \pare{  \sK^{2,0}_{\alpha}\pare{f;x}-1} +  \frac{1}{\alpha-1}\frac{f'(x) }{r^2}\sK^{3, 0}_\alpha\pare{f;x} \  \in \Sigma \cF^\bR_{K, 0, 1}\bra{\epsilon_0, N}  ; 
\end{equation}
\item 
$ L_\cJ \pare{\av{\xi}} \defeq  - \av{\xi}^2 \  \sM_{\alpha}\pare{\av{\xi}} $ is a real Fourier multiplier in $ \wt\Gamma^{\alpha-1}_0 $
(the Fourier multiplier $ \sM_\alpha  \pare{\av{\xi}}  $
is defined in \Cref{lem:linearization});  
\item 
$ S_{\cJ, \alpha-2}\pare{f;x, \xi}\defeq 
-\frac{\partial_x}{2}  \pare{\nu_{\cJ}\pare{f;x}}  \partial_\xi L_\cJ \pare{\av{\xi}} 
  +\pare{  \pare{\alpha - 2}  \sK^{2 ,1}_{\alpha}\pare{f;x} + 
\frac{1}{r^2}\pare{f'\sK^{3, 1}_\alpha\pare{f;x} - f''\sK^{3, 0}_\alpha\pare{f;x}} }  \sM_\alpha\pare{\av{\xi}}\xi $ is a real symbol in
$ \Sigma \Gamma^{\alpha-2} _{K, 0, 1}\bra{\epsilon_0, N} $;
\item $ V\bra{\cJ}\pare{f;x} $ is a real function in $ \Sigma \cF^\bR_{K, 0, 1}\bra{\epsilon_0, N} $; 
\item $ P \pare{f;x, \xi} $ is a symbol in $ \Sigma \Gamma^{-1} _{K, 0, 1}\bra{\epsilon_0, N}$ satisfying \eqref{areal}; 
\item  $ R\pare{f} $ is a real smoothing operator in 
$ \Sigma \cR^{-\rho} _{K, 0, 1}\bra{\epsilon_0, N}  $. 
\end{itemize}
\end{lemma}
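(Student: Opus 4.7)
The plan is to mirror the proof of Lemma \ref{lem:paralinearization_cI}, adapted to the structure of $\cJ(f)$ in \eqref{eq:cJ}. I would first substitute the Taylor expansion \eqref{eq:Taylor_expansion_kernels0} of the Kernel function $\sK^2_{\alpha,z}\bigl(\frac{\Delta_z f}{r^2}\bigr)$ into \eqref{eq:cJ}, producing the decomposition $\cJ(f) = \sum_{l=0}^{2} \cJ_l(f) + \cJ_3(f)$ with
\begin{equation*}
\cJ_l(f) = \sK^{2,l}_\alpha(f;x) \fint f'(x-z) \frac{q_l(z)}{\av{2\sin(z/2)}^\alpha}\, \dd z\,,\quad l = 0,1,2\,,
\end{equation*}
where $q_0(z) = \sin z$, $q_1(z) = \sin^2 z$, $q_2(z) = \sin z\, (2\sin(z/2))^2$, and $\cJ_3(f) = \fint \varrho^{2,3}_\alpha(f;x,z) f'(x-z) \frac{\sin z}{\av{2\sin(z/2)}^\alpha}\,\dd z$.

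\textbf{Evaluating integrals and paralinearizing.} The integral defining $\cJ_0$ equals $\av{D}^2\sM_\alpha(\av{D})f = -L_\cJ(\av{D})f$ by the integration-by-parts computation of Step 2 in the proof of Lemma \ref{lem:linearization} (compare \eqref{eq:Malpha1} and \eqref{eq:sMalpha_integral}); analogous integration-by-parts arguments identify the integrals in $\cJ_1$ and $\cJ_2$ as real Fourier multipliers of orders $\alpha-2$ and $\alpha-3$. For each product $\sK^{2,l}_\alpha(f;x) \cdot [m_l(D)f]$ I would then apply Bony's paraproduct decomposition \eqref{bonyeq} together with the symbolic calculus \eqref{asharpb}, obtaining a principal symbol $\OpBW{\sK^{2,l}_\alpha(f;x)\,m_l(\xi)}f$, a Poisson-bracket subprincipal correction $\tfrac{1}{2\ii}\{\sK^{2,l}_\alpha, m_l\}$, a symmetric conjugate piece $\OpBW{m_l(D)f} \sK^{2,l}_\alpha(f;x)$, and a smoothing remainder. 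Finally, the Kernel remainder term $\cJ_3(f)$ is treated via \Cref{prop:reminders_integral_operator}, producing a symbol in $\Sigma\Gamma^{\alpha-4}_{K,0,1}$ (which is absorbed into $P(f;x,\xi)$ since $\alpha - 4 < -1$) plus a smoothing operator.

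\textbf{Origin of $\nu_\cJ$ and main obstacle.} The most subtle point is identifying the additional contribution $\tfrac{1}{\alpha-1}\tfrac{f'(x)}{r^2}\sK^{3,0}_\alpha(f;x)$ inside $\nu_\cJ$. The key observation is that $\sK^{2,0}_\alpha(f;x)$ is the evaluation at $z = 0$ of $\sK^{2}_{\alpha,z}(\Delta_z f/r^2)$ where $\Delta_0 f = f'(x)$, so $\sK^{2,0}_\alpha$ depends on both $f$ and $f'$; its Bony paralinearization via Lemma \ref{lem:Bony_paralinearization_W} therefore produces a contribution from $\partial_{f'}\sK^{2,0}_\alpha$. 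By chain rule on \eqref{eq:sK2} and comparison with \eqref{eq:sK3}, one derives the identity $(\sK^2_{\alpha,0})'(\mathsf{X}) = \sK^3_{\alpha,0}(\mathsf{X})$, which yields $\partial_{f'}\sK^{2,0}_\alpha(f;x) = \sK^{3,0}_\alpha(f;x)/r^2$. When this is paired with the symmetric conjugate piece $\OpBW{m_0(D)f}\sK^{2,0}_\alpha(f;x)$ and the asymptotic $L_\cJ(\av{\xi})\sim \tfrac{1}{\alpha-1}\av{\xi}^{\alpha-1}$ visible in \eqref{expaLapha}, the prefactor $\tfrac{1}{\alpha-1}$ appears and one recovers the advertised $\nu_\cJ$. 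The main obstacle is thus symbolic bookkeeping: collecting the subprincipal contributions from Poisson brackets and from the $\partial_{f'}\sK^{2,l}_\alpha, \partial_{f''}\sK^{2,l}_\alpha$ pieces into the stated form of $\ii S_{\cJ,\alpha-2}$, absorbing the zero-order pieces into $V[\cJ]$, and verifying the reality condition \eqref{areal} and the order-$(-1)$ bound on $P(f;x,\xi)$.
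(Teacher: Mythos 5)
Your plan reverses the order of operations used in the paper, and this is where the gap lies. You Taylor expand the kernel $\sK^2_{\alpha,z}$ in $z$ \emph{first}, producing pointwise products $\sK^{2,l}_\alpha(f;x)\cdot m_l(D)f$, and only then paraproduct each product. The paper instead applies the Bony paraproduct decomposition \eqref{bonyeq} to the $x$-product $\sK^2_{\alpha,z}(\cdot)\,f'(x-z)$ \emph{inside} the $z$-integral (see \eqref{eq:K2_Bony_paraproduct}--\eqref{eq:cJ_decomposition}), and only afterwards Taylor expands in $z$. This is not a cosmetic difference. In the paper's $\cJ_2$-term, $f'(x-z)$ sits permanently in the low-frequency slot, while the object in the high-frequency slot is $\sK^2_{\alpha,z}(\Delta_z f/r^2)-\sK^2_{\alpha,z}(0)$; once paralinearized in $f$ this produces $\OpBW{\sK^3_{\alpha,z}}\,\delta_z f/(r^2\sin z)$, and the factor $\delta_z f/|2\sin(z/2)|^\alpha$ after $z$-integration is a multiplier of order $\alpha-1$ (via \eqref{eq:step:1}), not order $1$. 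In your decomposition, the conjugate paraproduct piece $\OpBW{m_l(D)f}\bigl(\sK^{2,l}_\alpha(f;x)-\sK^{2,l}_\alpha(0;x)\bigr)$ puts $\sK^{2,l}_\alpha$ in the high-frequency slot; since $\sK^{2,l}_\alpha$ is a pointwise function of $f,f',\dots,f^{(l+1)}$, Bony paralinearizing it in $f$ and commuting through $\partial_x$ yields symbols of order $\geq 1$ (and $\geq l+1$ for the higher $l$'s). These high-order symbols are not present in the paper's answer — the symbol in \eqref{eq:paralinearization_cJ} has order at most $\alpha-1<1$ and its imaginary part $\ii\,S_{\cJ,\alpha-2}$ is of order $\alpha-2<0$. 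So your decomposition produces a genuine loss of derivatives that must cancel across the different $\cJ_l$'s, and you neither prove nor flag this cancellation. (Note also that \Cref{lem:Bony_paralinearization_W} as stated paralinearizes a smooth composition $F(u)$ of a single argument; to apply it to $\sK^{2,0}_\alpha$, which is a smooth function of the two quantities $f$ and $f'$, you need the two-argument version and the subsequent composition with $\partial_x$, which is precisely where the order-$1$ symbol $\ii\,\xi$ is generated.)

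Your mechanism for extracting $\tfrac{1}{\alpha-1}$ is also not the paper's, and as written it does not close. The chain-rule identity $(\sK^2_{\alpha,0})'(\sX)=\sK^3_{\alpha,0}(\sX)$ is correct (both equal $-\alpha\sX(1+\sX^2)^{-\alpha/2-1}$) and is used implicitly in the paper's passage from $\sK^2$ to $\sK^3$ in \eqref{paraK2K0}. But in the paper the factor $\tfrac{1}{\alpha-1}$ appears because $\cJ_2$ naturally produces $\OpBW{r^{-2}f'\,\sK^{3,0}_\alpha}\sT^1_\alpha(\av{D})f$ and one then invokes the identity $\sT^1_\alpha(\av{\xi})=\tfrac{1}{\alpha-1}\av{\xi}^2\sM_\alpha(\av{\xi})+\tilde\bV_\alpha+m_{\alpha-3}$ from \Cref{lem:asympt} to rewrite $\sT^1_\alpha$ in terms of $L_\cJ=-\av{\xi}^2\sM_\alpha$. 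In your scheme the multiplier in front of the conjugate piece is $m_0=\av{\xi}^2\sM_\alpha$ itself, not $\sT^1_\alpha$, and the gesture toward \eqref{expaLapha} does not by itself convert your order-$1$ symbol $\ii\,\xi\,(m_0(D)f)(\sK^{3,0}_\alpha/r^2)$ into the order-$(\alpha-1)$ coefficient $\tfrac{1}{\alpha-1}\tfrac{f'}{r^2}\sK^{3,0}_\alpha\,L_\cJ(\av{\xi})$. To salvage the approach you would need to exhibit the inter-term cancellations explicitly, or else adopt the paper's ordering of the paraproduct and the $z$-Taylor expansion, which eliminates the problem at the source.
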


The rest of this section is devoted to the proof of \Cref{lem:paralinearization_cJ}. 

By  \Cref{lem:paraproduct_Weyl}  we obtain that
\begin{multline}\label{eq:K2_Bony_paraproduct}
 \sK^2 _{\alpha, z} \pare{\frac{\Delta_z f}{r^2}}  f'\pare{x -z} 
 =  \ \OpBW{ \sK^2 _{\alpha, z} \pare{\frac{\Delta_z f}{r^2}} } f'\pare{x -z}
+ \OpBW{ f'\pare{x -z} } \  \pare{ \sK^2 _{\alpha, z} \pare{\frac{\Delta_z f}{r^2}} - \sK^2_{\alpha, z}\pare{0} } 
\\
+R_1\pare{ \sK^2 _{\alpha, z} \pare{\frac{\Delta_z f}{r^2}}- \sK^2_{\alpha, z}\pare{0} }f'\pare{x-z} 
+R_2\pare{ f'\pare{x-z} } \pare{ \sK^2 _{\alpha, z} \pare{\frac{\Delta_z f}{r^2}}- \sK^2_{\alpha, z}\pare{0} }
\end{multline}
where $ R_1, R_2 $ are smoothing operators in $ \wt \cR^{-\rho}_1 $ . 
Hence, recalling the definition of
 $  \cJ\pare{f} $   in \eqref{eq:cJ}, we have
\begin{equation}\label{eq:cJ_decomposition}
\begin{aligned}
 \cJ\pare{f} \defeq & \ \sum_{\mathsf{j}=1}^4 \cJ_\mathsf{j}\pare{f} \, , \\
\cJ_1 \pare{f} \defeq & \ \fint \OpBW{ \sK^2 _{\alpha, z} \pare{\frac{\Delta_z f}{r^2}}  } f'\pare{x -z} \frac{\sin z}{\av{2\sin\pare{z/2}}^\alpha} \ \dd z \, , \\
\cJ_2 \pare{f} \defeq & \ \fint \OpBW{ f'\pare{x -z} } \  \pare{ \sK^2 _{\alpha, z} \pare{\frac{\Delta_z f}{r^2}} - \sK^2_{\alpha, z}\pare{0} }  \frac{\sin z}{\av{2\sin\pare{z/2}}^\alpha} \ \dd z \, , \\
\cJ_3 \pare{f} \defeq & \ \fint R_1\pare{ \sK^2 _{\alpha, z} \pare{\frac{\Delta_z f}{r^2}}- \sK^2_{\alpha, z}\pare{0}}f'\pare{ x -z}  \frac{\sin z}{\av{2\sin\pare{z/2}}^\alpha} \ \dd z \, , \\
\cJ_4 \pare{f} \defeq & \ \fint R_2\pare{ f'\pare{x-z} } \pare{ \sK^2 _{\alpha, z} \pare{\frac{\Delta_z f}{r^2}}- \sK^2_{\alpha, z}\pare{0} }  \frac{\sin z}{\av{2\sin\pare{z/2}}^\alpha}  \ \dd z \, . 
\end{aligned}
\end{equation}

\begin{step}[Paralinearization of $ \cJ_1 $ in \eqref{eq:cJ_decomposition}]
By \eqref{eq:Taylor_expansion_kernels0}
and \eqref{eq:coefficients_Kernels_zero} we obtain that
\begin{equation}\label{svilJ14}
\begin{aligned}
\cJ_1\pare{f}\defeq & \ \fint  f'\pare{x -z} \frac{\sin z}{\av{2\sin\pare{z/2}}^\alpha} \ \dd z +   \sum_{\mathsf{j}=1}^3 \cJ_{1, \mathsf{j}}\pare{f} \, , \\
\cJ_{1,1}\pare{f}\defeq & \ \OpBW{ \sK^{2,0} _{\alpha} \pare{f;x}-1  } \fint  f'\pare{x -z} \frac{\sin z}{\av{2\sin\pare{z/2}}^\alpha} \ \dd z \, , \\
\cJ_{1,2}\pare{f}\defeq & \ \OpBW{ \sK^{2,1} _{\alpha} \pare{f;x} } \fint  f'\pare{x -z} \frac{ \sin^2 z}{\av{2\sin\pare{z/2}}^\alpha} \ \dd z \, , 
 \\
\cJ_{1,3}\pare{f}\defeq & \  \fint 
{\OpBW{ \varrho^{\bra{3-\alpha}}_\alpha\pare{f;x,z}  }  f'\pare{x -z}} \ \dd z \, , 
\end{aligned}
\end{equation}
where, by  
\eqref{prop:fr}, \eqref{eq:coefficients_Kernels_zero} and Remark \ref{rem:operation_z}, 
\begin{equation}\label{varrho22}
\varrho^{\bra{3-\alpha}}_\alpha\pare{f;x,z} 
\defeq { \pare{\sK^{2,2}_\alpha  \pare{f;x} \pare{2\sin\pare{z/2}}^2
  +\varrho^{2,3}_\alpha\pare{f;x,z}} \frac{ \sin z}{\av{2\sin\pare{z/2}}^\alpha} }
   \in \Sigma \ \KF^{3 - \alpha}_{K, 0, 1}\bra{\epsilon_0, N} \, . 
\end{equation}
Now,  by \eqref{eq:Malpha1}, the first  term in \eqref{svilJ14} is 
\begin{equation}\label{J10}
\fint  f'\pare{x -z} \frac{\sin z}{\av{2\sin\pare{z/2}}^\alpha} \ \dd z 
=  \av{D}^2 \  \sM_{\alpha}\pare{\av{D}} f 
\end{equation}
and, using Proposition \ref{prop:composition_BW}, 
\begin{align}
\cJ_{1, 1}\pare{f} & 
=  \   \OpBW{  \pare{  \sK^{2,0}_{\alpha}\pare{f;x}-1}} \av{D}^2  \sM_{\alpha}\pare{\av{D}} f 
\label{J11} \\
& = \ \OpBW{
  \pare{  \sK^{2,0}_{\alpha}\pare{f;x}-1} \  \av{\xi}^2  \sM_{\alpha}\pare{\av{\xi}}
  +\frac{\ii}{2} \partial_x \pare{\sK^{2,0}_{\alpha}\pare{f;x}} \  \partial_\xi\pare{\av{\xi}^2 \  \sM_{\alpha}\pare{\av{\xi}} +  P \pare{f;x, \xi}}
} f + R\pare{f}f  \notag 
\end{align}
where $ P\pare{f;x, \xi} $ is a symbol 
in $ \Sigma \Gamma^{-1}_{K,0, 1}\bra{\epsilon_0 , N} $. 
In order to expand $\cJ_{1, 2}\pare{f}$ in
\eqref{svilJ14} we write
\begin{equation}\label{decofuJ1}
\frac{ \sin^2 z}{\av{2\sin\pare{z/2}}^\alpha} = 
\frac{\cos^2 (z/2) }{\av{2\sin\pare{z/2}}^{\alpha-2}} = 
\frac{1}{\av{2 (1 - \cos\pare{z}}^{\frac{\alpha}{2}-1}}  + \varrho_{1,2} (z) \, , \quad \varrho_{1,2} (z) \in 
\wt{\KF}^{3-\alpha}_0 \, . 
\end{equation}
As a consequence of \eqref{decofuJ1}, using also \eqref{eq:step:3}, 
Propositions \ref{prop:reminders_integral_operator} and 
\ref{prop:composition_BW},  for any $ \alpha \in (0,2) $, we get 
\begin{align}
\cJ_{1, 2}\pare{f} & =  
\ \OpBW{ \sK^{2,1} _{\alpha} \pare{f;x} } \fint  f'\pare{x -z} 
\frac{\dd z }{\av{2 (1 - \cos\pare{z}}^{\frac{\alpha}{2}-1}} + 
 \fint  \OpBW{ \sK^{2,1} _{\alpha} \pare{f;x} \varrho_{1,2} (z)  }  f'\pare{x -z}   \ \dd z  \notag 
\\
& = 
\ \OpBW{ \sK^{2,1} _{\alpha} \pare{f;x} } \ii (\alpha - 2 ) M_\alpha (|D|)  D f + 
\OpBW{ a\pare{f;x, \xi}} \pa_x f  \notag 	\\ 
 & = \OpBW{ \ii  \pare{\alpha - 2}  \sK^{2 ,1}_{\alpha}\pare{f;x} \ \sM_\alpha\pare{\av{\xi}}\xi + P\pare{f;x, \xi}} f + R \pare{f}{f} \label{J12}
\end{align}
where $ a\pare{f;x, \xi} $ is a symbol in 
$ \Sigma \Gamma^{4-\alpha}_{K,0, 1}\bra{\epsilon_0 , N} $ and
$ P\pare{f;x, \xi} $ is a symbol 
in $ \Sigma \Gamma^{-1}_{K,0, 1}\bra{\epsilon_0 , N} $ satisfying \eqref{areal}.
Furthermore, by \eqref{varrho22}, Propositions \ref{prop:reminders_integral_operator}
and \ref{prop:composition_BW}, 
for any $ \alpha \in (0,2) $, 
the last term in \eqref{svilJ14} is
\begin{equation}\label{J12J13}
\cJ_{1, 3}\pare{f} =  \  \OpBW{P\pare{f;x, \xi}}f + { R \pare{f} f }  \, . 
\end{equation}
In conclusion, by \cref{J10,J11,J12,J12J13,eq:coefficients_Kernels_zero} defining 
\begin{equation}\label{eq:nu2}
\nu_2\pare{f;x} \defeq  \sK^{2,0}_{\alpha}\pare{f;x}-1 
\in \Sigma \cF^\bR_{K,0,1}\bra{\epsilon_0, N}
 \, , 
\end{equation} 
the term  $ \cJ_1\pare{f} $  in \eqref{svilJ14} is 
\begin{multline}\label{eq:J1}
\cJ_1\pare{f} 
 =   \OpBW{\pare{1+\nu_2\pare{f;x}} \av{\xi}^2  \sM_{\alpha}\pare{\av{\xi}} } f
\\
+\ii \ \OpBW{\frac{\partial_x}{2} \pare{\nu_2\pare{f;x}} \  \partial_\xi \pare{\av{\xi}^2   \sM_{\alpha}\pare{\av{\xi}} } + \pare{\alpha - 2}  \sK^{2 ,1}_{\alpha}\pare{f;x} \ \sM_\alpha\pare{\av{\xi}}\xi + P\pare{f;x, \xi}} f + R\pare{f}f \, .   
\end{multline}
\end{step}
\begin{step}[Paralinearization of $ \cJ_2 $ in \eqref{eq:cJ_decomposition}]
Using 
\eqref{eq:sK2}, \eqref{eq:notation}, the paralinearization formula 
\eqref{lem:Bony_paralinearization_W}  and  \eqref{eq:sK3}, 
we write
\begin{equation}\label{paraK2K0}
\begin{aligned}
\sK^2_{\alpha, z}\pare{\frac{\Delta_z f}{r^2}} - \sK^2_{\alpha, z}\pare{0} 
= & \ 
\pare{\sG^2_{\alpha, z}\pare{\frac{\delta_zf}{r^2}} - \sG^2_{\alpha, z}\pare{0}} \av{2\sin\pare{z/2}}^\alpha
\\
= & \ \OpBW{\pare{\sG^2_{\alpha, z}}'\pare{\frac{\delta_z f }{r^2}}\av{2\sin\pare{z/2}}^\alpha}\frac{\delta_z f }{r^2}
+ R \pare{\frac{\delta_z f }{r^2}}\frac{\delta_z f }{r^2} \ \av{2\sin\pare{z/2}}^\alpha
\\
= & \ 
\OpBW{\sK^3_{\alpha, z}\pare{\frac{\Delta_z f}{r^2}}}
\frac{\delta_zf}{r^2 \sin (z) }  
+ R \pare{\frac{\delta_z f }{r^2}}\frac{\delta_z f }{r^2} \ \av{2\sin\pare{z/2}}^\alpha
\end{aligned}
\end{equation}
where $ R $ is a smoothing operator in 
$\Sigma \cR^{-\rho}_{K,0,1}\bra{\epsilon_0, N}$ for any $ \rho $. 
 By \cref{eq:sK3} it results 
$ \sK^3_{\alpha, z}\pare{\mathsf{X}} = \sK^3_{\alpha, z+2\pi}\pare{\mathsf{- X}} $ 
and the  map $ z\mapsto \sK^3_{\alpha, z}\pare{\frac{\Delta_z f}{r^2}} $ is $ 2\pi $-periodic.
 Therefore, by \eqref{eq:cJ_decomposition} and \eqref{paraK2K0} we obtain that
\begin{subequations}
\begin{align}
\cJ_2\pare{f} 
 = & \  \fint \OpBW{f'\pare{x-z}}\OpBW{\sK^3_{\alpha, z}\pare{\frac{\Delta_z f}{r^2}}} \  \frac{\delta_zf}{r^2 \av{2\sin\pare{z/2}}^\alpha}  \ \dd z \label{inJ1} \\
&  + \fint \OpBW{f'\pare{x-z}}R \bra{\frac{\delta_z f }{r^2}}\frac{\delta_z f }{r^2} \ \sin z \  \dd z \, . \label{eq:J2_1}
\end{align}
\end{subequations}
By \eqref{eq:deltaDelta} and 
\Cref{item:propMop2} we deduce that 
 $  M_2\pare{f;z} \defeq 
 r^{-2} \delta_z $ 
 is an operator
   in $ \Sigma \KM^{1,1}_{K, 0, 0}\bra{\epsilon_0, N} $. 
 As a consequence  by \Cref{prop:comp_z} we obtain that 
 \begin{equation}\label{Rfraz}
 R \pare{\frac{\delta_z f }{r^2}}\frac{\delta_z f }{r^2}   = R\pare{f;z} f \qquad 
  \text{where} \qquad R\pare{f;z} \in \Sigma \KR^{-\rho, 1}_{K, 0, 1}\bra{\epsilon_0, N} \, ,
 \end{equation}
  and, by  \Cref{prop:comp_z}, \cref{item:OpR_ext_z},  
\Cref{prop:action_z_smoothing}, being $ \alpha \in (0,2) $, we deduce 
that the integral \eqref{eq:J2_1} is 
\begin{equation}\label{la587}
\fint \OpBW{f'\pare{x-z} \sin z } R\pare{\frac{\delta_zf}{r^2}} \frac{\delta_zf}{r^2}  \dd z 
= R\pare{f}f 
\end{equation}
where $ R\pare{f} $ is a smoothing operator 
in $ \Sigma \cR^{-\rho}_{K,0,1}\bra{\epsilon_0, N} $. 

We now consider the term \eqref{inJ1}. By 
\Cref{lem:paraproduct_Weyl} we write
$$
\frac{\delta_z f}{r^2} = \OpBW{r^{-2}}  \delta_z f +
\OpBW{\delta_z f } (r^{-2} -1)+ R_1 \pare{ r^{-2} -1 }\delta_z f + 
R_2 \pare{ \delta_z f   }[r^{-2} -1]
$$
where $ R_1, R_2 $ are smoothing operators  
 in $ \wt \cR^{-\rho}_1 $ for any $ \rho \geq 0 $, and thus
 \begin{subequations}
  \begin{align}
 \eqref{inJ1} 
 & = \fint \OpBW{f'\pare{x-z}}
 \OpBW{\sK^3_{\alpha, z}
 \pare{\frac{\Delta_z f}{r^2}}} 
 \  \OpBW{r^{-2}}  \delta_z f \ \frac{\dd z}{\av{2\sin\pare{z/2}}^\alpha}  \label{inJ11}   \\
 & + \fint \OpBW{f'\pare{x-z}}
 \OpBW{\sK^3_{\alpha, z}
 \pare{\frac{\Delta_z f}{r^2}}} 
 \  \OpBW{\delta_z f } (r^{-2}-1) 
 \ \frac{\dd z}{\av{2\sin\pare{z/2}}^\alpha}   \label{inJ12}  \\
 & + \fint \OpBW{f'\pare{x-z}}
 \OpBW{\sK^3_{\alpha, z}
 \pare{\frac{\Delta_z f}{r^2}}} 
  \pare{R_1 \pare{ r^{-2} -1 }\delta_z f + 
R_2 \pare{ \delta_z f   }[r^{-2} -1]}  \, \frac{\dd z}{\av{2\sin\pare{z/2}}^\alpha}.  \label{inJ13}
 \end{align}
 \end{subequations}  
 \Cref{prop:comp_z}   
give that
$$
\eqref{inJ11} = \fint \OpBW{r^{-2} f'\pare{x-z} \sK^3_{\alpha, z}\pare{\frac{\Delta_z f}{r^2}}} \frac{\delta_zf}{ \av{2\sin\pare{z/2}}^\alpha}  \ \dd z + \underbrace{\fint R\pare{f;z} f \  \dd z}_{= R \pare{f}f }  
$$
where $ R\pare{f;z} $ is a Kernel-smoothing operator in 
$ \Sigma \cR^{-\rho, 1-\alpha }_{K,0, 1}\bra{\epsilon_0, N} $
and, since $ \alpha \in (0,2) $, the operator 
$ R \pare{f} $ is in $  \Sigma \cR^{-\rho}_{K,0, 1}\bra{\epsilon_0, N} $ by \Cref{prop:action_z_smoothing}.
Then by   \Cref{prop:comp_z,eq:parabeta,prop:integral_kernelfunction}, we get 
$$
 \eqref{inJ12} = 
 \OpBW{V_1 \bra{\cJ_2}\pare{f;x}} f 
+  R\pare{f} f 
$$
where $ V_1 \bra{\cJ_2} \pare{f;x} $ is a real function in $  \Sigma \cF^\bR_{K, 0, 1}\bra{\epsilon_0, N} $. Finally  \eqref{inJ13} is a smoothing term $ R\pare{f} f  $
and we deduce that 
\begin{equation}\label{eq:J2_2}
\eqref{inJ1} =
\fint \OpBW{r^{-2} f'\pare{x-z} \sK^3_{\alpha, z}\pare{\frac{\Delta_z f}{r^2}}} \frac{\delta_zf}{ \av{2\sin\pare{z/2}}^\alpha}  \ \dd z
+\OpBW{V_1 \bra{\cJ_2}\pare{f;x}} f 
+ R\pare{f} f \, .  
\end{equation}
Then we write
$$
 f'(x-z) = f'(x) - f''(x) \  \sin z + {\varrho}_{1} \pare{f;x,z} 
$$
where $ {\varrho}_{1} \pare{f;x,z} $ is a homogenous Kernel function in 
$  \widetilde{ \KF}^2_{1}  $ and, using \eqref{eq:Taylor_expansion_kernels0}, we deduce that
\begin{equation*}
r^{-2} f'\pare{x-z} \sK^3_{\alpha, z}\pare{\frac{\Delta_z f}{r^2}}
=
r^{-2} f' (x) \sK^{3, 0}_\alpha\pare{f;x} + r^{-2}\pare{f' (x) \sK^{3, 1}_\alpha\pare{f;x} - f''(x) \sK^{3, 0}_\alpha\pare{f;x}} \sin z
+\breve{\varrho}^{3, 2}_\alpha\pare{f;x, z}, 
\end{equation*}
where $ \breve{\vr}^{3, 2}_\alpha \pare{f;x, z} $ is a Kernel function 
in $ \Sigma \KF^2_{K, 0, 1}\bra{\epsilon_0, N} $, and,   
by also \eqref{eq:step:1}, \cref{lem:mult2,prop:reminders_integral_operator} and
\Cref{prop:composition_BW},  
we get   
\begin{align}
& \fint \OpBW{r^{-2} f'\pare{x-z} \sK^3_{\alpha, z}\pare{\frac{\Delta_z f}{r^2}}} \frac{\delta_zf}{ \av{2\sin\pare{z/2}}^\alpha}  \ \dd z = 
 \OpBW{r^{-2} f' (x) \sK^{3, 0}_\alpha\pare{f;x}}\OpBW{\sT^1_\alpha\pare{\av{\xi}}} f 
 \notag \\
& +\ii \ \OpBW{ r^{-2}\pare{f'(x) \sK^{3, 1}_\alpha\pare{f;x} - f''(x) \sK^{3, 0}_\alpha\pare{f;x}}} \OpBW{\sM_\alpha\pare{\av{\xi}}\xi} f + \OpBW{P\pare{f;x, \xi}}f \notag \\
& =
\OpBW{ r^{-2} f'(x) \sK^{3, 0}_\alpha\pare{f;x} \sT^1_\alpha\pare{\av{\xi}}} f \notag 
\\
& +\ii \ \OpBW{\frac{1}{2}\partial_x\pare{\frac{f'(x)}{r^2}\sK^{3, 0}_\alpha\pare{f;x}} \ \partial_\xi \sT^1_\alpha\pare{\av{\xi}}
+
r^{-2} \pare{f'(x) \sK^{3, 1}_\alpha\pare{f;x} - f''(x) \sK^{3, 0}_\alpha\pare{f;x}} \ \sM_\alpha\pare{\av{\xi}}\xi
 }f \notag 
 \\
 & + \OpBW{P\pare{f;x, \xi}}f + R\pare{f}f \, .  \label{eq:J2_3}
\end{align}
By \Cref{lem:asympt}  we have 
$ \sT^1_\alpha\pare{\av{\xi}} = \frac{1}{\alpha-1} \av{\xi}^2 \sM_\alpha\pare{\av{\xi}} + \tilde{\bV}_\alpha + m_{\alpha-3}\pare{|\xi|} $
and so, defining the function 
\begin{equation}
\label{eq:nu3}
\nu_3 \pare{f;x}\defeq \frac{1}{\alpha-1}\frac{f'\pare{x}}{r^2}\sK^{3, 0}_\alpha\pare{f;x} \in \Sigma\cF^\bR_{K, 0, 1}\bra{\epsilon_0, N}, 
\end{equation}
the equation \eqref{eq:J2_3} becomes 
\begin{multline}\label{eq:J2_4}
\fint \OpBW{r^{-2} f'\pare{x-z} \sK^3_{\alpha, z}\pare{\frac{\Delta_z f}{r^2}}} \frac{\delta_zf}{ \av{2\sin\pare{z/2}}^\alpha}  \ \dd z
=
\OpBW{\nu_3 \pare{f;x} \av{\xi}^2 \sM_\alpha\pare{\av{\xi}}} f
\\
+\ii \ \OpBW{\frac{1}{2} \partial_x \pare{\nu_3 \pare{f;x}}\ \partial_\xi \pare{\av{\xi}^2 \sM_\alpha\pare{\av{\xi}}}
+ r^{-2}\pare{f' (x) \sK^{3, 1}_\alpha\pare{f;x} - f'' (x) \sK^{3, 0}_\alpha\pare{f;x}} \ \sM_\alpha\pare{\av{\xi}}\xi
 }f
 \\
  + \OpBW{ V_2\bra{\cJ_2}\pare{f;x} +  P\pare{f;x, \xi}}f + R\pare{f}f
\end{multline}
where  $ V_2\bra{\cJ_2} \pare{f;x} \defeq \tilde{\bV}_\alpha \, \nu_3 \pare{f; x}  $ is 
a function in $ \Sigma \cF^\bR_{K, 0, 1}\bra{\epsilon_0, N} $. 
By \eqref{la587}, \eqref{eq:J2_2}, \eqref{eq:J2_4}
we deduce that $ \cJ_2\pare{f}  $ in \eqref{inJ1}-\eqref{eq:J2_1} is
\begin{multline}\label{eq:J2}
\cJ_2\pare{f} =
\OpBW{\nu_3 \pare{f;x} \av{\xi}^2 \sM_\alpha\pare{\av{\xi}}} f
\\
+\ii \ \OpBW{\frac{\partial_x}{2} \pare{\nu_3 \pare{f;x}} \ \partial_\xi \pare{\av{\xi}^2 \sM_\alpha\pare{\av{\xi}}}
+ r^{-2}\pare{f' (x) \sK^{3, 1}_\alpha\pare{f;x} - f'' (x) \sK^{3, 0}_\alpha\pare{f;x}} \ \sM_\alpha\pare{\av{\xi}}\xi
 }f
 \\
  + \OpBW{V\bra{\cJ_2}\pare{f;x} + P\pare{f;x, \xi}}f + R\pare{f}f 
\end{multline}
where the real function $ V\bra{\cJ_2} \defeq V_1 \bra{\cJ_2} + V_2\bra{\cJ_2} $ is  in 
$ \Sigma \cF^\bR_{K, 0, 1}\bra{\epsilon_0, N}   $.
\end{step}

\begin{step}[Paralinearization of $ \cJ_3 $ in \eqref{eq:cJ_decomposition}]
By \cref{paraK2K0,Rfraz,prop:comp_z,rem:OpBWisanMop-z} and 
$ \frac{\delta_z}{r^2 \sin z}\in \Sigma \KM^{1, 0}_{K, 0, 0}\bra{\epsilon_0, N} $
(which follows by \cref{eq:deltaDelta,item:propMop2,prop:comp_z}), and 
 since 
$ R_1 $ is a smoothing operator in $ \wt \cR^{-\rho}_1 $,
we deduce that 
\begin{equation}\label{eq:R1compostoK}
R_1\bra{ \sK^2 _{\alpha, z} \pare{\frac{\Delta_z f}{r^2}} - \sK^2_{\alpha, z}\pare{0}}  \in \Sigma \KR^{-\rho, 0}_{K, 0, 1}\bra{\epsilon_0, N}. 
\end{equation}
Furthermore $ f'\pare{x-z} = \partial_x \circ \st _{-z} f $  
and $ \partial_x \circ \st _{-z} $ is in $ \wt \KM ^{1, 0}_{0} $. By \Cref{rem:operation_z,prop:comp_z} we obtain (after relabeling  $ \rho $) that
\begin{equation*}
 R_1\bra{ \sK^2 _{\alpha, z} \pare{\frac{\Delta_z f}{r^2}}- \sK^2_{\alpha, z}\pare{0}} \frac{\sin z}{\av{2\sin\pare{z/2}}^\alpha} \partial_x \circ \st _{-z}
 \defeq R^\star \pare{f;z}
 \in \Sigma \KR^{-\rho, 1-\alpha}_{K, 0, 1}\bra{\epsilon_0, N} \ .
\end{equation*}
Finally \Cref{prop:action_z_smoothing} implies  that 
\begin{equation}\label{eq:J3}
\cJ_3\pare{f} =
\fint  R^\star \pare{f;z} f \ \dd z
= R\pare{f}f \qquad 
\text{where}
\qquad 
R\pare{f}\in\Sigma\cR^{-\rho}_{K,0,1}\bra{\epsilon_0, 1} \, . 
\end{equation}
\end{step}
\begin{step}[Paralinearization of $ \cJ_4 $ in \eqref{eq:cJ_decomposition}] 
We similar arguments 
one obtains 
\begin{equation}\label{eq:J4}
\cJ_4\pare{f} = R\pare{f}f \qquad 
\text{where}
\qquad 
R\pare{f}\in\Sigma\cR^{-\rho}_{K,0,1}\bra{\epsilon_0, 1} 
\, . 
\end{equation}
\end{step}
\begin{step}[Conclusion]
We plug \Cref{eq:J1,eq:J2,eq:J3,eq:J4} in \cref{eq:cJ_decomposition} and, 
recalling that $ L_\cJ\pare{\av{\xi}} = -\av{\xi}^2 \sM _\alpha \pare{\av{\xi}} $, 
defining  the real functions $ V\bra{\cJ}\defeq V\bra{\cJ_2} $  in 
$ \Sigma \cF^{\bR}_{K, 0, 1}\bra{\epsilon_0, N} $   and  
$ \nu_{\cJ} \defeq  \nu_2 + \nu_3 $ in $ \Sigma \cF^\bR_{K, 0, 1}\bra{\epsilon_0, N}
$ (cf. \cref{eq:nu2,eq:nu3})  
we obtain the paralinearization formula \eqref{eq:paralinearization_cJ} stated in \Cref{lem:paralinearization_cJ}. 
\end{step}

\subsection{Proof of \Cref{prop:paralinearization_1} }

We now paralinearize the scalar field in \Cref{eq:fcIcJ}. 
We apply \Cref{lem:paraproduct_Weyl}
\begin{align*}
 r^{2-\alpha}\cI\pare{f}  = &  \OpBW{r^{2-\alpha}}  \cI\pare{f}  + \OpBW{\cI\pare{f}} \pare{r^{2-\alpha}-1} + R_1 \pare{r^{2-\alpha}-1} \cI\pare{f} + R_2\pare{\cI\pare{f}} \pare{r^{2-\alpha}-1}, 
\\
 r^{-\alpha}\cJ\pare{f}  = &   \OpBW{r^{-\alpha}}  \cJ\pare{f}  + \OpBW{\cJ\pare{f}} \pare{r^{-\alpha}-1} + R_1 \pare{r^{-\alpha}-1} \cJ \pare{f} + R_2 \pare{\cJ\pare{f}} \pare{r^{-\alpha}-1}. 
\end{align*}
We thus apply \eqref{eq:parabeta},  \cref{lem:MtoFunctions,lem:paralinearization_cI,lem:paralinearization_cJ,prop:composition_BW,compositionMoperator}
and obtain that there exist real functions $ \tilde{V}_\cI, \tilde{V}_\cJ $ in 
$ \Sigma \cF^{\bR}_{K, 0, 1}\bra{\epsilon_0, N} $ such that
\begin{equation}
\label{eq:buuh8}
\begin{aligned}
 r^{2-\alpha}\cI\pare{f}  = & \  \OpBW{r^{2-\alpha}}  \cI\pare{f} +\OpBW{\tilde{V}_\cI\pare{f;x}} f + 
 R \pare{f}f , \\
 r^{-\alpha}\cJ\pare{f}  = &  \  \OpBW{r^{-\alpha}}  \cJ\pare{f}  +\OpBW{\tilde{V}_\cJ \pare{f;x}} f + 
 R\pare{f}f , 
\end{aligned}
\end{equation}
for some smoothing operator $ R\pare{f} $ in $ \Sigma \cR^{-\rho}_{K, 0, 1}\bra{\epsilon_0, N} $. 

A key fact proved in the  next lemma is that the imaginary part of the symbol in  \eqref{eq:buuh2_1} has order at most $ -1 $. This is actually an 
effect 
of the linear Hamiltonian structure, see Remark \ref{LHAM}.
\begin{lemma}
It results
\begin{multline}\label{eq:buuh2_1}
\OpBW{r^{2-\alpha}}  \cI\pare{f} + \OpBW{r^{-\alpha}}  \cJ\pare{f} 
=
\ - \OpBW{ \pare{1+\tilde \nu_\cI\pare{f;x}}L_\cI\pare{\av{\xi}} 
+
\pare{1+\tilde \nu_\cJ\pare{f;x}}L_\cJ\pare{\av{\xi}}
} f 
\\
\ + \OpBW{V_\cI \pare{f;x} + V_\cJ\pare{f;x} + P\pare{f;x,\xi}} f + R\pare{f}f
\end{multline}
where 
\begin{itemize}
\item $ L_\cI\pare{\av{\xi}} $ and $ L_{\cJ} \pare{\av{\xi}}  $ are the real Fourier multipliers defined in  \Cref{lem:paralinearization_cI,lem:paralinearization_cJ}; 
\item  $ \tilde{\nu}_\cI \pare{f;x} $, 
$ \tilde{\nu}_{\cJ} \pare{f;x} $, $  V_\cI \pare{f;x} $, 
$ V_\cJ \pare{f;x} $ are real functions
in $ \Sigma \cF^\bR_{K, 0, 1}\bra{\epsilon_0, N} $; 
\item $ P\pare{f;x,\xi}$ is a symbol  in $ \Sigma \Gamma^{-1}_{K, 0, 1}\bra{\epsilon_0, N} $; 
\item $ R\pare{f} $ is a smoothing operator  in
 $ \Sigma \cR^{-\rho}_{K, 0, 1}\bra{\epsilon_0, N} $.  
\end{itemize}
\end{lemma}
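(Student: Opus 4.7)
The approach is to apply \Cref{prop:composition_BW} (Weyl symbolic calculus) to each of $ \OpBW{r^{2-\alpha}}\circ\cI\pare{f} $ and $ \OpBW{r^{-\alpha}}\circ\cJ\pare{f} $, substitute the paralinearizations of \Cref{lem:paralinearization_cI,lem:paralinearization_cJ}, and then sort the composed symbol into real and imaginary parts order by order. The whole point is that the Hamiltonian structure of \eqref{eq:SQG_Hamiltonian} forces a cancellation of subprincipal imaginary contributions between the two halves.

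First I would apply the Moyal product $ a\#_2 b = ab + \frac{1}{2\ii}\{a,b\} $ modulo terms of order $ m+m'-2 $, cf.~\eqref{asharpb}. Since the outer symbols $ r^{2-\alpha} $ and $ r^{-\alpha} $ are $ \xi $-independent one has $ \{a,b\} = -\partial_x a\,\partial_\xi b $. At the principal order $ \max\{0,\alpha-1\} $ the composed symbol is purely real and equals $ -r^{2-\alpha}\pare{1+\nu_\cI}L_\cI - r^{-\alpha}\pare{1+\nu_\cJ}L_\cJ $. Defining $ \tilde{\nu}_\cI(f;x) \defeq r^{2-\alpha}(1+\nu_\cI(f;x))-1 $ and $ \tilde{\nu}_\cJ(f;x) \defeq r^{-\alpha}(1+\nu_\cJ(f;x))-1 $, which belong to $ \Sigma \cF^\bR_{K,0,1}\bra{\epsilon_0,N} $ by \Cref{lem:closure_comp_symbols}, the principal symbol becomes $ -(1+\tilde{\nu}_\cI)L_\cI - (1+\tilde{\nu}_\cJ)L_\cJ $, matching the first line of \eqref{eq:buuh2_1}. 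The zero-order real contributions $ r^{2-\alpha}V\bra{\cI} $ and $ r^{-\alpha}V\bra{\cJ} $, together with real Leibniz remainders, collect into $ V_\cI + V_\cJ $.

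Next I would track the imaginary subprincipal symbols. For the $\cI$-half, the direct product contributes $ r^{2-\alpha}\cdot\ii\,S_{\cI,\alpha-2} $ while the Moyal correction yields $ \frac{1}{2\ii}\{r^{2-\alpha},-(1+\nu_\cI)L_\cI\} = -\frac{\ii}{2}\partial_x(r^{2-\alpha})(1+\nu_\cI)\partial_\xi L_\cI $; using the Leibniz identity $ \partial_x[r^{2-\alpha}(1+\nu_\cI)] = (\tilde{\nu}_\cI)_x $ and the explicit form of $ S_{\cI,\alpha-2} $, these combine to
\[
-\frac{1}{2}(\tilde{\nu}_\cI)_x\,\partial_\xi L_\cI + r^{-\alpha}\sK^{1,1}_\alpha(f;x)\,\sM_\alpha(\av{\xi})\,\xi.
\]
An analogous calculation for $\cJ$ produces
\[
-\frac{1}{2}(\tilde{\nu}_\cJ)_x\,\partial_\xi L_\cJ + r^{-\alpha}\bra{(\alpha-2)\sK^{2,1}_\alpha + r^{-2}\pare{f'\sK^{3,1}_\alpha - f''\sK^{3,0}_\alpha}}\sM_\alpha(\av{\xi})\,\xi.
\]

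The main obstacle --- and the essence of the lemma --- is to show that the sum of these two imaginary symbols is of order $\leq -1$, so it may be absorbed into a symbol $P\in\Sigma\Gamma^{-1}_{K,0,1}\bra{\epsilon_0,N}$. For $\alpha\in(0,1)$ the claim is automatic because $\alpha-2<-1$. For $\alpha\in(1,2)$ it rests on a nonlinear identity tying together $\sK^{1,1}_\alpha, \sK^{2,1}_\alpha, \sK^{3,0}_\alpha, \sK^{3,1}_\alpha$ with the Fourier multipliers $\partial_\xi L_\cI$, $\partial_\xi L_\cJ$ and $\sM_\alpha(\av{\xi})\xi$; this is the ``subtle Hamiltonian identity'' anticipated in the introduction and carried out in \Cref{sec:Hamiltonian_identity}, where one substitutes the closed-form expressions for $\sK^{\mathsf{j},l}_\alpha$ given in \Cref{lem:characterization_Kernels} and exploits derivative relations between the generating kernels $\sG^1_{\alpha,z}$ and $\sG^2_{\alpha,z}$ of \eqref{eq:G1}, \eqref{eq:sG2}. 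This identity ultimately reflects the Hamiltonian character of \eqref{eq:SQG_Hamiltonian}. Once it is granted, the residual symbols of order $\leq -1$ and the smoothing remainders produced by \Cref{prop:composition_BW} reassemble into the form of the right-hand side of \eqref{eq:buuh2_1}, completing the proof.
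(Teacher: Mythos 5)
Your proposal is correct and follows the same route as the paper's own proof: you apply the Weyl composition formula, set $\tilde{\nu}_\cI = r^{2-\alpha}(1+\nu_\cI)-1$, $\tilde{\nu}_\cJ = r^{-\alpha}(1+\nu_\cJ)-1$, combine the Moyal corrections with the explicit $S_{\cI,\alpha-2}$, $S_{\cJ,\alpha-2}$ via the Leibniz rule to obtain exactly the two imaginary subprincipal symbols you display, and then reduce their sum — after expanding $\partial_\xi L_\cI$, $\partial_\xi L_\cJ$ and $\sM_\alpha(\av{\xi})\xi$ to a common leading multiplier $\breve{c}_\alpha\av{\xi}^{\alpha-3}\xi$ via \Cref{lem:asympt} — to the cancellation $A_{\alpha,1}+\tfrac{1}{2}(A_{\alpha,0})_x=0$ of \Cref{sec:Hamiltonian_identity}. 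Your extra observation that for $\alpha\in(0,1)$ the cancellation is not strictly needed (since $\alpha-2<-1$ already places the symbol in $\Sigma\Gamma^{-1}$) is accurate, though the paper chooses the uniform argument via the identity.
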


\begin{proof}
\Cref{prop:composition_BW,lem:paralinearization_cI,lem:paralinearization_cJ}
give that
\begin{align*}
\OpBW{r^{2-\alpha}} \cI\pare{f} 
= & \ \OpBW{r^{2-\alpha}\pare{-\pare{1+\nu_\cI\pare{f;x}}L_\cI\pare{\av{\xi}} + \ii \ S_{\cI, \alpha-2}\pare{f;x, \xi}  }} f
\\
& \ + \OpBW{\frac{1}{2\ii}  \pare{r^{2-\alpha}}_x \pare{1+\nu_\cI\pare{f; x}}\partial_\xi L_\cI\pare{\av{\xi}}} f
\\
& \ + \OpBW{V_\cI\pare{f;x} + P\pare{f;x,\xi}} f + R\pare{f}f
,
\\
\OpBW{r^{-\alpha}}  \cJ\pare{f} 
= & \ \OpBW{r^{-\alpha}\pare{-\pare{1+\nu_\cJ\pare{f;x}}L_\cJ\pare{\av{\xi}} + \ii \ S_{\cJ, \alpha-2}\pare{f;x, \xi}  }} f
\\
& \ + \OpBW{\frac{1}{2\ii}  \pare{r^{-\alpha}}_x \pare{1+\nu_\cJ \pare{f; x}}\partial_\xi L_\cJ\pare{\av{\xi}}} f 
\\
& \ + \OpBW{V_\cJ\pare{f;x} + P\pare{f;x,\xi}} f + R\pare{f}f, 
\end{align*}
so that, defining
$ \tilde{\nu}_\cI \pare{f;x} \defeq r^{2-\alpha} \pare{1+\nu_\cI\pare{f;x}} -1 $ and 
$ \tilde{\nu}_\cJ \pare{f;x} \defeq r^{-\alpha} \pare{1+\nu_\cJ\pare{f;x}} -1 $, we get 
\begin{subequations}
\label{ciochev}
\begin{align} 
\OpBW{r^{2-\alpha}}   \cI\pare{f} & + \OpBW{r^{-\alpha}}  \cJ\pare{f} \nonumber  \\
= 
&
\ - \OpBW{ \pare{1 + \tilde{\nu}_\cI\pare{f;x}}
L_\cI\pare{\av{\xi}} 
+ \pare{1+ \tilde{\nu}_\cJ\pare{f;x}}L_\cJ\pare{\av{\xi}}
} f   \notag 
\\  
& \ + \ii \ \OpBW{r^{2-\alpha} \ S_{\cI, \alpha-2}\pare{f;x, \xi}  + r^{-\alpha} \ S_{\cJ, \alpha-2}\pare{f;x, \xi}  } f 
\label{eq:pain1}
\\
& \ + 
\frac{1}{2\ii} \ \OpBW{
\pare{r^{2-\alpha}}_x \pare{1+\nu_\cI\pare{f; x}}\partial_\xi L_\cI\pare{\av{\xi}}
+
 \pare{r^{-\alpha}}_x \pare{1+\nu_\cJ \pare{f; x}}\partial_\xi L_\cJ\pare{\av{\xi}}
}f
\label{eq:pain2}
\\
& \ + \OpBW{V_\cI \pare{f;x} + V_\cJ\pare{f;x} + P\pare{f;x,\xi}} f + R\pare{f}f \, . \notag 
\end{align}
\end{subequations}
We now prove that the sum of \eqref{eq:pain1} and \eqref{eq:pain2}   
give a paradifferential term of order $ - 1$. 
We first note that, 
by \cref{lem:asympt},  we have the 
asymptotic expansions
\begin{equation}
\begin{aligned}\label{asi0}
& \av{\xi}^2 \sM_\alpha\pare{\av{\xi}} =  \  \breve{c}_\alpha \av{\xi}^{\alpha-1} + m_{\alpha-3}\pare{\av{\xi}},  && \xi \sM_\alpha\pare{\av{\xi}} =  \ \breve{c}_\alpha \av{\xi}^{\alpha-3} \xi + m_{\alpha-4}\pare{\av{\xi}}, 
\\
& \sT^1_\alpha\pare{\av{\xi}} =   \frac{1}{\alpha-1} \breve{c}_\alpha\av{\xi}^{\alpha-1} + \tilde \bV_\alpha + m_{\alpha-3}\pare{\av{\xi}}, && 
\text{where} \quad 
\breve{c}_\alpha \defeq \frac{\Gamma\pare{2-\alpha}}{\Gamma\pare{1-\frac{\alpha}{2}}\Gamma\pare{\frac{\alpha}{2}}} \, , 
\end{aligned}
\end{equation}
so that
$$
\partial_\xi L_\cI\pare{\av{\xi}} = \breve{c}_\alpha \av{\xi}^{\alpha-3} \xi + m_{\alpha-4}\pare{\av{\xi}}, \qquad 
\partial_\xi L_\cJ\pare{\av{\xi}} = -\pare{\alpha-1}\breve{c}_\alpha \av{\xi}^{\alpha-3} \xi  + m_{\alpha-4}\pare{\av{\xi}} \, . 
$$
By the explicit definition of the symbols $ S_{\cI, \alpha-2} $ and $ S_{\cJ, \alpha-2} $  in \Cref{lem:paralinearization_cI,lem:paralinearization_cJ} and \eqref{asi0}
we have the expansion of the symbol in \eqref{eq:pain1}
\begin{align}
& \ii \pare{r^{2-\alpha} \ S_{\cI, \alpha-2}\pare{f;x, \xi}  + r^{-\alpha} \ S_{\cJ, \alpha-2}\pare{f;x, \xi}}  \label{siespa1} \\
& = \ii \bra{- r^{2-\alpha} \  \frac{1}{2} \pare{ \nu_\cI }_x \pare{f;x} + \pare{\alpha-1} r^{-\alpha} \  \frac{1}{2} \pare{ \nu_\cJ }_x \pare{f;x}
+ A_{\alpha, 1}\pare{f;x}
} \breve{c}_\alpha \av{\xi}^{\alpha-3}\xi
+ \ii P\pare{f;x, \xi} , \notag 
\end{align}
where   
\begin{equation}\label{eq:A1}
A_{\alpha, 1}\pare{f;x} \defeq 
\frac{1}{r^{\alpha}}
\bra{ \sK^{1,1}_{\alpha}\pare{f;x } +  \pare{\alpha - 2}  \sK^{2 ,1}_{\alpha}\pare{f;x } + \frac{1}{r^2} \pare{ f ' \ \sK^{3,1}_{\alpha} \pare{f;x} - f'' \ \sK^{3,0}_{\alpha} \pare{f;x} }  }
\end{equation}
is a function in $  \Sigma \cF^\bR_{K,0,1}\bra{\epsilon_0,N} $, recalling \eqref{eq:coefficients_Kernels_zero}.  
Then the sum of  \eqref{eq:pain1} and \eqref{eq:pain2}  gives 
\begin{multline}\label{eq:pain3}
  r^{2-\alpha} \ S_{\cI, \alpha-2}\pare{f;x, \xi}  + r^{-\alpha} \ S_{\cJ, \alpha-2}\pare{f;x, \xi} 
\\
 -\frac{1}{2}
\bra{ \Big. \pare{r^{2-\alpha}}_x \pare{1+\nu_\cI\pare{f; x}}\partial_\xi L_\cI\pare{\av{\xi}}
+
\ \pare{r^{-\alpha}}_x \pare{1+\nu_\cJ \pare{f; x}}\partial_\xi L_\cJ\pare{\av{\xi}}} \\
=
\set{\frac{1}{2} 
\underbrace{\bra{ -r^{2-\alpha}\pare{1+\nu_\cI\pare{f;x}} + \pare{\alpha-1}r^{-\alpha}\pare{1+\nu_\cJ\pare{f;x}}}_x}_{= \pare{ A_{\alpha, 0}\pare{f;x}}_x }
+
A_{\alpha, 1}\pare{f;x}}\breve{c}_\alpha \av{\xi}^{\alpha-3}\xi
+ P\pare{f;x, \xi} , 
\end{multline}
where, having 
substituting the explicit values of $ \nu_\cI $, $ \nu_\cJ $ in \eqref{eq:LcI}, \eqref{eq:LcJ}, 
we define 
\begin{equation}\label{eq:A0}
A_{\alpha, 0}\pare{f;x} \defeq   \frac{1}{r^{\alpha}} \bra{\sK^{1,0}_{\alpha}\pare{f;x} +  \pare{\alpha-1} \sK^{2,0}_{\alpha}\pare{f;x} +   \frac{ f' }{r^{2}} \ \sK^{3,0}_{\alpha} \pare{f;x}  } + \pare{2-\alpha}  
\end{equation}
which is a function in $ \Sigma \cF^\bR_{K,0,1}\bra{\epsilon_0,N} $.  
We finally write 
\begin{equation}\label{eq:buuh1}
 \eqref{eq:pain3} 
=
\bra{ \frac{1}{2} \pare{ A_{\alpha, 0}\pare{f;x} }_x
+
A_{\alpha, 1}\pare{f;x}}\breve{c}_\alpha \av{\xi}^{\alpha-3}\xi
+ P\pare{f;x, \xi} = P\pare{f;x, \xi}  
\end{equation}
in view of the key cancellation
\begin{equation}
\label{eq:Hamiltonian_identity}
A_{\alpha , 1}\pare{f;x}  + \frac{1}{2} \ \pare{A_{\alpha,  0}\pare{f;x}}_x = 0 \, .
\end{equation}
 proved in \Cref{sec:Hamiltonian_identity}.  
By \eqref{eq:buuh1} we deduce that \eqref{ciochev} has the form 
\eqref{eq:buuh2_1}.
\end{proof}

\begin{rem}\label{LHAM}
The algebraic 
reason of the cancellation \eqref{eq:Hamiltonian_identity}
is that a symbol of the form $ \ii g\pare{f;x} |\xi|^{\alpha-3} \xi $, as in \eqref{siespa1},  
with a real function $ g\pare{f;x} $, 
does not 
respect the Hamiltonianity condition \eqref{Hamassy}.
\end{rem}

The next lemma enables to highlight the quasilinear structure of the vector field in \eqref{eq:fcIcJ}.
 
 \begin{lemma}
It results 
 \begin{multline}\label{eq:buuh6}
 \OpBW{r^{2-\alpha}}   \cI\pare{f} + \OpBW{r^{-\alpha}}   \cJ\pare{f}
+
\fint  \mathsf{G}^1_{\alpha , z}\pare{0} \dd z  \  \pare{r^{2-\alpha}-1}
\\
 =
- \pare{\frac{c_\alpha}{2\pare{1-\frac{\alpha}{2}}}}^{-1}
\OpBW{\pare{1+\nu\pare{f;x}}L_\alpha\pare{\av{\xi}}} f + \OpBW{\tilde{\tilde{ V}}\pare{f;x} + P\pare{f;x, \xi}} f + R\pare{f}f 
\end{multline}
where  $L_\alpha\pare{\av{\xi}}$ is the Fourier multiplier defined in \cref{lem:linearization} and 
 \begin{itemize}
 \item  $ \nu\pare{f;x}, \tilde{\tilde{V}} \pare{f;x} $ are 
 real functions in  $ \Sigma \cF^\bR_{K,0,1}\bra{\epsilon_0, N} $; 
 \item $ P \pare{f;x,\xi} $ is a symbol  in  $\Sigma \Gamma^{-1}_{K,0,1}\bra{\epsilon_0, N} $; 
 \item $ R\pare{f} $ is 
 a smoothing operator  in  $ \Sigma \cR^{-\rho}_{K,0,1}\bra{\epsilon_0, N} $. 
 \end{itemize}
 \end{lemma}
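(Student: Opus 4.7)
The plan is to combine the expansion \eqref{eq:buuh2_1} for $\OpBW{r^{2-\alpha}}\cI(f)+\OpBW{r^{-\alpha}}\cJ(f)$ with a Bony paralinearization of the factor $r^{2-\alpha}-1$, and then use an algebraic identity between $L_\cI$, $L_\cJ$ and $L_\alpha$ to regroup the principal symbols into the single multiplier $(1+\nu(f;x))L_\alpha(|\xi|)$.

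First I would apply \Cref{lem:Bony_paralinearization_W} with $F(f)=(1+2f)^{1-\alpha/2}-1$ to obtain
\begin{equation*}
r^{2-\alpha}-1 = \OpBW{(2-\alpha)\,r^{-\alpha}}f + R(f)f,
\end{equation*}
for some smoothing $R(f)\in\Sigma\cR^{-\rho}_{K,0,1}[\epsilon_0,N]$. Recalling \eqref{eq:intG1(0)}, this produces the term $\OpBW{V_0(f;x)}f + R(f)f$ with the real function $V_0(f;x)\defeq (2-\alpha)\,r^{-\alpha}\fint\sG^1_{\alpha,z}(0)\,\dd z\in\Sigma\cF^{\bR}_{K,0,1}[\epsilon_0,N]$.

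Next I would substitute \eqref{eq:buuh2_1} and perform the following algebraic reduction on the principal symbol. Using the explicit formulas for $L_\cI,L_\cJ$ from \Cref{lem:paralinearization_cI,lem:paralinearization_cJ} together with \eqref{eq:Lalpha} and \eqref{eq:T2alpha}, one checks the identity of Fourier multipliers
\begin{equation*}
L_\cI(|\xi|)+L_\cJ(|\xi|)=\sT^1_\alpha(|\xi|)-\sT^2_\alpha(|\xi|)+\frac{\Gamma(2-\alpha)}{\Gamma(1-\tfrac{\alpha}{2})^2}
=\frac{2(1-\tfrac{\alpha}{2})}{c_\alpha}L_\alpha(|\xi|)+2\,\frac{\Gamma(2-\alpha)}{\Gamma(1-\tfrac{\alpha}{2})^2}.
\end{equation*}
Thus, setting $K_\alpha\defeq\frac{c_\alpha}{2(1-\alpha/2)}$, one has $L_\cI+L_\cJ=K_\alpha^{-1}L_\alpha+\mathrm{const}$. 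I would then regroup
\begin{equation*}
(1+\tilde\nu_\cI)L_\cI+(1+\tilde\nu_\cJ)L_\cJ
=\bigl(1+\nu(f;x)\bigr)\,K_\alpha^{-1}L_\alpha(|\xi|)+W(f;x)+P(f;x,\xi),
\end{equation*}
where $\nu(f;x)$ is defined by averaging $\tilde\nu_\cI,\tilde\nu_\cJ$ so that $(1+\nu)K_\alpha^{-1}L_\alpha$ matches the highest-order part, $W(f;x)\in\Sigma\cF^{\bR}_{K,0,1}[\epsilon_0,N]$ collects the constant and lower-order contributions (including the $2\Gamma(2-\alpha)/\Gamma(1-\alpha/2)^2$ constant times $\tilde\nu_\cI,\tilde\nu_\cJ$ combinations), and $P\in\Sigma\Gamma^{-1}_{K,0,1}[\epsilon_0,N]$ gathers the subprincipal pieces coming from the asymptotic expansions \eqref{asi0} of $\sT^1_\alpha$, $|\xi|^2\sM_\alpha$ and of $(1-\alpha/2)^2\sM_\alpha$ (which is already of order $\alpha-3$).

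Putting these together, the sum $\OpBW{r^{2-\alpha}}\cI(f)+\OpBW{r^{-\alpha}}\cJ(f)+\fint\sG^1_{\alpha,z}(0)\,\dd z\,(r^{2-\alpha}-1)$ becomes
\begin{equation*}
-K_\alpha^{-1}\OpBW{(1+\nu(f;x))L_\alpha(|\xi|)}f+\OpBW{V_0+V_\cI+V_\cJ-W+P}f+R(f)f,
\end{equation*}
which is exactly \eqref{eq:buuh6} with $\tilde{\tilde V}(f;x)\defeq V_0+V_\cI+V_\cJ-W\in\Sigma\cF^{\bR}_{K,0,1}[\epsilon_0,N]$. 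The main obstacle is the precise combinatorial bookkeeping in the regrouping step: one has to factor $L_\cI,L_\cJ$ in terms of $L_\alpha$ modulo symbols of order $\leq -1$ while keeping the coefficient $1+\nu(f;x)$ real and with the correct vanishing at $f=0$; this requires carefully exploiting that the difference $L_\cI+\frac{1}{\alpha-1}L_\cJ$ has order $\alpha-3$, so that the ambiguity in the splitting is absorbed into $\tilde{\tilde V}$ and $P$.
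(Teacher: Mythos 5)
Your proposal is correct and takes essentially the same route as the paper: paralinearize the $\fint\sG^1_{\alpha,z}(0)\,\dd z\,(r^{2-\alpha}-1)$ term via \eqref{eq:parabeta}, invoke the algebraic identity $L_\cI + L_\cJ - 2\,\Gamma(2-\alpha)/\Gamma(1-\tfrac{\alpha}{2})^2 = \bigl(\tfrac{c_\alpha}{2(1-\alpha/2)}\bigr)^{-1}L_\alpha$ (the paper's \eqref{eq:buuh4}), and regroup $\tilde\nu_\cI L_\cI + \tilde\nu_\cJ L_\cJ$ into a single $\nu\,L_\alpha$ contribution modulo order $-1$ using the asymptotics \eqref{asi0}. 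One bookkeeping slip: as written $V_0 = (2-\alpha)r^{-\alpha}\fint\sG^1_{\alpha,z}(0)\,\dd z$ does not vanish at $f=0$, so it lies in $\Sigma\cF^\bR_{K,0,0}$ rather than $\Sigma\cF^\bR_{K,0,1}$; the offending constant $2\,\Gamma(2-\alpha)/\Gamma(1-\tfrac{\alpha}{2})^2$ is exactly the one produced when replacing $L_\cI+L_\cJ$ by $K_\alpha^{-1}L_\alpha$, and the paper arranges the cancellation cleanly by subtracting it in \eqref{eq:buuh4} before regrouping rather than carrying it inside $V_0$ and $W$.
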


\begin{proof}
By  \eqref{eq:intG1(0)} and \eqref{eq:parabeta} we have 
\begin{equation}\label{eq:buuh3}
 \fint  \mathsf{G}^1_{\alpha , z}\pare{0} \dd z \  \pare{r^{2-\alpha}-1}
 =
 2 \ \frac{\Gamma\pare{2-\alpha}}{\Gamma\pare{1-\frac{\alpha}{2}}^2 } \pare{ f  + \OpBW{r^{-\alpha}-1} f } + R\pare{f}f . 
\end{equation}
Notice now, from \cref{lem:paralinearization_cI,lem:paralinearization_cJ} and \Cref{lem:linearization},  that
\begin{equation}\label{eq:buuh4}
L_\cI\pare{\av{\xi}} + L_\cJ\pare{\av{\xi}} -  2 \ \frac{\Gamma\pare{2-\alpha}}{\Gamma\pare{1-\frac{\alpha}{2}}^2}
=
\pare{\frac{c_\alpha}{2\pare{1-\frac{\alpha}{2}}}}^{-1}
L_\alpha\pare{\av{\xi}} \ .
\end{equation}
Now we claim that
\begin{equation}\label{eq:buuh5}
\tilde{\nu}_\cI\pare{f;x} L_\cI\pare{\av{\xi}} + \tilde{\nu}_\cJ \pare{f;x} L_\cJ\pare{\av{\xi}}
=
\pare{\frac{c_\alpha}{2\pare{1-\frac{\alpha}{2}}}}^{-1}
\nu\pare{f;x}L_\alpha\pare{\av{\xi}} + \tilde{V}\pare{f;x} + P\pare{f;x, \xi} , 
\end{equation}
for a suitable real functions $ \nu, \tilde{V} $ in 
$ \Sigma\cF^\bR_{K, 0, 1}\bra{\epsilon_0, N} $ and a symbol $ P $ in  
$ \Sigma \Gamma^{-1}_{K,0,1}\bra{\epsilon_0, N} $.
 From \Cref{lem:paralinearization_cI,lem:paralinearization_cJ} 
 and the asymptotic decomposition   of $ \sT^1_\alpha $ and $ \sM_\alpha $  in
 \Cref{lem:asympt}, 
 we have that
\begin{equation*}
\begin{aligned}
\text{l.h.s. of \eqref{eq:buuh5}}
& =
\tilde{\nu}_\cI \pare{f;x} \sT^1_\alpha\pare{\av{\xi}}
-
\tilde{\nu}_\cJ\pare{f;x} \av{\xi}^2 \sM_\alpha\pare{\av{\xi}} + V\pare{f; x} + P\pare{f; x, \xi} \\
& = 
\frac{\Gamma\pare{2-\alpha}}{\Gamma\pare{1-\frac{\alpha}{2}}\Gamma\pare{\frac{\alpha}{2}}} \frac{\av{\xi}^{\alpha-1}}{\alpha-1} \ \pare{\tilde{\nu}_\cI\pare{f;x} - \pare{\alpha-1}\tilde{\nu}_{\cJ}\pare{f;x}}
+ V\pare{f; x} + P\pare{f; x, \xi} \, . 
\end{aligned}
\end{equation*}
Defining  
$ \nu\pare{f; x}\defeq  \frac{\tilde{\nu}_I\pare{f;x} - \pare{\alpha-1}\tilde{\nu}_{\cJ}\pare{f;x}}{2-\alpha}$ and using the identity
 $ \Gamma\pare{3-\alpha} = \pare{2-\alpha} \Gamma\pare{2-\alpha} $, we get 
\begin{equation}\label{eq:QL2}
\text{l.h.s. of \eqref{eq:buuh5}}
=  \  \pare{\frac{c_\alpha}{2\pare{1-\frac{\alpha}{2}}}}^{-1} \frac{c_\alpha}{2\pare{1-\frac{\alpha}{2}}} \frac{\Gamma\pare{3-\alpha}}{\Gamma\pare{1-\frac{\alpha}{2}}\Gamma\pare{\frac{\alpha}{2}}} \frac{\av{n}^{\alpha-1}}{\alpha-1} \ {\nu}\pare{f;x} 
+ V\pare{f; x} + P\pare{f; x, \xi} \, .
\end{equation}
By \Cref{prop:Lalpha_asymptotic} we have 
\begin{equation}\label{eq:QL1}
\frac{c_\alpha}{2\pare{1-\frac{\alpha}{2}}} \frac{\Gamma\pare{3-\alpha}}{\Gamma\pare{1-\frac{\alpha}{2}}\Gamma\pare{\frac{\alpha}{2}}} \frac{\av{\xi}^{\alpha-1}}{\alpha-1} \ {\nu}\pare{f;x} 
= \nu\pare{f; x} L_\alpha\pare{\av{\xi}} + V\pare{f; x} + P\pare{f; x, \xi} \, .
\end{equation}
Finally  plugging \eqref{eq:QL1} in \eqref{eq:QL2} we deduce \eqref{eq:buuh5}. 

 \Cref{eq:buuh2_1,eq:buuh3,eq:buuh4,eq:buuh5} give that for suitable $ \nu, \tilde{\tilde{ V}}\in \Sigma\cF^\bR_{K, 0, 1}\bra{\epsilon_0, N}  $ the desired decomposition provided in \Cref{eq:buuh6}.
\end{proof}

We can finally paralinearize \Cref{eq:fcIcJ}. Using \eqref{eq:buuh8} and \eqref{eq:buuh6} we have 
\begin{multline*}
 r^{2-\alpha}  \pare{  \cI\pare{f} 
 + R \pare{f} f}    + \fint  \mathsf{G}^1_{\alpha , z}\pare{0} \dd z \  \pare{r^{2-\alpha}-1}
+ r^{-\alpha}  \, \cJ \pare{f} 
\\
 =
 - \pare{\frac{c_\alpha}{2\pare{1-\frac{\alpha}{2}}}}^{-1}
\OpBW{\pare{1+\nu\pare{f;x}}L_\alpha\pare{\av{\xi}} + V \pare{f;x} + P\pare{f;x, \xi}} f 
+ R\pare{f}f  
\end{multline*}
where $ V \pare{f;x} $ is a real function in $ \Sigma\cF^\bR_{K, 0, 1}\bra{\epsilon_0, N} $.
This, combined with the observation that $ \partial_x\circ R\pare{f} \in \Sigma \dot{\cR}^{-\rho+1}_{K, 0, 1}\bra{\epsilon_0, N} $, proves that  \Cref{eq:fcIcJ} 
has the form \eqref{eq:paralinearized_1}. 
\hfill $ \Box $

\section{Birkhoff normal form reduction up to cubic terms}
\label{sec:constant_coeff}

In this section we 
reduce the equation 
 \eqref{eq:paralinearized_1} 
to its Birkhoff normal form up to a cubic smoothing vector field, from which Theorem 
\ref{thm:main} easily follows.  
From now on we consider $ \alpha \in (1,2) $. 

\begin{prop}[Cubic Birkhoff normal form]
\label{lem:BNF1}
Let $ \alpha\in\pare{1,2}  $ and  $ N \in \bN $.
There exists $ \underline{\rho} \defeq\underline{\rho}\pare{N, \alpha} $, such that for any $ \rho\geq \underline{\rho} $ there exists    $ \underline{K'} \defeq \underline{K'} \pare{\rho, \alpha} > 0 $ such that for any 
$ K\geq \underline{K'} $  there is $ \underline{s_0} > 0 $ such that for any $ s\geq \underline{s_0} $, there is  $  \underline{\epsilon_0}\pare{s} > 0 $ such that
 for any $ 0<\epsilon_0 \leq \underline{\epsilon_0}\pare{s} $ and any solution $ f \in B^K_{\underline{s_0} , \bR}\pare{I;\epsilon_0} \cap \Cast{K}{s} $ 
 of the equation \eqref{eq:paralinearized_1} the following holds:
\begin{itemize}
\item
there exists a real
invertible operator $ \underline{\Psi}\pare{f;t} $ on $ H^s_0 (\T, \R) $ satisfying the following:
for any $ s \in \R $ there are 
$ C \defeq C (s,\epsilon_0,K)$ and $ \epsilon_0' (s) \in (0, \epsilon_0) $, 
such that for any
$ f \in B^K_{\underline{s_0} , \bR}\pare{I;\epsilon_0'(s)} $ and
$ v \in C^{K-\underline{K'}}_{*} \pare{  I; H^s_0 (\T, \R) } $,
for any $ 0 \leq k \leq K - \underline{K'} $, $ t \in I $, 
\begin{equation}\label{equivfg}
\Big\| \partial_t^k \Big( \underline{\Psi}\pare{f;t}  v \Big) \Big\|_{s- k} +
\Big\| \partial_t^k \Big( \underline{\Psi}\pare{f;t}^{-1}  v \Big) \Big\|_{s- k}
\leq C \| v \|_{k,s} \, ;
\end{equation}
\item
the  variable $ \cy \defeq \underline{\Psi}\pare{f;t} f  $  solves the equation
 \begin{equation}\label{BNF12}
 \pa_{t} \cy +   \ii \  \omega_\alpha \pare{D} \cy +  \ii \OpBW{  d \pare{ f; t, \x} } \cy 
 =   R_{\geq 2}\pare{f;t}  \cy  
 \end{equation}
 where

 $ \bullet $ $ \omega_\alpha\pare{\xi} = \xi  L_\alpha\pare{\av{\xi}} $,
 with $ L_\alpha\pare{\av{\xi}} $ 
 defined in \Cref{lem:linearization},  is a Fourier multiplier 
 of order $ \alpha $;

$ \bullet $ $ d\pare{ f; t, \x}  $ is a symbol in 
$ \Sigma \Gamma^{\alpha}_{K, \underline{K'}, 2}\bra{\epsilon_0, N} $ independent of $ x $,  satisfying \eqref{areal},  
with 
$ \Im \, d\pare{ f; t, \x}  $ in the space $ \Sigma \Gamma^{0}_{K, \underline{K'}, 2}\bra{\epsilon_0, N} $;

$ \bullet $
 $ R_{\geq 2}\pare{f;t} $ is a real smoothing operator  in 
$ \Sigma \dot{\mathcal{R}}^{-( \rho -\underline{\rho} -\alpha )}_{K, \underline{K'}, 2}\bra{\epsilon_0, N} $.
\end{itemize}
\end{prop}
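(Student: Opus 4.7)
The plan is to prove Proposition \ref{lem:BNF1} in two stages: a reduction to constant-coefficient symbols in $x$, modulo smoothing operators, followed by a single paradifferential Birkhoff normal form step that cancels the $f$-linear part of the smoothing remainder.

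\textbf{Stage 1: Reduction to $x$-independent symbols.} Starting from the paralinearized equation \eqref{eq:paralinearized_1}, I first perform a bounded change of variables on $\T$ of the form $\Phi_f \colon x \mapsto x + \beta(f;t,x)$ with $\beta \in \Sigma\cF^\R_{K,K',1}[\epsilon_0,N]$. Using Lemmas \ref{lem:closure_comp_symbols} and \ref{lem:LemA3}, and the symbolic calculus in Proposition \ref{prop:composition_BW}, the conjugation transforms the principal symbol $(1+\nu(f;x))L_\alpha(|\xi|)$ into $(1+\mathpzc{c}_0(f))L_\alpha(|\xi|)$ where $\mathpzc{c}_0(f)$ is the average of a suitable real function of $\nu(f;\cdot)$ and hence is $x$-independent. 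The function $\beta$ is determined by solving a transport-type equation that forces the transformed principal coefficient to be spatially constant. After this initial step, the lower-order symbols acquire new $x$-dependent contributions, which are removed iteratively: at each step one conjugates by $\exp(i\OpBW{b(f;x,\xi)})$ with $b$ of decreasing order, choosing $b$ so that the Poisson bracket $\{1 \cdot L_\alpha,b\}$ matches the next unwanted $x$-dependent piece. Iterating until the residual $x$-dependent part has order $-(\rho-\underline\rho)$ and absorbing it into the smoothing remainder via Remark \ref{rem:smoo}, one obtains (see Proposition \ref{prop:cc_ao})
\begin{equation*}
\pa_t g + \pa_x \circ \OpBW{(1+\mathpzc{c}_0(f))L_\alpha(|\xi|) + \sH_\alpha(f;\xi)}\, g = R(f)\, g ,
\end{equation*}
where $\sH_\alpha$ is $x$-independent with $\Im \sH_\alpha$ of order $-1$ (a consequence of the linear Hamiltonian structure preserved through each conjugation, namely \eqref{Hamassy}).

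\textbf{Stage 2: Birkhoff normal form step.} I decompose $R(f) = R_1(f) + R_{\geq 2}(f)$ with $R_1(f) \in \wt\cR^{-(\rho-\underline\rho-\alpha)}_1$ (the $f$-linear homogeneous part) and $R_{\geq 2}(f)$ a smoothing operator vanishing at degree $\geq 2$ in $f$. Writing the Fourier-mode representation \eqref{smoocara0} for $R_1(f)$ with coefficients $(R_1)_{n,j,k}$ (where $n+j=k$ enforces translation invariance), I look for a linear transformation $\cy = (\Id + M(f))g$ with $M(f) \in \wt\cR^{-(\rho-\underline\rho-\alpha)}_1$ determined by the cohomological equation
\begin{equation*}
\ii\,[\omega_\alpha(D), M(f)] + R_1(f) = 0 \quad\Longleftrightarrow\quad M_{n,j,k} = \frac{-\ii\,(R_1)_{n,j,k}}{\omega_\alpha(k)-\omega_\alpha(j)-\omega_\alpha(n)}, \quad k=j+n.
\end{equation*}
By Lemma \ref{lem:nonres_cond}, the denominator is bounded below in absolute value by $\omega_\alpha(2) > 0$ uniformly in $n,j,k \in \Z\setminus\{0\}$, so the division introduces no loss of derivatives and $M(f)$ lies in the same smoothing class as $R_1(f)$. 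Invertibility of $\Id + M(f)$ for small $f$ is obtained by Neumann series and gives the bounds \eqref{equivfg}. Conjugating the equation from Stage 1 by $\Id + M(f)$ and using Propositions \ref{prop:composition_BW} and \ref{compositionMoperator} to handle compositions, the $R_1(f)g$ term cancels exactly, while all newly generated terms are either absorbed into the $x$-independent symbol $d(f;t,\xi)$ (from commutators of $M(f)$ with the paradifferential zero-order part $\OpBW{\sH_\alpha}$) or produce cubic or higher smoothing operators in $\Sigma\dot\cR^{-(\rho-\underline\rho-\alpha)}_{K,\underline K',2}[\epsilon_0,N]$.

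\textbf{Main obstacle and conclusion.} Setting $\underline\Psi(f;t) \defeq (\Id+M(f))\circ(\text{Stage 1 conjugations})$ and collecting everything gives \eqref{BNF12} with the claimed properties of $d(f;t,\xi)$ (in particular $\Im d$ of order $0$, inherited from the order-$(-1)$ property of $\Im \sH_\alpha$ combined with the contributions of $\mathpzc{c}_0$ which is real, all after accounting for the Weyl symmetrization). The main obstacle is the bookkeeping in Stage 1: verifying that through every step the imaginary part of the resulting top-order symbol stays at least one order below the real part, which ultimately rests on the Hamiltonian cancellation \eqref{eq:Hamiltonian_identity} already exploited in Section \ref{sec:paralinearization}, and tracking the exact counts $\underline\rho = \underline\rho(N,\alpha)$ and $\underline{K'} = \underline{K'}(\rho,\alpha)$ for derivatives lost in each conjugation. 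The Birkhoff step itself is algebraically simple thanks to the clean separation provided by Lemma \ref{lem:nonres_cond}, and the fact that the normal form nonresonance bound is independent of the Fourier indices means $M(f)$ genuinely preserves the order of smoothing, which is crucial for the final energy estimate yielding the quadratic lifespan in Theorem \ref{thm:main}.
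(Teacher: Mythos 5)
The two-stage structure matches the paper's own proof (Proposition~\ref{prop:cc_ao} followed by Lemma~\ref{lem:BNF1step}), and most of the ideas in the sketch are correct. However there is one genuine conceptual gap in Stage 2 that you should fix, and one structural point in Stage 1 worth noting.

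\textbf{The stated homological equation is wrong and inconsistent with the formula you write.} You write the equation as $\ii\,\comm{\omega_\alpha(D)}{M(f)} + R_1(f) = 0$. In Fourier modes this reads $\ii\pare{\omega_\alpha(k)-\omega_\alpha(j)} M_{n,j,k} + (R_1)_{n,j,k} = 0$, whose denominator is $\omega_\alpha(k)-\omega_\alpha(j)$, not the three-wave divisor $\omega_\alpha(k)-\omega_\alpha(j)-\omega_\alpha(n)$ that you then (correctly) plug into your formula for $M_{n,j,k}$. The missing term comes from the fact that $M(f)$ depends on $f$, which itself evolves: computing $\partial_t\cy = (\partial_t M(f))\, g + (\Id+M(f))\partial_t g$ and substituting the linearized flow $\partial_t f = -\ii\omega_\alpha(D)f + \cO(f^2)$ contributes $M(-\ii\omega_\alpha(D)f)\,g$, and since $M$ is $1$-homogeneous this adds $-\ii\omega_\alpha(n)$ inside the bracket. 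The correct homological equation is therefore $M(-\ii\omega_\alpha(D)f) + \comm{\ii\omega_\alpha(D)}{M(f)} + R_1(f) = 0$ (compare the paper's \eqref{omoBNF}), and only \emph{then} does the denominator become $\omega_\alpha(k)-\omega_\alpha(j)-\omega_\alpha(n)$ and Lemma~\ref{lem:nonres_cond} becomes the relevant small-divisor estimate. Without the $\partial_t M(f)$ contribution you would need a uniform lower bound on $\av{\omega_\alpha(k)-\omega_\alpha(j)}$ with $k=j+n$, which does not hold (take $n$ fixed, $j\to\infty$). This is not a bookkeeping detail; it is the reason three-wave nonresonance is the right condition, so it must be made explicit. (There is also a sign discrepancy in your formula versus what the corrected equation yields, $+\ii$ instead of $-\ii$, but that is downstream of the same slip.)

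\textbf{Choice of conjugating operators in Stage 1.} You propose iterating by $\exp(\ii\OpBW{b(f;x,\xi)})$. The paper instead conjugates by flows generated by $\partial_x \circ \OpBW{w}$ with $w$ real and even in $\xi$, i.e.\ by Hamiltonian operators up to order zero (see \eqref{eq:operatorsfamily2} and Lemma~\ref{lem:existence_flow}). This choice is what makes the preservation of the linear Hamiltonian structure — and therefore the reality of the conjugated principal symbol, up to order $-1$ — automatic at every step, rather than something to be verified by hand. Your $\exp(\ii\OpBW{b})$ is approximately unitary and would also preserve skew-adjointness, so the approach can be made to work, but you would need an additional argument that the conjugated operator keeps the specific form $\partial_x\circ\OpBW{\text{real even symbol}}$ rather than merely being skew-adjoint; the paper's choice gives this for free via Proposition~\ref{lem:Lie_expansion}. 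Also note that in the paper the symbol $d(f;t,\xi)$ in \eqref{BNF12} comes entirely from Proposition~\ref{prop:cc_ao} (namely $d = \mathpzc{c}_0(f)\omega_\alpha(\xi) + \xi\,\sH_\alpha(f;t,\xi)$); the Birkhoff step leaves it unchanged up to smoothing, and the commutators you mention with $\OpBW{\sH_\alpha}$ land in $R_{\geq 2}$, not in $d$.
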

The bounds \eqref{equivfg} imply in particular that  for any $ s\geq s_0 $,  there exists $ C \defeq C_{s,K,\alpha} > 0  $ such that
\begin{equation}
\label{eq:equivalence_yf}
C^{-1} \norm{ f \pare{t} }_s \leq \norm{ \cy\pare{t} }_s \leq C \norm{ f \pare{t}  }_s \, , \quad \forall t \in I \,  . 
\end{equation}
Note that 
the $ x$-independent symbol $ d \pare{ f; t, \xi } $ in \eqref{BNF12} has homogeneity 
at least $ 2 $ 
by Remark \ref{rem:symbol}.

\paragraph{Reduction to constant coefficients up to a smoothing operator.}
The first step 
is to reduce the  symbol of the paradifferential operator in \eqref{eq:paralinearized_1} 
to a constant coefficient one, up to a smoothing operator.

\begin{prop}[Reduction to constant coefficients up to smoothing operators]
\label{prop:cc_ao}
Let $ \alpha\in\pare{1,2}  $ and  $ N \in \bN $.
There exists $ \underline{\rho} \defeq\underline{\rho}\pare{N, \alpha} $, such that for any $ \rho\geq \underline{\rho} $ there exists    $ \underline{K'} \defeq \underline{K'} \pare{\rho, \alpha} > 0 $
 such that for any 
$ K\geq \underline{K'} $  
 there are $ s_0 > 0 $, $ \epsilon_0 > 0 $ such that
 for any solution $ f \in B^K_{s_0, \bR}\pare{I;\epsilon_0} $ 
 of \eqref{eq:paralinearized_1} the following holds:
\begin{itemize}
\item
there exists a real
invertible operator $ \Psi \pare{f;t} $ on $ H^s_0 (\T, \R) $ satisfying \eqref{equivfg};
\item
the  variable $ \cg \defeq \Psi\pare{f;t} f  $  solves the equation
\begin{equation}
\label{eq:paralinearized_7}
\pa_t \cg + \partial_x \circ \OpBW{ \pare{1+\mathpzc{c}_0\pare{f}} 
L_\alpha\pare{\av{\xi}} + \mathsf{H}_{\alpha}\pare{f; t, \xi} } \cg = R \pare{f;t} \cg
\end{equation}
 where

$ \bullet $ $  L_\alpha\pare{\av{\xi}} $ is the Fourier multiplier
of order $ \alpha - 1 $
 defined in \Cref{lem:linearization};

$ \bullet $   $  \mathpzc{c}_0\pare{f}  $ is a  $ x $-independent  real  function in
$  \Sigma \cF^\bR_{K,0,2}\bra{\epsilon_0,N} $;

$ \bullet $ $ \sH_\alpha \pare{f; t,  \xi} $ 
is an $ x $-independent symbol
in $ \Sigma \Gamma^{0}_{K,\underline{K'}, 2}\bra{\epsilon_0, N} $
 satisfying \eqref{areal}, with  
$\  \Im \sH_\alpha\pare{f; t, \xi} $ 
in $ \Sigma \Gamma^{-1}_{K,\underline{K'}, 2}\bra{\epsilon_0, N}  $;

$ \bullet $  $ R \pare{f;t} $ is a real smoothing operator in $  \Sigma  \dot{\cR}^{-(\rho-\underline{\rho})}_{K,\underline{K'},1}\bra{\epsilon_0, N} $.
\end{itemize}
\end{prop}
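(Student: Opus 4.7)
\medskip

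\noindent\textbf{Plan.}
The strategy is a finite iteration of paradifferential conjugations that eliminate the $x$-dependence of the symbols in \eqref{eq:paralinearized_1} order by order, starting from the top order $\alpha-1$ and descending to some order $-(\rho-\underline{\rho})$. Each conjugation is chosen to preserve, up to controlled lower-order errors, the linearly Hamiltonian form $\partial_x\circ\OpBW{\text{real, even in }\xi}$, so that at the end the symbol $\mathsf{H}_\alpha$ inherits the bound on $\Im\mathsf{H}_\alpha$ stated in the Proposition.

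The first step straightens the top order. Since the principal part $\partial_x\circ\OpBW{(1+\nu(f;x))L_\alpha(|\xi|)}$ is skew-self-adjoint modulo order zero, I would conjugate by the symplectic map $\Psi_1 u(x)\defeq (1+\beta'(f;t,x))^{1/2}\,u(x+\beta(f;t,x))$ associated to the diffeomorphism $x\mapsto x+\beta(f;t,x)$ of $\T$ given by Lemma~\ref{lem:LemA3}. Choosing $\beta$ so that $(1+\beta')^{\alpha}(1+\nu(f;x+\beta))$ is $x$-independent (an ODE solvable in $\beta$ thanks to the normalization of the average) reduces the top symbol to $(1+\mathpzc{c}_0(f))L_\alpha(|\xi|)$ with $\mathpzc{c}_0(f)\in\Sigma\mathcal{F}^{\R}_{K,0,2}[\epsilon_0,N]$ (quadratic in $f$ by Remark~\ref{rem:simplification_oscint}); the closure Lemma~\ref{lem:closure_comp_symbols} and the composition Proposition~\ref{prop:composition_BW} produce lower-order remainders that feed the next step. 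The tame bounds \eqref{equivfg} for $\Psi_1^{\pm 1}$ follow from standard estimates on composition by diffeomorphisms.

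The second step removes the remaining $x$-dependence from lower orders. For $j=1,2,\dots,J$ with $J\sim\rho$, I would conjugate the equation by the time--one flow $\Psi_{j+1}=\exp(\partial_x\circ\OpBW{b_j(f;t,x,\xi)})$ with $b_j$ a real symbol of order $\alpha-1-j$ satisfying \eqref{Hamassy}. Using \eqref{asharpb} and Proposition~\ref{prop:composition_BW}, the commutator with $\partial_x\circ\OpBW{(1+\mathpzc{c}_0)L_\alpha}$ contributes a symbol whose principal part is $(1+\mathpzc{c}_0)\{L_\alpha(|\xi|)\xi,\,b_j\}=-(1+\mathpzc{c}_0)\,\partial_\xi(\xi L_\alpha)\,\partial_x b_j$; since $\mathpzc{c}_0$ is $x$-independent I can solve in $b_j$ an ODE in $x$ to cancel the $x$-dependent part of the symbol of order $\alpha-1-j$ coming from the previous step. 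After $J$ iterations every surviving $x$-dependent symbol has order $\le -(\rho-\underline{\rho})$, hence by Remark~\ref{rem:smoo} the corresponding paradifferential operator is absorbed into $R(f;t)\in\Sigma\dot{\mathcal{R}}^{-(\rho-\underline{\rho})}_{K,\underline{K'},1}[\epsilon_0,N]$. The number $\underline{K'}$ of time-derivatives lost is bounded by $J$ since each flow costs at most one; choosing $\underline{K'}=\underline{K'}(\rho,\alpha)$ appropriately and the total conjugation $\Psi\defeq\Psi_{J+1}\circ\cdots\circ\Psi_1$, the estimates \eqref{equivfg} are obtained by composing the single-step bounds.

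The key structural point, and the main technical obstacle, is the preservation of the linearly Hamiltonian form through all conjugations, for this is what forces $\Im\mathsf{H}_\alpha$ to lie one full order below $\Re\mathsf{H}_\alpha$, rather than at the same order. Choosing each generator $b_j$ real and even in $\xi$, i.e.\ satisfying \eqref{Hamassy}, the conjugated operator remains of the form $\partial_x\circ\OpBW{\text{real, even in }\xi}$ modulo symbols of order strictly below the current one; tracking these parities requires careful use of the adjoint formulas \eqref{A1b} and of the subtle cancellation \eqref{eq:Hamiltonian_identity} exploited in the paralinearization, which propagates through each step. Once this is secured, writing the final conjugated symbol as $(1+\mathpzc{c}_0)L_\alpha+\mathsf{H}_\alpha$ and moving $\partial_x$ inside the Weyl quantization (using $-i\xi\cdot\text{symbol}$), the Hamiltonianity at leading order translates into $\Im\mathsf{H}_\alpha\in\Sigma\Gamma^{-1}_{K,\underline{K'},2}[\epsilon_0,N]$, completing the proof.
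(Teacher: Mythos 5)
Your overall strategy matches the paper's: first straighten the principal symbol by a diffeomorphism of $\T$, then iteratively conjugate by Hamiltonian paradifferential flows so as to push the residual $x$-dependent symbols to arbitrarily negative order, tracking the Hamiltonian parity \eqref{Hamassy} so that $\Im\sH_\alpha$ lands one full order below. Two remarks on the details.

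First, there is a concrete bookkeeping error in the iteration. You take $b_j$ of order $\alpha-1-j$ and ask the commutator $(1+\mathpzc{c}_0)\{L_\alpha(|\xi|)\xi,b_j\}=(1+\mathpzc{c}_0)\,\partial_\xi(\xi L_\alpha)\,\partial_x b_j$ to cancel a symbol of the same order $\alpha-1-j$. But $\partial_\xi(\xi L_\alpha)$ has order $\alpha-1$, so that commutator has order $2(\alpha-1)-j\neq\alpha-1-j$ for $\alpha\neq 1$: the homological equation does not balance with your orders. Since the dispersion only gains $\delta\defeq\alpha-1<1$ per step, the generator at step $\mathsf{j}$ must have order $-\mathsf{j}\delta$ (the paper's choice in \eqref{eq:def_pc1}), the residual at step $\mathsf{j}$ has order $-(\mathsf{j}-1)\delta$, and the number of steps is $J\sim\rho/(\alpha-1)$, not $\rho$. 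This in turn dictates $\underline{K'}(\rho,\alpha)$; your bound $\underline{K'}\lesssim\rho$ misses the factor $(\alpha-1)^{-1}$.

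Second, for the first conjugation you propose to use the bona fide change-of-variables operator $\Psi_1 u=(1+\beta')^{1/2}u\circ(\Id+\beta)$; the paper instead uses the flow $\Phi_B(f,\tau)$ of the Hamiltonian paradifferential operator $B(f,\tau)=\partial_x\circ\OpBW{b(f,\tau;x)}$ with $b$ as in \eqref{eq:Btheta}, together with the Egorov-type Proposition \ref{prop:action_flow}. These two operators agree at the level of the principal symbol (both produce \eqref{eq:simbolo_principale_trasformato}), but only the flow lives inside the paradifferential calculus, so that all conjugation remainders land automatically in the classes $\Sigma\Gamma^{m}_{K,K',p}$ and $\Sigma\dot{\cR}^{-\rho}_{K,K',p}$. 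Using the actual composition $u\mapsto u\circ(\Id+\beta)$ would require a separate paracomposition theory to control the errors, so if you keep that route you must supply that input; otherwise, replace $\Psi_1$ by $\Phi_B(f,1)$ as the paper does. The rest of your outline — in particular the use of the even/real parity \eqref{Hamassy} to confine $\Im\sH_\alpha$ to order $-1$, and the observation that $\mathpzc{c}_0(f)$ is quadratic because a $1$-homogeneous $x$-independent symbol vanishes — is correct and is what the paper does.
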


\Cref{prop:cc_ao} relies
on general results (given in \Cref{sec:action_paradiff})
that describe how paradifferential operators
are conjugated under the flow generated by a  paradifferential operator, 
which is Hamiltonian  up to  zero order operators. 
We shall use repeatedly the following result.

 \begin{lemma}[Flows of Hamiltonian operators up to order zero]
 \label{lem:existence_flow}
  Let  $ p, N \in \N $, $ 0\leq K'\leq K $
  and $  \delta \geq 0 $.  Let us consider a 
    ``{\it Hamiltonian operators up to order zero}"
 \begin{equation*}
 \Lambda \pare{f, \tau ; t } \defeq \partial_x \circ  \OpBW{\lambda\pare{f, \tau ; t, x, \xi}}
 \end{equation*}
 where $ \lambda\pare{f, \tau ; t, x, \xi} $ is a  symbol in  
 $  \,  \Sigma \Gamma^{-\delta}_{K, K',p}\bra{\epsilon_0,N} $, 
  uniformly in $ \av{\tau}\leq 1 $, with 
  $ \, {\rm Im} \, \lambda\pare{f, \tau ; t, x, \xi} \in 
  \Sigma \Gamma^{-1}_{K, K',p}\bra{\epsilon_0,N}  $ satisfying \eqref{areal}. 
 Then there exists $ s_0 > 0 $ such that,
  for any $ f\in\Ball{K'}{s_0} $,  the equation 
\begin{equation}
 \label{eq:operatorsfamily_Lambda}
 \frac{\dd }{\dd \tau} \Phi_\Lambda \pare{f,\tau; t} =  \Lambda \pare{f,\tau; t} \ \Phi_\Lambda \pare{f,\tau; t} \, , \qquad  \Phi_\Lambda \pare{f,0; t} = \Id \,  \, ,
 \end{equation}
 has a unique solution 
 $ \Phi_\Lambda\pare{f, \tau} \defeq \Phi_\Lambda\pare{f, \tau; t} $ 
 satisfying the following properties:
  for any $ s \in \R $ the linear map $ \Phi_\Lambda\pare{f, \tau; t} $ is 
  bounded and invertible  on
  $ H^s_0 (\T, \R) $  and 
  there are a constant $ C \defeq C (s,\epsilon_0,K)$ 
  and 
 $ \epsilon_0' (s) \in (0, \epsilon_0) $ such that, for any $ f\in B_{s_0, \bR}^K (I; \epsilon_0' (s)) $,
for any $ 0 \leq k \leq K - K' $, $ v \in C_*^{K-K'} (I; H^s_0(\T, \R))$,  $ t \in I $, 
\begin{equation} \label{eq:transport_estimate0}
\Big\| \partial_t^k \Big( \Phi_\Lambda\pare{f, \tau; t}  v \Big) \Big\|_{s- k} +
\Big\| \partial_t^k \Big( \Phi_\Lambda\pare{f, \tau; t}^{-1}  v \Big) \Big\|_{s- k}
\leq C \| v \|_{k,s} 
\end{equation} 
uniformly in $ |\tau | \leq 1 $. 
\end{lemma}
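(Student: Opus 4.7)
\medskip

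\noindent\textbf{Proof strategy for \Cref{lem:existence_flow}.}
The plan is to construct the flow by standard energy methods, exploiting the fact that $\Lambda(f,\tau;t)$ is linearly Hamiltonian up to operators of order zero, which is precisely what makes the transport-type equation \eqref{eq:operatorsfamily_Lambda} well-posed in every $H^s_0(\T;\R)$.

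\medskip

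\noindent\emph{Step 1: energy estimate at order zero.} The crucial algebraic observation is that
\begin{equation*}
\Lambda(f,\tau;t) + \Lambda(f,\tau;t)^*
= [\partial_x , \OpBW{\Re\lambda}]
+ \ii\bigl(\partial_x\circ\OpBW{\Im\lambda} + \OpBW{\Im\lambda}\circ\partial_x\bigr) + R_{-\infty}(f,\tau;t)
\end{equation*}
where $R_{-\infty}$ is the smoothing remainder coming from the relation between Bony--Weyl quantization and Weyl quantization. The first commutator is an operator of order $-\delta\le 0$ by symbolic calculus (\Cref{prop:composition_BW}), since $\Re\lambda\in\Sigma\Gamma^{-\delta}_{K,K',p}$; the second summand, by the hypothesis $\Im\lambda\in\Sigma\Gamma^{-1}_{K,K',p}$, has order $0$. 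Consequently, by \Cref{prop:action}, $\Lambda+\Lambda^*$ is a uniformly bounded operator on $L^2_0(\T;\R)$, with operator norm controlled by $\|f\|_{K',s_0}^{p-1}$. Setting $u_\tau\defeq \Phi_\Lambda(f,\tau;t)v$ formally,
\begin{equation*}
\tfrac{\dd}{\dd\tau}\|u_\tau\|_{L^2}^2
= 2\,\Re\langle \Lambda u_\tau, u_\tau\rangle_{L^2_0}
= \langle (\Lambda+\Lambda^*) u_\tau, u_\tau\rangle_{L^2_0}
\lesssim \|u_\tau\|_{L^2}^2 \, ,
\end{equation*}
which by Gr\"onwall gives $\|u_\tau\|_{L^2}\le C\|v\|_{L^2}$ uniformly for $|\tau|\le 1$.

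\medskip

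\noindent\emph{Step 2: higher Sobolev estimates via commutators.} For $s>0$ I would commute $\langle D\rangle^s$ with the equation:
\begin{equation*}
\tfrac{\dd}{\dd\tau}\langle D\rangle^s u_\tau = \Lambda\langle D\rangle^s u_\tau + [\langle D\rangle^s,\Lambda] u_\tau \, .
\end{equation*}
The commutator $[\langle D\rangle^s,\partial_x\circ\OpBW{\lambda}] = \partial_x\circ[\langle D\rangle^s,\OpBW{\lambda}]$ has, by \eqref{asharpb} and \Cref{prop:composition_BW}, principal symbol $\tfrac{1}{2\ii}\{\langle\xi\rangle^s,\lambda\}\cdot\ii\xi$, which is of order $s-\delta\le s$. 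Applying \Cref{prop:action} we obtain $\|[\langle D\rangle^s,\Lambda]w\|_{L^2}\lesssim \|f\|_{K',s_0}^{p-1}\|w\|_{s}$. Repeating the Step~1 computation with $\langle D\rangle^s u_\tau$ in place of $u_\tau$ gives $\tfrac{\dd}{\dd\tau}\|u_\tau\|_s^2 \lesssim \|u_\tau\|_s^2$ and hence the case $k=0$ of \eqref{eq:transport_estimate0}.

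\medskip

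\noindent\emph{Step 3: existence/uniqueness and the inverse flow.} With the a priori estimate of Step~2 in hand, existence of $\Phi_\Lambda(f,\tau;t)$ follows from a Galerkin/Friedrichs regularization scheme: one replaces $\Lambda$ by $\Lambda_\eta\defeq \Pi_\eta\Lambda\Pi_\eta$ (a bounded operator on every $H^s$ via a smooth frequency cutoff $\Pi_\eta$), solves the resulting ODE in a Banach space, and passes to the limit $\eta\to\infty$ using the uniform bounds of Step~2. Uniqueness follows from the $L^2$-estimate applied to the difference of two solutions. The inverse flow solves
\begin{equation*}
\tfrac{\dd}{\dd\tau}\Phi_\Lambda(f,\tau;t)^{-1} = -\Phi_\Lambda(f,\tau;t)^{-1}\Lambda(f,\tau;t) \, ,
\end{equation*}
an equation of the same structural type (right-composition with a Hamiltonian-up-to-order-zero operator), so an analogous energy estimate yields the bound on $\Phi_\Lambda^{-1}$ in \eqref{eq:transport_estimate0}.

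\medskip

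\noindent\emph{Step 4: time derivatives.} To obtain \eqref{eq:transport_estimate0} for $1\le k\le K-K'$ I would differentiate \eqref{eq:operatorsfamily_Lambda} in $t$ and argue by induction on $k$. Each time derivative of $\Lambda$ loses $\alpha$ spatial derivatives according to the norm \eqref{Knorm} and \Cref{def:symbols}, but falls on a factor $\partial_t^j \lambda$ whose seminorms are controlled by $\|f\|_{j+K',s_0}^{p-1}\|f\|_{j+K',s}$. Using Duhamel's formula against $\Phi_\Lambda$ and the tame action bound in \Cref{prop:action}, the estimate for $\partial_t^k(\Phi_\Lambda v)$ reduces to the already established bounds for lower-order $\partial_t^j(\Phi_\Lambda v)$, $j<k$, closing the induction provided $K-K'$ is finite and the loss is compensated by the shift $s\mapsto s-k$ appearing in \eqref{Knorm}. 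The condition $f\in B^{K'}_{s_0}(I;\epsilon_0'(s))$ with $\epsilon_0'(s)$ small enough ensures the Neumann series implicit in bounding $\Phi_\Lambda^{-1}$ converges in operator norm on $H^s_0$.

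\medskip

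The main obstacle is keeping precise track of the tame dependence on $\|f\|_{k+K',s_0}$ versus $\|f\|_{k+K',s}$ when iterating the commutator estimate for $\partial_t^k$, so that the final bound is of the form \eqref{eq:transport_estimate0} with loss $s\to s-k$ and not $s\to s-\alpha k$; this relies on the fact that the order of $\Lambda$ is at most $1-\delta\le 1$ and not $\alpha$, so each time derivative of the flow only costs one spatial derivative. Reality of $\Phi_\Lambda(f,\tau;t)$ for $f$ real follows from the reality condition \eqref{areal} on $\lambda$ and the uniqueness of the solution to \eqref{eq:operatorsfamily_Lambda}.
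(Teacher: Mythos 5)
Your proof is correct and takes essentially the same route as the paper: the paper's own proof is a one-line citation of \cite[Lemma 3.22]{BD2018}, which establishes well-posedness of exactly this kind of flow equation by the $L^2$-energy estimate exploiting that $\Lambda + \Lambda^*$ has order zero (since $\Re\lambda$ has order $\le 0$ and $\Im\lambda$ has order $-1$), together with commutator estimates with $\langle D\rangle^s$ and an induction on time derivatives — precisely the steps you lay out. The only cosmetic points are that the smoothing remainder $R_{-\infty}$ in your Step 1 is not actually needed (by \eqref{A1b} the Weyl adjoint is exact, $\OpBW{a}^*=\OpBW{\overline{a}}$), and the tame power in the $L^2$ bound should be $\|f\|_{K',s_0}^p$ rather than $\|f\|_{K',s_0}^{p-1}$; neither affects the argument.
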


\begin{proof}
Since the imaginary part of the symbol $ \lambda $ has order $ -1 $, 
the flow $ \Phi_\Lambda $ of \eqref{eq:operatorsfamily_Lambda}
is well-posed and satisfies
\eqref{eq:transport_estimate0} arguing 
as in \cite[Lemma 3.22]{BD2018}. 
Moreover it preserves the subspace of real functions  since
$ \lambda\pare{f, \tau ; t, x, \xi} $ satisfies  \eqref{areal}.  
\end{proof}

In the proof of Proposition \ref{prop:cc_ao}
 it is convenient to preserve the linear 
Hamiltonian structure of \eqref{eq:paralinearized_1} up to order zero along the reduction  
which leads to \eqref{eq:paralinearized_7}, since it 
guarantees that the  symbol  $ \pare{1+\mathpzc{c}_0\pare{f}} 
L_\alpha\pare{\av{\xi}} + \mathsf{H}_{\alpha}\pare{f; t, \xi} $, as well as those obtained in the intermediate reduction steps,  are real, at least up to order $ - 1$. 
\\[1mm]
{ \bf Reduction to constant coefficients at principal order.}
We first  reduce to constant coefficients the highest order
paradifferential operator in \eqref{eq:paralinearized_1}.
We conjugate  \eqref{eq:paralinearized_1} via the transformation
\begin{equation} \label{eq:conjugated_f}
f^{\bra{1}} \defeq  \Phi_B \pare{f,1} f
\end{equation}
  where $ \Phi_B \pare{f, \tau}  $ is the flow
 generated as in \Cref{lem:existence_flow}  
 by the Hamiltonian operator
  \begin{equation}\label{eq:Btheta}
 B\pare{f,\tau} \defeq  \partial_x \circ \OpBW{b\pare{f, \tau; x}} \, ,
\qquad 
 b\pare{f, \tau ;  x}
 \defeq \frac{\beta\pare{f;x}}{1+\tau\  \partial_x\pare{ \beta\pare{f;x} }} \, ,
 \end{equation}
where   $ \beta \pare{ f;x } $ is a real function to be chosen.

\begin{lemma}[Reduction to constant coefficients at principal order]
\label{lem:cc_po}
Let
$ \beta \pare{ f;x } \in \Sigma \cF^\bR_{K,0,1}\bra{\epsilon_0 , N} $ 
be the periodic function
of the diffeomorphism   $ x \mapsto x + \beta \pare{ f;x } $ of  $\,  {\mathbb T} $
whose  inverse diffeomorphism 
is $ y \mapsto y + \breve \beta \pare{ f;y } $, 
where
\begin{align}\label{eq:pzc0}
\breve{\beta}\pare{f;  y}  \defeq \partial_y^{-1}
\bra{\pare{\frac{ 1+ \mathpzc{c}_0\pare{f}}{ 1+\nu\pare{f;  y} }}^{\frac{1}{\alpha}} -1}
\in \Sigma \cF^\bR_{K,0,1}\bra{\epsilon_0 , N} \, ,
&&
 \mathpzc{c}_0\pare{f} \defeq \pare{ \fint   \pare{1+\nu\pare{f;  y}} ^{-\frac{1}{\alpha}}   \dd y        } ^{-\alpha} - 1 \, ,
\end{align}
and  $ \nu\pare{f;  y} $ is the real function defined in Theorem 
\ref{prop:paralinearization_1}.
Then, if $ f $ solves \eqref{eq:paralinearized_1},
the variable $ f^{\bra{1}} $ 
defined
in \eqref{eq:conjugated_f} satisfies the equation
\begin{equation}
\label{eq:paralinearized_4}
\pa_t f^{\bra{1}} 
  +\partial_x \circ \OpBW{      \pare{ 1+ \mathpzc{c}_0\pare{f} } \
 L_\alpha\pare{\av{\xi}}    +  V^1\pare{f; t, x}  +
P \pare{f;  x ,\xi}   \Big.    }  f^{\bra{1}}
=   R \pare{f;t} f^{\bra{1}}
\end{equation}
where
\begin{itemize}

\item $ \mathpzc{c}_0 \pare{f} $ is the $ x$-independent function
in $ \Sigma \cF^\bR_{K,0,1}\bra{\epsilon_0 , N} $   defined in \eqref{eq:pzc0};

\item   $ V^1 \pare{f;t, x} $ is a real function in $ \Sigma \cF^\bR_{K, 1, 1} \bra{\epsilon_0, N} $;
\item $ P\pare{ f; x, \xi } $ is a symbol in $  \Sigma\Gamma^{-1}_{K,0,1}\bra{\epsilon_0 , N} $
satisfying \eqref{areal};
\item  $ R \pare{f;t} $ is a real smoothing operator in
$  \Sigma \dot{\cR}^{-\pare{ \rho -N }}_{K,1,1}\bra{\epsilon_0 , N} $.
\end{itemize}
\end{lemma}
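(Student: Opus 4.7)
The strategy is to straighten the leading-order coefficient $1+\nu(f;x)$ to a constant via a paradifferential change of variables. Set $b(f,\tau;x)=\beta(f;x)/(1+\tau\pa_x\beta(f;x))$ and $B(f,\tau)=\pa_x\circ\OpBW{b(f,\tau;\cdot)}$. Since $b$ is $\xi$-independent and real, \Cref{lem:existence_flow} applies and yields an invertible flow $\Phi_B$ satisfying \eqref{equivfg}. The ``integrating factor'' form of $b$ is the standard one ensuring that $\Phi_B(f,1)$ coincides, modulo smoothing operators in $\Sigma\dot{\cR}^{-(\rho-N)}_{K,0,1}[\epsilon_0,N]$, with composition by the diffeomorphism $\phi_f:x\mapsto x+\beta(f;x)$ of \Cref{lem:LemA3}, whose inverse is $y\mapsto y+\breve\beta(f;y)$ from \eqref{eq:pzc0}; the invertibility of this diffeomorphism and the regularity $\breve\beta\in\Sigma\cF^\bR_{K,0,1}[\epsilon_0,N]$ are guaranteed by \Cref{lem:LemA3,lem:closure_comp_symbols} provided $\|f\|$ is small.

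Writing $f^{\bra{1}}=\Phi_B(f,1)f$ and using \eqref{eq:paralinearized_1} yields
\[
\pa_t f^{\bra{1}} + \Phi_B\,\pa_x\OpBW{(1+\nu)L_\alpha+V+P}\,\Phi_B^{-1} f^{\bra{1}}
=(\pa_t\Phi_B)\Phi_B^{-1}f^{\bra{1}} + \Phi_B R(f)\Phi_B^{-1}f^{\bra{1}}.
\]
Applying the symbolic calculus of \Cref{prop:composition_BW,compositionMoperator} to the Egorov-type conjugation of a Bony--Weyl paradifferential operator by $\Phi_B$, the principal symbol of the conjugated operator is
$
i\xi\bigl(1+\nu(f;\phi_f(x))\bigr)(1+\pa_x\beta)^{-\alpha}L_\alpha(|\xi|),
$
up to symbols of order strictly less than $\alpha$ and smoothing operators. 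The defining algebraic condition to enforce is
\[
(1+\pa_y\breve\beta(f;y))^\alpha=\frac{1+\mathpzc{c}_0(f)}{1+\nu(f;y)},\qquad y=\phi_f(x),
\]
which is precisely the derivative of \eqref{eq:pzc0}; the constant $\mathpzc{c}_0(f)$ is determined uniquely by the $2\pi$-periodicity requirement $\fint\pa_y\breve\beta\,dy=0$, yielding the stated formula. With this choice the conjugated principal symbol reduces to $i\xi(1+\mathpzc{c}_0(f))L_\alpha(|\xi|)$, which is $x$-independent.

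The remaining contributions are packaged as follows. The conjugation of $\pa_x\OpBW{V(f;x)}$, the subprincipal corrections from the Egorov-type expansion of the principal part, and the nonautonomous term $(\pa_t\Phi_B)\Phi_B^{-1}$ (which involves $\pa_t f=-\pa_x\OpBW{\ldots}f+R f$ and therefore loses one time derivative, accounting for the jump $K'=0\to K'=1$), are collected into the real function $V^1(f;t,x)\in\Sigma\cF^\bR_{K,1,1}[\epsilon_0,N]$; all contributions of symbolic order $\leq -1$ are absorbed into $P(f;x,\xi)\in\Sigma\Gamma^{-1}_{K,0,1}[\epsilon_0,N]$ satisfying \eqref{areal}; and $R(f;t)\in\Sigma\dot\cR^{-(\rho-N)}_{K,1,1}[\epsilon_0,N]$ collects $\Phi_B R(f)\Phi_B^{-1}$ together with the smoothing remainders from the truncation of the symbolic expansion at homogeneity $N$, giving the loss of $N$ regularizing orders.

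The main technical obstacle is ensuring that the intermediate-order contributions produced by the conjugation, of orders $\alpha-k$ for $k\ge 2$, which for $\alpha\in(1,2)$ lie strictly between $-1$ and $\alpha-1$, reduce to a pure function of $x$ modulo a symbol of order $-1$. This cancellation relies on three ingredients: the Fourier-multiplier structure of $L_\alpha$ (so that $\{L_\alpha,b\}$ only involves $\pa_\xi L_\alpha$, which drops the order by one in $\xi$ at each iteration of the Egorov expansion), the reality of $b$ and $\beta$, and the preservation under $\Phi_B$ of the linear-Hamiltonian-up-to-order-zero structure provided by \Cref{lem:existence_flow}, which simultaneously guarantees the reality of $V^1$ and the symmetry \eqref{areal} for the transformed $P$.
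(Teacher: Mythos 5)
Your proposal follows essentially the same route as the paper: straighten the coefficient $1+\nu(f;x)$ by conjugation with the flow $\Phi_B$ of $\pa_x\circ\OpBW{b}$, choose $\breve\beta$ via \eqref{eq:pzc0} so that the top-order transformed symbol becomes $x$-independent, determine $\mathpzc{c}_0$ from periodicity, and absorb the remaining contributions into $V^1$, $P$, $R$. Two remarks.

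First, your formula for the transformed principal symbol, $\ii\xi\,(1+\nu(f;\phi_f(x)))(1+\pa_x\beta)^{-\alpha}L_\alpha(|\xi|)$, is not quite what the Egorov formula gives: since $L_\alpha$ is a Fourier multiplier but \emph{not} homogeneous of degree $\alpha-1$, the conjugated symbol is $(1+\pa_y\breve\beta)(1+\nu(y))L_\alpha\bigl(|\xi|(1+\pa_y\breve\beta)\bigr)|_{y=\phi_f(x)}$, and the dilation of the argument does not factor out as $(1+\pa_y\breve\beta)^{\alpha-1}$. The paper therefore first splits $L_\alpha(|\xi|)=\bV_\alpha+c^1_\alpha|\xi|^{\alpha-1}+m_{\alpha-3}(|\xi|)$ via \eqref{expaLapha}, applies the Egorov formula \eqref{eq:simbolo_principale_trasformato} to the homogeneous piece $c^1_\alpha|\xi|^{\alpha-1}$ to obtain exactly $c^1_\alpha(1+\mathpzc{c}_0)|\xi|^{\alpha-1}$, treats $(1+\nu)\bV_\alpha$ and $(1+\nu)m_{\alpha-3}$ separately (they land in $V^1$ and $P$ respectively, using $\alpha<2$ so that $\alpha-3<-1$), and finally reconstitutes $(1+\mathpzc{c}_0)L_\alpha$ by subtracting the bookkeeping corrections $\mathpzc{c}_0\bV_\alpha$ (into $V^1$) and $\mathpzc{c}_0 m_{\alpha-3}$ (into $P$). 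Your proposal skips this step, which is precisely what makes the final symbol exactly $(1+\mathpzc{c}_0)L_\alpha$.

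Second, the ``main technical obstacle'' you identify about orders $\alpha-k$ for $k\geq 2$ is not actually an obstacle: Proposition \ref{prop:action_flow}(i) already states that the Egorov error after the principal symbol is a symbol of order $m-2$, so conjugating the order-$(\alpha-1)$ part produces, beyond $a_0$, only a symbol of order $\alpha-3<-1$. There is no order-$(\alpha-2)$ term to cancel; this two-unit drop is a built-in feature of the Weyl quantization together with the transport equation satisfied by the principal symbol, and is already packaged into the proposition rather than something to verify at this stage. Also note that $\Phi_B(f,1)$ is the paradifferential flow, not literally the composition operator by $\phi_f$ modulo smoothing remainders; the paper only uses the Egorov-type formula for the conjugated symbol, not such an identification of the operator itself.
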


\begin{proof}
If $ f $ solves \eqref{eq:paralinearized_1} then,
the variable $ f^{\bra{1}} \defeq \Phi_B\pare{f, 1} f \defeq  \Phi_B\pare{1} f  $  satisfies, using  also
the expansion 
$ L_\alpha (|\xi|)  =  \bV_\alpha +  c^1_\alpha    \av{\xi}^{\alpha - 1} + 
m_{\alpha-3}\pare{\av{\xi}} $ 
in \eqref{expaLapha}, 
and $   \partial_t \Phi_B \pare{1}\circ   \Phi_B \pare{1}^{-1}
= - \Phi_B \pare{1}\circ \pare{ \partial_t  \Phi_B \pare{1}^{-1} } $, the equation
\begin{multline}
\label{eq:paralinearized_3}
\pa_t f^{\bra{1}}  +\Phi_B \pare{1} \circ \partial_x \circ \OpBW{\pare{ 1+\nu\pare{f;x} }
\pare{ c^1_\alpha \av{\xi}^{\alpha-1} +
 \bV_\alpha + m_{\alpha-3}\pare{\av{\xi}}}   + 
 V\pare{f;x}   + P\pare{f;x,\xi}  } \circ \Phi_B \pare{1}^{-1}  \  f^{\bra{1}} \\
  +
 \Phi_B \pare{1}\circ \pare{ \partial_t  \Phi_B \pare{1}^{-1} } f^{\bra{1}} 
 = \Phi_B \pare{1}\circ R\pare{f} \circ \Phi_B \pare{1}^{-1}  f^{\bra{1}} \, .
\end{multline}
By \eqref{eq:conj_Ham_symbol}, \eqref{eq:simbolo_principale_trasformato}
 the principal order operator in \eqref{eq:paralinearized_3}  is
\begin{align}
& \Phi_B \pare{1} \circ \partial_x \circ \OpBW{ \pare{ 1+\nu \pare{f;x} }    c_\alpha^1 \av{\xi}^{\alpha-1}  } \circ \Phi_B \pare{1}^{-1}  \label{eq:transformed_symbols} 
\\
  & = \ \partial_x \circ \OpBW{ c^1_\alpha \
\pare{1+\nu\pare{f;y}}
\pare{1+\left. \partial_y\breve{\beta}\pare{f; y}}^\alpha\right|_{y=x+\beta(f;x)}   \  \av{\xi}^{\alpha-1} + P_1 \pare{f;x,\xi}  } 
+  R \pare{f}  \notag 
\end{align}
where $ y \mapsto y + \breve \beta \pare{ f;y } $ is 
the inverse diffeomorphism of  $ x \mapsto x + \beta \pare{ f;y } $ 
given by \Cref{lem:LemA3}, 
$ P_1 \pare{f;x,\xi} $ is a symbol 
in $ \Sigma \Gamma^{\alpha-3}_{K, 0, 1}\bra{\epsilon_0, N} $
and $ R \pare{f} $  is a smoothing operator 
in $ \Sigma\dot{\cR}^{-\pare{ \rho-N } }_{K,0,1}\bra{\epsilon_0, N} $. 
By \eqref{eq:pzc0} we deduce that
the symbol of highest order 
in \eqref{eq:transformed_symbols}
 is independent of the variable $ x $,
 that is
 \begin{multline}
\label{eq:transformed_symbols1}
\Phi_B \pare{1} \partial_x \OpBW{ \pare{ 1+\nu \pare{f;x} }
c_\alpha^1 \av{\xi}^{\alpha-1}  }  \Phi_B \pare{1}^{-1} \!
 = \! \partial_x  \OpBW{ c^1_\alpha
\pare{ 1+ \mathpzc{c}_0\pare{f} }    \av{\xi}^{\alpha-1} \!+\!
P_1 \pare{f;x,\xi}  } + R_1 \pare{f} \, .
\end{multline}
The lower order conjugated operator in \eqref{eq:paralinearized_3} is,
by \eqref{eq:conj_Ham_symbol} and \Cref{lem:closure_comp_symbols},
\begin{multline}\label{eq:commutator_pat0}
 \Phi_B \pare{1} \circ \partial_x \circ \OpBW{\pare{ 1+\nu \pare{f;x} }  
 \pare{  \bV_\alpha + m_{\alpha-3}\pare{\av{\xi}} 
}+ V\pare{f; x} + P\pare{f;x,\xi} } \circ \Phi_B \pare{1}^{-1}  \\
 = \partial_x \circ \OpBW{ \bV_\alpha + m_{\alpha-3} (\av{\xi})
 + \tilde V^1\pare{f;x} + P_2 \pare{f;x,\xi}}  + R \pare{f}
\end{multline}
where $ \tilde V^1 \pare{f; x} $ is a function  in 
$ \Sigma \cF^\bR_{K,0,1}\bra{\epsilon_0 , N}$,  
$ P_2 \pare{f; x, \xi} $ is a symbol in $ \Sigma \Gamma^{-1}_{K, 0, 1}\bra{\epsilon_0, N} $, since $ \alpha < 2 $
(note that $ m_{\alpha-3} 
\pare{|\xi| \pare{1+ \breve \beta \pare{ f;y } }_{|y=x+\beta (f; x)}} 
- m_{\alpha-3} \pare{|\xi|} $ is a symbol in
$ \Sigma \Gamma^{\alpha -3}_{K, 0, 1}\bra{\epsilon_0, N} $)
and $ R \pare{f} $ 
is a smoothing operator in $ \Sigma \dot{\cR}^{-\rho}_{K,0,1}\bra{\epsilon_0 , N} $, by 
renaming  $ \rho $.
Finally by \eqref{eq:Egorov_timeder}
 there exists a real function $  \fV \pare{f;t,x} $ in $ \Sigma \cF^\bR_{K,1,1}\bra{\epsilon_0 , N}$ and  
 a smoothing operator $ R \pare{f;t} $ in $ \Sigma\dot{\cR}^{-\rho}_{K,1,1}\bra{\epsilon_0 , N} $ such that
\begin{equation}
\label{eq:commutator_pat}
\Phi_B \pare{1}\circ \pare{ \partial_t  \Phi_B \pare{1}^{-1} } 
= \partial_x\circ \OpBW{\fV \pare{f;t,x}}  + R \pare{f;t} \, .
\end{equation}
\Cref{lem:cc_po}
follows by \eqref{eq:paralinearized_3}, \eqref{eq:transformed_symbols1}, \eqref{eq:commutator_pat0} and 
\eqref{eq:commutator_pat} 
  with $ V^1 \pare{f;x} \defeq \tilde V^1 \pare{f;x} + \fV \pare{f;x} -\mathpzc{c} _0\pare{f}\bV_\alpha $, which belongs to
$   \cF^\bR_{K, 1, 1}\bra{\epsilon_0 , N}  $, 
  and  
  $ P \pare{f;x, \xi} \defeq (P_1 + P_2)\pare{f;x, \xi} -\mathpzc{c} _0\pare{f} m_{\alpha-3} \pare{\av{\xi}} $ in $  \Sigma\Gamma^{-1}_{K,0,1}\bra{\epsilon_0 , N} $. 
\end{proof}

\noindent 
{\bf Reduction to constant coefficients at arbitrary-order.}
We now reduce \eqref{eq:paralinearized_4} 
to constant coefficients
up to a smoothing operator, implementing an inductive process which,
at each step,  regularizes the symbol   of $ \delta \defeq \alpha-1 > 0  $. 
We distinguish two regimes. 
\begin{lemma}[Reduction to constant coefficients up order $ 0 $]
\label{lem:reduction_positive_orders}
Let $ \delta \defeq \alpha-1 $ and\footnote{Note that 
$ \mathsf{j}_\ast  =
 \min \set{ \ \mathsf{j}\in\bN \ \middle| \ \pare{\mathsf{j}-1}\delta > 1 }. 
  $} $ \mathsf{j}_\ast\defeq \ceil{\xfrac{1}{\delta}} +1 $. For any $ \mathsf{j} \in\set{1, \ldots , \mathsf{j}_\ast -1 } $, there exist $ \rho_{\mathsf{j}} $ defined inductively as $ \rho_{1} \defeq N $ and $ \rho_{\mathsf{j} + 1}\defeq 
\rho_{\mathsf{j}} +N \pare{ 1-\mathsf{j}\delta} $ such that for any $ K\geq \mathsf{j} $ there exist $ s_0 > 0 $ and a  
\begin{itemize}
\item  symbol 
$  \mathpzc{d}^{\bra{\mathsf{j}}}\pare{f;t, \xi}\defeq \pare{1+\mathpzc{c}_0\pare{f}}L_\alpha\pare{\av{\xi}} + \sH^{\bra{\mathsf{j}}}_{\alpha}\pare{f;t,  \xi} $ where $ \sH^{\bra{\mathsf{j}}}_{\alpha}\pare{f;t, \xi} \in \Sigma \Gamma^{0}_{K, \mathsf{j}-1, 2}\bra{\epsilon_0, N} $, independent of $ x $, real, even in $ \xi $; 
\item  symbol $ r^{\bra{\mathsf{j}}}\pare{f;t, x, \xi} $ in
$ \Sigma \Gamma^{-\pare{ \mathsf{j}-1 }\delta}_{K, \mathsf{j}, 1}\bra{\epsilon_0, N} $, 
real and even in $ \xi $; 
\item  symbol $ P^{\bra{\mathsf{j}}} \pare{f;t, x, \xi} $ in 
$ \Sigma \Gamma^{-1}_{K, \mathsf{j}-1 , 1}\bra{\epsilon_0, N} $;
\item real smoothing operator 
$ R^{\bra{\mathsf{j}}}\pare{f;t} $  in $ \Sigma\cR^{-\pare{\rho - \rho_{\mathsf{j}} }}_{K, \mathsf{j}, 1}\bra{\epsilon_0, N} $;
\item  Hamiltonian operator $  W^{\bra{\mathsf{j}}}\pare{f} \defeq \partial_x\circ \OpBW{w^{\bra{\mathsf{j}}}\pare{f;t, x, \xi}}  $ where $  w^{\bra{\mathsf{j}}} $ is the real and even in $ \xi $ symbol 
\begin{equation}\label{eq:def_pc1}
 w^{\bra{\mathsf{j}}}\pare{f;t, x, \xi}
 \defeq
 -
  \partial_x^{-1} \bra{
\frac{ r^{\bra{\mathsf{j}}}\pare{f;t,x,\xi} - \fint r^{\bra{\mathsf{j}}}\pare{f;t,x,\xi} \dd x}{ \pare{1+\mathpzc{c}_0\pare{f}} \ c^1_\alpha \alpha \av{\xi}^{\alpha-1}   }  
  }
\in\Sigma\Gamma^{-\mathsf{j}\delta}_{K, \mathsf{j}, 1}\bra{\epsilon_0, N}  
   ;
\end{equation}

\end{itemize}
such that 
if $ f\in\Ball{K}{s_0} $ is a solution of   \eqref{eq:paralinearized_1} then $ f^{\bra{\mathsf{j}}} \defeq \prod_{\mathsf{j}' = 1}^{\mathsf{j}-1} \Phi_{W^{\bra{\mathsf{j}'}}}\pare{f; 1}^{-1} \circ \Phi_B\pare{f; 1} f $ solves
\begin{equation}\label{eq:fj}
\pa_t f^{\bra{\mathsf{j}}} + \partial_x\circ\OpBW{ \mathpzc{d}^{\bra{\mathsf{j}}}\pare{f;t, \xi} + r^{\bra{\mathsf{j}}}\pare{f; t, x, \xi} + P^{\bra{\mathsf{j}}}\pare{f;t, x, \xi} } f^{\bra{\mathsf{j}}} 
=
R^{\bra{\mathsf{j}}}\pare{f; t}f^{\bra{\mathsf{j}}}. 
\end{equation}
\end{lemma}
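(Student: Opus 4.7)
The plan is to argue by induction on $\mathsf{j}\in\{1,\ldots,\mathsf{j}_\ast-1\}$, each step gaining $\delta=\alpha-1$ derivatives of regularization in the $x$-dependent remainder while preserving the linear Hamiltonian structure up to order zero.

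For the base case $\mathsf{j}=1$, I would read off the equation \eqref{eq:paralinearized_4} produced by \Cref{lem:cc_po}: set $\sH^{[1]}_\alpha\equiv 0$ (which belongs vacuously to $\Sigma\Gamma^0_{K,0,2}[\epsilon_0,N]$), $r^{[1]}(f;t,x,\xi)\defeq V^1(f;t,x)\in\Sigma\cF^{\R}_{K,1,1}[\epsilon_0,N]\subset\Sigma\Gamma^0_{K,1,1}[\epsilon_0,N]$ (real, trivially even in $\xi$, and of order $-(\mathsf{j}-1)\delta=0$), identify $P^{[1]}$, $R^{[1]}$ with the $P$ and $R$ of \Cref{lem:cc_po}, and choose $\rho_1=N$.

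For the inductive step I assume \eqref{eq:fj} holds at level $\mathsf{j}$. The symbol $w^{[\mathsf{j}]}$ in \eqref{eq:def_pc1} is real, even in $\xi$, and of order $-\mathsf{j}\delta\le 0$ (since $r^{[\mathsf{j}]}$ is of order $-(\mathsf{j}-1)\delta$ and one divides by $|\xi|^{\alpha-1}$); hence $W^{[\mathsf{j}]}$ is Hamiltonian up to order zero in the sense of \Cref{lem:existence_flow}, whose time-$1$ flow $\Phi_{W^{[\mathsf{j}]}}(f;1)$ is available, real, invertible, with tame bounds of type \eqref{eq:transport_estimate0}. Setting $f^{[\mathsf{j}+1]}\defeq \Phi_{W^{[\mathsf{j}]}}(f;1)^{-1}f^{[\mathsf{j}]}$ and using the Duhamel identity
\[
\Phi_{W^{[\mathsf{j}]}}(f;1)^{-1}\, A\, \Phi_{W^{[\mathsf{j}]}}(f;1) \;=\; A \;+\; \int_0^1 \Phi_{W^{[\mathsf{j}]}}(f;\tau)^{-1}\,[A,W^{[\mathsf{j}]}]\,\Phi_{W^{[\mathsf{j}]}}(f;\tau)\,\dd\tau
\]
applied with $A=\partial_x\circ\OpBW{\mathpzc{d}^{[\mathsf{j}]}}$, $A=\partial_x\circ\OpBW{r^{[\mathsf{j}]}}$ and $A=\partial_x\circ\OpBW{P^{[\mathsf{j}]}}$, together with the symbolic calculus of \Cref{prop:composition_BW} and the asymptotics $L_\alpha(|\xi|)=\bV_\alpha+c_\alpha^1|\xi|^{\alpha-1}+m_{\alpha-3}(|\xi|)$ from \Cref{prop:Lalpha_asymptotic}, I would compute that the principal Weyl symbol of $[\partial_x\circ\OpBW{(1+\mathpzc{c}_0(f))L_\alpha(|\xi|)},\ \partial_x\circ\OpBW{w^{[\mathsf{j}]}}]$ is, modulo operators of order $-\mathsf{j}\delta$,
\[
-\partial_x\circ\OpBW{\,(1+\mathpzc{c}_0(f))\,c_\alpha^1\,\alpha\,|\xi|^{\alpha-1}\,\partial_x w^{[\mathsf{j}]}\,} \;=\; \partial_x\circ\OpBW{\,r^{[\mathsf{j}]}-\fint r^{[\mathsf{j}]}\,\dd x\,},
\]
where the equality is \emph{precisely} the definition \eqref{eq:def_pc1} of $w^{[\mathsf{j}]}$. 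Adding this to the original $\partial_x\circ\OpBW{r^{[\mathsf{j}]}}$ after integrating over $\tau\in[0,1]$ cancels the $x$-dependent component of $r^{[\mathsf{j}]}$, leaving the $x$-independent average $\fint r^{[\mathsf{j}]}\,\dd x$, which I would absorb into $\sH^{[\mathsf{j}+1]}_\alpha$. The surviving pieces (subprincipal commutator terms, conjugated $P^{[\mathsf{j}]}$, and the conjugated time-derivative term $\Phi_{W^{[\mathsf{j}]}}^{-1}\partial_t\Phi_{W^{[\mathsf{j}]}}$ analogous to \eqref{eq:Egorov_timeder} which costs one extra time derivative, whence $K'$ grows by $1$) are all of order at most $-\mathsf{j}\delta$, and I would collect them into the new $r^{[\mathsf{j}+1]}\in\Sigma\Gamma^{-\mathsf{j}\delta}_{K,\mathsf{j}+1,1}[\epsilon_0,N]$; the regularizing budget for the smoothing remainders loses $N(1-\mathsf{j}\delta)$ derivatives at each step, matching the recursion $\rho_{\mathsf{j}+1}=\rho_{\mathsf{j}}+N(1-\mathsf{j}\delta)$.

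The main obstacle is the twofold bookkeeping: first, verifying stage by stage that every conjugation preserves the reality and $\xi$-parity of the surviving symbols, so that the operator remains Hamiltonian up to order zero and \Cref{lem:existence_flow} keeps applying at the next step — this rests on the observation that Poisson brackets and compositions of real, $\xi$-even symbols with the same parity produce again real, $\xi$-even symbols, and on the specific division by $|\xi|^{\alpha-1}$ in \eqref{eq:def_pc1} (which is even in $\xi$ when suitably smoothed near $\xi=0$ as in \eqref{eq:chi}); and second, verifying precisely that each homogeneous piece of $r^{[\mathsf{j}]}$ descends in degree in $f$ correctly, so that $\sH^{[\mathsf{j}]}_\alpha$ stays of homogeneity $\geq 2$ — this uses the fact that $\mathpzc{c}_0(f)$ is already homogeneous of degree $\geq 1$ (in fact $\geq 2$ after centering) and that each commutator step multiplies by at least one factor of $f$ through $w^{[\mathsf{j}]}$. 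The induction produces all data with the stated properties for $\mathsf{j}=1,\ldots,\mathsf{j}_\ast-1$.
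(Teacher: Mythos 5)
Your strategy is essentially the same as the paper's: base case read off from \cref{lem:cc_po}, inductive step by conjugating with the time-$1$ flow $\Phi_{W^{\bra{\mathsf{j}}}}(f,1)$ of the Hamiltonian operator $W^{\bra{\mathsf{j}}}$, using the asymptotics $\partial_\xi\pare{\xi\mathpzc{d}^{\bra{\mathsf{j}}}} = (1+\mathpzc{c}_0(f))c_\alpha^1\alpha|\xi|^{\alpha-1}$ modulo order $0$ to extract the coefficient in front of $w^{\bra{\mathsf{j}}}_x$, and absorbing the $x$-average $\fint r^{\bra{\mathsf{j}}}\,\dd x$ into the constant-coefficient symbol $\sH^{\bra{\mathsf{j}+1}}_\alpha$. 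The use of the first-order Duhamel identity in place of the finite-order Lie expansion of \cref{lem:Lie_expansion} is equivalent and is a cosmetic choice; the paper prefers the Lie expansion because it isolates once and for all the homogeneity and order gain of each commutator. (Also minor: the time-derivative contribution should be handled by \eqref{eq:cc_pat}, not \eqref{eq:Egorov_timeder}; the latter is the $\Phi_B$-version where the generator has order $1$.)

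There is, however, a concrete sign error in the commutator computation that, as written, makes the claimed cancellation fail. With $A = \partial_x\circ\OpBW{\mathpzc{d}^{\bra{\mathsf{j}}}}$ and $W^{\bra{\mathsf{j}}}=\partial_x\circ\OpBW{w^{\bra{\mathsf{j}}}}$, since $\mathpzc{d}^{\bra{\mathsf{j}}}$ is $x$-independent one has $\partial_x\circ\OpBW{\mathpzc{d}^{\bra{\mathsf{j}}}}=\OpBW{\ii\xi\mathpzc{d}^{\bra{\mathsf{j}}}}$, and by \eqref{asharpb},
\begin{equation*}
\comm{A}{W^{\bra{\mathsf{j}}}}
 = \partial_x\circ\comm{\OpBW{\ii\xi\mathpzc{d}^{\bra{\mathsf{j}}}}}{\OpBW{w^{\bra{\mathsf{j}}}}}
 = \partial_x\circ\OpBW{ w^{\bra{\mathsf{j}}}_x\,\partial_\xi\pare{\xi\mathpzc{d}^{\bra{\mathsf{j}}}} } + \text{l.o.t.}
 = \partial_x\circ\OpBW{ w^{\bra{\mathsf{j}}}_x\,(1+\mathpzc{c}_0(f))\,c_\alpha^1\alpha|\xi|^{\alpha-1} } + \text{l.o.t.}
\end{equation*}
which, by the definition \eqref{eq:def_pc1}, equals $-\partial_x\circ\OpBW{r^{\bra{\mathsf{j}}}-\fint r^{\bra{\mathsf{j}}}\,\dd x}$ (\emph{with the minus sign}). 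Your proposal instead asserts the principal symbol of $\comm{A}{W^{\bra{\mathsf{j}}}}$ to be $-\partial_x\circ\OpBW{(1+\mathpzc{c}_0(f))c_\alpha^1\alpha|\xi|^{\alpha-1}w^{\bra{\mathsf{j}}}_x}=+\partial_x\circ\OpBW{r^{\bra{\mathsf{j}}}-\fint r^{\bra{\mathsf{j}}}}$, the opposite sign. If that sign were correct, adding it to $\partial_x\circ\OpBW{r^{\bra{\mathsf{j}}}}$ would give $\partial_x\circ\OpBW{2r^{\bra{\mathsf{j}}}-\fint r^{\bra{\mathsf{j}}}}$, not the constant-coefficient reduction. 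So the cancellation you describe is correct, but it comes from $\comm{A}{W^{\bra{\mathsf{j}}}}\approx-\partial_x\circ\OpBW{r^{\bra{\mathsf{j}}}-\fint r^{\bra{\mathsf{j}}}}$, consistently with the minus sign placed inside \eqref{eq:def_pc1} precisely so that this happens. Fixing the sign, your proof matches the paper's argument.
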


\begin{proof}
Note that \eqref{eq:paralinearized_4} 
has the form \eqref{eq:fj} for $ \mathsf{j} = 1  $
with $ \sH^{\bra{1}}_\alpha \pare{f;t, \xi} :=0 $, 
$ \mathpzc{d}^{\bra{1}}\pare{f; t, \xi}\defeq \pare{1+\mathpzc{c}_0\pare{f}} L_\alpha\pare{\av{\xi}} $,  
$ r^{\bra{1}}\pare{f;t, x, \xi}\defeq V^1\pare{f;t, x} $, 
$ P^{\bra{1}}\pare{f;t, x, \xi} \defeq P\pare{f; t, x, \xi} $ and 
$ R^{\bra{1}}\pare{f; t} \defeq R\pare{f; t} $. 
We now prove that, if 
$ f^{\bra{\mathsf{j}}} $ solves \eqref{eq:fj} then 
\begin{equation}\label{eq:fj+1}
f^{\bra{\mathsf{j}+1}} \defeq 
\Phi_{W^{\bra{\mathsf{j}}}}\pare{f, 1}^{-1} f^{\bra{\mathsf{j}}}  
\end{equation}
solves \eqref{eq:fj} with $ \mathsf{j}+1 $ instead of $ \mathsf{j} $. 
By conjugation, from  \eqref{eq:fj+1}, setting 
$ \Phi_{W^{\bra{\mathsf{j}}}}\pare{1} := \Phi_{W^{\bra{\mathsf{j}}}}\pare{f, 1} $, we have 
\begin{multline}
\label{eq:paralinearized_5}
\pa_t f^{\bra{\mathsf{j}+1}}   +\Phi_{W^{\bra{\mathsf{j}}}}\pare{1}^{-1}\circ \partial_x \circ \OpBW{ \mathpzc{d}^{\bra{\mathsf{j}}}\pare{f;t, \xi} + r^{\bra{\mathsf{j}}}\pare{f; t, x, \xi} + P^{\bra{\mathsf{j}}}\pare{f;t, x, \xi}  }\circ \Phi_{W^{\bra{\mathsf{j}}}}\pare{1}  \,  f^{\bra{\mathsf{j}+1}} \\
   -\partial_t \Phi_{W^{\bra{\mathsf{j}}}} \pare{1}^{-1} \circ \Phi_{W^{\bra{\mathsf{j}}}} \pare{1}
f^{\bra{\mathsf{j}+1}}  
 =  \Phi_{W^{\bra{\mathsf{j}}}} \pare{1}^{-1}\circ  R^{\bra{\mathsf{j}}} \pare{f;t}  \circ \Phi_{W^{\bra{\mathsf{j}}}}  \pare{1}  f^{\bra{\mathsf{j}+1}} .
\end{multline}
Using \eqref{eq:Taylor_conjugation}  we expand the highest order operator in
\eqref{eq:paralinearized_5} as
\begin{multline}
\label{eq:expansion_principal_by_auxiliary_flow}
  \ \Phi_{W^{\bra{\mathsf{j}}}} \pare{1}^{-1}\circ \partial_x \circ \OpBW{\mathpzc{d}^{\bra{\mathsf{j}}}\pare{f;t, \xi} }\circ \Phi_{W^{\bra{\mathsf{j}}}} \pare{1}
=
  \  \partial_x \circ \OpBW{\mathpzc{d}^{\bra{\mathsf{j}}}\pare{f;t, \xi} } \\
 -   \ \comm{\partial_x \circ \OpBW{w^{\bra{\mathsf{j}}} \pare{f;t,x,\xi} }}{ \partial_x \circ \OpBW{  \mathpzc{d}^{\bra{\mathsf{j}}}\pare{f;t, \xi}}}
+ \partial_x\circ \OpBW{Q_{-\pare{2\mathsf{j}-1}\delta} \pare{f;t, x,\xi}} + R \pare{f;t} 
\end{multline}
where, in view of \eqref{eq:def_pc1},   
 $ Q_{-\pare{2\mathsf{j}-1}\delta} $ is a real and even in $ \xi $ valued 
 symbol in $ \Sigma\Gamma^{-\pare{2\mathsf{j}-1}\delta}_{K, \mathsf{j}, 2}\bra{\epsilon_0 , N} $
and $ R\pare{f;t} $ is a smoothing operator in $ \Sigma \dot{\cR}^{-\rho}_{K, \mathsf{j}, 2}\bra{\epsilon_0 , N} $.
By symbolic calculus, \eqref{eq:def_pc1} 
  and since $ \mathpzc{d}^{\bra{\mathsf{j}}} $ is $ x $-independent we have
\begin{multline} \label{eq:expansion_principal_by_auxiliary_flow_2}
    \comm{\partial_x \circ \OpBW{w^{\bra{\mathsf{j}}}\pare{f;t,x,\xi}}}{ \partial_x \circ \OpBW{\mathpzc{d}^{\bra{\mathsf{j}}}\pare{f;t, \xi}}}
  =  \partial_x \circ \comm{\OpBW{ w^{\bra{\mathsf{j}}}\pare{f;t,x,\xi}}}{\OpBW{\ii  \xi \ \mathpzc{d}^{\bra{\mathsf{j}}}\pare{f;t, \xi}}}  \\
    = 
 \partial_x \circ \OpBW{ -  w^{\bra{\mathsf{j}}}_x \pare{f;t,x,\xi}  \  \partial_\xi\pare{\xi \ \mathpzc{d}^{\bra{\mathsf{j}}}\pare{f;t, \xi} }  + Q_{-2 -\pare{\mathsf{j}-1}\delta}\pare{f; t, x, \xi}
 }  + R\pare{f;t} 
\end{multline}
where $ Q_{-2-\pare{\mathsf{j}-1}\delta} \pare{f; t, x, \xi} $ 
is a  real and even symbol in $\Sigma\Gamma^{-2-\pare{\mathsf{j}-1}\delta}_{K, \mathsf{j},1}\bra{\epsilon_0, N} $ 
and $  R \pare{f;t} $ is a smoothing operator 
in $ \Sigma \dot{\cR}^{-\rho}_{K, \mathsf{j}, 1} \bra{\epsilon_0, N} $.
Using the asymptotic expansion \eqref{expaLapha} we have that
\begin{equation}
\label{eq:expansion_principal_by_auxiliary_flow_2-1}
\partial_\xi\pare{\xi \ \mathpzc{d}^{\bra{\mathsf{j}}}\pare{f;t, \xi} } 
=
\pare{1+\mathpzc{c}_0\pare{f}} \ c^1_\alpha \alpha \av{\xi}^{\alpha-1}
+\tilde{Q}^{\bra{\mathsf{j}}}\pare{f;t,\xi} \qquad
\text{where} \qquad 
\tilde{Q}^{\bra{\mathsf{j}}} 
\pare{f;t,\xi} \in\Sigma\Gamma^{0}_{K, \mathsf{j}-1, 0}\bra{\epsilon_0, N} \, . 
\end{equation}
So, by \eqref{eq:expansion_principal_by_auxiliary_flow}, 
\eqref{eq:expansion_principal_by_auxiliary_flow_2},
\eqref{eq:expansion_principal_by_auxiliary_flow_2-1},  \eqref{eq:Taylor_conjugation}, 
the definition of $ w^{\bra{\mathsf{j}}}\pare{f;t,x,\xi}  \in\Sigma \Gamma^{-\mathsf{j}\delta}_{K, \mathsf{j}, 1}\bra{\epsilon_0, N} $ provided in \cref{eq:def_pc1},   
\eqref{AdSwc}, 
we obtain
\begin{align}
 &  \Phi_{W^{\bra{\mathsf{j}}}}\pare{1}^{-1}\circ \partial_x \circ \OpBW{
  \mathpzc{d}^{\bra{\mathsf{j}}}\pare{f;t, \xi}
  +r^{\bra{\mathsf{j}}}\pare{f;t,x,\xi}
  }\circ \Phi_{W^{\bra{\mathsf{j}}}}\pare{1} \notag \\
& \ =  \partial_x \circ \OpBW{\mathpzc{d}^{\bra{\mathsf{j}}}\pare{f;t, \xi} 
+
w^{\bra{\mathsf{j}}}_x\pare{f;t,x,\xi}  \   \pare{1+\mathpzc{c}_0\pare{f}} \ c^1_\alpha \alpha \av{\xi}^{\alpha-1} + Q_{ -\mathsf{j}\delta}\pare{f; t, x, \xi}
 + r^{\bra{\mathsf{j}}}\pare{f;t,x,\xi}
}   + R \pare{f;t}  \notag \\
& \ =
   \partial_x \circ \OpBW{\mathpzc{d}^{\bra{\mathsf{j}}}\pare{f;t, \xi} + \fint r^{\bra{\mathsf{j}}}\pare{f;t,x,\xi} \dd x +   Q_{-\mathsf{j} \delta}\pare{f; t, x, \xi}  } +  R \pare{f;t} \label{finalid}
\end{align}
where $ Q_{-\mathsf{j}\delta}\pare{f; t, x, \xi} $ is a symbol 
in $ \Sigma \Gamma^{-\mathsf{j}\delta}_{K, \mathsf{j}, 1}\bra{\epsilon_0, N} $. 
 By 
 \eqref{eq:cc_pat}, 
\begin{equation}\label{conjtempo}
-\partial_t \Phi_{W^{\bra{\mathsf{j}}}} \pare{1}^{-1} \circ \Phi_{W^{\bra{\mathsf{j}}}} \pare{1} = \partial_x\circ\OpBW{T^{\bra{\mathsf{j}}}\pare{f;t, x, \xi}} + R\pare{f;t}
\quad \text{with} \quad  
T^{\bra{\mathsf{j}}}\pare{f;t, x, \xi} \in\Sigma \Gamma^{-\mathsf{j}\delta}_{K, \mathsf{j}+1, 1}\bra{\epsilon_0, N}
\end{equation}
real and even in $ \xi $. Furthermore by \eqref{eq:Taylor_conjugation}
\begin{equation}\label{anchePj} 
\Phi_{W^{\bra{\mathsf{j}}}}\pare{1}^{-1}\circ 
\OpBW{P^{\bra{\mathsf{j}}}\pare{f;t, x, \xi}}
\circ \Phi_{W^{\bra{\mathsf{j}}}}\pare{1} = 
\OpBW{P^{\bra{\mathsf{j+1}}} \pare{f;t, x, \xi}} \, , 
\end{equation}
up to a smoothing operator,   
with  a symbol 
$ P^{\bra{\mathsf{j+1}}} \pare{f;t, x, \xi} $  in $ \Sigma\Gamma^{-1}_{K, \mathsf{j}, 1}\bra{\epsilon_0, N} $. 
By  
  \eqref{finalid},  \eqref{conjtempo}, \eqref{anchePj}  
  and since 
  $ \Phi_{W^{\bra{\mathsf{j}}}}\pare{1}^{-1}\circ R^{\bra{\mathsf{j}}}\pare{f;t}\circ \Phi_{W^{\bra{\mathsf{j}}}}\pare{1} $ 
is in 
$ \Sigma\dot{\cR}^{-\pare{\rho - \rho_{\mathsf{j}} - N \pare{ 1-\mathsf{j}\delta}  }}_{K, \mathsf{j}, 1}\bra{\epsilon_0 , N}  $, 
we deduce that \eqref{eq:paralinearized_5} has the form
 \eqref{eq:fj} with $ \mathsf{j} $ replaced by $ \mathsf{j}+1 $
where 
$ \sH^{\bra{\mathsf{j}+1}}_{\alpha} \defeq \sH^{\bra{\mathsf{j}}}_{\alpha}  + \fint r^{\bra{\mathsf{j}}}\pare{f;t,x,\xi} \dd x $ and 
  $ r^{\bra{\mathsf{j}+1}} \defeq
T^{\bra{\mathsf{j}}}
+ Q_{-\mathsf{j} \delta} $.
  \end{proof}
  
Now, implementing an analogous  algorithmic procedure for the symbols of 
order $ \leq - 1 $, 
we reduce the equation \eqref{eq:fj} for $ \mathsf{j} = \mathsf{j}_\ast $ 
 to constant coefficients  up to a smoothing operator. 
 
  \begin{lemma}[Reduction to constant coefficients up to smoothing operators]
  \label{lem:reduction_negative_orders}
  For any integer $ \mathsf{j} \geq \mathsf{j}_\ast  $, 
  for any $ K\geq \mathsf{j} $ there exist a  
\begin{itemize}
\item  symbol 
$  \mathpzc{d}^{\bra{ \mathsf{j}}}\pare{f;t, \xi}\defeq \pare{1+\mathpzc{c}_0\pare{f}}L_\alpha\pare{\av{\xi}} + \sH^{\bra{ \mathsf{j}}}_{\alpha}\pare{f;t,  \xi} $ 
with $\sH^{\bra{ \mathsf{j}}}_{\alpha}\pare{f;t, \xi} \in 
\Sigma \Gamma^{0}_{K, \mathsf{j}-1, 2}\bra{\epsilon_0, N} $
and
$ \Im \sH^{\bra{ \mathsf{j}}}_{\alpha}\pare{f;t, \xi} $
in $ \Sigma \Gamma^{-1}_{K, \mathsf{j}-1, 2}\bra{\epsilon_0, N}  $, 
independent of $ x $ and satisfying \eqref{areal}; 
\item  symbol $ P^{\bra{\mathsf{j}}}\pare{f;t, x, \xi} $ in
$ \Sigma \Gamma^{-1- \pare{ \mathsf{j}-\mathsf{j}_\ast }\delta}_{K, \mathsf{j}, 1}\bra{\epsilon_0, N} $ satisfying \eqref{areal}; 
\item a real smoothing operator 
$ R^{\bra{\mathsf{j}}}\pare{f;t} $  in $ \Sigma\cR^{-\pare{ \rho- \rho_{\mathsf{j}_\ast} }}_{K, \mathsf{j}, 1}\bra{\epsilon_0, N} $;
\item bounded linear operators $  W^{\bra{ \mathsf{j}}}\pare{f} \defeq \partial_x\circ \OpBW{w^{\bra{ \mathsf{j}}}\pare{f;t, x, \xi}}  $ where 
\begin{equation}\label{eq:def_pc2}
 w^{\bra{\mathsf{j}}}\pare{f;t, x, \xi}
 \defeq
 -
  \partial_x^{-1} \bra{
\frac{ P^{\bra{\mathsf{j}}}\pare{f;t,x,\xi} - \fint P^{\bra{\mathsf{j}}}\pare{f;t,x,\xi} \dd x}{ \pare{1+\mathpzc{c}_0\pare{f}} \ c^1_\alpha \alpha \av{\xi}^{\alpha-1}   }  
  }
\in\Sigma\Gamma^{-1-\pare{  \mathsf{j}-\mathsf{j}_\ast  +1 }\delta}_{K,  \mathsf{j}, 1}\bra{\epsilon_0, N}  
   ;
\end{equation}

\end{itemize}
and $ s_0 > 0 $, such that 
if $ f\in\Ball{K}{s_0} $ is a solution of   \eqref{eq:paralinearized_1} then $ f^{\bra{\mathsf{j}}} \defeq \prod_{\mathsf{j}' = 1}^{ \mathsf{j}-1} \Phi_{W^{\bra{\mathsf{j}'}}}\pare{f; 1}^{-1} \circ \Phi_B\pare{f; 1} f $ solves
\begin{equation}\label{eq:fjast+j}
\pa_t f^{\bra{ \mathsf{j}}} + \partial_x\circ\OpBW{ \mathpzc{d}^{\bra{ \mathsf{j}}}\pare{f;t, \xi} + P^{\bra{\mathsf{j}}}\pare{f; t, x, \xi}  } f^{\bra{ \mathsf{j}}} 
=
R^{\bra{ \mathsf{j}}}\pare{f; t}f^{\bra{\mathsf{j}}}. 
\end{equation}
  \end{lemma}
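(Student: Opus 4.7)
The plan is to prove Lemma \ref{lem:reduction_negative_orders} by induction on $\mathsf{j}\ge \mathsf{j}_\ast$, essentially recycling the algorithmic scheme of Lemma \ref{lem:reduction_positive_orders}, but now acting on symbols of order $\leq -1$ and gaining $\delta = \alpha-1$ at each step.

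\textbf{Base case $\mathsf{j}=\mathsf{j}_\ast$.} One starts from equation \eqref{eq:fj} with $\mathsf{j}=\mathsf{j}_\ast$, for which Lemma \ref{lem:reduction_positive_orders} already provides a symbol $r^{[\mathsf{j}_\ast]}(f;t,x,\xi)$ of order $-(\mathsf{j}_\ast-1)\delta<-1$ (by the definition of $\mathsf{j}_\ast$ in the footnote) and a symbol $P^{[\mathsf{j}_\ast]}$ of order $-1$. One simply absorbs $r^{[\mathsf{j}_\ast]}$ into $P^{[\mathsf{j}_\ast]}$, obtaining \eqref{eq:fjast+j} at level $\mathsf{j}_\ast$. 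The symbol $\sH^{[\mathsf{j}_\ast]}_\alpha$ inherited from Lemma \ref{lem:reduction_positive_orders} is real (being the sum of the real function $V^1$ and of the averages $\fint r^{[\mathsf{j}']}\dd x$, all of which are real by construction), so $\Im\sH^{[\mathsf{j}_\ast]}_\alpha\equiv 0$ and the required property on the imaginary part holds trivially. The reality condition \eqref{areal} on $P^{[\mathsf{j}_\ast]}$ is inherited from the reality of $r^{[\mathsf{j}_\ast]}$ and of $P^{[\mathsf{j}_\ast]}$ coming from Lemma \ref{lem:reduction_positive_orders}.

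\textbf{Inductive step $\mathsf{j}\rightsquigarrow \mathsf{j}+1$.} Assuming \eqref{eq:fjast+j} at level $\mathsf{j}$, define $w^{[\mathsf{j}]}$ by \eqref{eq:def_pc2} and the associated Hamiltonian paradifferential operator $W^{[\mathsf{j}]}(f)=\partial_x\circ\OpBW{w^{[\mathsf{j}]}}$. Note that $w^{[\mathsf{j}]}$ satisfies \eqref{areal} (since $P^{[\mathsf{j}]}$ does and the operations $\partial_x^{-1}$, averaging, division by the even real symbol $(1+\mathpzc{c}_0)c^1_\alpha\alpha|\xi|^{\alpha-1}$ all preserve this property), its imaginary part has order $-1-(\mathsf{j}-\mathsf{j}_\ast+1)\delta\le -1$, so Lemma \ref{lem:existence_flow} applies and provides a flow $\Phi_{W^{[\mathsf{j}]}}(f,\tau;t)$ satisfying \eqref{eq:transport_estimate0}. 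The new unknown is
\[
f^{[\mathsf{j}+1]}\defeq \Phi_{W^{[\mathsf{j}]}}(f,1;t)^{-1}\,f^{[\mathsf{j}]}.
\]
Conjugating \eqref{eq:fjast+j} exactly as in the proof of Lemma \ref{lem:reduction_positive_orders} (using the Taylor conjugation formula \eqref{eq:Taylor_conjugation} and the formulas \eqref{eq:cc_pat}, \eqref{AdSwc}), the term $\partial_x\circ\OpBW{\mathpzc{d}^{[\mathsf{j}]}}$ produces the commutator
\[
-[\partial_x\circ \OpBW{w^{[\mathsf{j}]}},\partial_x\circ \OpBW{\mathpzc{d}^{[\mathsf{j}]}}]
=-\partial_x\circ\OpBW{-w^{[\mathsf{j}]}_x\,\pare{1+\mathpzc{c}_0(f)}c^1_\alpha\alpha|\xi|^{\alpha-1}+Q_{-2-(\mathsf{j}-\mathsf{j}_\ast+1)\delta}}
\]
up to a $\Sigma\dot{\cR}^{-\rho}_{K,\mathsf{j},1}$-smoothing operator, using the expansion \eqref{expaLapha} exactly as in \eqref{eq:expansion_principal_by_auxiliary_flow_2-1}. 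By the definition \eqref{eq:def_pc2}, this cancels precisely the non-$x$-independent part of $\partial_x\circ\OpBW{P^{[\mathsf{j}]}}$, leaving $\partial_x\circ\OpBW{\fint P^{[\mathsf{j}]}\dd x}$ plus a new symbol $P^{[\mathsf{j}+1]}$ in $\Sigma\Gamma^{-1-(\mathsf{j}+1-\mathsf{j}_\ast)\delta}_{K,\mathsf{j},1}[\epsilon_0,N]$ satisfying \eqref{areal}. The conjugation of $\partial_t$ as in \eqref{conjtempo} and of $R^{[\mathsf{j}]}$ produce a lower-order symbol (absorbable into $P^{[\mathsf{j}+1]}$) and a real smoothing operator $R^{[\mathsf{j}+1]}$, at the cost of losing a bounded amount of regularity in $\rho$.

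\textbf{Definition of the new diagonal symbol.} One sets $\sH^{[\mathsf{j}+1]}_\alpha\defeq \sH^{[\mathsf{j}]}_\alpha+\fint P^{[\mathsf{j}]}\dd x$. Since $P^{[\mathsf{j}]}$ satisfies \eqref{areal}, its imaginary part is odd in $\xi$; its $x$-average therefore also satisfies \eqref{areal}, and has order $-1-(\mathsf{j}-\mathsf{j}_\ast)\delta\le -1$. Combined with the inductive hypothesis $\Im\sH^{[\mathsf{j}]}_\alpha\in \Sigma\Gamma^{-1}_{K,\mathsf{j}-1,2}[\epsilon_0,N]$, this gives $\Im\sH^{[\mathsf{j}+1]}_\alpha\in \Sigma\Gamma^{-1}_{K,\mathsf{j},2}[\epsilon_0,N]$ as required.

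\textbf{Main obstacle.} The non-trivial point is bookkeeping: ensuring at each step that the reality condition \eqref{areal} is preserved, that the imaginary part of the $x$-independent symbol $\sH^{[\mathsf{j}]}_\alpha$ remains of order $-1$, and that the class indices $(K',p,\rho)$ of the remainders are tracked consistently with the finite budget given by $\rho_{\mathsf{j}_\ast}$ and $K\ge \mathsf{j}$. The parity structure (real/even-in-$\xi$ at principal order, imaginary/odd-in-$\xi$ at subprincipal order $-1$) is exactly preserved because the flows are generated by Hamiltonian-up-to-order-zero operators, matching the structure required by Lemma \ref{lem:existence_flow}; this linearly Hamiltonian structure is the key algebraic invariant that makes the iteration terminate at an equation of the desired form.
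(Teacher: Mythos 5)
Your proposal is correct and it is in fact exactly the argument the paper leaves implicit: Lemma \ref{lem:reduction_negative_orders} is obtained by repeating the iterative conjugation scheme of \Cref{lem:reduction_positive_orders}, with the symbol $P^{\bra{\mathsf{j}}}$ (only satisfying \eqref{areal}) playing the role of the real, even symbol $r^{\bra{\mathsf{j}}}$, and the definition \eqref{eq:def_pc2} of $w^{\bra{\mathsf{j}}}$ engineered so that, after conjugation via \eqref{eq:Taylor_conjugation}, \eqref{eq:expansion_principal_by_auxiliary_flow_2}, \eqref{eq:expansion_principal_by_auxiliary_flow_2-1} and \eqref{eq:cc_pat}, the $x$-dependent part of $P^{\bra{\mathsf{j}}}$ cancels and only $\fint P^{\bra{\mathsf{j}}}\,\dd x$ survives (defining $\sH^{\bra{\mathsf{j}+1}}_\alpha$), together with a new, more regular symbol $P^{\bra{\mathsf{j}+1}}$. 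Your treatment of the reality structure is also right: $P^{\bra{\mathsf{j}}}$ satisfying \eqref{areal} propagates to $w^{\bra{\mathsf{j}}}$, to the averages, and to $\sH^{\bra{\mathsf{j}+1}}_\alpha$; $\Im\sH^{\bra{\mathsf{j}_\ast}}_\alpha\equiv 0$ because $\sH^{\bra{\mathsf{j}_\ast}}_\alpha$ is real by \Cref{lem:reduction_positive_orders}, and each increment $\fint P^{\bra{\mathsf{j}}}\,\dd x$ has imaginary part of order $\leq -1$. One small imprecision worth flagging: you write that the conjugation of the smoothing remainder costs ``a bounded amount of regularity in $\rho$'', but in this regime no regularity is lost at all — the flow generators $w^{\bra{\mathsf{j}}}$ all have order $\leq -1-\delta < -1$, so the exponent $N\max\{0,1-\mathtt{d}\}$ in \Cref{lem:Lie_expansion}-\ref{item:action_flow-iiLie} vanishes; this is precisely why the lemma asserts the uniform regularizing index $-(\rho-\rho_{\mathsf{j}_\ast})$ for $R^{\bra{\mathsf{j}}}$ independent of $\mathsf{j}\geq \mathsf{j}_\ast$, in contrast to the growing sequence $\rho_\mathsf{j}$ in \Cref{lem:reduction_positive_orders}.
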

  
We now conclude the proof of Proposition \ref{prop:cc_ao}. Let 
 $
 \mathsf{j}^\ast :=  \mathsf{j}^\ast \pare{\rho} 
 \defeq \min\set{\mathsf{j}\in\bN_0 \ \middle| \ \pare{\mathsf{j}-\mathsf{j}_\ast}\delta > \rho- \rho_{\mathsf{j}_\ast}}, 
 $
 which is explicitly 
 $
 \mathsf{j}^\ast \defeq \ceil{\frac{\rho - \rho_{\mathsf{j}_\ast} }{\alpha-1}} + \mathsf{j}_\ast
 =
 \ceil{\frac{\rho - \rho_{\mathsf{j}_\ast} }{\alpha-1}} + \ceil{ \frac{1}{\alpha-1}} + 1 
 $, 
so  that $ \OpBW{P^{\bra{\mathsf{j}^\ast}}\pare{f;t,x,\xi}} $ is a smoothing operator in 
$ \Sigma \cR ^{-\pare{\rho- \rho_{\mathsf{j}_\ast} }}_{K,  \mathsf{j}^\ast,1}\bra{\epsilon_0, N} $ by \Cref{rem:smoo}.  
Then the equation 
\eqref{eq:fjast+j} with  $\mathsf{j} = \mathsf{j}^* $ has the form 
\eqref{eq:paralinearized_7} with 
$$ 
\cg =  f^{\bra{\mathsf{j}^*}} = 
 \Psi\pare{f;t} f \, , \qquad 
 \Psi\pare{f;t} := 
 \prod_{ \mathsf{j} ' =1}^{\mathsf{j}^\ast - 1} \Phi_{W^{\bra{\mathsf{j}'}}}\pare{f, 1}^{-1} \circ \Phi_B\pare{f, 1} \, ,
$$
symbol $ \sH_\alpha\pare{f;t, \xi} \defeq \sH^{\bra{\mathsf{j}^\ast}}_{\alpha} \pare{f;t, \xi}  $, 
smoothing operator 
 $ R\pare{f;t}\defeq R^{\bra{\mathsf{j}^\ast}}\pare{f;t} + 
 \OpBW{P^{\bra{\mathsf{j}^\ast}}\pare{f;t,x,\xi} }   $, 
    and defining $ \underline{\rho} \pare{N, \alpha} \defeq  \rho_{\mathsf{j}_\ast} $ and 
    $ \underline{K'}\pare{\rho, \alpha} \defeq \mathsf{j}^\ast $. 
\hfill $ \Box $

\paragraph{Birkhoff normal form step.}
\label{sec:quadratic_normal_forms}
We now perform one step of Birkhoff normal form to cancel out  
the  quadratic term  in \eqref{eq:paralinearized_7} which, 
since $ c_0 \pare{f} $ and $ H_\alpha \pare{f; t, \xi }$ vanish quadratically at $ f = 0 $,  
comes only from  $ R\pare{f;t} \cg $. 

By Proposition \ref{lem:BNF1} and 
using Proposition  \ref{prop:composition_BW} we first rewrite 
\eqref{eq:paralinearized_7} as
 \begin{equation}
\label{eq:paralinearized_1bis}
\pa_t \cg  +  \ii \omega_\alpha \pare{D} \cg +
\ii \, \OpBW{ d\pare{ f; t, \x}  }  \, \cg = 
R_1 \pare{f} \cg + R_{\geq 2}\pare{f;t} \cg
\end{equation}
where  
\begin{enumerate}[i)]
\item
$ d\pare{ f; t, \x}  \defeq    {  \mathpzc{c}_0\pare{f} 
\omega_\alpha \pare{\xi} 
+ \xi \ \mathsf{H}_{\alpha}\pare{f; t, \xi} } $ is a symbol in 
$ \Sigma \Gamma^{\alpha}_{K, \underline{K'}, 2}\bra{\epsilon_0, N} $ independent of $ x $,  
with imaginary part 
$ \Im \, d\pare{ f; t, \x}  $ in $ \Sigma \Gamma^{0}_{K, \underline{K'}, 2}\bra{\epsilon_0, N} $;
\item
$ R_1\pare{f} $ is a real homogenous smoothing operator 
in $ \dot{\widetilde{\mathcal{R}}}^{ \ - ( \rho-\underline{\rho} ) }_1 $ , that we expand 
(cf.  \eqref{smoocara0}) as 
\be\label{Qsmo2}
R_1\pare{f}  v = \sum_{n,k, j \in \Z \setminus \{0\}, \atop n+j=k} 
\pare{r_1}_{n,j,k} f_n v_j e^{\ii k x}\, , \quad  
\quad \pare{r_1}_{n,j,k}  \in \C \, , 
\ee
and $ R_{\geq 2}\pare{f;t} $ is a real smoothing operator  in 
$ \Sigma \dot{\mathcal{R}}^{-(\rho-\underline{\rho})}_{K, \underline{K'}, 2}\bra{\epsilon_0, N} $.
\end{enumerate}

In order to remove $ R_1 \pare{f} $ 
we conjugate \eqref{eq:paralinearized_1bis} with  the  flow 
 \begin{equation}\label{BNFstep1}
\partial_{\tau} \mathcal{\Phi}_Q^{\tau}\pare{f}  = Q\pare{f} \mathcal{\Phi}_Q^{\tau}\pare{f} \, , 
\qquad  \mathcal{\Phi}_Q^{0}\pare{f} = {\rm Id} \, ,  
\end{equation}
generated by the $ 1$-homogenous smoothing operator
\begin{equation}\label{omoBNF5}
Q\pare{f} v = \sum_{n,k, j \in \Z \setminus \{0\}, \atop n+j=k} 
q_{n,j,k} f_{n} v_j e^{\ii k x}\, , \quad 
   q_{n,j,k} \defeq
  \frac{-( r_1)_{n,j,k}}{\ii \big( \omega_\alpha(k) -  \omega_\alpha\pare{j} -
   \omega_\alpha(n) \big)} \, , 
\end{equation} 
which is  well-defined
by Lemma  \ref{lem:nonres_cond}. Note also 
that by \eqref{eq:nonres_cond} and 
since (cf. \eqref{eq:reality_cond_reminder}, \eqref{eq:bound_fourier_representation_m_operators})
\begin{equation}\label{realsmo}
\overline{(r_1)_{n,j,k}} = (r_1)_{-n,-j,-k} \, , \qquad 
 | (r_1)_{n,j,k}|\leq  C \frac{{\max}_2 \pare{|n|,|j|}^{\mu}}{\max\pare{|n|,|j|}^{\rho-\underline{\rho}}} \, ,
 \end{equation}
also $ Q \pare{f} $  is a real smoothing operator in 
$ \dot{\widetilde{\mathcal{R}}}_1^{-\rho +\underline{\rho} } $ as $ R_1 \pare{f} $.

\begin{lemma}[Birkhoff step]\label{lem:BNF1step}
If $ \cg $ solves \eqref{eq:paralinearized_1bis} 
then   the variable
$ \cy \defeq \mathcal{\Phi}_Q^{1} \pare{f} \cg    $
solves the equation \eqref{BNF12}.
 \end{lemma}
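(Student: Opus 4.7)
\medskip

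\noindent\emph{Proof plan for Lemma \ref{lem:BNF1step}.}
The plan is to differentiate $\cy=\Phi_Q^{1}(f)\cg$ in time, conjugate every operator on the right-hand side of \eqref{eq:paralinearized_1bis} through $\Phi_Q^{1}(f)$ by a Lie-type expansion, and show that the choice \eqref{omoBNF5} of the generator $Q(f)$ solves a homological equation which erases exactly the quadratic remainder $R_1(f)\cg$, leaving only a cubic smoothing contribution. Concretely, I would write
\begin{equation*}
\pa_t\cy \;=\;\bigl[\pa_t\Phi_Q^{1}(f)\bigr]\Phi_Q^{1}(f)^{-1}\cy+\Phi_Q^{1}(f)\,\pa_t\cg,
\end{equation*}
and substitute \eqref{eq:paralinearized_1bis}. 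Since $Q(f)$ is a real homogeneous smoothing operator in $\dot{\wt{\cR}}^{\,-(\rho-\underline{\rho})}_1$ (using the lower bound \eqref{eq:nonres_cond}, the reality \eqref{realsmo}, and \eqref{omoBNF5}), the flow $\Phi_Q^{\tau}(f)$ provided by Lemma~\ref{lem:existence_flow} is well defined, real, bounded on $H^s_0$, and $\Phi_Q^{1}(f)=\Id+Q(f)+\tfrac12 Q(f)^2+\dots$ in the sense of a Neumann/Lie series whose tails belong to higher-homogeneity smoothing classes thanks to Proposition~\ref{compositionMoperator}.

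Next I would expand each conjugation via $\Phi_Q^{1}A\,\Phi_Q^{-1}=A+[Q,A]+\tfrac12[Q,[Q,A]]+\dots$. For $A=\ii\omega_\alpha(D)$ one has $[Q,\ii\omega_\alpha(D)]=-\ii[\omega_\alpha(D),Q]$, and the higher iterated commutators, being at least $2$-homogeneous in $f$, belong to $\Sigma\dot{\cR}^{\,-(\rho-\underline\rho-\alpha)}_{K,\underline{K}',2}$ after losing $\alpha$ derivatives to commute with $\omega_\alpha(D)$. For $A=\OpBW{d(f;t,\xi)}$, since $d$ lies in $\Sigma\Gamma^{\alpha}_{K,\underline{K}',2}$, the commutator $[Q,\OpBW{d}]$ is already cubic in $f$ and again smoothing of order $-(\rho-\underline\rho-\alpha)$ by Proposition~\ref{compositionMoperator}(iv). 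Finally $\Phi_Q^{1}R_1(f)\Phi_Q^{-1}=R_1(f)+[Q,R_1(f)]+\dots$ with the commutator cubic smoothing, and $\Phi_Q^{1}R_{\ge 2}\Phi_Q^{-1}$ stays in $\Sigma\dot{\cR}^{\,-(\rho-\underline\rho)}_{K,\underline{K}',2}$.

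For the time-derivative term I would use Duhamel's formula $\pa_t e^{Q(f(t))}\,e^{-Q(f(t))}=\int_0^{1}e^{(1-s)Q}\,\pa_t Q(f)\,e^{-(1-s)Q}\,ds=\pa_t Q(f)+\mathcal{O}(Q\,\pa_t Q)$, where the tail is cubic smoothing. Because $Q(f)$ is linear in its argument, $\pa_t Q(f)=Q(\pa_t f)$, and \eqref{eq:paralinearized_1} gives $\pa_t f+\ii\omega_\alpha(D)f=\mathcal{O}(f^2)$, yielding $\pa_t Q(f)=-Q(\ii\omega_\alpha(D)f)+(\text{cubic smoothing})$. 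Collecting all terms gives
\begin{equation*}
\pa_t\cy+\ii\omega_\alpha(D)\cy+\ii\OpBW{d(f;t,\xi)}\cy
\;=\;\Bigl\{-Q(\ii\omega_\alpha(D)f)+\ii[\omega_\alpha(D),Q(f)]+R_1(f)\Bigr\}\cy+R'_{\ge 2}(f;t)\cy.
\end{equation*}

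A direct Fourier computation (using \eqref{Qsmo2}--\eqref{omoBNF5}) shows that the bracketed expression acts on $v=\cy$ as
\begin{equation*}
\sum_{n+j=k}\bigl[-\ii\,q_{n,j,k}\,\omega_\alpha(n)+\ii\,q_{n,j,k}(\omega_\alpha(k)-\omega_\alpha(j))+(r_1)_{n,j,k}\bigr]f_n\,v_j\,e^{\ii k x},
\end{equation*}
and the defining formula \eqref{omoBNF5} for $q_{n,j,k}$ makes the bracket vanish identically for every admissible triple $(n,j,k)$. Hence the quadratic remainder is removed, and $\cy$ satisfies \eqref{BNF12} with $R_{\ge 2}(f;t):=R'_{\ge 2}(f;t)$. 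Reality of $R_{\ge 2}$ follows because $Q(f)$, $R_1(f)$, $R_{\ge 2}$ and $\Phi_Q^{1}(f)$ are all real. The main technical obstacle is the careful bookkeeping showing that every iterated commutator and every cubic tail lands in the declared class $\Sigma\dot{\cR}^{\,-(\rho-\underline\rho-\alpha)}_{K,\underline{K}',2}$, which requires repeated use of Proposition~\ref{compositionMoperator} and Remark~\ref{rem:smoo} together with the fact that $Q(f)\in\dot{\wt\cR}^{\,-(\rho-\underline\rho)}_1$ absorbs one loss of order $\alpha$ each time a commutator with $\omega_\alpha(D)$ or $\OpBW{d}$ is performed.
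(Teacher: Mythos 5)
Your proposal is correct and follows essentially the same strategy as the paper: conjugating \eqref{eq:paralinearized_1bis} through $\Phi_Q^1(f)$, expanding each conjugated operator to first order in $Q$ with cubic-smoothing tails, using $\pa_t f = -\ii\omega_\alpha(D)f + \cO(f^2)$ to reduce $\pa_t Q(f)$ to $Q(-\ii\omega_\alpha(D)f)$ modulo smoothing, and then verifying via the Fourier computation that the choice \eqref{omoBNF5} solves the homological equation $Q(-\ii\omega_\alpha(D)f)+\comm{Q(f)}{-\ii\omega_\alpha(D)}+R_1(f)=0$ exactly. The only cosmetic difference is that you phrase the expansion via $e^{Q}$ and Duhamel, whereas the paper writes the Lie/Taylor expansion with explicit integral remainders as in Lemma \ref{lem:Lie_expansion}; the content is the same.
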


\begin{proof}
To conjugate \eqref{eq:paralinearized_1bis}  we apply a Lie expansion 
(similarly to \Cref{lem:Lie_expansion}). We have 
   \begin{align}
  -\ii \mathcal{\Phi}_Q^{1} \pare{f} \, \omega_\alpha (D)  \, \big(\mathcal{\Phi}_Q^{1}\big)^{-1} & =
 -\ii \, \omega_\alpha (D)  \, + \comm{Q\pare{f}}{-\ii \, \omega_\alpha (D)}  \nonumber \\
 & \ +\int_{0}^{1} (1- \tau) 
\mathcal{\Phi}_Q^{\tau}\pare{f} \comm{ Q\pare{f}}{ 
\comm{Q\pare{f}}{-\ii \, \omega_\alpha (D)  } }\pare{\mathcal{\Phi}_Q^{\tau}\pare{ f}  }^{-1} d \tau \label{core4} \, .
\end{align}
Using that $ Q\pare{f}$ belongs to 
 $\dot{\widetilde{\mathcal{R}}}_{1}^{-\rho+\underline{\rho}} $
 the  term in \eqref{core4}
 is a smoothing operator  in $ \Sigma\dot{\mathcal{R}}^{-\rho+\underline{\rho}+ \alpha}_{K,0,2} \bra{\epsilon_0, N} $.
Similarly we obtain
 \begin{equation}\label{core2}
  -\ii \mathcal{\Phi}_Q^{1}\pare{f}\OpBW{ d\pare{ f; t, \x} } 
   \pare{\mathcal{\Phi}_Q^{1}\pare{f} }^{-1}=
  -\ii \OpBW{ d\pare{ f; t, \x} }
 \end{equation}
  up to a smoothing operator  in 
  $ \Sigma\dot{\mathcal{R}}^{-\rho+\underline{\rho}+\alpha}_{K,K_0',2} \bra{\epsilon_0, N} $, and 
 \begin{equation}\label{core3}
\mathcal{\Phi}_Q^{1}\pare{f}\big( R_1\pare{f} + R_{\geq 2}\pare{f;t}\big)
\pare{\mathcal{\Phi}_Q^{1}\pare{f}}^{-1}=   R_1\pare{f} 
 \end{equation}
plus a smoothing operator in 
$ \Sigma\dot{\mathcal{R}}^{-\rho+\underline{\rho}+\alpha}_{K,K_0',2}\bra{\epsilon_0, N}  $.
Next we consider the contribution coming from the conjugation of $ \pa_t $. 
By a Lie expansion (similarly to  
\Cref{lem:Lie_expansion}) we get
 \begin{multline}
    \pa_t \mathcal{\Phi}_Q^{1} \pare{f} \pare{\mathcal{\Phi}_Q^{1} \pare{f} }^{-1}    =  
  \pa_t Q\pare{f}   \\
    +
  \frac{1}{2} \comm{Q\pare{f}}{\pa_t Q\pare{f} }
 + \frac{1}{2}\int_0^{1} \!\! (1- \tau)^{2} \mathcal{\Phi}_Q^{\tau}\pare{f}
\comm{Q\pare{f}}{ \comm{ Q\pare{f} }{ \pa_t Q\pare{f}  } }
 \pare{\mathcal{\Phi}_Q^{\tau} \pare{f} }^{-1} d \tau \, . \label{core5}
 \end{multline}
Since the  \cref{eq:paralinearized_1}  can be written as  
$ \pa_t f  =-\ii \omega_\alpha (D) f +  M\pare{f} f $
where $  M\pare{f} $ is a real $ \alpha $-operator  
in  $\Sigma \dot{\mathcal{M}}^{\alpha}_{K,0,1} $ by \Cref{rem:smoo} and \ref{rem:OpBW_firstproperties}, we deduce by Proposition \ref{compositionMoperator} that 
\begin{equation}\label{patQs}
 \pa_t  Q\pare{f} =  Q \pare{ -\ii \omega_\alpha (D) f+ M\pare{f}f } =
  Q \pare{-\ii \omega_\alpha (D)  f}
\end{equation}
 up to a smoothing operator 
 in $ \Sigma\dot{\mathcal{R}}^{-\rho+\underline{\rho}+ \alpha}_{K,0,2} \bra{\epsilon_0, N} $. 
Since $ Q \pare{ -\ii \omega_\alpha (D) f } $ is in 
$ \dot{\widetilde{\mathcal{R}}}_{1}^{-\rho+\underline{\rho} + \alpha }   $
  we have that the line \eqref{core5} belongs to 
$ \Sigma\dot{\mathcal{R}}^{-\rho+\underline{\rho}+ \alpha}_{K,0,2} \bra{\epsilon_0, N} $.

We now prove  that $ Q\pare{f} $ 
solves 
the homological equation
\begin{equation}\label{omoBNF}
Q \pare{ -\ii \omega_\alpha (D) f} + \comm{Q \pare{ f} }{ -\ii \omega_\alpha (D) } + R_1\pare{f}=0 \, .
\end{equation}
Writing \eqref{omoBNF5} as
$ Q\pare{f} v = \sum_{k, j \in \Z \setminus \{0\}} \big[ Q\pare{f} \big]_k^j v_j e^{\ii k x}
$ with $ \big[ Q\pare{f} \big]_k^j \defeq q_{n,j,k} f_{n} $, 
we see that the homological 
equation \eqref{omoBNF} 
amounts to 
$
\bra{Q(-\ii \omega_\alpha (D) f)}^{j}_{k}+  \bra{Q\pare{f}}^j_k \big(
\ii  \omega_\alpha (k) - \ii \omega_\alpha \pare{j}   \big) + \bra{R_1 \pare{f}}^{j}_{k}=0 $, 
 for any $j,k\in \Z\setminus\{0\}$,  
and then, recalling \eqref{Qsmo2}, 
to 
$  q_{n,j,k} \, \ii \big(\omega_\alpha (k) -\omega_\alpha \pare{j}- \omega_\alpha (n) \big)+
   (r_1)_{n,j,k}=0 $.
This proves  \eqref{omoBNF}. 
 
In conclusion, by  \eqref{core4}, \eqref{core2},  \eqref{core3}, \eqref{core5}, \eqref{patQs}  
and \eqref{omoBNF}  we deduce \eqref{BNF12}
(after renaming $ \rho $). The bound \eqref{eq:equivalence_yf}
follows by standard theory of Banach space ODEs for the flow \eqref{BNFstep1}
and \eqref{eq:equivalence_yf}.
\end{proof}

In view of Lemma \ref{lem:BNF1step}, 
Proposition \ref{lem:BNF1} follows 
defining $ \underline{\Psi}\pare{f;t}\defeq \Phi^1_Q\pare{f}\circ \Psi\pare{f;t} $ where $ \Psi\pare{f;t} $ is defined in \Cref{prop:cc_ao} and $ \Phi^1_Q\pare{f} $ is defined in \eqref{omoBNF5}. We now easily deduce Theorem \ref{thm:main}.

\paragraph{Proof of \Cref{thm:main}.}

The following result, analogous to \cite[Lemma 8.2]{BMM2022}, 
enables to control the time derivatives $ \| \pa_t^k f(t) \|_{s-k\alpha} $ of a solution 
$  f(t) $ of \eqref{eq:paralinearized_1} via $ \| f(t)\|_s  $. 

\begin{lemma}\label{lem:equivalence_norms}
Let $ K\in \bN $. There exists $ s_0> 0 $ such that for any $ s\geq s_0 $, any $ \epsilon\in\pare{0, \overline{\epsilon_0}\pare{s}} $ small, if $ f $ belongs to
$ B^0_{s_0, \bR}\pare{I;\epsilon}\cap \Cast{0}{s} $ 
and solves \eqref{eq:paralinearized_1} then $ f\in\Cast{K}{s} $ and there exists $ C_1 \defeq C_1\pare{s, \alpha, K} \geq 1 $ such that
$   \norm{f\pare{t}}_s  \leq
 \norm{f\pare{t}}_{K, s}
 \leq C_1 \norm{f\pare{t}}_s $ \ for any $ t \in I $. 
\end{lemma}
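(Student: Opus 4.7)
The plan is to argue by induction on the number of time derivatives $k$, using the paralinearized equation \eqref{eq:paralinearized_1} to trade one time derivative for $\alpha$ spatial derivatives. The base case $k=0$ is trivial. For the inductive step, I would rewrite \eqref{eq:paralinearized_1} as
\begin{equation*}
\pa_t f = - \partial_x\circ \OpBW{\pare{1+\nu(f;x)}L_\alpha(\av{\xi}) + V(f;x) + P(f;x,\xi)} f + R(f)f \, ,
\end{equation*}
differentiate in time $k$ times using the Leibniz formula, and then estimate the resulting expression in $H^{s-\alpha(k+1)}$. Since $L_\alpha$ has order $\max\{0,\alpha-1\}$ (so the total principal order of the paradifferential operator is $\alpha$), $V$ and $P$ are of lower order, and $R(f)$ is smoothing, \Cref{prop:action} (together with the action estimate for smoothing operators) gives a loss of exactly $\alpha$ spatial derivatives per time derivative, provided the coefficients are controlled.

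More concretely, I would establish by induction the claim that for $0\le k'\le K$ there exists $C_{k',s}$ such that $\|\pa_t^{k'} f(t)\|_{s-\alpha k'}\le C_{k',s}\|f(t)\|_s$ for all $t\in I$ and all $f\in B^0_{s_0,\R}(I;\epsilon)\cap C^0_\ast(I;H^s_0)$ solving \eqref{eq:paralinearized_1}. At the $(k+1)$-st step, applying $\pa_t^k$ to both sides of the equation yields a finite sum of terms of the form $\pa_t^{k_1}\OpBW{a(f;\cdot)}\cdot \pa_t^{k_2} f$ with $k_1+k_2=k$, plus $\pa_t^{k_1}R(f)\cdot \pa_t^{k_2}f$. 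Every time derivative that falls on a coefficient produces, through the chain rule, a factor $\pa_t^{j} f$ with $j\le k$, which by the inductive hypothesis is controlled by $\|f(t)\|_s$ in a norm of appropriate (lower) index; the tame estimates of \Cref{prop:action}, together with \Cref{def:symbols,def:moperators} for the symbolic/operatorial classes, close the bound after absorbing the small factor $\|f(t)\|_{s_0}\lesssim \epsilon$.

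The main technical point, which I expect to be the principal obstacle, is to bookkeep correctly the smallness $\|f(t)\|_{k,s_0}\lesssim \epsilon$ against the derivative losses: the definition of $\Sigma\Gamma^m_{K,K',p}$ and $\Sigma\cM^m_{K,K',p}$ requires estimates on $\pa_t^{k'}$ of symbols/operators using $\|f\|_{k'+K',s_0}$, so to apply these estimates at step $k$ one must already know $\|\pa_t^{k'} f\|_{s_0}$ for $k'<k$, which is precisely what the induction provides in the regime $s_0>\alpha K$. Taking $s_0$ sufficiently large (so that $s_0-\alpha K > 1/2$, to keep all intermediate Sobolev indices within the range where paradifferential calculus is controlled), and $\epsilon_0=\overline{\epsilon_0}(s)$ small enough to absorb the constants from the successive applications of \Cref{prop:action}, closes the induction and yields the constant $C_1=C_1(s,\alpha,K)$. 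The lower bound $\|f(t)\|_s\le \|f(t)\|_{K,s}$ is immediate from the definition \eqref{Knorm}.
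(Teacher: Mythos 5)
Your plan is correct and is exactly the standard bootstrap-on-time-derivatives argument that the paper delegates to the cited analogue \cite[Lemma 8.2]{BMM2022}: rewrite $\pa_t f$ from \eqref{eq:paralinearized_1}, differentiate $k$ times, and close with Lemma~\ref{prop:action} and the tame operator estimates using the inductively controlled lower-order time derivatives. One small imprecision: there is nothing to ``absorb'' via smallness of $\|f\|_{s_0}$ here, since no loss occurs in the estimate; the role of $\epsilon_0$ is merely to keep $f$ in the domain of validity of the symbol and operator classes, so the constant $C_1(s,\alpha,K)$ is uniform on the ball.
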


The first step is to choose the parameters in \Cref{lem:BNF1}. 
Let $ N \defeq 1 $. 
In the statement of
 \Cref{lem:BNF1} we fix 
   $ \rho \defeq \underline{\rho}\pare{1,\alpha} +\alpha  $ and $ K\defeq \underline{K'}\pare{\rho,\alpha} $. Then 
   \Cref{lem:BNF1} gives us  $ \underline{s_0} >0 $. For any $ s\geq \underline{s_0}$ we fix $ 0< \epsilon_0 \leq   \min \{ \overline{ \epsilon_0}\pare{s}, \underline{ \epsilon_0}\pare{s} \} $ where $ \underline {\epsilon_0}\pare{s} $ is
   defined in \Cref{lem:BNF1} and $ \overline{ \epsilon_0}\pare{s} $ in \Cref{lem:equivalence_norms}. 

The key corollary of  \Cref{lem:BNF1} is the following energy estimate
where by the time-reversibility of 
 $ \alpha $-SQG 
 we may restrict  to positive times $ t > 0 $.

\begin{lemma}[Quartic  energy estimate]\label{lem:SE}
Let $ f(t) $ be a solution of 
   equation \eqref{eq:paralinearized_1} in 
   $  \Ball{K}{\underline{s_0}} \cap \Cast{K}{s} $.
Then  there exists  $ \bar C_2 \pare{s, \alpha} > 1 $ such that 
\begin{equation}\label{stimaenf}
\norm{f\pare{t}}_s^2 \leq \bar{C}_2 \pare{s, \alpha}  \pare{\norm{ f \pare{0}}_s^2 
+ \int_0^t\norm{ f\pare{\tau}}_s^4 \dd \tau } \,  , 
\quad \forall 0 < t < T  \, . 
\end{equation}
\end{lemma}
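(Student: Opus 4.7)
The strategy is to transfer the energy analysis to the Birkhoff normal form variable $\cy = \underline{\Psi}(f;t) f$ provided by \Cref{lem:BNF1}, where the equation reduces to the form \eqref{BNF12} with all quadratic perturbations cancelled. The norm equivalence \eqref{eq:equivalence_yf} will then allow to translate the estimate back to $f$. First I would verify that $f$ has the regularity required to apply \Cref{lem:BNF1}: since $f \in B^K_{\underline{s_0},\bR}(I;\epsilon_0) \cap C_\ast^K(I;H^s)$ solves \eqref{eq:paralinearized_1} (this is the input of the whole paralinearization scheme, together with \Cref{lem:equivalence_norms} to bootstrap time regularity), Propositions \ref{prop:cc_ao} and \Cref{lem:BNF1} apply and yield an invertible conjugation $\underline{\Psi}(f;t)$ together with the equation
\begin{equation*}
\pa_t \cy + \ii\,\omega_\alpha(D)\cy + \ii\,\OpBW{d(f;t,\xi)}\cy = R_{\geq 2}(f;t)\cy .
\end{equation*}
where $d$ is $x$-independent of order $\alpha$, $\Im d$ has order $0$ and vanishes quadratically in $f$, and $R_{\geq 2}$ is a real smoothing operator of homogeneity $\geq 2$ in $f$.

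The core of the proof is the standard energy identity
\begin{equation*}
\tfrac{1}{2}\tfrac{d}{dt}\norm{\cy(t)}_s^2
= -\Re\bigl\langle \ii\,\omega_\alpha(D)\Lambda^s\cy,\Lambda^s\cy \bigr\rangle
-\Re\bigl\langle \ii\,\OpBW{d}\Lambda^s\cy,\Lambda^s\cy \bigr\rangle
+\Re\bigl\langle \Lambda^s R_{\geq 2}(f;t)\cy,\Lambda^s\cy \bigr\rangle ,
\end{equation*}
where $\Lambda=(1-\partial_x^2)^{1/2}$ and $\Lambda^s$ commutes with all Fourier multipliers. The first term vanishes because $\omega_\alpha(D)$ is a real Fourier multiplier, hence self-adjoint, so $\ii\,\omega_\alpha(D)$ is skew-self-adjoint and produces a purely imaginary inner product. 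Since $d$ is $x$-independent, $\OpBW{d}$ is a Fourier multiplier with symbol $d(\xi)$, whose adjoint has symbol $\overline{d(\xi)}$; therefore the skew-self-adjoint part of $\ii\,\OpBW{d}$ is $\ii\,\OpBW{\Re d}$ and the self-adjoint part is $-\OpBW{\Im d}$. The second term thus collapses to $\langle \OpBW{\Im d}\Lambda^s\cy,\Lambda^s\cy\rangle$, and since $\Im d \in \Sigma\Gamma^{0}_{K,\underline{K'},2}[\epsilon_0,N]$, \Cref{prop:action} yields the $L^2$-boundedness estimate
\begin{equation*}
\bigl|\langle \OpBW{\Im d}\Lambda^s\cy,\Lambda^s\cy\rangle\bigr|
\lesssim_{s,\alpha} \norm{f}_{s_0}^{2}\,\norm{\cy}_s^2.
\end{equation*}
The remainder term is controlled by the tame estimate \eqref{piove} applied to $R_{\geq 2}\in\Sigma\dot{\cR}^{-(\rho-\underline{\rho}-\alpha)}_{K,\underline{K'},2}[\epsilon_0,N]$ (choosing $\rho$ large enough that $-(\rho-\underline{\rho}-\alpha)\leq 0$), giving
$
\norm{R_{\geq 2}(f;t)\cy}_s \lesssim_{s,\alpha} \norm{f}_{s_0}^{2}\,\norm{\cy}_s,
$
whence $|\langle \Lambda^s R_{\geq 2}\cy,\Lambda^s\cy\rangle|\lesssim_{s,\alpha}\norm{f}_{s_0}^2\norm{\cy}_s^2$.

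Combining the three contributions, and using the embedding $\norm{f}_{s_0}\lesssim\norm{f}_s$ together with the equivalence $\norm{\cy}_s \sim_{s,\alpha} \norm{f}_s$ from \eqref{eq:equivalence_yf}, I obtain
\begin{equation*}
\tfrac{d}{dt}\norm{\cy(t)}_s^{2}\ \lesssim_{s,\alpha}\ \norm{f(t)}_s^{2}\,\norm{\cy(t)}_s^{2}\ \lesssim_{s,\alpha}\ \norm{f(t)}_s^{4}.
\end{equation*}
Integrating on $[0,t]$ and translating back from $\cy$ to $f$ via \eqref{eq:equivalence_yf} yields the estimate \eqref{stimaenf} with a constant $\bar{C}_2(s,\alpha)$ independent of the initial datum. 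The conclusion of \Cref{thm:main} then follows from the bootstrap argument: a continuity argument on the quartic inequality $\|f(t)\|_s^2 \leq \bar{C}_2(\|f(0)\|_s^2 + \int_0^t \|f(\tau)\|_s^4 d\tau)$ with $\|f(0)\|_s\leq \varepsilon$ shows that the solution stays of size $O(\varepsilon)$ on a time interval of length $\gtrsim \varepsilon^{-2}$. The main conceptual ingredient to be trusted from the earlier sections is the exact structure of the normal form equation \eqref{BNF12}---in particular, that the leading symbol is real-valued ($x$-independent of order $\alpha$) and that the smoothing remainder vanishes at order two in $f$, both of which are consequences of the Hamiltonian cancellation \eqref{eq:Hamiltonian_identity} and the absence of three wave interactions \eqref{eq:nonres_cond}; the residual energy analysis carried out above is then routine.
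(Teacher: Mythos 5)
Your proof is correct and follows essentially the same route as the paper: pass to the Birkhoff-normal-form variable $\cy$ via \Cref{lem:BNF1}, exploit that $\OpBW{d}$ is an $x$-independent paradifferential operator whose real part is skew-self-adjoint after $\ii$ and whose imaginary part has order $0$ and homogeneity $\geq 2$, control the smoothing remainder $R_{\geq 2}$ by \eqref{piove}, and then translate back to $f$ via \eqref{eq:equivalence_yf}. The paper's writeup is terser (it simply invokes Lemma \ref{prop:action}, \eqref{piove}, and \Cref{lem:equivalence_norms}), but your more explicit splitting of the energy identity is exactly the computation those citations compress.
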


\begin{proof}
The variable $ \cy \defeq \underline{\Psi}\pare{f;t} f $ defined in 
Proposition \ref{lem:BNF1}  solves the equation \eqref{BNF12} 
where $ \Im \, d\pare{f;t, \xi} $ is a symbol in 
 $ \Gamma^0_{K, \underline{K'}, 2}\bra{\epsilon_0} $ and, being $ x $-independent, 
 $ \OpBW{d\pare{f;t, \xi}} $ commutes with $ \angles{D}^s $. Furthermore, 
for the above choice of $ \rho $ 
it results that $ R_{\geq 2}\pare{f;t} $ is in 
$ \cR^0_{K, \underline{K'}, 2}\bra{\epsilon_0} $. 
Then 
by  \eqref{piove}, Lemmata \ref{prop:action}
and \ref{lem:equivalence_norms} we deduce 
$$
\norm{ \cy \pare{t} }_{s}^2 \leq \norm{ \cy \pare{0} }_{s}^2 +  \bar C_1 (s, \alpha)   \int_0^t
\norm{ \cy\pare{\tau}}_{s}^4 \, \dd \tau \, , \quad \forall 0 < t < T \, ,
$$
and, 
by  \eqref{eq:equivalence_yf}, we deduce \eqref{stimaenf}. 
\end{proof}

The energy estimate \eqref{stimaenf}, \eqref{eq:f-h} and 
the local existence result in  \cite{CCCG2012}
(which amounts to a local existence result 
 for the equation \eqref{eq:paralinearized_1}), 
imply, by a standard 
bootstrap argument,  \Cref{thm:main}. 
\hfill$ \Box $

\bigskip

\noindent
{\bf Acknowledgments.}
{\footnotesize We thank A. Maspero, R. Montalto, and F. Murgante for many discussions. 
M.B. and S.C. were supported by  PRIN 2020XB3EFL, {\it Hamiltonian and Dispersive PDEs}.
F. G. was partially supported by the MICINN (Spain), 
grants EUR2020-112271 and PID2020-114703GB-I00, by RED2022-134784-T funded by MCIN/AEI/10.13039/501100011033, by the Junta de Andalucía, grant P20-00566, and by the Fundacion de Investigación de la Universidad de Sevilla, grant FIUS23/0207. F. G. acknowledges support from IMAG, funded by MICINN through Maria de Maeztu
Excellence Grant CEX2020-001105-M/AEI/10.13039/501100011033.
S.S. is supported by PRIN 2022HSSYPN - {\it Turbulent Effects vs Stability in Equations from Oceanography}, MTM PID2022-141187NB-I00 and FIUS23/0207.} 

\appendix

\section{Proof of \Cref{eq:Hamiltonian_identity}}\label{sec:Hamiltonian_identity}

We now prove  the identity \cref{eq:Hamiltonian_identity} where
the functions $ A_{\alpha, 0}, A_{\alpha, 1} $ are defined in \eqref{eq:A0}, \eqref{eq:A1}, and 
 $ \sK^{j,l}_\alpha, \ j=1,2,3, \ l=0,1 $ are the $ l $-th order Taylor expansion in $ z $ of the function $ z\mapsto \sK^{j}_{\alpha, z}\pare{\frac{\Delta_z f}{r^2}} $ where the kernel functions $ \sK^{j}_{\alpha, z}\pare{\sx} $ are defined   in \eqref{eq:sK1}, \eqref{eq:sK2}, \eqref{eq:sK3}. 
The verification of \cref{eq:Hamiltonian_identity} 
can be {\it automated}. 
The next small program in {\tt SageMath}, a Python-based, open-source 
Computer-Algebra System, verifies \cref{eq:Hamiltonian_identity}.

\begin{lstlisting} 
x, X, z, a = var('x, X, z, a')
assume(0<a<2)

f(x) = function('f')(x)
Deltaf (x,z) = (f(x)-f(x-z))/(2*sin(z/2))

G1(X,z,a) = (1-2*X-sqrt(1-2*X)*cos(z))/((2*(1-X-sqrt(1-2*X)*cos(z)))^(a/2))
DXG1(X,z,a) = diff(G1(X,z,a),X)
K1(X,z,a)= DXG1(2*X*sin(z/2),z,a) * (2*(1-cos(z)))^(a/2)

G2(X,z,a) = (1/ (sqrt(1-2*X)) )/((2*(1-X-sqrt(1-2*X)*cos(z)))^(a/2))
K2 (X,z,a) =  G2( 2*X*sin(z/2) , z  , a) *  (2*(1-cos(z)))^(a/2)

DXG2(X,z,a) = diff(G2(X,z,a),X)
K3(X,z,a) = DXG2(X*2*sin(z/2), z, a)* (2*(1-cos(z)))^(a/2) * sin(z)

expansionf_K1(x,z,a) = taylor(K1( Deltaf (x,z) / (1+2*f(x)) , z, a ), z, 0, 1)
expansionf_K2(x,z,a) = taylor(K2( Deltaf (x,z) / (1+2*f(x)) , z, a ), z, 0, 1)
expansionf_K3(x,z,a) = taylor(K3( Deltaf (x,z) / (1+2*f(x)) , z, a ), z, 0, 1)

C10(x,a) = expansionf_K1.coefficient(z, n=0)
C11(x,a) = expansionf_K1.coefficient(z, n=1)
C20(x,a) = expansionf_K2.coefficient(z, n=0)
C21(x,a) = expansionf_K2.coefficient(z, n=1)
C30(x,a) = expansionf_K3.coefficient(z, n=0)
C31(x,a) = expansionf_K3.coefficient(z, n=1)

A0(x,a) = ((1+2*f(x))^(-a/2)) * ( C10(x,a) + (a-1)*C20(x,a) + (diff(f(x),x) / (1+2*f(x))) * C30(x,a) )
A1(x,a) = ((1+2*f(x))^(-a/2)) * ( C11(x,a) +(a-2)* C21(x,a) + ( 1 / (1+2*f(x))) * ( diff(f(x),x) * C31(x,a) - diff(f(x),x,x) * C30(x,a)))


bool(A1(x,a)  + 1/2 * diff(A0(x,a) , x)==0)

\end{lstlisting}

Here we comment the lines of code above.

\begin{itemize}

\item[1,2] Several variables are defined, so that
$ \pare{ \texttt{x,X,z,a}} = \pare{x, \sx, z, \alpha} $
accordingly to the notation of the present manuscript. The variable {\tt a}, which is the parameter $ \alpha $, is limited to the range $ \pare{0, 2} $.

\item[3,4] We define {\tt f} as an implicit function depending on the variable {\tt x} only, next we define {\tt Deltaf} as the periodic finite difference $ \Delta_z f $ defined in \eqref{eq:notation}.

\item[5-7] The function {\tt G1} is the function $ \sG^1_{\alpha, z} $ defined in \eqref{eq:G1}, the function {\tt DXG1} is the function $ \pare{ \sG^1_{\alpha, z} }' $ defined in \eqref{eq:G1'} and finally we define {\tt K1} as the function $ \sK^1_{\alpha, z} $ defined in \cref{eq:sK1}. 

\item[8-11] We perform the same computations as $ \sK^1_{\alpha, z} $ for the kernels $ \sK^2_{\alpha, z} $, $ \sK^3_{\alpha, z} $ defined in \cref{eq:sK2,eq:sK3}.

\item[12-14] The asymptotic expansion in \cref{eq:Taylor_expansion_kernels0} is computed for the three kernels.

\item[15-20] We ask the computer to extract the coefficients of the expansions in \cref{eq:Taylor_expansion_kernels0} so that
$ \texttt{C}jl\texttt{(x,a)} = \sK^{j,l}_{\alpha}\pare{f;x} $, for any 
$ j=1,2,3, \ l =0,1 $. 

\item[21,22] We define the functions {\tt A0} and {\tt A1} as in \cref{eq:A0,eq:A1}.

\item[23] The last line, line 23, is a statement of truth, which asks the computer whether using algebraic simplifications it can prove that \cref{eq:Hamiltonian_identity} is true. 
\end{itemize}

 \section{Conjugation of paradifferential  operators under flows} 
\label{sec:action_paradiff}

The main results of this section concern 
transformation rules of 
paradifferential operators of the form 
$ \partial_x\circ \OpBW{\mathsf{a}} $ 
under the flow generated by  
paradifferential operators which are Hamiltonian,  
or Hamiltonian up to order zero.

\begin{prop}\label{prop:action_flow}
 Let $ q\in \bN, \ K'\leq K, \ N\in \bN $ with $ q\leq N $, $ \epsilon_0 > 0 $ and $ \rho \gg N $. Let 
  $ \beta \pare{ f; t, x } $ be a function in $
  \Sigma \cF^\bR_{K,K',1}\bra{\epsilon_0 , N} $ and
 $ \Phi_B\pare{f, \tau} $ be the flow generated by the Hamiltonian 
 operator $ B\pare{f, \tau} $ defined in \eqref{eq:Btheta}.
\begin{enumerate}[{\bf i}]

\item \label{item:action_flow-i}
{\bf (Conjugation of a paradifferential operator)}
 Let $ \sa\pare{f; t, x , \xi}  $ be a symbol in $ \Sigma \Gamma^{m}_{K,K', q}\bra{\epsilon_0 , N} $. 
 Then
\begin{equation}
\label{eq:conj_Ham_symbol}
\Phi_B \pare{f, 1} \circ \partial_x \circ \OpBW{ \sa\pare{f; t, x , \xi} } \circ \Phi_B \pare{f, 1}^{-1} 
  = \partial_x \circ \OpBW{a_0\pare{f, 1; t, x, \xi} + P \pare{f; t, x, \xi} }   + R\pare{f;t} 
\end{equation}
where
 \begin{equation}
 \label{eq:simbolo_principale_trasformato}
 a_0\pare{f, \tau ; t, x, \xi}   \defeq \left. \pare{1+  \partial_y  \breve{\beta}\pare{f, \tau; t, y} } \ \sa\pare{ f ; \, t, \, y , \,  \xi   \pare{1+  \partial_y  \breve{\beta}\pare{f, \tau; t, y}}  \Big. }  
 \right|_{y= x + \tau \beta\pare{f; t, x}}
 \end{equation}
  is a symbol in $ \Sigma \Gamma^m_{K,K',q}\bra{\epsilon_0 , N} $,
   $ P\pare{f; t, x, \xi} $ is a symbol in
   $ \Sigma \Gamma^{m-2}_{K,K',q+1}\bra{\epsilon_0 , N} $ and $ R \pare{f; t}
   $ is a smoothing operator in
   $ \Sigma \dot{\cR}^{-\rho +m + 1 + N}_{K,K',q+1}\bra{\epsilon_0 , N} $.
\item \label{item:action_flow-iii}
{\bf (Conjugation of $ \partial_t $)}
 There exists a function
 $  V\pare{f;t, x} $ in $ \Sigma \cF^\bR_{K, K'+1, 1}\bra{\epsilon_0 , N}$ and a smoothing operator 
 $ R \pare{f;t} $ in $ \Sigma \dot{\cR}^{-\rho}_{K,K'+1, 1}\bra{\epsilon_0 , N} $
such that
\begin{equation}
\label{eq:Egorov_timeder}
\begin{aligned}
\Phi_B \pare{f,1} \circ \pare{ \partial_t \Phi_B \pare{f,1}^{-1} }
=  \partial_x \circ \OpBW{V \pare{f;t,x} } + R\pare{f;t}.
\end{aligned}
\end{equation}
\item  \label{item:action_flow-ii}
{\bf (Conjugation of a smoothing operator)}
If  $ \,  R \pare{f;t} $ is a smoothing operator in
$  \Sigma \dot{\cR}^{-\rho}_{K, K', q}\bra{\epsilon_0 , N} $ then
the composed operator 
$ \Phi_B\pare{f, 1}\circ R \pare{f;t} \circ \Phi_B\pare{f, 1}^{-1} $ is in 
$   \Sigma \dot{\cR}^{-\rho+N}_{K, K', q}\bra{\epsilon_0 , N} $. 
\end{enumerate}
\end{prop}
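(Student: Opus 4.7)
The plan is to handle the three items together via a Lie--Duhamel expansion of the conjugation by $\Phi_B(f,\tau)$, all based on the Heisenberg equation satisfied by the conjugated operator. For item \textbf{i}, I would set
\[
A(\tau) \defeq \Phi_B(f,\tau) \circ \partial_x \circ \OpBW{\sa(f;t,x,\xi)} \circ \Phi_B(f,\tau)^{-1},
\]
which by \eqref{eq:operatorsfamily_Lambda} satisfies $A'(\tau) = [B(f,\tau), A(\tau)]$ with $A(0) = \partial_x \circ \OpBW{\sa}$. Since $b(f,\tau;x)$ is $\xi$-independent, Proposition \ref{prop:composition_BW} and \eqref{asharpb} give that, modulo symbols one order lower and one degree higher in homogeneity, the commutator $[\partial_x\circ\OpBW{b}, \partial_x\circ\OpBW{c}]$ equals $\partial_x\circ\OpBW{b\,\partial_x c - \xi\,b_x\,\partial_\xi c}$---the classical Hamiltonian action of $b\partial_x$ on the symbol $c$.

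For the principal part I would postulate $A(\tau) = \partial_x\circ\OpBW{a_0(f,\tau;\cdot)}$ modulo lower-order terms and impose the transport equation
\[
\partial_\tau a_0 = b(f,\tau;x)\,\partial_x a_0 - \xi\,\partial_x b(f,\tau;x)\,\partial_\xi a_0, \qquad a_0|_{\tau=0} = \sa(f;t,\cdot,\cdot).
\]
The specific choice $b = \beta/(1+\tau\,\partial_x\beta)$ is engineered precisely so that the characteristics of this first-order equation on $(x,\xi)$-phase space coincide with the cotangent lift of the diffeomorphism $\Psi_\tau : x\mapsto x+\tau\beta(f;x)$: the base flow is $x\mapsto \Psi_\tau(x)$ and the fiber flow is $\xi\mapsto \xi\,(1+\partial_y\breve\beta(f,\tau;y))|_{y=\Psi_\tau(x)}$, where $\breve\beta$ is the inverse diffeomorphism provided by Lemma \ref{lem:LemA3}. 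Integrating $a_0$ along these characteristics, together with the Jacobian prefactor $1+\partial_y\breve\beta$ arising from combining the $\partial_x$ on the left of $\OpBW{\cdot}$ with the density weight inherent to the Hamiltonian structure of $B$, yields exactly \eqref{eq:simbolo_principale_trasformato}. That $a_0$ lies in $\Sigma\Gamma^m_{K,K',q}[\epsilon_0,N]$ then follows from Lemma \ref{lem:closure_comp_symbols} applied to $\beta,\breve\beta\in\Sigma\cF^\R_{K,K',1}[\epsilon_0,N]$.

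The correction $P$ of order $m-2$ and the smoothing remainder of order $-\rho+m+1+N$ come from iterating the Duhamel identity a finite number of times: at each step \eqref{asharpb} yields a subprincipal symbol two orders lower and one degree higher in homogeneity, while the residual operator is a new commutator that can be re-expanded. After $\ceil{\rho}$ iterations the cumulative remainder lands in $\Sigma\dot\cR^{-\rho+m+1+N}_{K,K',q+1}[\epsilon_0,N]$, where the loss $m+1+N$ accounts for the order-$1$ commutator with $B$ and for the standard $|\vec n|^\mu$-type losses tracked by the symbolic calculus of Section \ref{sec:para}. Item \textbf{ii} reduces to item \textbf{i} via the Duhamel identity
\[
\Phi_B(f,1)\,\partial_t\Phi_B(f,1)^{-1} = -\int_0^1 \Phi_B(f,1)\Phi_B(f,s)^{-1}\,\partial_t B(f,s)\,\Phi_B(f,s)\Phi_B(f,1)^{-1}\,ds,
\]
combined with $\partial_t B = \partial_x\circ\OpBW{\partial_t b}$ and $\partial_t b\in\Sigma\cF^\R_{K,K'+1,1}[\epsilon_0,N]$ (one $t$-derivative absorbed into the regularity index). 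Item \textbf{iii} follows from the Lie--Duhamel expansion $\Phi_B(f,1)R\Phi_B(f,1)^{-1} = \sum_{k=0}^{\ceil{\rho}}\frac{1}{k!}\mathrm{ad}_{B}^k R + \text{integral remainder}$, using Proposition \ref{compositionMoperator}-(iv) to see that each bracket $\mathrm{ad}_B^k R$ remains smoothing, with the total derivative loss bounded by $N$.

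The main obstacle is the Egorov-type identification of the principal symbol \eqref{eq:simbolo_principale_trasformato}: one must verify that the transport equation for $a_0$ is solved \emph{exactly} by the stated change-of-variables formula, with the Jacobian $1+\partial_y\breve\beta$ appearing correctly both as a multiplicative prefactor (from commuting $\partial_x$ through the composition with the diffeomorphism) and inside the $\xi$-slot of $\sa$ (from the cotangent action of $\Psi_\tau$). The remaining commutator estimates and the bookkeeping of orders and homogeneities are routine applications of Proposition \ref{prop:composition_BW}, Lemma \ref{prop:action}, and the structural lemmas of Section \ref{sec:para}, but the Egorov step genuinely requires combining the Hamiltonian skew-symmetry of $B$ with the specific form of $b$ chosen to compensate for the $\tau$-dependent Jacobian of $\Psi_\tau$.
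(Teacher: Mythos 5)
Your architecture (Heisenberg equation for the conjugated operator, transport equation for the principal symbol, iterative subprincipal corrections, Duhamel for $\partial_t$ and for the smoothing conjugation) matches the paper's. However, the key commutator formula you state is wrong, and the error propagates to your transport equation, so that the Jacobian prefactor in \eqref{eq:simbolo_principale_trasformato} does \emph{not} emerge from your derivation.

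You claim that, modulo lower order, $\comm{\partial_x\circ\OpBW{b}}{\partial_x\circ\OpBW{c}}$ equals $\partial_x\circ\OpBW{b\,\partial_x c - \xi\,b_x\,\partial_\xi c}=\partial_x\circ\OpBW{\{b\xi,c\}}$. This misses an order-zero term $-b_x\,c$ inside the quantization. Writing $\partial_x\circ\OpBW{b} = \OpBW{\ii b\xi + \tfrac12 b_x}$ and $\OpBW{b}\circ\partial_x = \OpBW{\ii b\xi - \tfrac12 b_x}$ modulo smoothing (Proposition \ref{prop:composition_BW}), one finds
\begin{equation*}
\comm{\partial_x\circ\OpBW{b}}{\partial_x\circ\OpBW{c}}
= \partial_x\circ\Bigl(\OpBW{\ii b\xi - \tfrac12 b_x}\circ\OpBW{c} - \OpBW{c}\circ\OpBW{\ii b\xi + \tfrac12 b_x}\Bigr)
= \partial_x\circ\OpBW{\{b\xi,c\} - b_x\,c}
\end{equation*}
up to lower-order symbols and a smoothing operator. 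Hence the correct transport equation for $a_0$ is $\partial_\tau a_0 = \{b\xi, a_0\} - b_x\,a_0$, which is exactly the paper's \eqref{eq:ODEprincipalsymbol}, not the one you wrote. The extra term $-b_x\,a_0$ is precisely what produces the prefactor $1+\partial_y\breve{\beta}$: along the characteristics \eqref{sistode} the conserved quantity (when there is no source) is $\xi(\tau)\,Q\pare{\tau;x(\tau),\xi(\tau)}$, not $Q$ itself (see the proof of \Cref{lem:modif_subprinc_symbol_flow}), and solving for $Q$ yields the factor $\xi/\xi(\tau)=1+\breve{\beta}_y$. Your transport equation, lacking the $-b_x\,a_0$ term, integrates to the pure pullback $\sa\bigl(y,\xi(1+\partial_y\breve{\beta})\bigr)\big|_{y=x+\tau\beta}$ with no prefactor, and your appeal to ``the density weight inherent to the Hamiltonian structure of $B$'' is not a derivation: that density weight \emph{is} the $-b_x\,c$ term you dropped, and once dropped it cannot be recovered a posteriori. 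The paper sidesteps this pitfall by conjugating $A(\tau)\defeq\partial_x^{-1}\circ\cP(\tau)$ rather than $\cP(\tau)$ directly (\Cref{lemmaA4}), which makes the asymmetry between $\OpBW{b}\circ\partial_x$ and $\partial_x\circ\OpBW{b}$ explicit and produces the $-b_x a_0$ contribution automatically. Your treatment of items ii) and iii) is a reasonable sketch consistent with the references the paper cites for those points.
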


We also prove an analogous result when the paradifferential operator which generates the flow has order strictly less than $ 1 $. 

\begin{prop}[Lie expansions]\label{lem:Lie_expansion}
Let $ q\in \bN, \ K'\leq K, \ N\in \bN $ with 
$ q\leq N $, $ \epsilon_0 > 0 $ and $ \rho \gg N $.  
Given a  symbol  $ w :=  w\pare{f; t, x, \xi} $  satisfying 
\begin{equation}\label{eq:assumption_w}
w\pare{f; t, x, \xi}  \in \Sigma \Gamma^{- {\mathtt d}}_{K,K',1} \bra{\epsilon_0 , N} \, , 
\ {\mathtt d} > 0 \, , \quad \Im w\pare{f; t, x, \xi} \in 
 \Gamma^{ - \max\{ 1, \mathtt d\}}_{K,K',1} \bra{\epsilon_0 , N} \, , 
\end{equation}
and \eqref{areal} 
and denote $  \Phi_W\pare{f, \tau} $  the flow generated by 
\begin{align}
 \label{eq:operatorsfamily2}
\partial_\tau \Phi_W \pare{f,\tau} = \partial_x\circ
\OpBW{w(f;t, x,\xi)} \ \Phi_W \pare{f, \tau} \, , \qquad \Phi_W \pare{0} = \Id \, . 
 \end{align}
\begin{enumerate}[{\bf i}]

\item \label{item:action_flow-iLie}
{\bf (Conjugation of a paradifferential operator)}
 Let $ \sa := \sa\pare{f; t, x , \xi}  $ be a symbol in 
 $ \Sigma \Gamma^{m}_{K,K', q}\bra{\epsilon_0 , N} $. 
Then  \begin{multline}
 \label{eq:Taylor_conjugation}
 \Phi_W\pare{f,1}^{-1}\circ \partial_x \circ  \OpBW{\sa\pare{f; t, x , \xi}} \circ  \Phi_W\pare{f,1}  
 = \\
  \partial_x \circ  \OpBW{\sa} - \comm{\partial_x \circ \OpBW{w}}{\partial_x
 \circ \OpBW{\sa}}
  +\partial_x \circ \OpBW{P\pare{f;t,x,\xi}}
+ R \pare{f;t}
 \end{multline}
 where
 $ P \pare{f;t,x,\xi} $ is a 
 symbol in $ \Sigma\Gamma^{m-2 {\mathtt d}}_{K,K',q+2}\bra{\epsilon_0 , N} $,  
 and $R \pare{f;t} $ is a smoothing operator in 
 $ \Sigma \dot{\cR}^{-\rho}_{K,K',{q+2}} \bra{\epsilon_0, N} $.
 If $ \sa, w$ are real and even in $ \xi $ then 
 $ \comm{\partial_x \circ \OpBW{w}}{\partial_x
 \circ \OpBW{\sa}} $ is Hamiltonian and $ P $ is real and even in $ \xi $. 
\item \label{item:action_flow-iiiLie}
{\bf (Conjugation of $ \partial_t $)}
There exists a symbol  $ T \pare{f;t, x, \xi} $ 
in 
$ \Sigma\Gamma^{-{\mathtt d} }_{K,K'+1,1}\bra{\epsilon_0 , N} $
satisfying \eqref{areal}, and a smoothing operator
$ R \pare{f;t}$  in
$ \Sigma \dot{\cR}^{-\rho}_{K, K'+1, 2}\bra{\epsilon_0 , N} $ such that 
 \begin{equation}
 \label{eq:cc_pat}
 -\partial_t \Phi_W\pare{f,1}^{-1}\circ \Phi_W\pare{f,1} = 
 \partial_x \circ \OpBW{T\pare{f;t, x, \xi}} + R\pare{f;t} \, .
 \end{equation}
If $ w $ is real and even in $ \xi $ then 
$ \partial_x \circ \OpBW{T\pare{f;t, x, \xi}}$ 
is Hamiltonian, i.e. $T  $ 
  is real and even in $ \xi $.
\item  \label{item:action_flow-iiLie}
{\bf (Conjugation of a smoothing operator)}
If  $ \,  R \pare{f;t} $ is a smoothing operator in
$  \Sigma \dot{\cR}^{-\rho}_{K, K', q}\bra{\epsilon_0 , N} $ then
the composed operator 
$ \Phi_W\pare{f, 1}\circ R \pare{f;t} \circ \Phi_W \pare{f, 1}^{-1} 
$ is in 
$ \Sigma \dot{\cR}^{-\rho+ N \max\{0,\pare{1-{\mathtt d}}\}}_{K, K', q}\bra{\epsilon_0 , N} $.
\end{enumerate}
\end{prop}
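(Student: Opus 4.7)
The overall strategy is the classical Lie series argument, where each successive commutator with the generator $W:=\partial_x\circ\OpBW{w}$ drops the order of the symbol by $\mathtt{d}>0$, so that after sufficiently many commutators the tail lies in the smoothing class. The well-posedness of the flow $\Phi_W(f,\tau)$ defined by \eqref{eq:operatorsfamily2} follows from Lemma \ref{lem:existence_flow} with the choice $\delta=\mathtt{d}$, since assumption \eqref{eq:assumption_w} on $\Im w$ is exactly the one needed there; moreover the flow preserves real functions since $w$ satisfies \eqref{areal}.

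For item (i) I would set $A(\tau):=\Phi_W(f,\tau)^{-1}\circ(\partial_x\circ\OpBW{\sa})\circ\Phi_W(f,\tau)$ and compute
\begin{equation*}
A'(\tau)=\Phi_W(f,\tau)^{-1}\circ[\partial_x\circ\OpBW{\sa},\,W]\circ\Phi_W(f,\tau),
\qquad A''(\tau)=\Phi_W(f,\tau)^{-1}\circ[[\partial_x\circ\OpBW{\sa},W],W]\circ\Phi_W(f,\tau).
\end{equation*}
Taylor's formula at $\tau=0$ gives
$\Phi_W(f,1)^{-1}(\partial_x\OpBW{\sa})\Phi_W(f,1)=\partial_x\OpBW{\sa}-[W,\partial_x\OpBW{\sa}]+\int_0^1(1-\tau)A''(\tau)\,\di\tau$, which is the announced expansion once the integral term is put in the correct class. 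Now by Proposition \ref{prop:composition_BW} and \eqref{asharpb}, the double commutator $[[\partial_x\OpBW{\sa},W],W]$ equals $\partial_x\circ\OpBW{P}$ modulo a smoothing operator in $\Sigma\dot\cR^{-\rho}_{K,K',q+2}[\epsilon_0,N]$, with $P\in\Sigma\Gamma^{m-2\mathtt{d}}_{K,K',q+2}[\epsilon_0,N]$: indeed each commutator contributes a Poisson bracket which gains one $\xi$-derivative on $w$, hence lowers the order by $\mathtt{d}$, and absorbs one $\partial_x$ factor. Conjugation with $\Phi_W(f,\tau)$ preserves the symbol class by item (iii) applied to the smoothing remainder and by iterating item (i) itself on the principal paradifferential part (a bootstrap on $q$). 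The Hamiltonian preservation is automatic: if $\sa,w$ are real and even in $\xi$, then $\partial_x\OpBW{\sa}$ and $W$ are Hamiltonian operators in the sense of \eqref{Hamassy}, so their commutator is again of the form $\partial_x\circ\OpBW{c}$ with $c$ real and even in $\xi$ (via \eqref{asharpb}, since the Poisson bracket of two real even-in-$\xi$ symbols yields a purely imaginary odd-in-$\xi$ symbol, which divided by $\ii\xi$ is real and even).

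For item (ii) I would use Duhamel's formula for $\partial_t\Phi_W(f,\tau)$: differentiating the flow equation in $t$ and using $\partial_t\Phi_W(f,0)=0$ gives
\begin{equation*}
-\partial_t\Phi_W(f,1)^{-1}\circ\Phi_W(f,1)=\Phi_W(f,1)^{-1}\circ\partial_t\Phi_W(f,1)=\int_0^1\Phi_W(f,\tau)^{-1}\circ\big(\partial_x\OpBW{\partial_t w}\big)\circ\Phi_W(f,\tau)\,\di\tau.
\end{equation*}
Since $\partial_t w\in\Sigma\Gamma^{-\mathtt{d}}_{K,K'+1,1}[\epsilon_0,N]$, applying item (i) (with parameter $K'+1$ in place of $K'$) to each integrand gives $\partial_x\circ\OpBW{T_\tau}$ plus a smoothing operator, with $T_\tau\in\Sigma\Gamma^{-\mathtt{d}}_{K,K'+1,1}[\epsilon_0,N]$; integrating in $\tau$ yields the claim. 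The parity/reality preservation follows from that of item (i).

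For item (iii), I would again run a Lie expansion $\Phi_W(f,1)R\Phi_W(f,1)^{-1}=\sum_{\ell=0}^{N}\tfrac{1}{\ell!}\mathrm{ad}_W^\ell R+\cR_{N+1}$, where $\mathrm{ad}_W R:=[W,R]$ and $\cR_{N+1}$ is an iterated integral of $\mathrm{ad}_W^{N+1}R$ conjugated by the flow. By Proposition \ref{compositionMoperator}-(iv), each commutator with $W$ (an operator of order $1-\mathtt{d}$ when $\mathtt{d}<1$, of order $0$ when $\mathtt{d}\geq1$) costs at most $\max\{0,1-\mathtt{d}\}$ derivatives and increases the homogeneity by one, so after $N$ iterations the class is $\Sigma\dot\cR^{-\rho+N\max\{0,1-\mathtt{d}\}}_{K,K',q}[\epsilon_0,N]$, and the tail $\cR_{N+1}$ has sufficiently high homogeneity to lie in the same class.

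The main obstacle will be the careful bookkeeping of symbol orders, homogeneity degrees, and $t$-regularity indices through the iterated commutators, in particular the verification that pulling out the single $\partial_x$ factor on the left is compatible with the paradifferential calculus of Proposition \ref{prop:composition_BW} (this is where the assumption $\mathtt{d}>0$ is used crucially, since otherwise the commutator order would not strictly decrease). The propagation of the Hamiltonian structure through the expansion, although conceptually transparent, also requires one to keep track of the division by $\ii\xi$ at each step, which is well-defined on zero-average functions and on symbols supported for $|\xi|\gtrsim1$ as in Remark \ref{rem:OpBW_firstproperties}.
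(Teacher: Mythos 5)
Your approach is essentially the one in the paper: a Lie expansion of the conjugated operator, with each commutator by $W=\partial_x\circ\OpBW{w}$ gaining $\mathtt d$ units of order and one degree of homogeneity, plus an Egorov-type treatment of the remainder. The one place where your formulation is slightly looser than the paper's is item (i): you Taylor-expand only to first order, leaving $\int_0^1(1-\tau)\,\Phi_W(\tau)^{-1}\,[[\partial_x\OpBW{\sa},W],W]\,\Phi_W(\tau)\,\dd\tau$, and then invoke ``iterating item (i) itself on the principal paradifferential part,'' which is formally circular until you observe that the induction terminates (after $O(\rho/\mathtt d)$ steps the order of the principal symbol drops below $-\rho-1$, at which point $\partial_x\OpBW{\cdot}$ is already smoothing and item (iii) alone closes the argument). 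The paper sidesteps this by writing the Lie expansion explicitly to order $L$ chosen so that $\Ad_W^{L+1}(\partial_x\OpBW{\sa})$ is already a smoothing operator before conjugation, so that item (iii) can be applied directly to the integral remainder — same content, cleaner bookkeeping. Your Duhamel argument for item (ii) and your Lie expansion for item (iii) match the paper's treatment, and your parity argument for the Hamiltonian structure (Poisson bracket of two real even symbols is imaginary odd, division by $\ii\xi$ gives real even) is exactly the observation encoded in the paper's formula \eqref{AdSwc}.
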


The rest of this section is devoted to the proof of Propositions \ref{prop:action_flow}
and \ref{lem:Lie_expansion}.

\subsection*{Proof of Proposition \ref{prop:action_flow}}

The proof of Propositions \ref{prop:action_flow}
is inspired by the Egorov type analysis in  \cite[Section 3.5]{BD2018}.
The difference is that we highlight the Hamiltonian 
structure in  \eqref{eq:conj_Ham_symbol} and \eqref{eq:Egorov_timeder} of the conjugated operators.  

For simplicity  we avoid to track the dependence of $ \beta, b $ and $ \Phi_B $
 on the variable $ f $, as well as on $ t $,  and denote
$ \beta_x \pare{x} \defeq \partial_x\pare{\beta\pare{f;t,x}} $,
$ b_x \pare{\tau;t, x} \defeq \partial_x\pare{b\pare{f, \tau;t, x}} $
and  $ \Phi_B (\tau) \defeq   \Phi_B (f, \tau) $.
In the sequel $ \partial_x^{-1} $ is the Fourier multiplier with symbol $ \pare{\ii \xi}^{-1} $ that maps $ H^s_0 $ onto $ H^{s+1}_0 $ for any $ s\in\bR $.

\subsubsection*{Proof of item \ref{item:action_flow-i} : 
conjugation of a paradifferential operator}

The conjugated operator
\begin{equation}\label{defPtheta}
\cP\pare{\tau} \defeq \Phi_B \pare{\tau} \circ \partial_x \circ \OpBW{ \sa } \circ \Phi_B \pare{\tau}^{-1} \in \cL   \pare{ H^s_0 ; H^{s-1-m}_0  }  \, , \quad
\forall s \in \bR \, ,
\end{equation}
satisfies
$ \cP\pare{0} = \partial_x \circ \OpBW{ \sa}  $, and using that  
$ \partial_\tau \pare{ \Phi_B \pare{\tau}^{-1} } = -\Phi_B \pare{\tau}^{-1} \circ \partial_\tau \Phi_B \pare{\tau} \circ \Phi_B \pare{\tau}^{-1} $, it  
solves the Heisenberg equation
\begin{equation}
\label{eq:Heisenberg}
\partial_\tau \cP\pare{\tau} =  \comm{B(\tau)}{\cP\pare{\tau}} \, ,
\qquad  \cP\pare{0} = \partial_x \circ \OpBW{ \sa } \, .
\end{equation}

\begin{lemma}\label{lemmaA4}
The operator $ A \pare{ \tau  } \defeq \partial_x^{-1} \circ \cP\pare{\tau}
\in \cL\pare{H^s_0 ; H^{s-m }_0}  $   solves
\begin{equation}
\label{eq:approx_Atheta}
\system{
\begin{aligned}
&
\begin{multlined}
\partial_\tau A\pare{\tau} = \ii \comm{\OpBW{b\pare{\tau;x} \xi}}{A\pare{\tau}}
 - \tfrac12 \pare{  \OpBW {b_x\pare{\tau;x}  } A\pare{\tau} + A\pare{\tau} \OpBW{ b_x\pare{\tau;x}  } }  \\
 + R'\pare{\tau} \, A\pare{\tau}
- A\pare{\tau} \,  R\pare{\tau}
\end{multlined}
  \\
& A\pare{0} =  \OpBW{ \sa }
\end{aligned}
}
\end{equation}
where $ R\pare{\tau} $, $  R'\pare{\tau} $
are smoothing operators in $ \Sigma\dot{\cR}^{-\rho}_{K,K',1} $, uniformly 
in $ \av{\tau}\leq 1  $,
preserving the zero-average subspaces.
\end{lemma}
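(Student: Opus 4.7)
The plan is to differentiate the definition of $A(\tau)=\partial_x^{-1}\circ\mathcal{P}(\tau)$ and substitute the Heisenberg equation \eqref{eq:Heisenberg}. Since $\mathcal{P}(\tau)$ and $B(\tau)$ preserve zero-average, $A(\tau)$ is well-defined on $H^s_0$, and $\partial_x^{-1}$ commutes with $\partial_\tau$, so
\[
\partial_\tau A(\tau)=\partial_x^{-1}\bigl[B(\tau),\mathcal{P}(\tau)\bigr]
=\partial_x^{-1}B(\tau)\mathcal{P}(\tau)-\partial_x^{-1}\mathcal{P}(\tau)B(\tau)
=\OpBW{b(\tau;x)}\,\partial_x A(\tau)-A(\tau)\,B(\tau) \, ,
\]
where in the last step I used the two \emph{exact} identities $\partial_x^{-1}B(\tau)=\OpBW{b(\tau;x)}$ and $\partial_x^{-1}\mathcal{P}(\tau)=A(\tau)$. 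The initial condition $A(0)=\OpBW{\mathsf{a}}$ is immediate. This reduces the claim to analysing the two compositions $\OpBW{b}\circ\partial_x$ and $\partial_x\circ\OpBW{b}=B(\tau)$.

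The main step is to apply the composition rule \eqref{smoospec} of \Cref{prop:composition_BW} with $\partial_x=\OpBW{\ii\xi}$. Since $b$ is $\xi$-independent, the asymptotic expansion \eqref{asharpb} truncates at the subprincipal level:
\begin{align*}
\OpBW{b(\tau;x)}\circ \partial_x
&=\OpBW{\ii b(\tau;x)\,\xi+\tfrac{1}{2\ii}\{b,\ii\xi\}}+R'_1(\tau)
=\OpBW{\ii b(\tau;x)\,\xi}-\tfrac{1}{2}\OpBW{b_x(\tau;x)}+R'(\tau)\, , \\
\partial_x\circ\OpBW{b(\tau;x)}
&=\OpBW{\ii b(\tau;x)\,\xi+\tfrac{1}{2\ii}\{\ii\xi,b\}}+R_1(\tau)
=\OpBW{\ii b(\tau;x)\,\xi}+\tfrac{1}{2}\OpBW{b_x(\tau;x)}+R(\tau)\, ,
\end{align*}
where $R(\tau),R'(\tau)$ belong to $\Sigma\dot{\mathcal{R}}^{-\rho}_{K,K',1}[\epsilon_0,N]$ uniformly in $|\tau|\le 1$ by \Cref{prop:composition_BW} (combined with \Cref{lem:closure_comp_symbols} for the non-homogeneous part of $b$), and preserve the zero-average subspaces because $b$ has zero $x$-average on each fibre and $\OpBW{\cdot}$ commutes with $\Pi_0^\perp$ (cf.\ \Cref{rem:OpBW_firstproperties}).

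Plugging these two expansions into the identity of the first paragraph and regrouping, the $\OpBW{\ii b\xi}$ terms combine into the commutator $\ii\,\comm{\OpBW{b\xi}}{A(\tau)}$, while the $\pm\tfrac{1}{2}\OpBW{b_x}$ terms combine with opposite signs to produce the anticommutator $-\tfrac{1}{2}\bigl(\OpBW{b_x}A(\tau)+A(\tau)\OpBW{b_x}\bigr)$; the remainders produce $R'(\tau)A(\tau)-A(\tau)R(\tau)$. This is precisely \eqref{eq:approx_Atheta}.

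The only delicate point is that the ``remainders'' $R,R'$ really have order $-\rho$ for arbitrarily large $\rho$: since $b$ is $\xi$-independent and $\ii\xi$ is linear in $\xi$, the Poisson-bracket hierarchy in \eqref{espansione2} terminates after one term, so the entire contribution at orders $\le -1$ in the composition formula comes from the difference between Weyl and Bony--Weyl quantizations of $b\cdot(\ii\xi)$, and this is smoothing of arbitrarily high order by \Cref{prop:composition_BW}. There is no real obstacle beyond bookkeeping; the Hamiltonian/symmetric splitting $\pm\tfrac{1}{2}b_x$ is the genuinely useful feature, as it will allow in the subsequent Egorov analysis to read off the conjugation formula \eqref{eq:simbolo_principale_trasformato} with the normalising factor $1+\partial_y\breve\beta$ coming from the method-of-characteristics solution of the transport-like equation generated by the first two terms on the right-hand side of \eqref{eq:approx_Atheta}.
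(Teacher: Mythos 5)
Your proof is correct and takes essentially the same approach as the paper: differentiate $A(\tau)=\partial_x^{-1}\mathcal P(\tau)$ using the Heisenberg equation, use the exact identities $\partial_x^{-1}B(\tau)=\OpBW{b}$ and $\mathcal P(\tau)=\partial_x A(\tau)$, and then expand the two compositions $\OpBW{b}\circ\partial_x$ and $\partial_x\circ\OpBW{b}$ by symbolic calculus (the sign difference in the $\tfrac12 b_x$ terms being exactly what produces the commutator/anticommutator splitting). The only cosmetic difference is that the paper writes the first composition as $\partial_x^{-1}\circ B(\tau)\circ\partial_x$ rather than $\OpBW{b}\circ\partial_x$, which is the same operator.
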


\begin{proof}
By \eqref{eq:Btheta}  and \Cref{prop:composition_BW} we have
 \begin{align}\label{eq:Btheta1}
&  B \pare{\tau}
 - \OpBW{\ii  \, b\pare{\tau;x} \, \xi +
 \tfrac12 b_x\pare{\tau;x} } = R\pare{\tau} \\
&  \label{eq:Btheta2}
 \partial_x^{-1}  \circ B ({\tau})   \circ \partial_x  =   \OpBW{b\pare{\tau;x}} \circ \partial_x  =    \OpBW{ \ii b\pare{\tau;x} \xi - \tfrac12
 b_x\pare{\tau;x} } + R' \pare{\tau}
\end{align}
where $ R\pare{\tau}, R'\pare{\tau} $ are smoothing operators in 
$  \Sigma\dot{\cR}^{-\rho}_{K,K',1} $ 
preserving the zero-average subspaces.
Then, by  \eqref{eq:Heisenberg}, \eqref{eq:Btheta1}, \eqref{eq:Btheta2} we get
\begin{align*}
\partial_\tau A (\tau) & =
 \partial_x^{-1}  \circ B(\tau)  \circ  \partial_x \circ A\pare{\tau} - A\pare{\tau} \circ B
 (\tau) \\ 
& =   \pare{  \OpBW{\ii b\pare{\tau;x} \xi-
\tfrac{1 }{2} b_x\pare{\tau;x}  }} A\pare{\tau} - A\pare{\tau} \pare{  \OpBW{\ii b\pare{\tau;x} \xi +
\tfrac{1 }{2} b_x\pare{\tau;x} }}
 +  R'\pare{\tau} A\pare{\tau}
- A\pare{\tau} R \pare{\tau}  \notag
\end{align*}
proving \eqref{eq:approx_Atheta}.
\end{proof}

We now look for an approximate solution  of \eqref{eq:approx_Atheta} of the form
\begin{equation}
\label{eq:ansatz_Atheta}
A^{(J)}\pare{\tau} = \sum_{\mathsf{j} = 0}^{J } \OpBW{a_\mathsf{j} \pare{\tau}} \, ,\qquad
a_0\pare{\tau} \in \Sigma\Gamma^{m}_{K,K',q}\bra{\epsilon_0 , N} \, , \quad 
a_\mathsf{j} \pare{\tau} \in \Sigma\Gamma^{m-2\mathsf{j}}_{K,K',q+1}\bra{\epsilon_0 , N} \, , 
\ \forall \mathsf{j} = 1, \ldots, J \, .
\end{equation}
We use the following asymptotic expansions
derived by Proposition \ref{prop:composition_BW} 
and \eqref{asharpb}.
\begin{lemma} \label{lem:comm}
Let  $ a $ be a symbol 
in $  \Sigma \Gamma^m_{K,K',q} \bra{\epsilon_0,N} $. Then the commutator
$$
\comm{\OpBW{ \ii b(\tau;x) \xi }}{  \OpBW {a} }  =
\OpBW{ \pbra{ b(\tau;x) \xi }{ a } }  +   \OpBW{ r_{-3}(b(\tau),a) }   + R\pare{\tau} 
$$
with  symbols
$$
 \pbra{  b(\tau;x) \xi }{  a }  \in \Sigma \Gamma^{m}_{K,K',q+1}\bra{\epsilon_0,N} \, ,
\qquad
r_{-3}(b(\tau),a) \in \Sigma \Gamma^{m-2}_{K,K',q+1}\bra{\epsilon_0,N} \, ,
$$
and  a smoothing operator
$ R \pare{\tau} $ in $ \Sigma \dot{\cR}^{-\rho + m  + 1}_{K,K',q +1} [r, N] $, 
uniformly in $ \tau $. 
 Moreover 
\begin{equation*}
\tfrac12 \OpBW{ b_x\pare{\tau;x} } \OpBW{a} +
\tfrac12 \OpBW{a}  \OpBW{b_x\pare{\tau;x} }  =
\OpBW{b_x\pare{\tau;x} a + r_{-2}(b(\tau),a) }  + R \pare{\tau}  
\end{equation*}
where $ r_{-2}(b(\tau),a) $ is a symbol in 
$ \Sigma \Gamma^{m-2}_{K,K',q+1} \bra{\epsilon_0 , N} $, and 
$ R\pare{\tau} $ is a smoothing operator 
in $ \Sigma\dot{\cR}^{-\rho+m}_{K,K',q+1}\bra{\epsilon_0 , N} $.
\end{lemma}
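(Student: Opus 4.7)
The plan is to derive both identities directly from the symbolic composition formula of Proposition \ref{prop:composition_BW}, using the parity of the symplectic form $\sigma(D_x, D_\xi, D_y, D_\eta) = D_\xi D_y - D_x D_\eta$ under the swap $(x,\xi) \leftrightarrow (y,\eta)$ to extract which terms in the expansion \eqref{espansione2} survive in a commutator versus a symmetrized sum. A direct computation of $\sigma$ acting on $a(x,\xi) b(y,\eta)$, restricted to the diagonal, gives $\frac{\ii}{2}\sigma[ab]|_{y=x,\eta=\xi} = \frac{1}{2\ii}\{a,b\}$, which is antisymmetric; more generally each power $(\frac{\ii}{2}\sigma(D))^k$ acquires a sign $(-1)^k$ under the swap $a \leftrightarrow b$. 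Thus in $a \#_\rho b - b \#_\rho a$ only odd-$k$ terms in \eqref{espansione2} survive and get doubled, while in $a \#_\rho b + b \#_\rho a$ only even-$k$ terms survive.

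For the first assertion, I would apply Proposition \ref{prop:composition_BW} to both $\OpBW{\ii b(\tau;x)\xi} \circ \OpBW{a}$ and $\OpBW{a} \circ \OpBW{\ii b(\tau;x)\xi}$ with $\rho' \defeq \rho - m - 1$, picking up a smoothing operator in $\Sigma\dot{\cR}^{-\rho+m+1}_{K,K',q+1}[\epsilon_0,N]$ per composition. By the parity argument above, the difference of the two resulting Bony--Weyl symbols equals $2\sum_{k \text{ odd}}\frac{1}{k!}(\frac{\ii}{2}\sigma(D))^k[(\ii b\xi)(x,\xi) a(y,\eta)]|_{\mathrm{diag}}$. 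The $k=1$ term is $\frac{1}{\ii}\{\ii b\xi, a\} = \{b\xi, a\}$, a symbol in $\Sigma\Gamma^{m}_{K,K',q+1}$, which is exactly the Poisson bracket appearing in the lemma. The $k=3$ term, which I would call $r_{-3}(b(\tau),a)$, carries one extra factor of $\sigma$ (two derivatives) and has order $(m+1) - 3 = m-2$, hence lies in $\Sigma\Gamma^{m-2}_{K,K',q+1}$; all higher odd terms are absorbed into the smoothing remainder by taking $\rho$ large compared with $N$.

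For the second assertion, the analogous application of Proposition \ref{prop:composition_BW} to $\OpBW{b_x(\tau;x)} \circ \OpBW{a}$ and $\OpBW{a} \circ \OpBW{b_x(\tau;x)}$ yields a smoothing remainder in $\Sigma\dot{\cR}^{-\rho+m}_{K,K',q+1}[\epsilon_0,N]$ (since $b_x$ has order $0$). Summing and dividing by $2$, only even-$k$ terms of \eqref{espansione2} contribute. The $k=0$ term is precisely $b_x(\tau;x) \, a$, and the $k=2$ term, which I would call $r_{-2}(b(\tau),a)$, has order $m - 2$ and lies in $\Sigma\Gamma^{m-2}_{K,K',q+1}$, matching the statement.

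The only delicate bookkeeping is tracking how the multilinear order $q+1$ and the smoothing index interact: each application of $\sigma(D)$ differentiates either $b$ or $a$ in $(x,\xi)$, so one derivative lands on the homogeneous-in-$f$ factor $b$ and contributes the extra homogeneity degree $q+1$; the smoothing remainders produced by Proposition \ref{prop:composition_BW} already live in $\Sigma\dot{\cR}^\cdot_{K,K',q+1}$ since they vanish at the same degree as the lowest-order factor. Nothing in the argument is actually difficult, but care is needed to compare the sign $\frac{1}{2\ii}\{\cdot,\cdot\}$ in \eqref{asharpb} with the direct expansion of $\sigma$; once this is pinned down, both identities follow from \eqref{espansione2} by truncating at $k=3$ in the first case and $k=2$ in the second, and absorbing the tail into the smoothing remainder.
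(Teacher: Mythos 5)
Your proposal is correct, and it matches the paper's (essentially un-elaborated) approach: the paper proves this lemma by citing Proposition~\ref{prop:composition_BW} and \eqref{asharpb} without further comment, and you supply the missing parity argument for the symplectic form $\sigma$, which is exactly what makes the commutator pick out the odd-$k$ terms and the symmetrized sum pick out the even-$k$ terms.

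Two small bookkeeping remarks. First, the truncation parameter you should feed into Proposition~\ref{prop:composition_BW} is just $\rho$ itself, not $\rho-m-1$: with symbols of orders $1$ and $m$, the remainder of $a\#_\rho b$ is then automatically smoothing of order $-\rho+m+1$ as required, so your written $\rho'$ is off by a sign convention even though the conclusion you draw is the right one. Second, it is not quite accurate to say the odd-$k$ terms with $k\ge 5$ ``are absorbed into the smoothing remainder'': for $k\le\rho$ they are genuine symbols, of orders $m-4, m-6,\dots$, so they are not smoothing operators. The correct statement is that $r_{-3}(b(\tau),a)$ is the \emph{sum} of all odd-$k$ contributions with $3\le k\le\rho$; since the leading one has order $m-2$ and the others have strictly lower order, the whole sum lives in $\Sigma\Gamma^{m-2}_{K,K',q+1}$. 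The same remark applies to $r_{-2}$ in the second identity. Neither of these affects the validity of your argument, which is sound.
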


We shall also use the following lemma concerning solutions of a transport  equation.

\begin{lemma}
\label{lem:modif_subprinc_symbol_flow}
Let   $ \,  W\pare{f,\tau; x, \xi} $ by a symbol in $ \Sigma\Gamma^m_{K,K',q}\bra{\epsilon_0 , N} $ uniformly in $ \av{\tau}\leq 1 $.
Then the unique solution of
\begin{equation}
\label{eq:PDEtransport}
\system{
\begin{aligned}
& \partial_\tau Q \pare{f, \tau; x, \xi} =
\pbra{b\pare{f, \tau; x}\Big. \xi}{Q\pare{f, \tau; x, \xi}} -   b_x\pare{f, \tau; x} Q\pare{f, \tau; x, \xi} + W\pare{f, \tau; x, \xi} \\
& \left. Q \pare{f, \tau; x, \xi} \right|_{\tau=0} = Q_0\pare{f ; x, \xi} \in \Sigma\Gamma^m_{K,K',q}\bra{\epsilon_0 , N}
\end{aligned}
}
\end{equation}
has the form
\begin{align}
 Q\pare{f, \tau; x, \xi} & = \left. \pare{ 1+ \breve \beta_y \pare{ f, \tau; \ y } } Q_0\pare{f; \  y\ , \ \xi    \pare{  1+ \breve \beta_y \pare{ f, \tau; \ y } }  } \right|_{y= x+\tau\beta\pare{f;x}} \label{formqtheta}
 \\
 & +\left.
 \int_0^\tau
 \frac{1+\breve{\beta}_y\pare{f, \tau; \  y} }{1+\breve{\beta}_y\pare{f, \tau'; \  y}}
 \
 W\pare{f, \tau' ; \
y + \breve{\beta} \pare{f, \tau'; \  y} \ ,  \ \frac{\xi\pare{1+\breve{\beta}_y \pare{f, \tau; \  y}}}{1+\breve{\beta}_y\pare{f, \tau';\  y}}} \ \dd \tau' \right|_{y=x+\tau\beta\pare{f;x}} \, , \notag
\end{align}
which is a symbol in 
$ \Sigma\Gamma^m_{K,K',q}\bra{\epsilon_0 , N} $,
uniformly in $ \av{\tau}\leq 1 $.
\end{lemma}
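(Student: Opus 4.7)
Equation \eqref{eq:PDEtransport} is a linear inhomogeneous first-order scalar PDE in $(\tau, x, \xi)$ with smooth coefficients: expanding the Poisson bracket as $\{b\xi, Q\} = b\,\partial_x Q - b_x\,\xi\,\partial_\xi Q$, the equation reads
\begin{equation*}
(\partial_\tau - b\,\partial_x + b_x\,\xi\,\partial_\xi)\,Q + b_x Q = W.
\end{equation*}
Existence and uniqueness follow from the classical method of characteristics on any time interval where the coefficients are smooth, so the plan is to verify that the explicit formula \eqref{formqtheta} solves this Cauchy problem and then to argue that the resulting $Q$ lies in the claimed symbol class.

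The key geometric observation is that the ansatz in \eqref{formqtheta} is exactly the push-forward of $Q_0$ under the change of variables $(y,\eta) := (x+\tau\beta(f;x),\, \xi(1+\breve\beta_y(f,\tau;y)))$ provided by \Cref{lem:LemA3}. Setting $J(\tau,y):=1+\breve\beta_y(\tau,y)$, differentiation of the two identities $(1+\breve\beta_y)(1+\tau\beta_x)=1$ and $\breve\beta(\tau,y(\tau,x))=-\tau\beta(x)$ along $y=x+\tau\beta(x)$ will yield the pair
\begin{equation*}
\breve\beta_\tau(\tau,y) = -b(\tau,x),\qquad J_\tau(\tau,y) + \beta(x)\,J_y(\tau,y) = -J^2\beta_x(x),
\end{equation*}
together with $b = \beta J$ and a short computation of $b_x$, and from these the pointwise identity $J_\tau + b_x J = 0$ (still along $y=x+\tau\beta(x)$) will follow. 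This identity is what makes the characteristic operator $\partial_\tau - b\partial_x + b_x\xi\partial_\xi$ reduce to $\partial_\tau$ at fixed $(y,\eta)$ modulo the multiplicative term $b_x$, whose integrating factor is precisely $J$; along a characteristic the PDE thus becomes $\tfrac{d}{d\tau}(J^{-1}\tilde Q) = J^{-1}\tilde W$.

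The homogeneous component of \eqref{formqtheta} is then nothing but the transport of $Q_0$ weighted by this integrating factor, and the inhomogeneous integral is Duhamel's principle applied with the evolution operator $\mathcal{U}(\tau,\tau')$ of the homogeneous equation; this will be confirmed by a direct computation of $\partial_\tau$, $\partial_x$, $\partial_\xi$ of the right-hand side of \eqref{formqtheta}, after which all non-$W$ terms collapse via $J_\tau+b_x J=0$. The initial condition $Q(0,x,\xi)=Q_0(x,\xi)$ is immediate since $\breve\beta(0,\cdot)\equiv 0$ and the integral vanishes. For the symbol-class claim, the homogeneous contribution is the composition of $Q_0\in\Sigma\Gamma^m_{K,K',q}[\epsilon_0,N]$ with the $\Sigma\cF^\bR_{K,K',1}$ diffeomorphism $x\mapsto x+\tau\beta(x)$ and with the rescaling $\xi\mapsto\xi(1+\breve\beta_y)$, multiplied by the smooth function $1+\breve\beta_y$, and \Cref{lem:closure_comp_symbols} keeps it in $\Sigma\Gamma^m_{K,K',q}[\epsilon_0,N]$ uniformly for $|\tau|\le 1$; the Duhamel integrand has the same structure with $Q_0$ replaced by $W\in\Sigma\Gamma^m_{K,K',q}[\epsilon_0,N]$, and integrating over $\tau'\in[0,\tau]$ preserves the class.

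The main obstacle I expect is purely algebraic: the identity $J_\tau=-b_x J$ along $y=x+\tau\beta(x)$ hinges on the specific form $b=\beta/(1+\tau\beta_x)$ dictated by \eqref{eq:Btheta}, and the verification that \eqref{formqtheta} solves the PDE requires careful bookkeeping of chain-rule contributions (in which the $-b\partial_x F$ terms cancel against the $\beta$-portion of $\partial_\tau F$, while the remaining $\xi$-derivative terms cancel exactly because $J_\tau+b_x J=0$). Once that computation is in hand, the symbol-class bookkeeping via \Cref{lem:closure_comp_symbols} is routine, since $\breve\beta$ and $1+\breve\beta_y$ are uniformly smooth and bounded away from degeneracy for $\epsilon_0$ small.
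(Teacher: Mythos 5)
Your approach is correct and essentially the same as the paper's: solve \eqref{eq:PDEtransport} by the method of characteristics with a multiplicative integrating factor, apply Duhamel, and then invoke \Cref{lem:closure_comp_symbols} and \Cref{lem:LemA3} for the symbol-class claim. The paper packages the integrating factor more economically by observing directly that $\tfrac{d}{d\tau}\bigl[\xi(\tau)\,Q(\tau;x(\tau),\xi(\tau))\bigr]=\xi(\tau)\,W(\tau;x(\tau),\xi(\tau))$ along characteristics (since $\dot\xi=b_x\xi$), which is equivalent to your $J^{-1}$ factor because $\xi(\tau)J(\tau,X)$ is a constant of the motion; this lets the paper avoid the explicit verification of $J_\tau+b_xJ=0$, which you would need to carry out carefully since some of your intermediate identities mix total derivatives along the curve $y=x+\tau\beta(x)$ with partial $\tau$-derivatives at fixed first argument.
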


\begin{proof}
The solution
$ (x(\tau), \xi(\tau)) = \phi^{0,\tau} (X,\Xi) $  of the characteristics system
\begin{equation}\label{sistode}
\frac{\dd }{\dd \tau} {x}\pare{\tau} = - b\pare{\tau; x\pare{\tau}} \, , \quad
\frac{\dd }{\dd \tau} {\xi}\pare{\tau} = b_x\pare{\tau; x\pare{\tau}} \ \xi\pare{\tau} \, , 
\end{equation}
with initial condition $ \left.  \pare{x\pare{\tau} , \xi\pare{\tau}} \right|_{\tau=0} = \phi^{0,0} (X,\Xi) = \pare{X, \Xi} $ is (cf. \cite[p. 83]{BD2018})
\begin{equation}\label{dirflow}
\pare{x\pare{\tau} , \xi\pare{\tau}} =
\phi^{0, \tau}\pare{X, \Xi} =    \pare{ X + \breve \beta \pare{ \tau, X },  \frac{\Xi}{1+ \breve \beta_y \pare{ \tau, X } } } \, .
\end{equation}
By \eqref{sistode} and \eqref{eq:PDEtransport} we get
$ \frac{\dd }{\dd \tau} \bra{
\xi (\tau)\big.  Q\pare{\tau;  x\pare{\tau} , \xi\pare{\tau}} } =
\xi (\tau)  W \pare{ \tau; x\pare{\tau} , \xi\pare{\tau} } $
and so, by integration,
\begin{equation}\label{integrata}
\xi(\tau) \
Q\pare{\tau; x\pare{\tau} , \xi\pare{\tau} } = \Xi Q\pare{0; X, \Xi}
+
\int_0^{\tau} \xi(\tau')
W \pare{ \tau'; x(\tau'), \xi(\tau') } \dd \tau' \, .
\end{equation}
The  inverse flow
$   \phi^{ \tau , 0}\pare{x, \xi}   $,
i.e.
$ (x, \xi) = \phi^{0, \tau}\pare{X,\Xi} $
 if and only if $ \pare{X,\Xi} =  \phi^{ \tau , 0} (x, \xi) $
is  (cf. \cite[p. 83]{BD2018})
\begin{equation}\label{eq:flux_dirinv}
\pare{X,\Xi} = \phi^{\tau,0} (x,\xi) = \pare{ x + \tau \beta (x),  \xi\pare{ 1+ \breve \beta_y \pare{ \tau;y } }_{|{y= x + \tau \beta (x)}} } \, .
\end{equation}
In addition,  by \eqref{dirflow} and \eqref{eq:flux_dirinv},
\begin{equation}\label{flowthetaprimo}
\pare{x(\tau'), \xi(\tau')}
=
\phi^{\tau, \tau'}\pare{x, \xi}
= \phi^{0, \tau'}\pare{\phi^{\tau, 0}\pare{x, \xi}}
=
\left. \pare{
y + \breve{\beta} \pare{\tau' y} \ ,  \ \frac{\xi\pare{1+\breve{\beta}_y \pare{\tau; y}}}{1+\breve{\beta}_y\pare{\tau'; y}}
}
\right|_{y=x+\tau\beta\pare{x}}.
\end{equation}
We deduce
\eqref{formqtheta}
inserting \eqref{eq:flux_dirinv} and \eqref{flowthetaprimo} in \eqref{integrata}.
Finally  $ Q \pare{f, \tau; x, \xi}  $ is a symbol in $ \Sigma\Gamma^m_{K,K',q}\bra{\epsilon_0 , N} $, by \eqref{formqtheta} and Lemmata \ref{lem:closure_comp_symbols} and \ref{lem:LemA3}.
\end{proof}

\noindent
{\bf Step i): Determination of the principal symbol $ a_0 $.}
From \eqref{eq:approx_Atheta}, \eqref{eq:ansatz_Atheta} and Lemma 
\ref{lem:comm}  the principal symbol $a_0 $ solves     the equation
\begin{equation}
\label{eq:ODEprincipalsymbol}
\system{
\begin{aligned}
& \partial_\tau a_0\pare{\tau ; x, \xi} = \pbra{b\pare{\tau; x}\xi}{a_0\pare{\tau; x, \xi}} - b_x\pare{\tau; x}\ a_0\pare{\tau; x, \xi}
\\
& a_0\pare{0; x, \xi} = \sa\pare{x, \xi} \, .
\end{aligned}
}
\end{equation}
By \Cref{lem:modif_subprinc_symbol_flow} with $ W=0 $ and $ Q_0=\sa $,
the solution of \eqref{eq:ODEprincipalsymbol}   is given by
\eqref{eq:simbolo_principale_trasformato}.
The operator
$ A^{(0)} \defeq A^{(0)}\pare{\tau}  \defeq \OpBW{a_0 \pare{\tau}}  $
solves approximately
\eqref{eq:approx_Atheta}
in the sense that, by \eqref{eq:ODEprincipalsymbol}
and \Cref{lem:comm},
\begin{equation}\label{app0}
\partial_\tau A^{(0)}  =
 \ii \comm{\OpBW{b\pare{\tau} \xi}}{A^{(0)}}
 - \OpBW{\tfrac{b_x\pare{\tau}}{2}} A^{(0)}  -
 A^{(0)} \OpBW{\tfrac{b_x\pare{\tau}}{2}} \\
 +  \OpBW{r^{(0)}(\tau)} +  R ^{\pare{0}}\pare{\tau}
\end{equation}
where $ r^{(0)} (\tau) \defeq - r_{-3} (b,a_0) - r_{-2} (b,a_0)  $ is a symbol in $ \Sigma\Gamma^{m-2}_{K,K',q+1}\bra{\epsilon_0 , N} $ and $ R^{\pare{0}}\pare{\tau} $ is a smoothing operator in $ \Sigma \dot{\cR}^{-\rho+m}_{K,K',q+1}\bra{\epsilon_0 , N} $, unifomly in 
$ \tau \in [0,1] $.
\\[1mm]
{\bf Step ii): Determination of the subprincipal symbol $\sum_{\mathsf{j}=1}^J a_\mathsf{j} $.}
We define  $ a_1 (\tau; x, \xi )$ as the solution of the
transport equation
\begin{equation}
\label{eq:ODE_a1}
\system{
\begin{aligned}
& \partial_\tau a_1 \pare{\tau ; x, \xi} = \pbra{b\pare{\tau; x}\xi}{a_1 \pare{\tau; x, \xi}} - b_x\pare{\tau; x}\ a_1 \pare{\tau; x, \xi} -
r^{(0)} \pare{\tau;x,\xi}
\\
& a_1\pare{0; x, \xi} = 0 \, .
\end{aligned}
}
\end{equation}
By \Cref{lem:modif_subprinc_symbol_flow}
the symbol
$ a_1\pare{\tau; x, \xi} $ is in $ \Sigma \Gamma^{m-2}_{K,K',q+1} $.
By  \Cref{app0,eq:ODE_a1,lem:comm}
$$
A^{(1)} (\tau) \defeq A^{(0)} (\tau) + \OpBW{a_1 (\tau)}
$$
is a better approximation of equation \eqref{eq:approx_Atheta} in the sense that
\begin{equation}\label{app1}
\partial_\tau A^{(1)}  =
\ii \comm{\OpBW{b\pare{\tau} \xi}}{A^{(1)}}
 - \OpBW{\tfrac{b_x\pare{\tau}}{2}} A^{(1)}  -
 A^{(1)} \OpBW{\tfrac{b_x\pare{\tau}}{2}} \\
 +  \OpBW{r^{(1)}(\tau)} +  R^{(1)}\pare{\tau}
\end{equation}
where $ r^{(1)} \defeq - r_{-3} (b,a_1) - r_{-2} (b,a_1)  $ is a symbol in $ \Sigma \Gamma^{m-4}_{K,K',q+1}\bra{\epsilon_0 , N} $ and $ R^{(1)}\pare{\tau}
$  are smoothing operators  in $ \Sigma \dot{\cR}^{-\rho +m}_{K,K',q+1}\bra{\epsilon_0 , N}$ uniformly in $ |\tau | \leq 1 $.

Repeating $ J $ times ($ J \sim \rho / 2 $) the above procedure, until the new paradifferential  term may be incorporated into the smoothing remainder, we obtain
an operator $ A^{\pare{J}}\pare{\tau} \defeq \sum_{\mathsf{j}=0}^J \OpBW{a_\mathsf{j} \pare{\tau}} $
as in \eqref{eq:ansatz_Atheta} solving
\begin{equation}\label{solapproJ}
 \system{
\begin{aligned}
&
\begin{multlined}
\partial_\tau A^{\pare{J}}\pare{\tau} = \ii \ \comm{\OpBW{b\pare{\tau} \xi}}{A^{\pare{J}}\pare{\tau}} - \OpBW{\tfrac{b_x\pare{\tau}}{2}} A^{\pare{J}}\pare{\tau} - A^{\pare{J}}\pare{\tau} \OpBW{\tfrac{b_x\pare{\tau}}{2}}  + R^{(J)}\pare{\tau}
\end{multlined}
  \\
& A^{\pare{J}}\pare{0} =  \OpBW{ \sa }
\end{aligned}
}
\end{equation}
where $ R^{(J)}\pare{\tau} $ are smoothing operators in $
\Sigma \dot{\cR}^{-\rho+m}_{K,K',q+1}\bra{\epsilon_0 , N}  $  uniformly in
$ |\tau | \leq 1 $.
\\[1mm]
{\bf Step iii) :  Analysis of the error. }
We finally estimate the  difference between the  conjugated
operator $  P \pare{\tau}  $ in
\eqref{defPtheta} and
$ P^{\pare{J}}\pare{\tau} \defeq \partial_x \circ A^{\pare{J}}\pare{\tau}  $.

\begin{lemma}
\label{lem:modif_regularizing_operator_flow}
$ P \pare{\tau} - P^{\pare{J}}\pare{\tau}  $
is a smoothing operator $ R\pare{\tau}  $ in $ \Sigma\dot{\cR}^{-\rho +m+1+N}_{K,K',q+1}\bra{\epsilon_0 , N} $
uniformly in $ \av{\tau}\leq 1 $.
\end{lemma}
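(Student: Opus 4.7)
Define the error operator $E^{(J)}(\tau) := A(\tau) - A^{(J)}(\tau)$, which vanishes at $\tau = 0$. Subtracting \eqref{solapproJ} from \eqref{eq:approx_Atheta} we obtain, for $|\tau| \leq 1$, the inhomogeneous linear equation
\begin{equation*}
\partial_\tau E^{(J)}(\tau) = \tilde B(\tau) E^{(J)}(\tau) - E^{(J)}(\tau) B(\tau) + F^{(J)}(\tau) \, , \qquad E^{(J)}(0) = 0 \, ,
\end{equation*}
where $\tilde B(\tau) = \OpBW{\ii b(\tau;x)\xi - \tfrac12 b_x(\tau;x)}$ (which equals $\partial_x^{-1} \circ B(\tau) \circ \partial_x$ modulo a smoothing operator, by \eqref{eq:Btheta2}) and the source is
\begin{equation*}
F^{(J)}(\tau) \defeq R'(\tau) A(\tau) - A(\tau) R(\tau) - R^{(J)}(\tau) \, .
\end{equation*}
I first verify that $F^{(J)}(\tau)$ is smoothing with the expected order. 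Since $A(\tau) = \sum_{\mathsf{j}=0}^J \OpBW{a_\mathsf{j}(\tau)}$ is a $m$-operator in $\Sigma\dot\cM^{m}_{K,K',q}[\epsilon_0,N]$ (by Remark \ref{rem:smoo}), while $R(\tau), R'(\tau) \in \Sigma\dot\cR^{-\rho}_{K,K',1}[\epsilon_0,N]$ and $R^{(J)}(\tau) \in \Sigma\dot\cR^{-\rho+m}_{K,K',q+1}[\epsilon_0,N]$, the composition rule in Proposition \ref{compositionMoperator} yields $F^{(J)}(\tau) \in \Sigma\dot\cR^{-\rho+m}_{K,K',q+1}[\epsilon_0,N]$ uniformly in $|\tau|\leq 1$.

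Next I apply the variation-of-constants formula. Denote by $\tilde\Phi(\tau)$ the flow of $\tilde B(\tau)$; since $\tilde B(\tau) = \partial_x^{-1}\circ B(\tau)\circ\partial_x$ modulo a smoothing operator, Lemma \ref{lem:existence_flow} together with \eqref{eq:transport_estimate0} guarantees that $\tilde\Phi(\tau)$ is well-defined and obeys $\tilde\Phi(\tau) = \partial_x^{-1}\circ\Phi_B(\tau)\circ\partial_x$ up to a smoothing operator. Setting $E^{(J)}(\tau) = \tilde\Phi(\tau) G(\tau) \Phi_B(\tau)^{-1}$ reduces the evolution equation to $\partial_\tau G(\tau) = \tilde\Phi(\tau)^{-1} F^{(J)}(\tau) \Phi_B(\tau)$, $G(0) = 0$, hence
\begin{equation*}
E^{(J)}(\tau) = \int_0^\tau \tilde\Phi(\tau)\tilde\Phi(s)^{-1} \, F^{(J)}(s) \, \Phi_B(s)\Phi_B(\tau)^{-1} \, \dd s \, .
\end{equation*}

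Finally I propagate the smoothing class through the two-sided composition. The operator $\Phi_B(s)\Phi_B(\tau)^{-1}$ is itself the flow of a paradifferential Hamiltonian vector field of the same type as $B$ (obtained by time-reparametrization), so item \ref{item:action_flow-ii} of Proposition \ref{prop:action_flow} applies and produces, together with its analog for $\tilde\Phi$ (which is conjugate to $\Phi_B$ by $\partial_x$), the bound: for each $s,\tau \in [0,1]$ the integrand is a smoothing operator in $\Sigma\dot\cR^{-\rho+m+N}_{K,K',q+1}[\epsilon_0,N]$, uniformly in $s,\tau$. Integration in $s$ preserves this class, and applying $\partial_x$ on the left to pass from $A(\tau)$ to $P(\tau) = \partial_x\circ A(\tau)$ costs exactly one order. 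We thus conclude $P(\tau)-P^{(J)}(\tau) = \partial_x\circ E^{(J)}(\tau) \in \Sigma\dot\cR^{-\rho+m+1+N}_{K,K',q+1}[\epsilon_0,N]$ uniformly in $|\tau|\leq 1$, as claimed.

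The principal obstacle is the last step: rigorously pushing a smoothing operator through the combination $\tilde\Phi(\tau)\tilde\Phi(s)^{-1}(\cdot)\Phi_B(s)\Phi_B(\tau)^{-1}$ at distinct times $s$ and $\tau$. This requires a slight extension of item \ref{item:action_flow-ii}, but since $\Phi_B(s)\Phi_B(\tau)^{-1}$ is still the time-$1$ flow of a suitably rescaled Hamiltonian paradifferential operator in the same class as $B$, the argument reduces to the one already in place, with the quantitative loss of $N$ derivatives coming, as usual, from the finite expansion in homogeneities encoded by the class $\Sigma\dot\cR^{\bullet}_{K,K',\bullet}[\epsilon_0,N]$.
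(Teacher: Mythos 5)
Your overall strategy — error equation plus Duhamel plus conjugation of a smoothing operator through the flow — coincides with the paper's, but there is a genuine gap in the classification of the source term. You write ``Since $A(\tau)=\sum_{\mathsf j=0}^J \OpBW{a_{\mathsf j}(\tau)}$ is a $m$-operator\dots''. This is false: that sum is $A^{(J)}(\tau)$, not $A(\tau)$. The exact conjugate $A(\tau)=\partial_x^{-1}\circ\Phi_B(\tau)\circ\partial_x\OpBW{\sa}\circ\Phi_B(\tau)^{-1}$ is precisely the object the Egorov-type theorem is trying to describe, and it does \emph{not} a priori lie in the pluri-homogeneous class $\Sigma\dot{\mathcal M}^m_{K,K',q}[\epsilon_0,N]$ (we only know the Sobolev-boundedness coming from \Cref{lem:existence_flow}, not the tame estimates with the $\|u\|_{s_0}^{p-1}\|u\|_s$ structure of \eqref{piove}, nor the homogeneous expansion in $f$). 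Consequently you cannot invoke \Cref{compositionMoperator} to conclude that $F^{(J)}(\tau)=R'(\tau)A(\tau)-A(\tau)R(\tau)-R^{(J)}(\tau)$ lies in $\Sigma\dot{\mathcal R}^{-\rho+m}_{K,K',q+1}[\epsilon_0,N]$, and the whole subsequent estimate collapses.

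There are two ways to repair this, and the second is what the paper actually does. (i) Substitute $A=A^{(J)}+E^{(J)}$ in your source: then $F^{(J)}=R'E^{(J)}-E^{(J)}R+\big(R'A^{(J)}-A^{(J)}R-R^{(J)}\big)$, and the $E^{(J)}$-dependent pieces must be absorbed into the linear part of the evolution for $E^{(J)}$ (the ``homogeneous'' flow is then generated by $\tilde B+R'$ on the left and $B+R$ on the right), leaving a source that involves only $A^{(J)}$, which \emph{is} a paradifferential $m$-operator; this makes your plan go through. (ii) Do not subtract the two equations for $A$ and $A^{(J)}$ at all: instead observe that $\cP^{(J)}(\tau)=\partial_x\circ A^{(J)}(\tau)$ solves the \emph{exact} Heisenberg equation $\partial_\tau\cP^{(J)}=\comm{B}{\cP^{(J)}}+R(\tau)$ with a defect $R(\tau)$ built only from $A^{(J)}$ and hence manifestly in $\Sigma\dot{\mathcal R}^{-\rho+m}_{K,K',q+1}[\epsilon_0,N]$, then set $V(\tau):=P^{(J)}(\tau)\Phi_B(\tau)-\Phi_B(\tau)\partial_x\OpBW{\sa}$, derive $\partial_\tau V=B(\tau)V+R(\tau)\Phi_B(\tau)$ with $V(0)=0$, and conclude by Duhamel $P^{(J)}(\tau)-P(\tau)=\int_0^\tau \Phi_B(\tau)\Phi_B(\tau')^{-1}R(\tau')\Phi_B(\tau')\Phi_B(\tau)^{-1}\,\dd\tau'$. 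The final propagation of the smoothing class through $\Phi_B(\tau)\Phi_B(\tau')^{-1}(\cdot)\Phi_B(\tau')\Phi_B(\tau)^{-1}$ is then handled as you suggest (and as in \cite[Proof of Thm.\ 3.27]{BD2018}), with the loss of $N$ orders. So your last two paragraphs are fine; it is the classification of the source that breaks and must be rewritten so that only $A^{(J)}$ appears.
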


\begin{proof}
In  view of \cref{solapproJ,eq:Btheta1,eq:Btheta2},
the operator
$ \cP^{(J)} (\tau) = \partial_x \circ A^{\pare{J}}\pare{\tau} $
solves an approximated Heisenberg equation (cf. \eqref{eq:Heisenberg})
\begin{equation}
\label{eq:Heisenbergapp}
\partial_\tau \cP^{(J)} \pare{\tau} =  \comm{B}{\cP^{(J)}\pare{\tau}} +
R \pare{\tau} \, ,
\qquad
R (\tau) \in \Sigma \dot{\cR}^{-\rho+m}_{K,K', q+1}
 \, .
\end{equation}
Recalling \eqref{defPtheta} we write
$$
P^{(J)}  \pare{\tau} -  P \pare{\tau} =
V(\tau)  \Phi_B (\tau)^{-1}  \quad \text{where} \quad
V(\tau)  \defeq
P^{(J)} \pare{\tau} \Phi_B (\tau) -   \Phi_B (\tau)
\circ \partial_x \circ \OpBW{\sa} \, .
$$
By \eqref{eq:Heisenbergapp} we have that
$ \partial_\tau V(\tau)  = B(\tau)  V(\tau)  + R
(\tau) \Phi_B (\tau)  $, $ V(0) = 0 $,
and therefore, by Duhamel and 
$ \partial_\tau \Phi_B = B \Phi_B $ we deduce
$ V(\tau) = \Phi_B (\tau) \int_0^{\tau} \Phi_B (\tau')^{-1}
R (\tau')\Phi_B (\tau') \, \dd  \tau' $
and thus
$$
P^{(J)}  \pare{\tau} -  P \pare{\tau} =
 \int_0^{\tau} \Phi_B (\tau) \circ \Phi_B (\tau')^{-1}  \circ
R (\tau') \circ \Phi_B (\tau') \circ
\Phi_B (\tau)^{-1}  \, \dd  \tau' \, .
$$
This is a smoothing operator in  arguing as in  \cite[Proof of Thm. 3.27]{BD2018}.
\end{proof}

\Cref{lem:modif_regularizing_operator_flow} implies that
$ P \pare{\tau} = \partial_x \circ A^{\pare{J}}\pare{\tau} + R (\tau)  $
concluding the proof of  \Cref{prop:action_flow}-\ref{item:action_flow-i}
with symbol $ P 
 = \sum_{\mathsf{j}=1}^J a_\mathsf{j} \pare{1}  $.
 \Cref{item:action_flow-iii} follows 
similarly as  in \cite[Lemma A.5]{BFP2018}. \Cref{item:action_flow-ii} 
is given  in \cite[Remark at page 89]{BD2018}.

\subsection*{Proof of Proposition \ref{lem:Lie_expansion}}

In view of \eqref{eq:assumption_w} and \Cref{lem:existence_flow} the flow 
$ \Phi_W\pare{\tau} := \Phi_W\pare{f, \tau} $ 
generated by \eqref{eq:operatorsfamily2} is well posed and 
\begin{equation}\label{paWt}
\frac{\di}{\di \tau } \pare{ \Phi_W\pare{\tau}^{-1} \circ \partial_x \circ \OpBW{\sa} \circ 
\Phi_W\pare{\tau} } =
-   \Phi_W\pare{\tau}^{-1} \comm{\partial_x \circ \OpBW{w}}{\partial_x \circ \OpBW{\sa}} \Phi_W\pare{\tau} 
\end{equation}
and a  Taylor expansion gives 
\begin{multline}\label{eq:Lie_symbol}
\Phi_W\pare{1}^{-1}\, \partial_x \circ \OpBW{\sa} \circ  \Phi_W\pare{1}
\\
=  \partial_x \circ \OpBW{\sa} - 
\comm{\partial_x  \circ \OpBW{w}}{\partial_x\circ  \OpBW{\sa}}  
+
\sum_{\ell=2}^L \frac{ \pare{-1}^\ell}{\ell!}
\Ad_{\partial_x \circ \OpBW{w}}^\ell \pare{\partial_x \circ \OpBW{\sa}}
\\
+ \frac{\pare{-1}^{L+1}}{L!} \int _0^1 \pare{1-\tau}^L
\Phi_W\pare{\tau}^{-1}\circ
\Ad_{\partial_x \circ \OpBW{w}}^{L+1} \pare{\partial_x \circ \OpBW{\sa}} \circ \Phi_W\pare{\tau}  \dd \tau \, .
\end{multline}
Since $  \partial_x \circ \OpBW{w} $ belongs to 
$ \Sigma \Gamma^{-{\mathtt d}}_{K,K',1} $, $  {\mathtt d} > 0 $, 
 then, by Proposition \ref{prop:composition_BW} 
each commutator $ \comm{ \partial_x \circ \OpBW{w}}{ \cdot }$ gains $  {\mathtt d} > 0$ 
unit of order and one degree of vanishing in $ f $ and  
\eqref{eq:Lie_symbol} is an expansion as in \eqref{eq:Taylor_conjugation} 
in operators with decreasing order and increasing degree of homogeneity  
with a symbol $P$ of order $ m - 2 {\mathtt d} $.  
Item \ref{item:action_flow-iiLie} follows as in \cite[Remark at page 89]{BD2018}, see also \cite{BMM2022}, by properties of the flow generated by paradifferential operators. 
Thus, Proposition \ref{prop:composition_BW}, 
give that the last term of \eqref{eq:Lie_symbol} belongs to 
$ \Sigma \dot{\cR}^{1+m - {\mathtt d} \pare{L+1} +   \max \{0, 1- {\mathtt d} \} N}_{K, K', q}\bra{\epsilon_0, N} $, 
hence if $ L +1 \geq \frac{\rho  +1+m+ \max \{0, 1- {\mathtt d} \}   N}{{\mathtt d}}  $
 it belongs to $  \Sigma \dot{\cR}^{-\rho}_{K, K', q}\bra{\epsilon_0, N}  $. 
 If $ w, \sa $ are real and even in $ \xi $, then 
 the operators $\partial_x \circ \OpBW{w} $ and $ \partial_x \circ \OpBW{\sa}$ are Hamiltonian
 (cf. \eqref{Hamassy}). 
The commutator of two Hamiltonian operators
 \begin{equation}\label{AdSwc}
 \Ad_{\partial_x \circ \OpBW{w}} 
 \pare{\partial_x \circ  \OpBW{\sa}}
= \partial_x \circ S \, , \quad
S := 
\OpBW{w} \circ \partial_x \circ \OpBW{\sa} - \OpBW{\sa}
 \circ \partial_x \circ \OpBW{w} \, ,
 \end{equation}
where $ S = S^* $, $ S = \overline{S} $, 
 is another Hamiltonian operator where, by 
 \Cref{prop:composition_BW},  
 the operator $ S = \OpBW{s} $ with 
 a real 
 symbol $ s $ in $ \Sigma\Gamma^{m-{\mathtt d}}_{K,K',q+1}\bra{\epsilon_0 , N} $
 even in $ \xi $ (cf. \eqref{Hamassy}), 
  up to a smoothing operator in  $ \Sigma \dot{\cR}^{-\rho}_{K,K',q+1} \bra{\epsilon_0, N} $, by renaming $ \rho $. Applying iteratively this result to 
 $ \Ad_{\partial_x \circ \OpBW{w}}^\ell \pare{\partial_x \circ \OpBW{\sa}} $ the 
 formula \eqref{eq:Taylor_conjugation} follows.

Let us prove \eqref{eq:cc_pat}. As in \eqref{paWt} we have that 
\begin{equation*}
\frac{\di}{\di \tau } \pare{ \Phi_W\pare{\tau}^{-1}\circ \partial_t  \circ \Phi_W\pare{\tau} }
 =-
 \Phi_W\pare{\tau}^{-1}\circ
 \comm{\partial_x \circ \OpBW{w}}{ \partial_t } \circ \Phi_W\pare{  \tau} =
 \Phi_W\pare{\tau}^{-1}\circ
\partial_x \circ \OpBW{w_t } \circ \Phi_W\pare{  \tau}
 \end{equation*}
and a Taylor expansion gives
\begin{align}
\Phi_W\pare{1}^{-1}\circ \partial_t  \circ \Phi_W\pare{1}
& =
\partial_t
 + \partial_x \circ \OpBW{w_t } + \sum_{\ell = 2}^L \frac{(-1)^{\ell-1}}{\ell !} 
 \Ad_{\partial_x \circ \OpBW{w}}^{\ell-1}
\big( \partial_x \circ \OpBW{w_t }\big) \label{Lietime}
 \\
 & \ + \frac{(-1)^{L}}{L!} \int_0^1 
 \pare{1-\tau}^L 
  \Phi_W\pare{\tau}^{-1}\circ \Ad_{\partial_x \circ \OpBW{w}}^{L} \pare{ \partial_x \circ \OpBW{w_t} \Big. } \circ \Phi_W\pare{\tau}  \,  \dd \tau \, . \notag
 \end{align}
Since
$
\Phi_W\pare{1}^{-1}\circ \partial_t  \circ \Phi_W\pare{1}
= $ $ \partial_t + \Phi_W\pare{1}^{-1}\circ \pare{\partial_t  \Phi_W\pare{1}}
 = $ $ \partial_t  - \pare{\partial_t \Phi_W\pare{1}^{-1}} \circ   \Phi_W\pare{1}   $
we deduce by \eqref{Lietime} that 
$$
- \partial_t \Phi_W\pare{1}^{-1}\circ   \Phi_W\pare{1} =
\partial_x \circ \OpBW{w_t } +
 \sum_{\ell=2}^L \frac{(-1)^{\ell-1}}{\ell!} \Ad_{\partial_x \circ \OpBW{w}}^{\ell-1}
\big( \partial_x \circ \OpBW{w_t }\big) + R 
$$
 where, 
 if  $ L \gtrsim_{\mathtt d, N} \rho $,  then   $ R $ is in 
 $ \Sigma \dot{\cR}^{-\rho}_{K,K'+1,2}\bra{\epsilon_0 , N} $ (renaming $ \rho $).
 Then  \eqref{eq:cc_pat} follows arguing as for \eqref{eq:Taylor_conjugation} 
 and if $ w $ is real and even in $ \xi $ we also deduce that 
 $ T $ is real and even in $ \xi $.

	\begin{footnotesize}

\begin{thebibliography}{10}

\bibitem{AA2022}
A.~Ai and O.-N. Avadanei.
\newblock Well-posedness for the surface quasi-geostrophic front equation.
\newblock Arxiv preprint \url{arXiv:2212.00117}.

\bibitem{AIT2019}
Albert Ai, Mihaela Ifrim, and Daniel Tataru.
\newblock Two dimensional gravity waves at low regularity {I}: {E}nergy
  estimates.
\newblock preprint \url{https://arxiv.org/abs/1910.05323}.

\bibitem{BBHM2018}
Pietro Baldi, Massimiliano Berti, Emanuele Haus, and Riccardo Montalto.
\newblock Time quasi-periodic gravity water waves in finite depth.
\newblock {\em Invent. Math.}, 214(2):739--911, 2018.

\bibitem{BBM2016_2}
Pietro Baldi, Massimiliano Berti, and Riccardo Montalto.
\newblock K{AM} for autonomous quasi-linear perturbations of {K}d{V}.
\newblock {\em Ann. Inst. H. Poincar\'{e} C Anal. Non Lin\'{e}aire},
  33(6):1589--1638, 2016.

\bibitem{BD2018}
Massimiliano Berti and Jean-Marc Delort.
\newblock {\em Almost global solutions of capillary-gravity water waves
  equations on the circle}, volume~24 of {\em Lecture Notes of the Unione
  Matematica Italiana}.
\newblock Springer, Cham; Unione Matematica Italiana, [Bologna], 2018.

\bibitem{BFF2021}
Massimiliano Berti, R.~Feola, and Luca Franzoi.
\newblock Quadratic life span of periodic gravity-capillary water waves.
\newblock {\em Water Waves}, 3(1):85--115, 2021.

\bibitem{BFP2018}
Massimiliano Berti, Roberto Feola, and Fabio Pusateri.
\newblock Birkhoff normal form and long time existence for periodic gravity
  water waves.
\newblock {\em Comm. Pure Appl. Math.}, 76(7):1416--1494, 2023.

\bibitem{BFM2021}
Massimiliano Berti, Luca Franzoi, and Alberto Maspero.
\newblock Traveling quasi-periodic water waves with constant vorticity.
\newblock {\em Arch. Ration. Mech. Anal.}, 240(1):99--202, 2021.

\bibitem{BHM2022}
Massimiliano Berti, Zineb Hassainia, and Nader Masmoudi.
\newblock Time quasi-periodic vortex patches of {E}uler equation in the plane.
\newblock {\em Invent. Math.}, 233(3):1279--1391, 2023.

\bibitem{BMM2022}
Massimiliano Berti, Alberto Maspero, and Federico Murgante.
\newblock Hamiltonian birkhoff normal form for gravity-capillary water waves
  with constant vorticity: almost global existence.
\newblock \url{https://arxiv.org/abs/2212.12255}.

\bibitem{BMM2021}
Massimiliano Berti, Alberto Maspero, and Federico Murgante.
\newblock Local well posedness of the {E}uler-{K}orteweg equations on {$\Bbb
  T^d$}.
\newblock {\em J. Dynam. Differential Equations}, 33(3):1475--1513, 2021.

\bibitem{BMM2}
Massimiliano Berti, Alberto Maspero, and Federico Murgante.
\newblock Hamiltonian {P}aradifferential {B}irkhoff {N}ormal {F}orm for {W}ater
  {W}aves.
\newblock {\em Regular and Chaotic Dynamics}, 2023.

\bibitem{BM2020}
Massimiliano Berti and Riccardo Montalto.
\newblock Quasi-periodic standing wave solutions of gravity-capillary water
  waves.
\newblock {\em Mem. Amer. Math. Soc.}, 263(1273):v+171, 2020.

\bibitem{BSV19}
Tristan Buckmaster, Steve Shkoller, and Vlad Vicol.
\newblock Nonuniqueness of weak solutions to the {SQG} equation.
\newblock {\em Comm. Pure Appl. Math.}, 72(9):1809--1874, 2019.

\bibitem{Burbea1982}
Jacob Burbea.
\newblock Motions of vortex patches.
\newblock {\em Lett. Math. Phys.}, 6(1):1--16, 1982.

\bibitem{CCG2016_1}
Angel Castro, Diego C\'{o}rdoba, and Javier G\'{o}mez-Serrano.
\newblock Existence and regularity of rotating global solutions for the
  generalized surface quasi-geostrophic equations.
\newblock {\em Duke Math. J.}, 165(5):935--984, 2016.

\bibitem{CCG2016_2}
Angel Castro, Diego C\'{o}rdoba, and Javier G\'{o}mez-Serrano.
\newblock Uniformly rotating analytic global patch solutions for active
  scalars.
\newblock {\em Ann. PDE}, 2(1):Art. 1, 34, 2016.

\bibitem{CCG2019}
Angel Castro, Diego C\'{o}rdoba, and Javier G\'{o}mez-Serrano.
\newblock Uniformly rotating smooth solutions for the incompressible 2{D}
  {E}uler equations.
\newblock {\em Arch. Ration. Mech. Anal.}, 231(2):719--785, 2019.

\bibitem{CCG2020}
Angel Castro, Diego C\'{o}rdoba, and Javier G\'{o}mez-Serrano.
\newblock Global smooth solutions for the inviscid {SQG} equation.
\newblock {\em Mem. Amer. Math. Soc.}, 266(1292):v+89, 2020.

\bibitem{CCZ2021}
\'{A}ngel Castro, Diego C\'{o}rdoba, and Fan Zheng.
\newblock The lifespan of classical solutions for the inviscid surface
  quasi-geostrophic equation.
\newblock {\em Ann. Inst. H. Poincar\'{e} C Anal. Non Lin\'{e}aire},
  38(5):1583--1603, 2021.

\bibitem{CCCG2012}
Dongho Chae, Peter Constantin, Diego C\'{o}rdoba, Francisco Gancedo, and
  Jiahong Wu.
\newblock Generalized surface quasi-geostrophic equations with singular
  velocities.
\newblock {\em Comm. Pure Appl. Math.}, 65(8):1037--1066, 2012.

\bibitem{Chemin98}
Jean-Yves Chemin.
\newblock {\em Perfect incompressible fluids}, volume~14 of {\em Oxford Lecture
  Series in Mathematics and its Applications}.
\newblock The Clarendon Press, Oxford University Press, New York, 1998.
\newblock Translated from the 1995 French original by Isabelle Gallagher and
  Dragos Iftimie.

\bibitem{CMT1994_2}
Peter Constantin, Andrew~J. Majda, and Esteban Tabak.
\newblock Formation of strong fronts in the {$2$}-{D} quasigeostrophic thermal
  active scalar.
\newblock {\em Nonlinearity}, 7(6):1495--1533, 1994.

\bibitem{CMT1994_1}
Peter Constantin, Andrew~J. Majda, and Esteban~G. Tabak.
\newblock Singular front formation in a model for quasigeostrophic flow.
\newblock {\em Phys. Fluids}, 6(1):9--11, 1994.

\bibitem{CCG18}
Antonio C\'{o}rdoba, Diego C\'{o}rdoba, and Francisco Gancedo.
\newblock Uniqueness for {SQG} patch solutions.
\newblock {\em Trans. Amer. Math. Soc. Ser. B}, 5:1--31, 2018.

\bibitem{Cordoba98}
Diego Cordoba.
\newblock Nonexistence of simple hyperbolic blow-up for the quasi-geostrophic
  equation.
\newblock {\em Ann. of Math. (2)}, 148(3):1135--1152, 1998.

\bibitem{CF02}
Diego Cordoba and Charles Fefferman.
\newblock Growth of solutions for {QG} and 2{D} {E}uler equations.
\newblock {\em J. Amer. Math. Soc.}, 15(3):665--670, 2002.

\bibitem{CFMR05}
Diego C\'{o}rdoba, Marco~A. Fontelos, Ana~M. Mancho, and Jose~L. Rodrigo.
\newblock Evidence of singularities for a family of contour dynamics equations.
\newblock {\em Proc. Natl. Acad. Sci. USA}, 102(17):5949--5952, 2005.

\bibitem{CGI2019}
Diego C\'{o}rdoba, Javier G\'{o}mez-Serrano, and Alexandru~D. Ionescu.
\newblock Global solutions for the generalized {SQG} patch equation.
\newblock {\em Arch. Ration. Mech. Anal.}, 233(3):1211--1251, 2019.

\bibitem{DHH2016}
Francisco de~la Hoz, Zineb Hassainia, and Taoufik Hmidi.
\newblock Doubly connected {V}-states for the generalized surface
  quasi-geostrophic equations.
\newblock {\em Arch. Ration. Mech. Anal.}, 220(3):1209--1281, 2016.

\bibitem{DHH2018}
Francisco de~la Hoz, Zineb Hassainia, and Taoufik Hmidi.
\newblock Doubly connected {V}-states for the generalized surface
  quasi-geostrophic equations.
\newblock In {\em Evolution equations: long time behavior and control}, volume
  439 of {\em London Math. Soc. Lecture Note Ser.}, pages 90--117. Cambridge
  Univ. Press, Cambridge, 2018.

\bibitem{DHMV2016}
Francisco de~la Hoz, Taoufik Hmidi, Joan Mateu, and Joan Verdera.
\newblock Doubly connected {$V$}-states for the planar {E}uler equations.
\newblock {\em SIAM J. Math. Anal.}, 48(3):1892--1928, 2016.

\bibitem{DZ1978}
Gary~S. Deem and Norman~J. Zabusky.
\newblock Vortex waves: Stationary "$v$ states," interactions, recurrence, and
  breaking.
\newblock {\em Phys. Rev. Lett.}, 40:859--862, Mar 1978.

\bibitem{DIP2022}
Yu~Deng, Alexandru~D. Ionescu, and Fabio Pusateri.
\newblock On the wave turbulence theory of {2D} gravity waves, {I}:
  deterministic energy estimates.
\newblock preprint \url{https://arxiv.org/abs/2211.10826}.

\bibitem{FG2020}
Roberto Feola and Federico Giuliani.
\newblock Quasi-periodic traveling waves on an infinitely deep fluid under
  gravity.
\newblock Memoires American Mathematical Society, in press.

\bibitem{Gancedo2008}
Francisco Gancedo.
\newblock Existence for the {$\alpha$}-patch model and the {QG} sharp front in
  {S}obolev spaces.
\newblock {\em Adv. Math.}, 217(6):2569--2598, 2008.

\bibitem{GNP2021}
Francisco Gancedo, Huy Nguyen, and Neel Patel.
\newblock Well-posedness for {SQG} sharp fronts with unbounded curvature.
\newblock \url{https://arxiv.org/abs/2105.10982}.

\bibitem{GP2021}
Francisco Gancedo and Neel Patel.
\newblock On the local existence and blow-up for generalized {SQG} patches.
\newblock {\em Ann. PDE}, 7(1):Paper No. 4, 63, 2021.

\bibitem{GS14}
Francisco Gancedo and Robert~M. Strain.
\newblock Absence of splash singularities for surface quasi-geostrophic sharp
  fronts and the {M}uskat problem.
\newblock {\em Proc. Natl. Acad. Sci. USA}, 111(2):635--639, 2014.

\bibitem{GomezSerrano2019}
Javier G\'{o}mez-Serrano.
\newblock On the existence of stationary patches.
\newblock {\em Adv. Math.}, 343:110--140, 2019.

\bibitem{GIP}
Javier Gómez-Serrano, Alexandru~D. Ionescu, and Jaemin Park.
\newblock Quasiperiodic solutions of the generalized {SQG} equation.
\newblock \url{https://arxiv.org/abs/2303.03992}.

\bibitem{HHM2021}
Zineb Hassaina, Toufik Hmidi, and Nader Masmoudi.
\newblock {KAM} theory for active scalar equations.
\newblock \url{http://arxiv.org/abs/2110.08615}.

\bibitem{HH2015}
Zineb Hassainia and Taoufik Hmidi.
\newblock On the {V}-states for the generalized quasi-geostrophic equations.
\newblock {\em Comm. Math. Phys.}, 337(1):321--377, 2015.

\bibitem{HMW2020}
Zineb Hassainia, Nader Masmoudi, and Miles~H. Wheeler.
\newblock Global bifurcation of rotating vortex patches.
\newblock {\em Comm. Pure Appl. Math.}, 73(9):1933--1980, 2020.

\bibitem{HK21}
Siming He and Alexander Kiselev.
\newblock Small-scale creation for solutions of the {SQG} equation.
\newblock {\em Duke Math. J.}, 170(5):1027--1041, 2021.

\bibitem{HM2016_2}
Taoufik Hmidi and Joan Mateu.
\newblock Bifurcation of rotating patches from {K}irchhoff vortices.
\newblock {\em Discrete Contin. Dyn. Syst.}, 36(10):5401--5422, 2016.

\bibitem{HM2016_1}
Taoufik Hmidi and Joan Mateu.
\newblock Degenerate bifurcation of the rotating patches.
\newblock {\em Adv. Math.}, 302:799--850, 2016.

\bibitem{HM2017}
Taoufik Hmidi and Joan Mateu.
\newblock Existence of corotating and counter-rotating vortex pairs for active
  scalar equations.
\newblock {\em Comm. Math. Phys.}, 350(2):699--747, 2017.

\bibitem{HMV2013}
Taoufik Hmidi, Joan Mateu, and Joan Verdera.
\newblock Boundary regularity of rotating vortex patches.
\newblock {\em Arch. Ration. Mech. Anal.}, 209(1):171--208, 2013.

\bibitem{HMV2015}
Taoufik Hmidi, Joan Mateu, and Joan Verdera.
\newblock On rotating doubly connected vortices.
\newblock {\em J. Differential Equations}, 258(4):1395--1429, 2015.

\bibitem{HIT2016}
John~K. Hunter, Mihaela Ifrim, and Daniel Tataru.
\newblock Two dimensional water waves in holomorphic coordinates.
\newblock {\em Comm. Math. Phys.}, 346(2):483--552, 2016.

\bibitem{HIT2015}
John~K. Hunter, Mihaela Ifrim, Daniel Tataru, and Tak~Kwong Wong.
\newblock Long time solutions for a {B}urgers-{H}ilbert equation via a modified
  energy method.
\newblock {\em Proc. Amer. Math. Soc.}, 143(8):3407--3412, 2015.

\bibitem{HSZ20}
John~K. Hunter, Jingyang Shu, and Qingtian Zhang.
\newblock {G}lobal solutions for a family of {GSQG} front equations.
\newblock \url{https://arxiv.org/abs/2005.09154}, 2020.

\bibitem{HSZ2021}
John~K. Hunter, Jingyang Shu, and Qingtian Zhang.
\newblock Global solutions of a surface quasigeostrophic front equation.
\newblock {\em Pure Appl. Anal.}, 3(3):403--472, 2021.

\bibitem{IT2017}
Mihaela Ifrim and Daniel Tataru.
\newblock The lifespan of small data solutions in two dimensional capillary
  water waves.
\newblock {\em Arch. Ration. Mech. Anal.}, 225(3):1279--1346, 2017.

\bibitem{IP2015}
Alexandru~D. Ionescu and Fabio Pusateri.
\newblock Global solutions for the gravity water waves system in 2d.
\newblock {\em Invent. Math.}, 199(3):653--804, 2015.

\bibitem{IP2019}
Alexandru~D. Ionescu and Fabio Pusateri.
\newblock Long-time existence for multi-dimensional periodic water waves.
\newblock {\em Geom. Funct. Anal.}, 29(3):811--870, 2019.

\bibitem{IM2021}
Philip Isett and Andrew Ma.
\newblock A direct approach to nonuniqueness and failure of compactness for the
  {SQG} equation.
\newblock {\em Nonlinearity}, 34(5):3122--3162, 2021.

\bibitem{KL2023}
Alexander Kiselev and Xiaoyutao Luo.
\newblock The $ \alpha $-sqg patch problem is illposed in $ c^{2, \beta} $ and
  $w^{2, p}$.
\newblock preprint \url{https://arxiv.org/abs/2306.04193}.

\bibitem{KL21}
Alexander Kiselev and Xiaoyutao Luo.
\newblock {O}n nonexistence of splash singularities for the $\alpha$-{SQG}
  patches.
\newblock \url{https://arxiv.org/abs/2111.13794}, 2021.

\bibitem{KRLY16}
Alexander Kiselev, Lenya Ryzhik, Yao Yao, and Andrej Zlato\v{s}.
\newblock Finite time singularity for the modified {SQG} patch equation.
\newblock {\em Ann. of Math. (2)}, 184(3):909--948, 2016.

\bibitem{KYZ17}
Alexander Kiselev, Yao Yao, and Andrej Zlato\v{s}.
\newblock Local regularity for the modified {SQG} patch equation.
\newblock {\em Comm. Pure Appl. Math.}, 70(7):1253--1315, 2017.

\bibitem{LX2019}
Omar Lazar and Liutang Xue.
\newblock Regularity results for a class of generalized surface
  quasi-geostrophic equations.
\newblock {\em J. Math. Pures Appl. (9)}, 130:200--250, 2019.

\bibitem{MO1943}
Wilhelm Magnus and Fritz Oberhettinger.
\newblock {\em Formeln und {S}\"{a}tze f\"{u}r die speziellen {F}unktionen der
  mathematischen {P}hysik}.
\newblock Springer-Verlag, Berlin, 1943.

\bibitem{MB2002}
Andrew~J. Majda and Andrea~L. Bertozzi.
\newblock {\em Vorticity and incompressible flow}, volume~27 of {\em Cambridge
  Texts in Applied Mathematics}.
\newblock Cambridge University Press, Cambridge, 2002.

\bibitem{Marchand08}
Fabien Marchand.
\newblock Existence and regularity of weak solutions to the quasi-geostrophic
  equations in the spaces {$L^p$} or {$\dot H^{-1/2}$}.
\newblock {\em Comm. Math. Phys.}, 277(1):45--67, 2008.

\bibitem{NIST}
Frank W.~J. Olver, Daniel~W. Lozier, Ronald~F. Boisvert, and Charles~W. Clark,
  editors.
\newblock {\em N{IST} handbook of mathematical functions}.
\newblock U.S. Department of Commerce, National Institute of Standards and
  Technology, Washington, DC; Cambridge University Press, Cambridge, 2010.
\newblock With 1 CD-ROM (Windows, Macintosh and UNIX).

\bibitem{PHS94}
{R}.{T}. {P}ierrehumbert, {I}.{M}. {H}eld, and {K}.{L}. {S}wanson.
\newblock Spectra of local and nonlocal two-dimensional turbulence.
\newblock {\em Chaos Solitons Fractals}, (4):1111--1116, 1994.

\bibitem{Renault2017}
Coralie Renault.
\newblock Relative equilibria with holes for the surface quasi-geostrophic
  equations.
\newblock {\em J. Differential Equations}, 263(1):567--614, 2017.

\bibitem{Resnick95}
Serge~G. Resnick.
\newblock {\em Dynamical problems in non-linear advective partial differential
  equations}.
\newblock ProQuest LLC, Ann Arbor, MI, 1995.
\newblock Thesis (Ph.D.)--The University of Chicago.

\bibitem{Rodrigo2005}
Jos\'{e}~Luis Rodrigo.
\newblock On the evolution of sharp fronts for the quasi-geostrophic equation.
\newblock {\em Comm. Pure Appl. Math.}, 58(6):821--866, 2005.

\bibitem{SV2002}
Jukka Saranen and Gennadi Vainikko.
\newblock {\em Periodic integral and pseudodifferential equations with
  numerical approximation}.
\newblock Springer Monographs in Mathematics. Springer-Verlag, Berlin, 2002.

\bibitem{Scott11}
Richard~K. Scott.
\newblock A scenario for finite-time singularity in the quasigeostrophic model.
\newblock {\em J. Fluid Mech.}, 687:492--502, 2011.

\bibitem{Stein1993}
Elias~M. Stein.
\newblock {\em Harmonic analysis: real-variable methods, orthogonality, and
  oscillatory integrals}, volume~43 of {\em Princeton Mathematical Series}.
\newblock Princeton University Press, Princeton, NJ, 1993.
\newblock With the assistance of Timothy S. Murphy, Monographs in Harmonic
  Analysis, III.

\bibitem{WU2020}
Sijue Wu.
\newblock The quartic integrability and long time existence of steep water
  waves in 2d.
\newblock preprint \url{https://arxiv.org/abs/2010.09117}.

\bibitem{Wu2009}
Sijue Wu.
\newblock Almost global wellposedness of the 2-{D} full water wave problem.
\newblock {\em Invent. Math.}, 177(1):45--135, 2009.

\bibitem{Yudovich1963}
Victor~I. Yudovi\v{c}.
\newblock Non-stationary flows of an ideal incompressible fluid.
\newblock {\em \v{Z}. Vy\v{c}isl. Mat i Mat. Fiz.}, 3:1032--1066, 1963.

\bibitem{Zheng2022}
Fan Zheng.
\newblock Long-term regularity of 2d gravity water waves.
\newblock preprint \url{https://arxiv.org/abs/2206.10350}.

\end{thebibliography}

	\end{footnotesize}

\end{document}